\documentclass[ejsv2,noshowframe]{imsart}

\usepackage{cancel}
\usepackage{threeparttable}
\usepackage{tabularx}
\usepackage{booktabs}
\usepackage{multirow}
\usepackage{enumitem}
\RequirePackage[numbers]{natbib}
\RequirePackage[colorlinks,citecolor=blue,urlcolor=blue]{hyperref}
\RequirePackage{graphicx}
\usepackage{float}
\usepackage{placeins}
\usepackage[ruled,vlined,linesnumbered]{algorithm2e}
\makeatletter
\renewcommand{\fps@algocf}{H}
\makeatother
\SetKwInOut{KwIn}{Input}
\SetKwInOut{KwOut}{Output}
\SetKw{KwRet}{Return}
\SetAlFnt{\footnotesize}
\SetAlCapFnt{\footnotesize}
\SetAlCapNameFnt{\footnotesize}

\startlocaldefs

\startlocaldefs
\theoremstyle{plain}
\newtheorem{theorem}{Theorem}[section]
\newtheorem{proposition}[theorem]{Proposition}
\newtheorem{lemma}[theorem]{Lemma}
\newtheorem{corollary}[theorem]{Corollary}

\theoremstyle{definition}

\theoremstyle{remark}

\DeclareMathOperator{\Var}{Var}
\DeclareMathOperator{\Cov}{Cov}

\newcommand{\E}{\mathbb{E}}

\renewcommand{\Pr}{\mathbb{P}}
\newcommand{\R}{\mathbb{R}}

\newcommand{\op}{\mathrm{op}}

\DeclareMathOperator{\tr}{tr}

\endlocaldefs

\endlocaldefs

\begin{document}
\begin{frontmatter}
\title{Statistical Optimality of Prediction-Powered Inference}\thankstext{t1}{This manuscript is a preprint and is currently under review.}
\runtitle{Statistical Optimality of Prediction-Powered Inference}

\begin{aug}
\author[A]{\fnms{Se Yoon}~\snm{Lee}\ead[label=e1]{seyoonlee.stat.math@gmail.com}}
\and
\author[B]{\fnms{Jae Kwang}~\snm{Kim}\ead[label=e2]{jkim@iastate.edu}}


\address[A]{Department of Statistics,
Texas A\&M University\printead[presep={,\ }]{e1}}

\address[B]{Department of Statistics,
Iowa State University\printead[presep={,\ }]{e2}}

\runauthor{Lee and Kim}
\end{aug}

\begin{abstract}
The prediction-powered inference (PPI) proposed
by Angelopoulos et al. (2023) is a popular method that leverages a small number of labeled samples and machine learning predictions for semi-supervised inference. While several variants of PPI have appeared in the literature, its rigorous statistical theory has not been fully developed. In this paper, we study the statistical optimality of PPI. Our contributions span both foundational theory and new methodology. First, we frame PPI as an $M$-estimation problem, revealing a link between the bias-corrected PPI estimating equation and the ideal full-data estimating equation. This connection leads to the consistency and asymptotic normality of the PPI estimator under simple random sampling without replacement. Next, we identify the efficient influence function and prove that PPI can attain the semiparametric efficiency lower bound when the predictor is score-calibrated, that is, when the predictor's output aligns with the true conditional expectation of the estimating function. Finally, for learned prediction rules, we develop asymptotic theory for cross-fitting and for a single-fit variant with variance correction in the special case of semiparametric mean estimation. Simulation experiments and a real-data application support these findings.
\end{abstract}

\begin{keyword}[class=MSC]
\kwd[Primary ]{62G20}
\kwd{62E20}
\kwd[; secondary ]{62G05}
\end{keyword}

\begin{keyword}
\kwd{Prediction-powered inference}
\kwd{Semi-supervised inference}
\kwd{Semiparametric efficiency theory}
\end{keyword}

\end{frontmatter}

\setcounter{tocdepth}{2}
\tableofcontents

\section{Introduction}


Prediction-powered inference (PPI) \citep{angelopous2023} is a semi-supervised inference technique \citep{van2020survey,zhu2009introduction,hady2013semi}: it uses a predictive model to impute missing responses for units observed only with covariates, and then corrects the resulting estimating equation using a comparatively small labeled subset. The estimator is defined as the root of a bias-corrected (or ``rectified'') estimating equation. PPI is well suited to settings with abundant covariates but scarce labels, where flexible machine learning (ML) predictors—e.g., deep learning, gradient-boosted trees, random forests, or BART \citep{LeCun2015,Breiman2001,Friedman2001,Chipman2010}—can be trained at modest cost.


While Angelopoulos et al. \cite{angelopous2023} established several important properties of PPI, their analysis primarily focuses on confidence interval validity under specific constructions (see Theorem~S1 in the supplement) and does not address its foundational statistical properties. Subsequent extensions—including PPI++ \citep{angelopoulos2024}, stratified PPI \citep{fisch2024stratified}, cross-fitted PPI \citep{zrnic2024cross}, and related variants \citep{fisch2024stratified,gu2024local,einbinder2025semi,luo2024federated,lee2026mec}—largely emphasize methodological refinements and empirical performance.

As a result, several fundamental questions remain unresolved. In particular, it is unclear whether PPI is statistically optimal, whether it admits a semiparametric efficiency characterization, or whether valid inference can be achieved under learned prediction rules without sample splitting. In this paper, we address this research gap under a superpopulation framework \citep{isaki1982} and rigorously investigate the statistical optimality of PPI. 

Our contributions span both foundational theory and new methodology:

\begin{itemize}
   \item[] \textbf{I. General moment-equation framework.}
We develop a statistical optimality framework for PPI under a general
moment-equation formulation and characterize its semiparametric efficiency.
  \item[] \textbf{II. Semiparametric mean estimation.}
We establish asymptotic theory for cross-fitted PPI with sample splitting, as well as for a single-fit variant with variance correction. The latter constitutes a new variant of PPI.
\end{itemize}

First (\textbf{I}), under a general moment-equation framework, we cast PPI as an $M$-estimation problem and establish an oracle bridge linking the computable bias-corrected score to the full-data moment. Under mild regularity conditions, we prove consistency and an asymptotically linear expansion for PPI, enabling valid confidence-interval construction for fixed prediction rules. We also identify the efficient influence function and give conditions under which PPI achieves the semiparametric efficiency bound.

Second (\textbf{II}), in the semiparametric mean setting, we study the statistical effect of label reuse, using the same labeled observations for both predictor training and evaluation, in the original PPI. We address this with two remedies: sample splitting and variance correction. We prove consistency and asymptotic normality for both under a learned prediction rule. For the former, we split the labeled sample into folds, train on the complement, and evaluate out-of-fold. For the latter, a linear-smoother degrees-of-freedom adjustment calibrates the plug-in variance with a single fit.

The article is organized as follows. In Section \ref{sec:setup} we introduce the problem setup and PPI methodology. Section \ref{sec:ASYMPTOTIC THEORY: A FIXED PREDICTION RULE} presents the asymptotic theory for a fixed predictor, Section \ref{sec:semiparametric-efficiency} discusses semiparametric efficiency, and Section \ref{sec:ASYMPTOTIC THEORY: A LEARNED PREDICTION RULE} addresses the learned predictor case, including cross-fitting and variance correction. Section~\ref{sec:Simulation experiments} presents the simulation experiments, Section~\ref{sec:Real data applications} presents the real-data application, and Section~\ref{sec:Conclusion} concludes the paper. The proofs of the main results are presented in the Appendix.

\section{Background}
\label{sec:setup}
\subsection{Setup and notation}\label{subsec:Setup and notaiton}
Let $\mathcal{F}_N=\{(X_i,Y_i)\}_{i=1}^N$ denote a finite population, viewed as a realization of $N$ i.i.d.\ draws from a super-population distribution $P_0$; that is, $(X_i,Y_i)\stackrel{\mathrm{i.i.d.}}{\sim} P_0$. 
We observe covariates $X_i\in\mathcal{X}\subset\mathbb{R}^d$ for all $i$, while responses $Y_i\in\mathcal{Y}\subset\mathbb{R}$ are observed only for a labeled subset $S\subset\{1,\ldots,N\}$ of size $n$. 
Let $\delta_i:=\mathbf{1}\{i\in S\}$ denote the inclusion indicator. 
Assume $S$ is drawn by simple random sampling without replacement (SRSWOR), selecting $n$ of the $N$ units. 
Let $f_N:=n/N$ denote the labeling fraction; throughout, asymptotics are taken with $N\to\infty$, $n=n(N)\to\infty$, and $n/N\to f\in(0,1)$. 
For notational convenience we henceforth write $f$ for $f_N$. 
For design-based statements we condition on $\mathcal{F}_N$; for large-sample statements we appeal to this super-population model. 



The inferential target is defined through a super-population moment condition. 
Let $\Theta\subset\mathbb{R}^p$ be open, and let $U:\Theta\times\mathcal{X}\times\mathcal{Y}\to\mathbb{R}^p$ be a measurable estimating function. 
The super-population parameter $\theta_0\in\Theta$ is defined as the unique solution to
\begin{equation}
\label{eq:pop-moment}
\mathbb{E}_{P_0}\!\big[U(\theta_0;X,Y)\big]
=\int_{\mathcal X\times\mathcal Y} U(\theta_0;x,y)\, dP_0(x,y)=0,
\end{equation}
and the finite-population analogue $\theta_N\in\Theta$ is defined as the unique solution to the finite-population counterpart of \eqref{eq:pop-moment},
\begin{equation}
\label{eq:UN}
U_N(\theta):=\frac{1}{N}\sum_{i=1}^N U(\theta;X_i,Y_i) = 0.
\end{equation}

Because the responses $Y_i$ are missing for many units, $U_N(\theta)$ is not computable in practice. The classical approach therefore relies only on the labeled sample and solves the labeled estimating equation
\begin{equation}
\nonumber
U_S(\theta):=\frac{1}{n}\sum_{j\in S} U(\theta;X_j,Y_j)=0,
\end{equation}
thereby discarding the information contained in the unlabeled covariates $\{X_j\}_{j\notin S}$.


The central goal of semi-supervised inference, including PPI, is to improve efficiency by exploiting unlabeled covariates while maintaining valid statistical inference (e.g., nominal \(95\%\) confidence-interval coverage, consistency of solutions to the relevant estimating equations, and asymptotic normality).

We note that the solution of the moment equation \eqref{eq:pop-moment} can also be characterized as the minimizer of a convex risk. Precisely, suppose there exists a measurable loss $L:\Theta\times\mathcal{X}\times\mathcal{Y}\to\mathbb{R}$ such that, for each $(x,y)$, the map $\theta\mapsto L(\theta;x,y)$ is convex and integrable, and\[U(\theta;x,y)\in \partial_\theta L(\theta;x,y)\qquad\text{for all }\theta\in\Theta,\]where $\partial_\theta L$ denotes the subdifferential with respect to $\theta$. Under standard interchange conditions for expectation and subgradient, it holds $0\in \partial_\theta \,\mathbb{E}\!\left[L(\theta;X,Y)\right]= \mathbb{E}\!\left[\partial_\theta L(\theta;X,Y)\right]= \mathbb{E}\!\left[U(\theta;X,Y)\right],$ so any $\theta_0$ solving \eqref{eq:pop-moment} also satisfies\[\theta_0 \in \arg\min_{\theta\in\Theta}\, \mathbb{E}\!\left[L(\theta;X,Y)\right].\] 

Conversely, if $\theta\mapsto \mathbb{E}[L(\theta;X,Y)]$ is strictly convex, its unique minimizer $\theta_0$ obeys $\mathbb{E}[U(\theta_0;X,Y)] $ $=0$. 



\subsection{Prediction-powered inference}\label{subsec:prediction-powered-inference}
PPI \citep{angelopous2023} employs the model-based estimating function
\begin{equation}
\label{eq:utilde}
\widetilde U(\theta;X) := U\!\big(\theta; X, m(X)\big),
\end{equation}
where $m(\cdot):\mathcal X\to\mathbb R$ is a predictive model. 


Note that $\widetilde U(\theta;X)$ in \eqref{eq:utilde} is computable for both labeled and unlabeled units once a predictor for $m$ is available. A distinctive feature of PPI is the systematic use of \eqref{eq:utilde} in semi-supervised settings—an aspect not typically addressed by standard supervised-learning methods \citep{zhu2009introduction,sohn2020fixmatch,song2024general}.


In what follows, we derive the PPI score; see \cite{angelopous2023} for a heuristic derivation.

Fix $\theta$ and write
\[
U_i(\theta):=U(\theta;X_i,Y_i),\quad 
\widetilde U_i(\theta):=U\big(\theta;X_i,m(X_i)\big).
\]

To develop a bias-calibration method, one can consider  the following intercept-only calibration  model
\begin{equation}
\nonumber
U_i(\theta)\;=\;\beta_0(\theta)\;+\;\widetilde U_i(\theta)\;+\;e_i,
\end{equation}
where $e_i$ is a zero-mean error term uncorrelated with $X_i$. The parameter $\beta_0(\theta)$ is a nuisance quantity capturing the average discrepancy between the model-based term and the observed estimating function; 
\[
\mathbb{E}\!\left\{ U(\theta;X,Y)-U\big(\theta;X,m(X)\big)\right\}\;=\;\beta_0(\theta).
\]

Estimate $\beta_0(\theta)$ on the labeled set $S$ by ordinary least squares, $Q(\beta_0):=\sum_{j\in S}\big\|U_j(\theta)-\beta_0-\widetilde U_j(\theta)\big \|_2^2$, which yields the closed-form solution
\begin{equation}
\label{eq:rectifier}
\widehat\beta_0(\theta)\;=\;\frac{1}{n}\sum_{j\in S}\Big\{U_j(\theta)-\widetilde U_j(\theta)\Big\}
\;=:\;\Delta_\theta,
\end{equation}
where  $\Delta_\theta$ is the average residual on labeled data, which serves as a bias correction term (the ‘rectifier’).  Intuitively, $\Delta_\theta$ estimates the bias in the model-based estimating function $\widetilde{U}(\theta;X)$, thereby correcting $\widetilde{U}(\theta;X)$ to better match the true full-data estimating function on average.


For any unit $i$--labeled or unlabeled--the debiased predictor of $U_i(\theta)$ is
\begin{equation}
\label{eq:Ui-debiased}
\widehat U_i(\theta)\;=\;\widetilde U_i(\theta)\;+\;\widehat\beta_0(\theta)
\;=\;\widetilde U_i(\theta)\;+\;\Delta_\theta.
\end{equation}

Averaging \eqref{eq:Ui-debiased} over the $N$ units gives the prediction-powered score
\begin{align}    
\widehat U_{\mathrm{PPI}}(\theta)
\;:= &\;\frac{1}{N}\sum_{i=1}^N \widehat U_i(\theta) 
\;=\;\underbrace{\frac{1}{N}\sum_{i=1}^N \widetilde U_i(\theta)}_{\text{measure of fit $m_\theta$} }
\;+\;\underbrace{\frac{1}{n}\sum_{j\in S}\Big\{U_j(\theta)-\widetilde U_j(\theta)\Big\}}_{\text{rectifier $\Delta_\theta$}},
\label{eq:PPI-score-derivation}
\end{align}
which coincides with the bias-corrected estimating equation used by PPI \citep{angelopous2023}. The PPI point estimator is then defined as any root of \eqref{eq:PPI-score-derivation}, i.e., $\hat\theta_{\mathrm{PPI}}\in\{\theta:\widehat U_{\mathrm{PPI}}(\theta)=0\}$. 

This derivation makes it explicit that PPI is a model-assisted, bias-corrected plug-in procedure with

\begin{itemize}
    \item[] \textbf{$\bullet$ Measure of model fit}: $m_\theta$ uses the prediction rule $m$ to impute outcomes for the unlabeled covariates. It captures how well the model explains the mean structure of the estimating function;
    \item[] \textbf{$\bullet$ Rectifier}: \(\Delta_\theta\) uses the labeled residuals to correct potential bias in the imputation by aligning with the observed labeled responses. 
\end{itemize}


In the PPI score \(\widehat U_{\mathrm{PPI}}(\theta)=m_\theta+\Delta_\theta\), the term \(m_\theta:=N^{-1}\sum_{i=1}^N U\!\big(\theta;x_i,m(x_i)\big)\) is a model-based fit computed on the imputed (prediction-filled) population. It exploits all \(N\) covariate points and therefore concentrates quickly, but is generally biased for the infeasible full-data moment because \(m(x)\neq Y\). The rectifier \(\Delta_\theta:=n^{-1}\sum_{j\in S}\{U(\theta;x_j,Y_j)-U(\theta;x_j,m(x_j))\}\) estimates that bias using only the labeled residuals; under simple random labeling it is conditionally unbiased for the average discrepancy in the estimating equation. Consequently, \(m_\theta+\Delta_\theta\) is, given \(\mathcal F_N\), an unbiased proxy for \(U_N(\theta)=N^{-1}\sum_{i=1}^N U(\theta;x_i,Y_i)\), so the PPI root solves a bias-corrected score that bridges the computable, prediction-assisted procedure to the oracle full-data target. 

\paragraph{PPI++ score derivation.}Likewise, one can obtain a PPI++ score \cite{angelopoulos2024} by considering the intercept-slope calibration model \(U_i(\theta)=\beta_0(\theta)+\beta_1(\theta)\,\widetilde U_i(\theta)+e_i\) with mean-zero errors. Fitting this linear regression on the labeled set \(S\) by ordinary least squares yields the slope \(\hat\beta_1(\theta)\) and the intercept \(\hat\beta_0(\theta)=n^{-1}\sum_{j\in S}\{U_j(\theta)-\hat\beta_1(\theta)\,\widetilde U_j(\theta)\}\). The debiased predictor for any unit is then \(\widehat U_i(\theta)=\hat\beta_1(\theta)\,\widetilde U_i(\theta)+\hat\beta_0(\theta)\), and averaging over the \(N\) units gives the PPI++ score 
\begin{align*}
    \widehat U_{\mathrm{PPI++}}(\theta)=\frac{1}{N}\sum_{i=1}^N \hat\beta_1(\theta)\,\widetilde U_i(\theta)+\frac{1}{n}\sum_{j\in S}\{U_j(\theta)-\hat\beta_1(\theta)\,\widetilde U_j(\theta)\}.
\end{align*}
This is the estimating function used by the efficient PPI estimator. 

In this paper, we focus on the statistical properties of the PPI \citep{angelopous2023}; the derivation and most arguments may extend to PPI++ \cite{angelopoulos2024}, with minor modifications, to slope-adjusted variants. A full treatment is left for future work.

\section{Asymptotic theory: a fixed prediction rule}\label{sec:ASYMPTOTIC THEORY: A FIXED PREDICTION RULE}
\subsection{Consistency of the PPI estimator}
\label{subsec:ppi-consistency}
The starting point for the asymptotic analysis is to understand design-unbiasedness of the
PPI score. Under SRSWOR, for any fixed
prediction rule $m$ and all $\theta$, we can show that
\begin{equation}
\label{eq:ppi-unbiased}
\mathbb{E}\!\left[\widehat U_{\mathrm{PPI}}(\theta)\,\middle|\,\mathcal{F}_N\right]=U_N(\theta).
\end{equation}
The expectation in (\ref{eq:ppi-unbiased}) is with respect to the randomization distribution generated by the sampling mechanism for selecting sample $S$ from the finite population $\mathcal{F}_N$. A detailed proof (as well as proofs of other results) is provided in the Appendix.

Identity \eqref{eq:ppi-unbiased} shows that the bias-corrected PPI score is design-unbiased for the full-data estimating equation, effectively acting as an  \emph{oracle bridge} between the computable PPI
score $\widehat U_{\mathrm{PPI}}(\theta)$ \eqref{eq:PPI-score-derivation} and the infeasible full-data moment $U_N(\theta)$ \eqref{eq:UN}.
In particular, letting $\theta_N\in\Theta$ be the unique zero of $U_N(\theta) = 0$, we have
$\mathbb{E}[\widehat U_{\mathrm{PPI}}(\theta_N)\mid\mathcal{F}_N]=0$.

If, in addition, $\sup_{\theta\in\Theta}\big\|\widehat U_{\mathrm{PPI}}(\theta)-U_N(\theta)\big\|_2\xrightarrow{p}0$ holds and $I_N(\theta_N):=-\partial_\theta U_N(\theta_N)$ is nonsingular, then the $Z$-estimation continuity theorem \citep{NeweyMcFadden1994,vanDerVaart1998}
implies $\hat\theta_{\mathrm{PPI}} \xrightarrow{p} \theta_N.$

To pass from the finite-population root to the super-population target, assume that for every compact $K\subset\Theta$, $\sup_{\theta\in K}\big\|U_N(\theta)-\mathbb{E}\{U(\theta;X,Y)\}\big\|_2\xrightarrow{p}0.$ Then, since $\theta_0$ is the unique zero of $\mathbb{E}\{U(\theta;X,Y)\}$, it holds $\theta_N\xrightarrow{p}\theta_0$, and hence $\hat\theta_{\mathrm{PPI}}\xrightarrow{p}\theta_0.$

Therefore,  the PPI estimator \(\hat\theta_{\mathrm{PPI}}\) remains consistent without requiring the predictor \(m\) to be correctly specified, provided the regularity conditions (see Assumptions 1–2 in Subsection \ref{subsec:Preliminaries: Assumptions and Setup.} in the Appendix) hold, thanks to the design-unbiasedness in \eqref{eq:ppi-unbiased}. Indeed, misspecification of \(m\) primarily affects efficiency--the asymptotic variance \(\Var(\hat\theta_{\mathrm{PPI}})\).

One can also understand this advantage of PPI through a survey-sampling lens. Define \(\Delta_i(\theta):=U(\theta;X_i,Y_i)-U(\theta;X_i,m(X_i))\).
Then the rectifier \(\Delta_\theta\) \eqref{eq:rectifier} equals
$(1/N)\sum_{i=1}^N (\delta_i/f)$ $\Delta_i(\theta)$ with
\(f=n/N\) the inclusion probability under SRSWOR (see the Appendix for the proof); that is, it is the Horvitz--Thompson estimator of the finite-population mean of \(\Delta_i(\theta)\) \citep{horvitz1952}. This identity clarifies that misspecification of \(m\) does not induce design bias in the score; it only affects the variance through \(\Delta_i(\theta)\). In survey sampling, the PPI estimator is often called the difference estimator \citep{breidt17}.

\subsection{General $M$-estimation theory}
\label{eq:M-estimation theory}
We state our theoretical result on $M$-estimation with an arbitrary fixed predictor $m$ (possibly misspecified). The following theorem provides an asymptotic linear expansion of the PPI estimator $\hat\theta_{\mathrm{PPI}}$, thereby characterizing its asymptotic variance.

\begin{theorem}\label{thm:ppi-known-m-compact}
Assume the regularity conditions (in Assumptions~1–3 in Subsection \ref{subsec:Preliminaries: Assumptions and Setup.} in the Appendix) hold under SRSWOR. Then the PPI estimator $\hat\theta_{\mathrm{PPI}}$ has the first-order asymptotic
expansion
\begin{align}
&\hat\theta_{\mathrm{PPI}}-\theta_0=\frac{1}{N}\sum_{i=1}^N\phi_i+o_p(N^{-1/2}),  \label{eq:LAN_PPI_estimator_m_estimation}  
\end{align}
with the influence function for the $i$-th observation
\begin{align}
\nonumber
\phi_i &= \phi(X_i,Y_i,\delta_i;\,\theta_0, m) \\&:= I(\theta_0)^{-1}\!\Big\{ U(\theta_0; X_i, m(X_i))  + \frac{\delta_i}{f}\,\big[ U(\theta_0; X_i, Y_i) - U(\theta_0; X_i, m(X_i)) \big] \Big\},
\nonumber
\end{align}
where $\delta_i=\mathbb{I} \{i\in S\}$ and $f=n/N$. Thus, we have
\begin{align}
\label{eq:asymptotic_normal_distribution_of_PPI_m_estimation}
\sqrt{N}\,(\hat\theta_{\mathrm{PPI}}-\theta_0)\ \xrightarrow{d}\ \mathcal N(0,\Sigma_f),
\end{align}
where the asymptotic variance $\Sigma_f$ can be decomposed as
\begin{align}
&\Sigma_f  = V_1+\big(f^{-1}-1\big)V_2,
    \label{eq:Sigma_known_m} \\
&V_1  := \Var \!\big(I(\theta_0)^{-1}U(\theta_0;X,Y)\big) = \; I(\theta_0)^{-1}\,\mathbb{E}\!\big[\,U(\theta_0;X,Y)^{\otimes 2}\big]\,
         I(\theta_0)^{-1} \label{eq:V1_known_m}\\
&V_2 := I(\theta_0)^{-1}\,
      \mathbb{E}\!\big[\Delta(\theta_0;X,Y)^{\otimes 2}\big]\,
      I(\theta_0)^{-1}, \label{eq:V2_known_m}\\
      &\Delta(\theta_0;X,Y):= U(\theta_0;X,Y)-U(\theta_0;X,m(X)),\nonumber
\end{align}
where $I(\theta)=-  E\left\{ \partial_\theta 
U \left( \theta; X,Y \right) 
\right\}$ and 
\(a^{\otimes 2} := a a^\top\) denotes the outer product.
\end{theorem}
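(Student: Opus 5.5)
We follow the standard $Z$-estimation route: linearize the PPI score around $\theta_0$, control the stochastic-equicontinuity remainder, and then derive the limiting variance from the two-phase structure (superpopulation draw, then SRSWOR). Consistency $\hat\theta_{\mathrm{PPI}}\xrightarrow{p}\theta_0$ is taken from Subsection~\ref{subsec:ppi-consistency}. Using the Horvitz--Thompson representation of the rectifier, we write
\[
\widehat U_{\mathrm{PPI}}(\theta)=\frac1N\sum_{i=1}^N\Big\{\widetilde U_i(\theta)+\frac{\delta_i}{f}\Delta_i(\theta)\Big\}=:\frac1N\sum_{i=1}^N g_i(\theta).
\]
Its summand at $\theta_0$ is exactly $I(\theta_0)\phi_i$.

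\textbf{Linearization.} The decomposition is
\[
0=\widehat U_{\mathrm{PPI}}(\hat\theta)=\widehat U_{\mathrm{PPI}}(\theta_0)+\big[\widehat G(\hat\theta)-\widehat G(\theta_0)\big]+\big[\bar G(\hat\theta)-\bar G(\theta_0)\big],
\]
where $\bar G(\theta)=\mathbb E[U(\theta;X,Y)]$ and $\widehat G=\widehat U_{\mathrm{PPI}}-\bar G$. Note that $\mathbb E[g_i(\theta)]=\bar G(\theta)$, because $\delta$ is independent of $(X,Y)$ with $\mathbb E\delta_i=f$. The assumptions presumably supply one of two things: Lipschitz/Donsker-type conditions on $\theta\mapsto U(\theta;x,y)$ and $\theta\mapsto U(\theta;x,m(x))$, or differentiability with a dominated derivative. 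In either case we show the stochastic equicontinuity bound
\[
\sqrt N\{\widehat G(\hat\theta)-\widehat G(\theta_0)\}=o_p(1),
\]
treating the full-population average and the labeled average separately. Each is an empirical process over a fixed class; the labeled average is an i.i.d.\ sample of size $n\asymp N$, since under SRSWOR the labeled units are exchangeable draws from $P_0$. A Taylor expansion then gives $\bar G(\hat\theta)-\bar G(\theta_0)=-I(\theta_0)(\hat\theta-\theta_0)+o(\|\hat\theta-\theta_0\|)$. Because $\widehat U_{\mathrm{PPI}}(\theta_0)=O_p(N^{-1/2})$, which follows from the variance computation below, this yields $\sqrt N$-consistency. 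Inverting $I(\theta_0)$ then gives \eqref{eq:LAN_PPI_estimator_m_estimation}.

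\textbf{CLT and variance.} We need asymptotic normality of $N^{-1/2}\sum_i I(\theta_0)\phi_i$. The $\delta_i$ are not independent under SRSWOR, so we condition on $\mathcal F_N$. Decompose
\[
\frac1N\sum_i g_i(\theta_0)=U_N(\theta_0)+\frac1N\sum_i\Big(\frac{\delta_i}{f}-1\Big)\Delta_i(\theta_0).
\]
The first term is an i.i.d.\ mean with zero expectation and variance $\mathbb E[U^{\otimes2}]/N$. The second term has conditional mean zero given $\mathcal F_N$ by \eqref{eq:ppi-unbiased}. Its conditional variance is the SRSWOR formula
\[
\frac{1}{N}\Big(\frac1f-1\Big)\frac{N}{N-1}S^2_{\Delta,N},
\]
and $S^2_{\Delta,N}\to \Var(\Delta)$ almost surely. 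We apply the H\'ajek finite-population CLT conditionally, verifying the Lindeberg condition from the moment assumptions. This gives conditional normality with limiting variance $(f^{-1}-1)\Var(\Delta)$, which does not depend on $\mathcal F_N$, together with the unconditional CLT for $\sqrt N U_N(\theta_0)$. A characteristic-function argument, conditioning on $\mathcal F_N$ and using dominated convergence, then shows the two pieces are asymptotically independent. Their limiting covariance is zero since the second term has conditional mean zero. Sandwiching by $I(\theta_0)^{-1}$ gives $\Sigma_f=V_1+(f^{-1}-1)V_2$.

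One point needs checking: $V_2$ is written with $\mathbb E[\Delta^{\otimes2}]$ rather than $\Var(\Delta)$. These agree only when $\mathbb E\Delta(\theta_0)=0$, i.e.\ when $\mathbb E U(\theta_0;X,m(X))=0$. Otherwise the finite-population variance yields the centered version. I would state this discrepancy explicitly, or argue under the paper's assumptions that the centered form is intended.

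\textbf{Main obstacle.} I expect the hardest step to be the equicontinuity of the labeled-sample process under without-replacement sampling, combined with making the conditional finite-population CLT rigorous. For the former, I would first reduce to i.i.d.\ sampling by noting that, unconditionally, $\{(X_j,Y_j)\}_{j\in S}$ is an i.i.d.\ $P_0$ sample of size $n$. This lets standard Donsker or Lipschitz-in-parameter maximal inequalities apply directly.
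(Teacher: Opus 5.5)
Your argument is correct, and it is organized differently from the paper's. The paper never expands at $\theta_0$ directly. It evaluates the score at the finite-population root, $\widehat U_{\mathrm{PPI}}(\theta_N)=N^{-1}\sum_i(\delta_i/f-1)\Delta_i(\theta_N)$, and expands $\hat\theta_{\mathrm{PPI}}-\theta_N$ using the Jacobian-stability condition of Assumption~1(c). It then replaces $\theta_N$ by $\theta_0$ in the rectifier and adds an oracle expansion of $\theta_N-\theta_0$. For the limit law it simply invokes the CLT granted in Assumption~3, and it computes $\Sigma_f$ by the law of total variance as if $\delta$ were Bernoulli$(f)$, independent of $(X,Y)$, with i.i.d.\ $\phi_i$.

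Your single expansion at $\theta_0$ removes the detour through $\theta_N$. Noting that $\{(X_j,Y_j)\}_{j\in S}$ is unconditionally an i.i.d.\ $P_0$ sample lets standard equicontinuity tools handle the labeled process. Your conditional H\'ajek CLT plus the characteristic-function step actually proves the joint limit that the paper assumes.

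The cost is that your equicontinuity step needs hypotheses not literally among Assumptions~1--3: either Donsker plus $L_2$-continuity, or a mean-value argument. The mean-value version uses Assumption~1(c) plus a local uniform LLN for $\partial_\theta U_N$ near $\theta_0$, and it is closest to the stated hypotheses; the paper tacitly uses the same kind of uniform LLN in Step~3 of its expansion around $\theta_N$. On that route you get $o_p(1+\sqrt N\|\hat\theta-\theta_0\|)$ rather than $o_p(1)$, which suffices after the usual $\sqrt N$-consistency step.

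The discrepancy you flag is genuine, and your centered form is the correct one under SRSWOR. The indicators satisfy $\Cov(\delta_i,\delta_k\mid\mathcal F_N)=-f(1-f)/(N-1)$ for $i\neq k$; the paper derives this itself in its Horvitz--Thompson proposition, whose part~(c) gives a centered design variance. Summed over the $N(N-1)$ pairs, these covariances remove exactly the term $(f^{-1}-1)\,I(\theta_0)^{-1}\mathbb{E}[\Delta]\,\mathbb{E}[\Delta]^\top I(\theta_0)^{-1}$, where $\Delta=\Delta(\theta_0;X,Y)$. The limit is therefore
\[ \Sigma_f=V_1+(f^{-1}-1)\,I(\theta_0)^{-1}\Var\{\Delta(\theta_0;X,Y)\}\,I(\theta_0)^{-1}. \]
The paper's law-of-total-variance computation drops these covariances. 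In effect it computes the variance under Bernoulli (Poisson) labeling with Horvitz--Thompson weights, not under SRSWOR.

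A quick check: in the mean case with a constant predictor $m\equiv c$, the score collapses to $\hat\theta_{\mathrm{PPI}}=n^{-1}\sum_{j\in S}Y_j$, whose asymptotic variance is $\Var(Y)/f$. The stated formula adds $(f^{-1}-1)(c-\theta_0)^2$ on top of that. The two forms agree if and only if $\mathbb{E}\,U(\theta_0;X,m(X))=0$, for example when $m$ is score-calibrated. The paper's own mean-estimation specialization already writes $V_2=\Var\{Y-m(X)\}$. So do not try to recover $\mathbb{E}[\Delta^{\otimes 2}]$; state the result with $\Var(\Delta)$, or add the hypothesis $\mathbb{E}\,\Delta(\theta_0;X,Y)=0$.
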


The covariance decomposition in \eqref{eq:Sigma_known_m} isolates the two sources of uncertainty. 
The term \(V_1\) in \eqref{eq:V1_known_m} is the \emph{oracle/population variability} that would remain even if all responses were observed; it does not depend on the labeling fraction \(f\).
The term \(V_2\) in \eqref{eq:V2_known_m} is the variance of the \emph{labeled residual}
\(\Delta(\theta_0;X,Y)\), and it is scaled by \(f^{-1}-1=N/n-1=(N-n)/n\) $(\approx N/n $ if $N \gg n$), which quantifies the scarcity of labels.
Consequently:
(i) when \(f\to1\) (many labels), the residual component vanishes and
\(\Sigma_f\to V_1\);
(ii) when \(f\ll1\) (few labels), the residual term dominates unless the predictor \(m\) is highly accurate (so \(V_2\) is small);
(iii)  If the predictor is exactly correct for the estimating function (we term this ‘score-perfect’), then the bias term $V_2$ is zero and the PPI estimator is as efficient as if we had no missing labels. 

\paragraph{Confidence interval for $\theta_0$.} The asymptotic expansion in \eqref{eq:LAN_PPI_estimator_m_estimation} together with the variance
decomposition \eqref{eq:Sigma_known_m} also guides practical inference for any given predictor
$m$. Using the labeled set $S$, one may compute a plug-in estimate of (\ref{eq:Sigma_known_m})
\begin{align*}
&\widehat I \;=\; -\frac1n\sum_{j\in S}\partial_\theta U(\hat\theta_{\mathrm{PPI}};X_j,Y_j),\\    
&\widehat V_1 \;=\; \widehat I^{-1}\!\Big(\frac1n\sum_{j\in S} U(\hat\theta_{\mathrm{PPI}};X_j,Y_j)^{\otimes2}\Big)\widehat I^{-1}\\
&\widehat V_2 \;=\; \widehat I^{-1}\!\Big(\frac1n\sum_{j\in S}\Delta_j(\hat\theta_{\mathrm{PPI}})^{\otimes2}\Big)\widehat I^{-1},\\
&\Delta_j(\theta):=U(\theta;X_j,Y_j)-U(\theta;X_j,m(X_j)).
\end{align*}
Plugging \(\widehat V_1\) and \(\widehat V_2\) into \(\widehat\Sigma_f=\widehat V_1+(f^{-1}-1)\widehat V_2\) yields $100(1-\alpha)\%$ Wald-type confidence intervals:
\[
C^{\mathrm{PPI}}_{\alpha,j}
=\Big(\;\hat\theta_{\mathrm{PPI},j}\ \pm\ z_{1-\alpha/2}\,\sqrt{(\widehat\Sigma_f)_{jj}/N}\Big),\;\; j\in[p],
\]
where $z_{1-\alpha/2}$ is the $(1-\alpha/2)$-quantile of the standard normal distribution, $(\widehat\Sigma_f)_{jj}$ denotes the $j$-th diagonal element of $\widehat\Sigma_f$, and $[p]=\{1,\ldots,p\}$ indexes the components of $\theta$. For $p=1$, the set reduces to the usual Wald interval.

Note that the validity of the PPI, that is, 
\[
\lim_{N, n(N) \rightarrow \infty}
\mathbb P\!\left(\,\theta_{0,j}\in C^{\mathrm{PPI}}_{\alpha,j}\right)\ \geq 1-\alpha
\quad\text{for each } j\in[p].
\] holds due to the asymptotic normality of the $\hat{\theta}_{\mathrm{PPI}}$ (\ref{eq:asymptotic_normal_distribution_of_PPI_m_estimation}).

\paragraph{Semi-supervised mean estimation.}
For the mean parameter $\theta_0=\mathbb{E}[Y] \in \mathbb{R}$, we have $U(\theta;X,Y)=Y-\theta$, $I(\theta_0)=1$, and $\Delta(\theta_0;X,Y)=Y-m(X)$, so $V_1=\Var\{Y-\theta_0\}$ and $V_2=\Var\{Y-m(X)\}$. By Theorem~\ref{thm:ppi-known-m-compact},
\[
\Sigma_f \;=\; \Var\{Y-\theta_0\} \;+\; (f^{-1}-1)\,\Var\{Y-m(X)\}.
\]
Using the law of total variance, $\Var\{Y-\theta_0\}=\Var\{m(X)\}+\Var\{Y-m(X)\}$, this is equivalently
\begin{align}
\label{eq:variance_of_mean_fixed_m}
\sigma_f^2 := \Sigma_f \;=\; \Var\{m(X)\} \;+\; f^{-1}\,\Var\{Y-m(X)\}.    
\end{align}
This matches the semiparametric efficiency lower bound under missing data \citep{robins1994estimation}. 

We write $\sigma_f^2$ because $\Sigma_f$ is a positive real number.
Thus, a $(1-\alpha)\%$ confidence interval for $\theta_0$ is
\begin{align*}
C^{\mathrm{PPI}}_{\alpha}
\;=\;
\Big(
\hat\theta_{\mathrm{PPI}}
\;\pm\;
z_{1-\alpha/2}\,
\sqrt{\,\widehat\sigma_f^2/N\,}
\Big),
\end{align*}
where \(\widehat \Var(\hat\theta_{\mathrm{PPI}}) \approx \widehat\Sigma_f/N=\hat\sigma_m^2/N+\hat\sigma_\Delta^2/n\) with
\begin{align*}
\hat\sigma_m^2 \;&=\; \frac{1}{N}\sum_{i=1}^N\bigg(m(X_i)- \frac{1}{N}\sum_{i=1}^N m(X_i)\bigg)^2,
\end{align*}
computed from the unlabeled covariates \(\{X_i\}_{i=1}^N\), and
\begin{align*}
\hat\sigma_\Delta^2 \;=\; \frac{1}{n}\sum_{j\in S}\bigg(Y_j-m(X_j)-\frac{1}{n}\sum_{j\in S}\big\{Y_j-m(X_j)\big\}\bigg)^2    
\end{align*}
computed from the labeled pairs \(\{(X_j,Y_j)\}_{j\in S}\). Note that the confidence interval 
$C^{\mathrm{PPI}}_{\alpha}$ coincides with the heuristic formula derived in \citep{angelopous2023}.

\section{Semiparametric efficiency theory}\label{sec:semiparametric-efficiency}
The statistical theory in Section~\ref{sec:ASYMPTOTIC THEORY: A FIXED PREDICTION RULE} is valid for any choice of prediction function $m(x)$. We now ask: is the PPI estimator statistically optimal among all regular estimators in this missing-data setting? To answer this, we invoke semiparametric theory \citep{bickel1998,davidian2022methods,tsiatis2006}: the parameter of interest is finite-dimensional, $\theta\in\Theta\subset\mathbb{R}^p$, while the data law is otherwise unrestricted. Under known labeling fraction, the observed-data law factorizes as $P(dx, d\delta, dy)
= P_X(dx)\, P_\delta(d\delta)\, P_{Y\mid X}(dy \mid x),$
where $P_\delta = \mathrm{Bernoulli}(f)$ is independent of $(X,Y)$, so the nuisance parameter is the infinite-dimensional pair $\eta(P)=(P_X,P_{Y\mid X})$. The target $\theta(P)$ is defined by the full-data moment restriction $\mathbb{E}_P[U(\theta;X,Y)]=0$. 


The following theorem holds:
\begin{theorem}\label{thm:semiparametric-main}
Consider observed data $O=(X,\delta,\delta Y)$ from SRSWOR with known labeling fraction $\Pr(\delta=1)=f\in(0,1)$. Let $U(\theta;x,y)\in\mathbb{R}^p$ be a full-data estimating function with target $\theta_0\in\Theta$ defined by $\mathbb{E}\{U(\theta_0;X,Y)\}=0$, and let $I(\theta_0):=-\,\mathbb{E}\{\partial_\theta U(\theta_0;X,Y)\}$ be nonsingular. Assume the standard identification, smoothness, moment, and regularity conditions stated in the Appendix.

\noindent
Then:

\noindent\textup{\textbf{(i) Efficient influence function (EIF).}}
The parameter $\theta(P)$ defined implicitly by the observed data moment equation is pathwise differentiable at $P_0$ with efficient influence function
\begin{align*}
\phi^{\mathrm{eff}}(O) &= \phi^{\mathrm{eff}}(X,Y,\delta;\theta_0)\\
&= I(\theta_0)^{-1}\!\Big\{
\overline U(\theta_0;X;P_0)+\frac{\delta}{f}\Big(U(\theta_0;X,Y)-\overline U(\theta_0;X;P_0)\Big)
\Big\},
\end{align*}
where $\overline U(\theta;X;P_0):=\mathbb{E}\!\big[U(\theta;X,Y)\mid X\big]$.

\noindent\textup{\textbf{(ii) Linear expansion and asymptotic normality.}}
The PPI estimator $\widehat\theta_{\mathrm{PPI}}$ has the first-order asymptotic linear expansion with efficient influence function $\phi^{\mathrm{eff}}$,
\begin{align*}
&\widehat\theta_{\mathrm{PPI}}-\theta_0
= \frac{1}{N}\sum_{i=1}^N \phi^{\mathrm{eff}}(O_i)\;+\;o_p(N^{-1/2}),
\\
&\sqrt{N}\,\big(\widehat\theta_{\mathrm{PPI}}-\theta_0\big)
\;\xrightarrow{d}\;
\mathcal N\!\big(0,\ \Var_{P_0}(\phi^{\mathrm{eff}}(O))\big),
\end{align*}
with $\Var_{P_0}(\phi^{\mathrm{eff}}(O))
= I(\theta_0)^{-1}\,\Sigma\,I(\theta_0)^{-1}$ where
\begin{footnotesize}
\begin{align*}
&\Sigma
= \Var_{P_0}\!\big(\overline U(\theta_0;X;P_0)\big)
+\frac{1}{f}\,\mathbb{E}_{P_0}\!\Big[\Var\!\big(U(\theta_0;X,Y)\mid X\big)\Big].    
\end{align*}
\end{footnotesize}
\end{theorem}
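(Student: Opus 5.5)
The plan is to prove (i) by a direct pathwise-differentiability computation in the i.i.d.\ observed-data model $O=(X,\delta,\delta Y)$ with $\delta\sim\mathrm{Bernoulli}(f)$ independent of $(X,Y)$ and $f$ known. We then reduce (ii) to Theorem~\ref{thm:ppi-known-m-compact} under the score-calibration condition from the Appendix, namely $U(\theta_0;x,m(x))=\overline U(\theta_0;x;P_0)$ for $P_X$-a.e.\ $x$. Part (ii) cannot hold for an arbitrary $m$, since Theorem~\ref{thm:ppi-known-m-compact} shows the influence function depends on $m$. So I will make this assumption explicit at the start.

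\textbf{Step 1 (tangent space).} Because $P_\delta$ is known, a regular parametric submodel $P_t$ perturbs only $P_X$ and $P_{Y\mid X}$. Its score therefore has the form $s(O)=s_X(X)+\delta\, s_{Y\mid X}(Y\mid X)$, with $\mathbb{E}[s_X(X)]=0$ and $\mathbb{E}[s_{Y\mid X}\mid X]=0$. Since $P_X$ and $P_{Y\mid X}$ are otherwise unrestricted, the closure of these scores is the tangent space
\[
\mathcal T=\{a(X)+\delta\, b(X,Y):\ \mathbb{E}[a(X)]=0,\ \mathbb{E}[b\mid X]=0\}.
\]

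\textbf{Step 2 (pathwise derivative).} Differentiate $\mathbb{E}_{P_t}[U(\theta(P_t);X,Y)]=0$ at $t=0$, using the implicit function theorem and the nonsingularity of $I(\theta_0)$. This gives
\[
\dot\theta=I(\theta_0)^{-1}\,\mathbb{E}\big[U(\theta_0;X,Y)\{s_X(X)+s_{Y\mid X}(Y\mid X)\}\big].
\]
Note that the full-data score $s_X+s_{Y\mid X}$ appears here, not the observed-data score. Interchanging differentiation and integration is where the smoothness and moment conditions are used.

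\textbf{Step 3 (gradient property).} We check that $\mathbb{E}[\phi^{\mathrm{eff}}(O)\,s(O)]=\dot\theta$ for every score. Independence of $\delta$ and $\mathbb{E}[U-\overline U\mid X]=0$ give
\[
\mathbb{E}[\phi^{\mathrm{eff}}s_X]=I(\theta_0)^{-1}\mathbb{E}[\overline U s_X]=I(\theta_0)^{-1}\mathbb{E}[U s_X].
\]
Using $\delta^2=\delta$ and $\mathbb{E}\delta=f$,
\[
\mathbb{E}[\phi^{\mathrm{eff}}\delta s_{Y\mid X}]=I(\theta_0)^{-1}\mathbb{E}\big[f\,\overline U s_{Y\mid X}+(U-\overline U)s_{Y\mid X}\big]=I(\theta_0)^{-1}\mathbb{E}[U s_{Y\mid X}],
\]
since $\mathbb{E}[\overline U(X)s_{Y\mid X}]=0$. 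Hence $\phi^{\mathrm{eff}}$ is a gradient.

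\textbf{Step 4 (efficiency).} The function $\phi^{\mathrm{eff}}$ itself lies in $\mathcal T$. Write it as $a(X)+\delta b(X,Y)$ with $a=I^{-1}\overline U$, which has mean zero because $\mathbb{E}\overline U=\mathbb{E} U=0$, and $b=I^{-1}f^{-1}(U-\overline U)$, which satisfies $\mathbb{E}[b\mid X]=0$. A gradient lying in the tangent space is the canonical gradient, so $\phi^{\mathrm{eff}}$ is the efficient influence function. This proves (i).

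For (ii), under score calibration the influence function $\phi_i$ of Theorem~\ref{thm:ppi-known-m-compact} coincides exactly with $\phi^{\mathrm{eff}}(O_i)$. The expansion and asymptotic normality are therefore inherited directly from that theorem, which already handles the SRSWOR dependence among the $\delta_i$.

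For the variance, Theorem~\ref{thm:ppi-known-m-compact} gives $\Sigma_f=V_1+(f^{-1}-1)V_2$. Here $V_1=I^{-1}\Var(U)I^{-1}$, and $V_2=I^{-1}\mathbb{E}[(U-\overline U)^{\otimes2}]I^{-1}=I^{-1}\mathbb{E}[\Var(U\mid X)]I^{-1}$. The law of total variance, $\Var(U)=\Var(\overline U)+\mathbb{E}\Var(U\mid X)$, then turns this into $I^{-1}\{\Var(\overline U)+f^{-1}\mathbb{E}\Var(U\mid X)\}I^{-1}$. This equals $\Var_{P_0}(\phi^{\mathrm{eff}})$ computed under the Bernoulli model, because the cross term $\mathbb{E}[\overline U\,\delta(U-\overline U)^\top]$ vanishes.

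The main obstacle is conceptual rather than computational. The efficiency bound is defined in the i.i.d.\ Bernoulli-labeling model, whereas the estimator is analyzed under SRSWOR, where the $\delta_i$ are exchangeable but dependent. I would handle this by observing that the two designs give the same first-order variance for $N^{-1}\sum_i\phi_i$: under SRSWOR the finite-population correction $(f^{-1}-1)$ on the residual term plus the superpopulation variance $V_1$ reproduces exactly $\Var_{\mathrm{Bern}}(\phi^{\mathrm{eff}})$, as the computation above shows. I would state explicitly that the bound is interpreted for the Bernoulli observed-data law and attained under both designs.
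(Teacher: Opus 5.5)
Your argument is correct, but it is organized differently from the paper's.

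\textbf{Part (i).} The paper builds an observed-data moment map $M(P,\theta)=\mathbb{E}_P[m(P;\theta)(O)]$ whose integrand already has the augmented form $\overline U+\frac{\delta}{f}(U-\overline U)$. It differentiates this map with a product-rule lemma, where the change-of-integrand term vanishes because $\mathbb{E}[1-\delta/f\mid X,Y]=0$. It then checks that the representer lies in $\mathcal T$ and invokes a general implicit-functional lemma, $\phi^{\mathrm{eff}}=-A^{-1}\Pi_{\mathcal T}\varphi$, proved via Riesz representation. You skip this machinery. You differentiate the full-data moment, so $\dot\theta$ is expressed through the full-data score $s_X+s_{Y\mid X}$, and then check by hand that the candidate reproduces $\dot\theta$ against observed scores $s_X+\delta s_{Y\mid X}$ and lies in $\mathcal T$. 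Your route is more elementary. The paper's lemma is the more general tool, since it would still deliver the canonical gradient if the representer fell outside $\mathcal T$.

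\textbf{Part (ii).} Here the routes genuinely diverge. The paper does not use Theorem~\ref{thm:ppi-known-m-compact}. It linearizes the computable score under its own assumption A3, which requires $\sup_{\theta\in\mathcal N}\mathbb{E}\|\widetilde U(\theta;X)-\overline U(\theta;X;P_0)\|^2=o_p(1)$ for a possibly data-dependent $m$. It then replaces $\widehat U_{\mathrm{PPI}}(\theta_0)$ by $N^{-1}\sum_i\varphi(O_i)$ and applies an i.i.d.\ CLT. This nominally covers learned, approximately calibrated predictors. However, the replacement step is only asserted, and for a learned $m$ it would need cross-fitting or independence. For a fixed $m$, A3 amounts to exact calibration on a whole neighborhood of $\theta_0$.

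Your reduction swaps the theorem's A1--A3 for Assumptions~1--3 of Theorem~\ref{thm:ppi-known-m-compact}. It handles only a fixed, exactly calibrated predictor. In exchange, it needs calibration only at $\theta_0$, and it makes explicit the calibration requirement that the main-text statement leaves implicit. It also addresses the SRSWOR-versus-Bernoulli mismatch, which the paper passes over by writing the observed-data law as i.i.d.\ Bernoulli.

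\textbf{One precision in your reconciliation.} Exact agreement of the two designs rests on calibration forcing $\mathbb{E}\,\Delta(\theta_0;X,Y)=0$. Under SRSWOR, $\sum_i(\delta_i/f-1)=0$, so the design variance involves the centered residual and gives $(f^{-1}-1)\Var(\Delta)$. This equals the Bernoulli quantity $(f^{-1}-1)\mathbb{E}[\Delta^{\otimes 2}]$ only when $\Delta$ has mean zero.
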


\paragraph{Semiparametric efficiency lower bound.}
Theorem~\ref{thm:semiparametric-main} implies that, for any regular
asymptotically linear (RAL) estimator $\widetilde\theta$ of $\theta_0$ and influence function $\psi(O)$, it holds
\begin{align*}
\Var_{P_0}\{\psi(O)\}
\succeq
\Var_{P_0}\{\phi^{\mathrm{eff}}(O)\}
= I(\theta_0)^{-1}\,\Sigma\,I(\theta_0)^{-1},
\end{align*}
where $\phi^{\mathrm{eff}}$ and $\Sigma$ are as in Theorem~\ref{thm:semiparametric-main},
and $\succeq$ denotes the Loewner order on symmetric matrices (for $p=1$, it reduces to $\ge$). Equivalently, $\liminf_{N\to\infty}\Var_{P_0}\!\big(\sqrt{N}\,(\widetilde\theta-\theta_0)\big)
\;\succeq\; I(\theta_0)^{-1}\Sigma I(\theta_0)^{-1}.$ Thus $\Var_{P_0}\{\phi^{\mathrm{eff}}(O)\}$ is the semiparametric efficiency lower bound. Equality holds if and only if $\psi(O)=\phi^{\mathrm{eff}}(O)$ $P_0$-a.s., in which case the RAL estimator $\widetilde\theta$ is semiparametrically efficient. 

\paragraph{Semiparametric efficiency condition for PPI.} Assume the conditions of Theorems~\ref{thm:semiparametric-main} and~\ref{thm:ppi-known-m-compact} hold, and suppose that the predictor $m$ is \emph{score-calibrated} at the truth, i.e., plug-in score equals the conditional score:
\begin{small}
\begin{equation}
U(\theta_0;X,m(X))=\overline U(\theta_0;X;P_0)
=\E_{P_0}\!\big[U(\theta_0;X,Y)\mid X\big].
\label{score-calib}
\end{equation}
\end{small}
Then the semiparametric efficient variance matches the $M$-estimation variance:
\begin{small}
\begin{align*}
\Var_{P_0}\!\big(\phi^{\mathrm{eff}}(O)\big)
=I(\theta_0)^{-1}\,\Sigma\,I(\theta_0)^{-1}
=V_1+\big(f^{-1}-1\big)V_2,
\end{align*}
\end{small}
where $\Sigma$ is as in Theorem~\ref{thm:semiparametric-main} and
$V_1, V_2$ are as in Theorem~\ref{thm:ppi-known-m-compact}. Therefore, the PPI estimator attains the semiparametric efficiency bound asymptotically  when the predictor is score-calibrated in the sense of satisfying (\ref{score-calib}). In other words, the fitted model must output the correct conditional estimating function on average. This is a weaker condition than having a perfect predictor for $Y$, but it ensures no first-order bias in the score. 

In the special case of $\theta_0=\mathbb{E}_{P_0}[Y]$, the score-calibration condition reduces to $m(x)= \mathbb{E}_{P_0}[Y \mid X = x]$. Thus, as long as the ML predictor $m(x)$ is consistent for $\mathbb{E}_{P_0}[Y \mid X = x]$, we can expect that the resulting PPI estimator is semiparametrically efficient.  Of course, in practice $m$ is learned from data rather than fixed. We next examine how learning the predictor affects PPI’s properties and what can be done to maintain the statistical validity.

\section{Asymptotic theory: a learned prediction rule}\label{sec:ASYMPTOTIC THEORY: A LEARNED PREDICTION RULE}
 
\subsection{A limitation of vanilla prediction-powered inference}\label{sec:Limitation of prediction-powered inference}
Theorem~\ref{thm:ppi-known-m-compact} is stated for a \emph{fixed} prediction rule \(m\).
In practice, \(m\) is learned from the labeled data \(\{(X_j,Y_j): j\in S\}\), yielding a fitted rule \(\widehat m\). Generally speaking, the conclusions of Theorem~\ref{thm:ppi-known-m-compact} continue to hold if \(\widehat m(X)\) is sufficiently close to the oracle regression \(m_0(x):=\mathbb{E}_{P_0}[Y\mid X=x]\) and the associated function class has controlled complexity (e.g., is \(P\)-Donsker); see Section~10.4.2 of \cite{kennedy2024semiparametric} and Section~8.4.2 of \cite{kennedy2016semiparametric}. 


One central concern when using PPI with a learned prediction rule \(\widehat m\) is overfitting.
More specifically, the prediction rule \(m\) is an infinite-dimensional nuisance parameter; consequently, estimating \(m\) with a flexible black-box model (e.g., random forests, gradient-boosted trees, or neural networks) can be prone to overfitting.
If the same labeled outcomes are used both to train \(\widehat  m\) and to form the labeled residuals in the PPI score \(\widehat U_{\mathrm{PPI}}(\theta)\)~(\ref{eq:PPI-score-derivation}), this “double dipping” may allow training noise to leak into the rectifier \(\Delta_\theta\).
Such leakage may spoil the \(o_p(N^{-1/2})\) remainder required for the linear expansion in~(\ref{eq:LAN_PPI_estimator_m_estimation}).
See \cite{chernozhukov2017} for an illustration and relevant discussion.

To remove this leakage and preserve first-order validity, a standard remedy is \emph{cross-fitting} (or sample splitting). More broadly, cross-fitting is widely used in modern causal-inference workflows, including targeted learning and debiased/double machine learning, to mitigate overfitting of nuisance parameters and to ensure valid asymptotic inference \citep{newey2018cross,chernozhukov2017,ZhengVanDerLaan2010CVTMLE}. As an alternative, one may apply a \emph{variance adjustment} to the single-fit PPI estimator; to our knowledge, this constitutes a novel methodological development in the context of PPI.

\subsection{Cross-fit prediction-powered inference with sample-splitting}\label{sec:cf}
The cross-fit prediction-powered inference (CF-PPI) can be implemented as follows. First, we partition the labeled set \(S\) into \(K\) folds \(S_1,\ldots,S_K\). For the labeled covariates, for each fold \(k\in\{1,\ldots,K\}\), fit a predictor \(\widehat m^{(k)}\) using only the labels in \(S\setminus S_k\).
Use the resulting out-of-fold prediction
\(\widehat m^{(k)}(X_i)\) for every labeled index \(i\in S_k\). (See Figure \ref{fig:Cross_Fitting_Idea} in the Appendix.) For the unlabeled covariates, we use an aggregate predictor—either the single
fit on all labels $\widehat m^\star(x)=\widehat m_{\mathrm{all}}(x)$ or the average of the $K$
fold-specific fits $\widehat m^\star(x)=(1/K)\sum_{k=1}^K \widehat m^{(k)}(x)$. 

The out-of-fold predictor is then
\begin{align}
\nonumber
\widehat m^{(-)}(X_i):=
\begin{cases}
\widehat m^{(\kappa(i))}(X_i), & i\in S \,\, \text{(labeled)},\\
\widehat m^{\star}(X_i), & i\notin S\,\, \text{(unlabeled)},
\end{cases}    
\end{align}
where $\kappa(i)\in\{1,\dots,K\}$ is the fold map.

Define the cross-fitted model-fit term and rectifier by
\begin{align*}
m_\theta^{\mathrm{cf}}
&=\frac{1}{N}\sum_{i=1}^N U\!\big(\theta;X_i,\widehat m^\star(X_i)\big),\\
\Delta_\theta^{\mathrm{cf}}
&=\frac{1}{n}\sum_{k=1}^K\ \sum_{i\in S_k}\!\Big\{U(\theta;X_i,Y_i)-U\!\big(\theta;X_i,\widehat m^{(k)}(X_i)\big)\Big\}.    
\end{align*}

The CF-PPI score is then obtained by plugging-in $\widehat m^{(-)}(\cdot)$ to the PPI score \eqref{eq:PPI-score-derivation}
\begin{align*}
&\widehat U_{\mathrm{PPI}}^{\mathrm{cf}}(\theta):=
\widehat U_{\mathrm{PPI}}(\theta; \widehat m^{(-)}(\cdot)) =m_\theta^{\mathrm{cf}}+\Delta_\theta^{\mathrm{cf}},
\end{align*}
and the CF-PPI estimator is $\widehat\theta_{\mathrm{PPI}}^{\mathrm{cf}}
:= \arg\{\widehat U_{\mathrm{PPI}}^{\mathrm{cf}}(\theta)=0\}$.

One of the important benefits of using sample splitting in CF-PPI is the following. It ensures that, by conditioning on the trained models \(\{\widehat  m^{(k)}\}_{k=1}^K\), the labeled residuals in
\(\Delta_{\theta}^{\mathrm{cf}}\) are always evaluated on units that were \emph{not} used to
fit the corresponding model (out-of-fold). This preserves design-unbiasedness of the score and confines the learning error from \(\widehat m\) to a second-order remainder.

For the semi-supervised mean estimation, we can prove the following theorem.
\begin{theorem}\label{thm:cfppi_mean_unified}
Let $(X,Y)$ have joint law $P$ with $X\sim P_X$ and $Y=m_0(X)+\varepsilon$, where
$\mathbb E[\varepsilon\mid X]=0$ and $\mathbb E[Y^2]<\infty$. The target is the population mean
$\theta_0:=\mathbb E[Y]$. Let $\{X_i\}_{i=1}^N \stackrel{\mathrm{i.i.d.}}{\sim} P_X$ be an unlabeled
sample and $\{(X_j,Y_j)\}_{j\in S}$, $|S|=n$, an independent labeled sample from $P$, with
$N,n\to\infty$ and $n/N\to f\in(0,1)$. 

Let $\widehat m^{(-)}$ be the cross-fit predictor
(each labeled index is scored by a model trained without its own fold), and consider
\begin{equation*}
\widehat\theta^{\mathrm{cf}}_{\mathrm{PPI}}
=\frac{1}{N}\sum_{i=1}^N \widehat m^{(-)}(X_i)
+\frac{1}{n}\sum_{j\in S}\{Y_j-\widehat m^{(-)}(X_j)\}.
\end{equation*}
Then:

\noindent\textup{\textbf{(i) Consistency.}}
If the out-of-fold error is stochastically bounded in $L_2(P_X)$,
\[
\|\widehat m^{(-)}-m_0\|_{L_2(P_X)} = O_p(1),
\]
then $\widehat\theta^{\mathrm{cf}}_{\mathrm{PPI}}\xrightarrow{p}\theta_0$.

\noindent\textup{\textbf{(ii) Asymptotic normality.}}
If, in addition, the cross-fit predictor is $L_2(P_X)$-consistent,
\[
\|\widehat m^{(-)}-m_0\|_{L_2(P_X)} = o_p(1),
\]
then
\begin{align}
\nonumber
&\sqrt{N}\,\big(\widehat\theta^{\mathrm{cf}}_{\mathrm{PPI}}-\theta_0\big)
\;\xrightarrow{d}\; \mathcal{N}\!\big(0,\sigma_f^2\big),\\    
\sigma_f^2 \;&=\; \Var\!\big(m_0(X)\big) \;+\; \frac{1}{f}\,\Var\!\big(Y-m_0(X)\big).
\label{eq:variance_of_mean_CF_PPI}
\end{align}
\end{theorem}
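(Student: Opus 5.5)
We will decompose the estimator around the oracle quantity obtained by replacing $\widehat m^{(-)}$ with $m_0$. Write $\widehat m^\star$ for the predictor used on the unlabeled units and $\widehat m^{(k)}$ for the fold-specific fits. The identity
\[
\widehat\theta^{\mathrm{cf}}_{\mathrm{PPI}}-\theta_0 = T_0 + R_1 - R_2
\]
holds, with
\[
T_0:=\frac1N\sum_{i=1}^N\{m_0(X_i)-\theta_0\}+\frac1n\sum_{j\in S}\{Y_j-m_0(X_j)\},
\qquad
R_1:=\frac1N\sum_{i=1}^N (\widehat m^\star-m_0)(X_i),
\]
\[
R_2:=\frac1n\sum_{k=1}^K\sum_{j\in S_k}(\widehat m^{(k)}-m_0)(X_j).
\]
The term $T_0$ is the fixed-predictor PPI estimator at $m=m_0$ minus $\theta_0$. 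Since the unlabeled and labeled samples are independent, the two averages in $T_0$ are independent, and the classical CLT gives
\[
\sqrt N\, T_0\xrightarrow{d}\mathcal N\big(0,\Var(m_0(X))+f^{-1}\Var(Y-m_0(X))\big).
\]
This is exactly $\sigma_f^2$ in \eqref{eq:variance_of_mean_CF_PPI}, matching the formula \eqref{eq:variance_of_mean_fixed_m} with $m=m_0$. For part (i) it suffices that $T_0=O_p(N^{-1/2})$.

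Next we center the remainders. Let $g_k:=\widehat m^{(k)}-m_0$ and $g^\star:=\widehat m^\star-m_0$. Then
\[
R_1-R_2=\Big\{\frac1N\sum_i g^\star(X_i)-P_Xg^\star\Big\}-\sum_k\frac{n_k}{n}\Big\{\frac1{n_k}\sum_{j\in S_k}g_k(X_j)-P_Xg_k\Big\}+\Big\{P_Xg^\star-\sum_k\frac{n_k}{n}P_Xg_k\Big\}.
\]

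\textbf{Fold terms.} Condition on $S\setminus S_k$. Then $g_k$ is fixed and the $X_j$, $j\in S_k$, are i.i.d. and independent of it. Chebyshev's inequality gives the centered fold average as $O_p(\|g_k\|_{L_2}/\sqrt{n_k})$. This is $o_p(n^{-1/2})$ when $\|g_k\|_{L_2}=o_p(1)$ and $O_p(n^{-1/2})$ under part (i).

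\textbf{Unlabeled term.} The fits $\widehat m^\star$ (whether $\widehat m_{\mathrm{all}}$ or the fold average) depend only on the labeled data, which is independent of $\{X_i\}_{i=1}^N$. The same conditional Chebyshev argument therefore gives $O_p(\|g^\star\|_{L_2}/\sqrt N)$.

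\textbf{Bias term.} The remaining term $P_Xg^\star-\sum_k (n_k/n)P_Xg_k$ is the one not of empirical-process type.
\begin{itemize}
\item If $\widehat m^\star$ is the average of the fold fits and the folds are balanced, it vanishes exactly, or is $O(K^{-1}\max_k|n_k/n-1/K|)\cdot\max_k\|g_k\|$.
\item If $\widehat m^\star=\widehat m_{\mathrm{all}}$, it equals a difference of population integrals $P_X(\widehat m_{\mathrm{all}}-\bar{\widehat m})$, which is not automatically $o(N^{-1/2})$.
\end{itemize}
I would interpret the hypothesis $\|\widehat m^{(-)}-m_0\|=o_p(1)$ as covering all fits used, including $\widehat m^\star$, and I expect this term to be the main obstacle. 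My first attempt is to use the averaged predictor, where the term cancels. For $\widehat m_{\mathrm{all}}$ one needs a stability-type assumption $P_X(\widehat m_{\mathrm{all}}-\widehat m^{(k)})=o_p(n^{-1/2})$, or else one accepts an $O_p$ bias that preserves consistency but not normality.

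\textbf{Part (i).} By Cauchy--Schwarz, $|P_Xg|\le\|g\|_{L_2}$. So under $O_p(1)$ errors the bias term is $O_p(1)$ only, and consistency needs more. I would use $P_X(\cdot)$ differences as above, which vanish for the averaged predictor. The centered terms are $O_p(n^{-1/2})\to0$, so $\widehat\theta^{\mathrm{cf}}_{\mathrm{PPI}}-\theta_0=o_p(1)$, provided the bias term is $o_p(1)$. This holds whenever $\widehat m^\star$ is the fold average.

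\textbf{Part (ii).} Combining the bounds, $\sqrt N(R_1-R_2)=o_p(1)$. Slutsky's lemma then transfers the CLT for $T_0$ to $\widehat\theta^{\mathrm{cf}}_{\mathrm{PPI}}$.
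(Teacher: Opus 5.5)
Your argument is correct, and its skeleton is the paper's. The paper uses the decomposition $(\star)$ from the proof of Theorem~\ref{thm:cfppi_mean_consistency}: two oracle fluctuations plus the nuisance remainder, with the remainder handled by a conditional Chebyshev bound as in Lemma~\ref{lem:cf_external_sample_bound_n}. Where you depart from the paper, you are more careful, in two places.

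First, you condition fold by fold on $S\setminus S_k$. The paper instead conditions on a single $\sigma$-field $\mathcal T$ generated by all fold fits and $\widehat m^\star$, and asserts that the labeled $X_j$ are i.i.d.\ and independent of $\mathcal T$. That is not true, since $\widehat m^\star$ and every $\widehat m^{(k')}$ with $k'\neq\kappa(j)$ are trained on $(X_j,Y_j)$.

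Second, you isolate the centering mismatch $P_Xg^\star-\sum_k (n_k/n)P_Xg_k$. The paper writes $(P_N-P_n)\Delta=R_N-R_n$. However, its $R_N$ is centered at $P_Xg^\star$, while $R_n$ is centered at $\sum_k (n_k/n)P_Xg_k$ (the ``shorthand $P\Delta$''). So that identity silently drops exactly your bias term, and it is harmless only when that term vanishes.

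Your suspicion about $\widehat m^\star=\widehat m_{\mathrm{all}}$ is therefore justified. Restricting to the fold average is a necessary qualification of the statement, not a gap in your proof. For example, suppose the learner trained on $t$ points returns $m_0+t^{-1/4}$. Every fit is then $L_2(P_X)$-consistent, yet with $K$ equal folds $\widehat\theta^{\mathrm{cf}}_{\mathrm{PPI}}-\theta_0=T_0+n^{-1/4}\{1-(K/(K-1))^{1/4}\}$, so $\sqrt N(\widehat\theta^{\mathrm{cf}}_{\mathrm{PPI}}-\theta_0)$ diverges. Under mere $O_p(1)$ errors, nothing forces $P_X(\widehat m_{\mathrm{all}}-\widehat m^{(k)})\to0$, so even (i) can fail. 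For $\widehat m_{\mathrm{all}}$, a stability condition of the type you state is genuinely needed.

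One small correction. For the fold average with unequal folds, the bias is $\sum_k(1/K-n_k/n)P_Xg_k$, which is bounded by $K\max_k|n_k/n-1/K|\cdot\max_k\|g_k\|_{L_2(P_X)}$, not by $K^{-1}\max_k|n_k/n-1/K|\cdot\max_k\|g_k\|_{L_2(P_X)}$. With the usual split $|n_k-n/K|<1$, this is $O_p(K/n)=o_p(N^{-1/2})$ under the $O_p(1)$ hypothesis alone. So exact balance is not needed for either (i) or (ii).
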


Note that the variance formula \eqref{eq:variance_of_mean_CF_PPI} coincides with
\eqref{eq:variance_of_mean_fixed_m}, except that the fixed predictor $m$ is
replaced by the true regression function $m_0$. In practice, a confidence interval can be constructed by plugging the cross-fitted predictor
$\widehat m^{(-)}$ into this expression.

\paragraph{Relation to previous work.}
\cite{zrnic2024cross} proved central limit theorems (CLT) for CF-PPI for the mean
and for general $M$-estimation under stability conditions on the fold-specific learners
(see their Assumptions 1 and 2). Those conditions may capture algorithmic stability but are not standard in the empirical-process theory. Our analysis for the semi-supervised mean works under milder and more classical assumptions: (i) consistency follows from the minimal
$L_2$ stochastic boundedness $\|\widehat m^{(-)} -m_0\|_{L_2(P_X)}=O_p(1)$; and
(ii) asymptotic normality holds under the mere $L_2$ consistency
$\|\widehat m^{(-)}-m_0\|_{L_2(P_X)}=o_p(1)$. We do not impose stability, Donsker, or entropy
conditions. Our proof relies only on the basic decomposition, cross-fitting for conditional
independence and centering, and a conditional Chebyshev bound to control the remainder,
which yields the closed-form variance (\ref{eq:variance_of_mean_CF_PPI}) separating the unlabeled and labeled fluctuations. 

\paragraph{Relation to standard assumptions on nuisance parameters in causal inference.}
Under the setup of Theorem~\ref{thm:cfppi_mean_unified}, the nuisance remainder satisfies
\begin{align*}
(P_N-P_n)\big(\widehat m^{(-)}-m_0\big)= O_p\!\Big(\|\widehat m^{(-)}-m_0\|_{L_2(P_X)}
       \big(1/\sqrt N+1/\sqrt n\big)\Big),
\end{align*}
by the cross-fitted empirical-process bound
\((P_m-P)h = O_p(\|h\|_{L_2(P)}/\sqrt m)\) applied separately to the unlabeled and labeled
averages (see the Appendix, or Lemma~1 of \cite{kennedy2024semiparametric}).
Multiplying by $\sqrt N$ and using $n/N\to f\in(0,1)$ yields
\[
\sqrt N\,(P_N-P_n)\big(\widehat m^{(-)}-m_0\big)
 \;=\; O_p\!\big(\|\widehat m^{(-)}-m_0\|_{L_2(P_X)}\big).
\]
Hence the CLT in Theorem~\ref{thm:cfppi_mean_unified} holds as soon as the
out-of-fold predictor is $L_2(P_X)$-consistent,
$\|\widehat m^{(-)}-m_0\|_{L_2(P_X)}=o_p(1)$; no specific rate such as $o_p(n^{-1/4})$ is required. This contrasts with common causal-inference settings
\citep{van2011targeted,van2018targeted,chernozhukov2018double}, where product-error conditions typically imply an $n^{-1/4}$ rate (instead, double robustness can be guaranteed). For the semi-supervised mean, linearity and cross-fitting
reduce the requirement to $L_2$ consistency only.

\subsection{Single-fit prediction-powered inference with variance correction}
\label{sec:sf-ppi}
In this subsection, we propose a new variant of PPI and analyze its asymptotic properties. The central idea is to use an ML predictor \emph{without} cross-fitting and to introduce an explicit variance correction (SF-PPI-VC) to account for the reuse of labels in the rectifier term. 

We illustrate the procedure of SF-PPI-VC. The predictor $\widehat m$ is trained once on the labeled sample and then reused in both the unlabeled and labeled terms of the PPI score \eqref{eq:PPI-score-derivation}:
\begin{align*}
&\widehat U_{\mathrm{PPI}}^{\mathrm{sf}}(\theta) := \widehat U_{\mathrm{PPI}}(\theta; \widehat m(\cdot)) = \underbrace{\frac1N\sum_{i=1}^N U\!\big(\theta;X_i,\widehat m(X_i)\big)}_{ \substack{\text{measure of fit $m_\theta^{\mathrm{sf}}$ with }\\ \text{a fitted prediction rule $\widehat m(\cdot)$}}}  + \underbrace{\frac1n\sum_{j\in S}\!\Big\{U(\theta;X_j,Y_j)-U\!\big(\theta;X_j,\widehat m(X_j)\big)\Big\}.}_{\substack{\text{rectifier $\Delta_\theta^{\mathrm{sf}}$ with}\\ \text{a fitted prediction rule $\widehat m(\cdot)$}}}
\end{align*}
The SF-PPI-VC estimator is any solution to $\widehat U_{\mathrm{PPI}}^{\mathrm{sf}}(\theta)=0$, as in the original PPI implementation \citep{angelopous2023}, which does not use sample splitting.

As noted in our discussion of CF–PPI, reusing labels to both train $\widehat m$ and form residuals induces dependence between the residuals and the fitted rule, producing \emph{label-noise leakage} that typically appears in the fitted rectifier $\Delta_\theta^{\mathrm{sf}}$. Without a variance adjustment to absorb this leakage (when cross-fitting is not used), first-order validity is not guaranteed: root-$N$ consistency and asymptotic normality, as in Theorem~\ref{thm:ppi-known-m-compact}, may fail. See the Appendix for details.

A practical consideration of the SF-PPI-VC approach is that the variance correction must be tailored to the specific problem setting (e.g., mean estimation, generalized linear regression, etc.) and/or to the chosen prediction rule $\widehat m$ to guarantee the desired asymptotic properties. Developing a unified asymptotic theory may be quite challenging, as variance correction typically requires a specific adjustment method for each chosen class of predictors $\widehat m(\cdot)$.


We revisit the semi-supervised mean estimation problem, where the estimation is performed by SF-PPI-VC. 

\paragraph{Semi-supervised mean estimation (revisited).}
For the population mean $\theta_0=\mathbb{E}[Y]$ with $U(\theta;x,y)=y-\theta$ and
$Y=m_0(X)+\varepsilon$ ($\mathbb{E}[\varepsilon\mid X]=0$, $\sigma^2=\mathbb{E}[\varepsilon^2]<\infty$),
the SF-PPI estimator is
\begin{align}
\nonumber
\widehat\theta_{\mathrm{PPI}}^{\mathrm{sf}}
= \frac{1}{N}\sum_{i=1}^N \widehat m(X_i)
\;+\; \frac{1}{n}\sum_{j\in S}\!\big\{Y_j-\widehat m(X_j)\big\}.
\end{align}
Assume $\widehat m$ is a linear smoother trained on the labeled data $S$,
$\widehat m(x)=s(x)^\top Y_S + b(x)$, where $s(x)=(s_j(x))_{j\in S}\in\mathbb{R}^n$
are smoothing weights (depending on the labeled covariates $X_S$),
$Y_S:=(Y_j)_{j\in S}$ is the vector of labeled responses, and $b(x)$ is an offset.
We impose the mass-preserving property $\sum_{j\in S} s_j(x)=1$ for all $x$.


We fit $\widehat m$ using kernel ridge regression (KRR) with an unpenalized intercept. Write the Gram matrix $K\in\mathbb{R}^{n\times n}$ with $K_{ij}=k(X_i,X_j)$ for a
bounded positive semidefinite (PSD) kernel $k$ (so $\sup_x k(x,x)\le \kappa^2<\infty$).
We parameterize predictions as
\[
\widehat m(x)=\widehat\alpha+k_x^\top \widehat\beta,\,\,
k_x:=(k(X_1,x),\ldots,k(X_n,x))^\top,
\]
where $(\widehat\alpha,\widehat\beta)\in
\arg\min_{\alpha\in\mathbb{R},\,\beta\in\mathbb{R}^n}
\Big\{\frac{1}{n}\,\big\|Y_S-\alpha\,\mathbf 1_n-K\beta\big\|_2^2
+\lambda_n\,\beta^\top K\beta\Big\}$ with $\lambda_n>0$ the regularization parameter.
This construction yields a linear smoother $\widehat m(x)=s(x)^\top Y_S$ (here $b(x) = 0$) whose
weights obey the mass-preserving property.  Writing
$\widehat m_S:=(\widehat m(X_j))_{j\in S}$ for the fitted values on the labeled inputs, we have the following in-sample representation
\[
\widehat m_S \;=\; K\,\widehat\beta \;+\; \widehat\alpha\,\mathbf 1_n \;=\; H\,Y_S,
\]
where $H=[s(X_1) \cdots s(X_n)]^{\top}$ is symmetric $n$-by-$n$ KRR hat matrix with an unpenalized intercept, and it satisfies $H\mathbf 1_n=\mathbf 1_n$. Similarly, we define $N$-by-$n$ unlabeled weights matrix as $S_U=[s(X_1) \cdots s(X_N)]^{\top}$. The unlabeled leverage average is then given by $\overline{\|s(X)\|_2^2}=(1/N)\sum_{i=1}^N\|s(X_i)\|_2^2$. Based on these, we define the centering vector $c=(1/N)S_U^{\top} \mathbf 1_N $ $ + (1/n)\mathbf 1_n $ $- (1/n)H^{\top}\mathbf 1_n \in \mathbb{R}^n$. For additional details, see the Appendix. 

Then the following theorem holds.

\begin{theorem}\label{thm:sfppi_mean_krr}
Assume the same semi-supervised mean setting as Theorem~\ref{thm:cfppi_mean_unified}. Train a single predictor $\widehat m$ on the labeled sample using a KRR with a bounded PSD kernel $k$ and regularization $\lambda_n>0$, including an
unpenalized intercept so that $\sum_{j\in S}s_j(x)=1$.
Define the degrees-of-freedom-adjusted residual variance using the labeled hat matrix $H$:
\[
\widehat\sigma^2_{\mathrm{KRR}}:=\frac{\big\|(I_n-H)Y_S\big\|_2^2}{\,n-\tr(H)\,}.
\]
Assume the tuning satisfies $\lambda_n \downarrow 0$ and $n\lambda_n \to \infty$, and that the
stability and centering conditions for the induced smoother $\widehat m(x)=s(x)^\top Y_S+b(x)$ hold
(in particular, $\max_i\|s(X_i)\|_2^2=O_p(1/n)$, $\sqrt N\max_{j\in S}|c_j|\to 0$, and $N\|c\|_2^2\to 1/f$, see Appendix).

Then:

\noindent\textup{\textbf{(i) Consistency.}}
$\widehat\theta^{\mathrm{sf}}_{\mathrm{PPI}}\xrightarrow{p}\theta_0$.

\noindent\textup{\textbf{(ii) Asymptotic normality.}}
\begin{align*}
\sqrt{N}\,\big(\widehat\theta^{\mathrm{sf}}_{\mathrm{PPI}}-\theta_0\big)\;&\xrightarrow{d}\; \mathcal{N}\!\big(0,\sigma_f^2\big),\\
\sigma_f^2 \;=\; \Var\!\big(m_0(X)\big) \;&+\; \frac{1}{f}\,\Var\!\big(Y-m_0(X)\big).
\end{align*}

\noindent\textup{\textbf{(iii) Variance correction and studentized CLT.}}
With
\begin{align*}
\widehat{\Var}\!\big(\widehat\theta^{\mathrm{sf}}_{\mathrm{PPI}}\big)
&=\frac{1}{N}\Big\{\Var_N\!\big(\widehat m(X_1),\ldots,\widehat m(X_N)\big) -\widehat\sigma^2_{\mathrm{KRR}}\, \overline{\|s(X)\|_2^2}\Big\}+\widehat\sigma^2_{\mathrm{KRR}}\,\|c\|_2^2,    
\end{align*}
where $
\Var_N(Z_1,\ldots,Z_N)
= (1/N)\sum_{i=1}^N (Z_i-\bar Z)^2, \, \bar Z=(1/N)\sum_{i=1}^N Z_i$, 
we have $\widehat{\Var}\!\big(\widehat\theta^{\mathrm{sf}}_{\mathrm{PPI}}\big)\xrightarrow{p}\sigma_f^2/N$ and
\[
\frac{\widehat\theta^{\mathrm{sf}}_{\mathrm{PPI}}-\theta_0}
{\sqrt{\widehat{\Var}\!\big(\widehat\theta^{\mathrm{sf}}_{\mathrm{PPI}}\big)}}
\;\xrightarrow{d}\; \mathcal N(0,1).
\]
\end{theorem}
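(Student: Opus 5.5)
The starting point is that, with $b\equiv 0$ and the mass-preserving KRR weights, the single-fit estimator is an exact linear functional of the labeled responses:
\[
\widehat\theta^{\mathrm{sf}}_{\mathrm{PPI}}=\tfrac1N\mathbf 1_N^\top S_U Y_S+\tfrac1n\mathbf 1_n^\top Y_S-\tfrac1n\mathbf 1_n^\top H Y_S=c^\top Y_S .
\]
I will write $Y_S=m_{0,S}+\varepsilon_S$ with $m_{0,S}:=(m_0(X_j))_{j\in S}$, and condition on the $\sigma$-field $\mathcal G$ generated by all covariates, both labeled and unlabeled. Given $\mathcal G$, the vector $c$ is fixed and $\varepsilon_S$ has independent, mean-zero coordinates. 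This gives the split
\[
\widehat\theta^{\mathrm{sf}}_{\mathrm{PPI}}-\theta_0=\underbrace{\big(c^\top m_{0,S}-\theta_0\big)}_{A_N}+\underbrace{c^\top\varepsilon_S}_{B_N}.
\]
The term $A_N$ is $\mathcal G$-measurable. The term $B_N$ is a conditionally centered weighted sum of errors.

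\textbf{Step 1: the signal term $A_N$.} Let $\widetilde m(x):=s(x)^\top m_{0,S}$ be the noiseless KRR fit of $m_0$. Then $A_N$ is exactly the PPI estimator with predictor $\widetilde m$, evaluated on the signal $m_0$:
\[
A_N=\tfrac1N\sum_{i}\{m_0(X_i)-\theta_0\}+(P_N-P)(\widetilde m-m_0)-(P_n-P)(\widetilde m-m_0).
\]
Under $\lambda_n\downarrow0$ and $n\lambda_n\to\infty$, the noiseless fit satisfies $\|\widetilde m-m_0\|_{L_2(P_X)}=o_p(1)$.

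The unlabeled remainder can be handled exactly as in the proof of Theorem~\ref{thm:cfppi_mean_unified}. The unlabeled sample is independent of $S$, so a conditional Chebyshev bound gives $\sqrt N(P_N-P)(\widetilde m-m_0)=O_p(\|\widetilde m-m_0\|_{L_2})=o_p(1)$.

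The labeled remainder $(P_n-P)(\widetilde m-m_0)$ is in-sample, and I expect it to be the main obstacle, since no cross-fitting is available. My first attempt will use the RKHS structure. For a bounded kernel, $\widetilde m-m_0$ lies in a ball of the RKHS (plus constants) whose radius is controlled by the KRR bias. The class $\{h:\|h\|_{\mathcal H}\le R\}$ is Donsker for bounded $k$, so asymptotic equicontinuity together with $\|\widetilde m-m_0\|_{L_2}\to0$ gives $\sqrt n(P_n-P)(\widetilde m-m_0)=o_p(1)$. If $m_0\notin\mathcal H$, I will instead approximate $m_0$ by some $m_\lambda\in\mathcal H$ and treat the approximation error as a fixed function with a vanishing $L_2$ norm. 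The conclusion of this step is $\sqrt N A_N=N^{-1/2}\sum_i\{m_0(X_i)-\theta_0\}+o_p(1)$. By the ordinary CLT this converges in distribution to $\mathcal N(0,\Var(m_0(X)))$.

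\textbf{Step 2: the noise term $B_N$.} Conditionally on $\mathcal G$, I will apply the Lindeberg--Feller CLT to $\sqrt N\,c^\top\varepsilon_S$, using $\sqrt N\max_j|c_j|\to0$ and $\mathbb E\varepsilon^2<\infty$ for the Lindeberg condition. The conditional variance is $N\sum_j c_j^2\sigma^2(X_j)$. To replace it by $N\|c\|_2^2\,\sigma^2\to\sigma^2/f$, I will use one of two routes: homoscedasticity, or a weighted LLN for $\sigma^2(X_j)$ that exploits the fact that the weights $Nc_j^2$ are uniformly negligible. This is a point to check carefully. Since $B_N$ is conditionally normal given $\mathcal G$ and $\sqrt N A_N$ is $\mathcal G$-measurable, the characteristic functions factor. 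Combining the two pieces yields (ii), with $\sigma_f^2=\Var(m_0(X))+\sigma^2/f$. Part (i) then follows, because $A_N=o_p(1)$ and $\Var(B_N\mid\mathcal G)=O_p(1/N)$.

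\textbf{Step 3: the variance estimator.} I first show $\widehat\sigma^2_{\mathrm{KRR}}\xrightarrow{p}\sigma^2$. The residual is $(I-H)Y_S=(I-H)m_{0,S}+(I-H)\varepsilon_S$. The conditional mean of the noise part is $\mathbb E[\varepsilon_S^\top (I-H)^2\varepsilon_S\mid\mathcal G]=\sigma^2\tr\{(I-H)^2\}$, and this ratio to $n-\tr(H)$ tends to one because $\tr(H^2)\le\tr(H)=o(n)$ when $n\lambda_n\to\infty$. The bias part satisfies $n^{-1}\|(I-H)m_{0,S}\|^2\to0$ by Step 1, and the conditional variance vanishes by the leverage bound $\max\|s(X_i)\|^2=O_p(1/n)$.

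Next I show $\Var_N(\widehat m(X_i))-\widehat\sigma^2_{\mathrm{KRR}}\overline{\|s(X)\|_2^2}\xrightarrow{p}\Var(m_0(X))$. Write $\widehat m(X_i)=\widetilde m(X_i)+s(X_i)^\top\varepsilon_S$. Then $\Var_N(\widetilde m)\to\Var(m_0)$, and the noise contribution has conditional mean $\sigma^2\overline{\|s\|^2}$ minus a centering term that is asymptotically negligible. Moreover, $\overline{\|s\|^2}=O_p(1/n)$, so the subtraction only removes an $o(1)$ bias. Finally, $N\widehat\sigma^2_{\mathrm{KRR}}\|c\|_2^2\to\sigma^2/f$. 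These limits give $N\widehat{\Var}\xrightarrow{p}\sigma_f^2$, and the studentized CLT in (iii) follows from (ii) and Slutsky's lemma.
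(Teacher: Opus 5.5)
Your plan follows the paper's proof closely. You use the same split $\widehat\theta^{\mathrm{sf}}_{\mathrm{PPI}}-\theta_0=(P_N-P)m_0+c^\top\varepsilon_S+(P_N-P_n)(\widetilde m-m_0)$; your $A_N$ is the paper's $U_N+R_{N,n}$ with $b\equiv0$. You also use the same conditional Lindeberg--Feller step driven by $\sqrt N\max_j|c_j|\to0$, and the same trace bounds for $\widehat\sigma^2_{\mathrm{KRR}}$.

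You diverge at the in-sample term $(P_n-P)(\widetilde m-m_0)$, and you are right to single it out. The paper handles it with the Chebyshev bound \eqref{eq:PNPn-generic-correct}. That bound is valid for the unlabeled average but not for the labeled one, because $\widetilde m=s^\top m_{0,S}$ is built from $X_S$ and is not fixed given the points at which $P_n$ evaluates it.

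Because $H$ is symmetric with $H\mathbf 1_n=\mathbf 1_n$, one has $P_n(\widetilde m-m_0)=\tfrac1n\mathbf 1_n^\top(H-I_n)m_{0,S}=0$ exactly. So what has to be shown is $\sqrt N\,P(\widetilde m-m_0)\xrightarrow{p}0$.

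Your Donsker argument does this when $m_0\in\mathcal H_k$. Write $\widetilde m=\widetilde\alpha+\widetilde g$ and evaluate the noiseless KRR objective at $(\alpha,g)=(0,m_0)$; this gives $\|\widetilde g\|_{\mathcal H_k}\le\|m_0\|_{\mathcal H_k}$. Then $\widetilde g-m_0$ stays in a fixed ball, and asymptotic equicontinuity together with $\|\widetilde m-m_0\|_{L_2(P_X)}\xrightarrow{p}0$ finishes it.

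Your Step 3 is also better calibrated than the paper's. You prove $\Var_N(\widehat m(X_1),\ldots,\widehat m(X_N))-\widehat\sigma^2_{\mathrm{KRR}}\overline{\|s(X)\|_2^2}\xrightarrow{p}\Var(m_0(X))$ at the $O(1)$ scale, which requires $P_N(\widetilde m-m_0)^2\xrightarrow{p}0$. The paper only shows its remainder is $O_p(1)$ and then divides by $N$, which does not separate it from the $\Var(m_0(X))/N$ target.

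The genuine gap is your fallback for $m_0\notin\mathcal H_k$. The theorem allows this case, since A1 only asks for $L_2$-approximability. Comparing the objective with a fixed approximant $m^\ast\in\mathcal H_k$ gives only $\|\widetilde g\|^2_{\mathcal H_k}\le\|m^\ast\|^2_{\mathcal H_k}+\lambda_n^{-1}P_n(m_0-m^\ast)^2$, which diverges as $\lambda_n\downarrow0$. Taking $m^\ast=m_{\lambda_n}$ does not help, because $\|m_{\lambda_n}\|_{\mathcal H_k}\to\infty$ whenever $m_0\notin\mathcal H_k$. So the fitted function does not stay in a fixed Donsker ball.

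Peeling off $r:=m_0-m^\ast$ as a fixed function handles $(P_n-P)r$. By linearity of the smoother, however, you are then left with $(P_n-P)(s^\top r_S)$, the in-sample fluctuation of the KRR fit $\widetilde\alpha_r+\widetilde g_r$ to $r$. The only generic bound is
$\sqrt n\,|(P_n-P)\widetilde g_r|\le\|\widetilde g_r\|_{\mathcal H_k}\,\sqrt n\,\|P_nk(X,\cdot)-Pk(X,\cdot)\|_{\mathcal H_k}=O_p\big(\|r\|_{L_2(P_X)}/\sqrt{\lambda_n}\big)$,
which does not go to zero for fixed $r$. Forcing $\|r\|_{L_2(P_X)}=o(\sqrt{\lambda_n})$ makes $\|m^\ast\|_{\mathcal H_k}$ grow, which breaks the fixed-ball argument for the other piece.

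Closing this needs an extra hypothesis. One option is $m_0\in\mathcal H_k$ up to constants. Another is a source condition on $m_0$, combined with a rate on $\lambda_n$ and a localized, eigenvalue-based bound on $\sup\{|\sqrt n(P_n-P)h|:\|h\|_{\mathcal H_k}\le R_n,\ \|h\|_{L_2(P_X)}\le\delta_n\}$. The conditions $\lambda_n\downarrow0$, $n\lambda_n\to\infty$ and $L_2$-approximability alone do not deliver it.

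A minor point in Step 3: $n^{-1}\|(I_n-H)m_{0,S}\|_2^2\xrightarrow{p}0$ is an in-sample statement and does not follow from population $L_2$ consistency ``by Step 1''. It does follow from the variational inequality $P_n(m_0-\widetilde m)^2\le P_n(m_0-m^\ast)^2+\lambda_n\|m^\ast\|^2_{\mathcal H_k}$.
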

Parts (i)–(iii) of Theorem \ref{thm:sfppi_mean_krr} show that the SF-PPI estimator with variance correction---when the predictor is obtained by KRR with an unpenalized intercept and mild tuning/stability conditions---admits standard large-sample inference from a single model fit. Moreover, the plug-in variance $\widehat{\Var}\!\big(\widehat\theta^{\mathrm{sf}}_{\mathrm{PPI}}\big)$ is consistent, yielding a studentized CLT. Notably, $\sigma_f^2$ coincides with the variance that would obtain if $m_0$ were known (oracle form), suggesting first-order optimality.  

\section{Simulation experiments}\label{sec:Simulation experiments}
\subsection{Simulation setting}\label{subsec:Simulation Setting}
We study the super-population mean target \(\theta_0=\mathbb{E}[Y]\) under a semi-supervised design. 
For each replication, we draw \(X\sim\mathcal{N}(0,1)\) and generate
\[
Y = m_0(X) + \varepsilon,\qquad \varepsilon\sim\mathcal{N}(0,1),
\]
independent of \(X\). We adopt a finite-population SRSWOR design: a population of size \(N\) with all \(N\) covariates unlabeled, and a labeled subset \(S\subset\{1,\dots,N\}\) of size \(n\) drawn without replacement (labeling fraction \(f=n/N\)). We consider $N \in \{500,2000\}$ with $f\in \{0.1, 0.2, 0.3, 0.4, 0.5\}$.

We consider two ground-truth regression functions \(m_0\). These functions are displayed in Figure~\ref{fig:true_functions}. 

\begin{figure}[h]
  \centering
  \includegraphics[width=0.9\columnwidth]{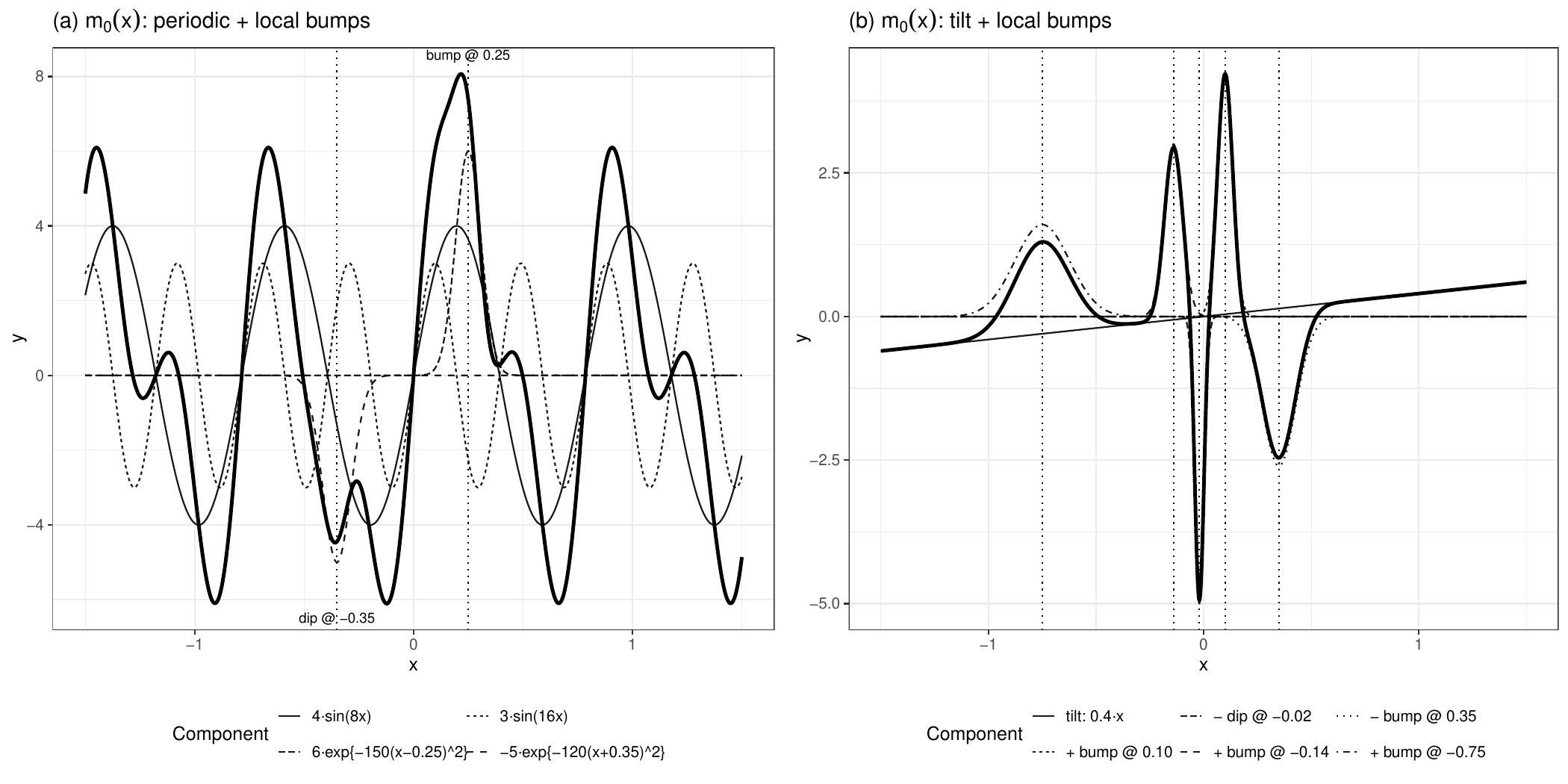}
  \caption{True regression functions $m_0(x)$ (shown as bold curves) used in simulation experiments: a periodic baseline with multiple local bumps (Panel (a)), and a tilted baseline with multiple local bumps and dips (Panel (b)). For each panel, the dominant trends are indicated by the curve ($y = 4\sin(8x)$) and the line ($y = 0.4x$).}
  \label{fig:true_functions}
\end{figure}

For the first simulation experiment, we set the \emph{periodic + local bumps} specification for \(m_0\),
\[
m_0(x)=4\sin(8x)+3\sin(16x)+6e^{-150(x-0.25)^2}-5e^{-120(x+0.35)^2},
\]
which combines a dominant low-frequency sinusoid \(4\sin(8x)\), a higher-frequency ripple \(3\sin(16x)\), and two sharp Gaussian perturbations—a positive bump near \(x=0.25\) and a negative dip near \(x=-0.35\)—yielding a periodic baseline with localized deviations. 

For the second simulation experiment, we set the \emph{tilt + local bumps} specification for \(m_0\),
\[
\begin{aligned}
m_0(x)=\;&0.4x
+4.2\,e^{-((x-0.10)/0.05)^2}
-5.0\,e^{-((x+0.02)/0.03)^2}\\
&+3.0\,e^{-((x+0.14)/0.06)^2}
-2.6\,e^{-((x-0.35)/0.11)^2}
+1.6\,e^{-((x+0.75)/0.18)^2},
\end{aligned}
\]
which superimposes a global linear tilt with several localized Gaussian features of varying width and sign: a sharp positive bump at \(0.10\), a nearby sharp negative dip at \(-0.02\), a medium positive bump at \(-0.14\), a broad negative dip at \(0.35\), and a very broad positive bump at \(0.75\). This yields a non-periodic regression surface with asymmetric local structure.

We evaluate three categories of estimators. 

First, the classical estimator 
\[
\hat\theta_{\mathrm{class}}=\frac{1}{n}\sum_{j\in S}Y_j,
\]
the sample average of the outcomes on the labeled dataset. 

Second, PPI with a \emph{prespecified} predictor \(m\),
\[
\hat\theta_{\mathrm{PPI}}=\frac{1}{N}\sum_{i=1}^Nm(X_i)
\;+\;\frac{1}{n}\sum_{j\in S}\{Y_j-m(X_j)\}.
\]

As for the fixed predictor \(m\), we consider (i) an oracle specification \(m=m_0\) and two misspecified fixed rules: (ii) a crude step function 
\(m(x)=2.2\,\mathbf{1}\{x>0.5\}-0.2\,\mathbf{1}\{x\le 0.5\}\), and (iii) a dominant-trend surrogate that keeps only the leading structure of the truth 
(i.e., \(m(x)=4\sin(8x)\) in the periodic case and \(m(x)=0.4x\) in the tilted case). 

Third, PPI with a \emph{learned} predictor \(\widehat m\). Specifically, we fit \(\widehat m\) via KRR with an unpenalized intercept, where the kernel is specified as a Gaussian kernel
\[
k(x,x')=\exp\!\left(-\frac{(x-x')^2}{2\ell^2}\right)
\quad\text{with}\quad \ell=0.1,
\]
and a ridge parameter \(\lambda=c\, n^{-\alpha}\) with \(\alpha=1/2\) and \(c=10^{-4}\). This regularization setting satisfies the standard KRR conditions \(\lambda\to0\) and \(n\lambda\to\infty\), providing a bias–variance tradeoff that is
compatible with asymptotic consistency and that performed well across our simulation scenarios. Recall that leaving the intercept unpenalized ensures that the mass-preserving property. 

We report three implementations that differ in how \(\widehat m\) is trained and how uncertainty is quantified.
(i) Vanilla PPI fits \(\widehat m\) once on all labeled data and uses the plug-in variance formula; this highlights the baseline behavior of PPI, originally proposed by \citep{angelopous2023}, without cross-fitting or variance adjustment. (ii) CF-PPI \citep{zrnic2024cross} employs \(K\)-fold cross-fitting (here \(K=5\)): for each fold, \(\widehat m(\cdot) = m^{(-)}(\cdot)\) is trained on
the remaining \(K-1\) folds and evaluated on the held-out fold, mitigating overfitting in the correction term and potentially
improving robustness of standard errors.
(iii) SF–PPI–VC fits the model once $\widehat m$ and corrects the variance by separating uncertainty from unlabeled predictions and noise in the labeled outcomes, using the KRR hat matrix and weights with a degrees-of-freedom adjustment. See the algorithms in Section~\ref{sec:ALGORITHM} for the implementation of these methods.

Across replications (\(R=10{,}000\)) we report (a) the bias, (b)
the mean absolute error (MAE), (c) the root mean squared error (RMSE), (d)
the 95\% coverage of the nominal 95\% confidence interval, 
and (e) the mean confidence-interval length. Here, the ground-truth \(\theta_0=\mathbb{E}[m_0(X)]\) is computed by one-dimensional numerical
integration under \(X\sim\mathcal{N}(0,1)\). That is, $\theta_0 = \mathbb{E}[m_0(X)]
= \int_{-\infty}^{\infty} m_0(x)\,p(x)\,dx
= \int_{-\infty}^{\infty} m_0(x)\,\phi(x)\,dx$, where \(\phi(x)=(2\pi)^{-1/2}\exp\!\bigl(-x^2/2\bigr)\) denotes the standard normal density. 

\subsection{Simulation experiment 1: periodic + local bumps specification}\label{subsec:Simulation Experiment 1}

\subsubsection{Monte Carlo numerical results at $N = 500$}
The simulation results are shown in Table \ref{tab:ppi_n500_simexp1}. As the labeling fraction $f$ increases from $0.1$ to $0.5$, all procedures exhibit monotonic decreases in MAE and RMSE and shorter intervals; biases are essentially zero throughout. PPI with the oracle $m=m_0$ uniformly improves upon the classical estimator for every $f$, yielding much smaller errors and substantially shorter intervals while maintaining approximately $95\%$ coverage. This suggests that, when implemented properly, PPI can substantially improve efficiency relative to using only the labeled data.

PPI with the prediction rule fixed to a dominant low-frequency sinusoid, $m(x)=4\sin(8x)$, outperforms the classical estimator in both accuracy and interval length, whereas PPI with the crude step function $m(x)=2.2\,\mathbf{1}\{x>0.5\}-0.2\,\mathbf{1}\{x\le 0.5\}$ is consistently the least accurate (largest MAE/RMSE and longer intervals). Nevertheless, the coverage of the nominal $95\%$ confidence intervals is maintained and the bias remains nearly zero across all scenarios, consistent with the theoretical result that misspecification affects only efficiency (interval width) and not the bias of PPI.

PPI with the fitted prediction rule $\widehat m$ without cross-fitting or variance correction (PPI (no CF, no VC)) yields deceptively short intervals but exhibits severe undercoverage when the labeling fraction is small—about $58\%$ at $f=0.1$, $84\%$ at $f=0.2$, and $91\%$ at $f=0.3$. Coverage improves and approaches the nominal $95\%$ as $f$ increases, but it does not reach the nominal level even at $f=0.5$. This pattern suggests that the vanilla PPI procedure \citep{angelopous2023} may suffer from undercoverage in low-label regimes, motivating the use of cross-fitting and/or variance correction.

PPI with the fitted prediction rule $\widehat m$ with cross-fitting (CF-PPI) and with variance correction (SF-PPI-VC) exhibits slight undercoverage at $f=0.1$, but coverage quickly recovers to near $95\%$ by $f=0.2$; by $f=0.3$, both methods essentially maintain the nominal $95\%$ coverage.

\begin{table*}[h!]
\caption{Results of Simulation Experiment 1 at $N=500$ for varying labeled fractions $f=n/N$.}
\label{tab:ppi_n500_simexp1}
\begin{tabular}{@{}llrrrrr@{}}
\hline
Sample size & Method &
\multicolumn{1}{c}{Bias} &
\multicolumn{1}{c}{MAE} &
\multicolumn{1}{c}{RMSE} &
\multicolumn{1}{c}{Int. len.} &
\multicolumn{1}{c@{}}{Cov. (\%)} \\
\hline

\multirow{7}{*}{\begin{tabular}{@{}l@{}}
$N=500$\\
$n=50$\\
$f=0.1$
\end{tabular}}
& Classical (labeled only) & 0.0010 & 0.3890 & 0.5810 & 2.2973 & 94.7 \\
& PPI (oracle $m=m_0$) & 0.0032 & 0.1569 & 0.2315 & 0.8978 & 94.8 \\
& PPI (fixed $m$: step) & 0.0023 & 0.4090 & 0.6115 & 2.4293 & 94.6 \\
& PPI (fixed $m$: sin-only) & $-$0.0010 & 0.2555 & 0.3821 & 1.4766 & 94.2 \\
& PPI (fitted $\widehat m$: no CF, no VC) & 0.0125 & 0.3387 & 0.5524 & 0.8277 & 57.9 \\
& CF-PPI (fitted $\widehat m$: CF) & 0.0072 & 0.3592 & 0.5624 & 1.9696 & 91.7 \\
& SF-PPI-VC (fitted $\widehat m$: VC) & 0.0125 & 0.3387 & 0.5524 & 1.8312 & 90.7 \\
\hline

\multirow{7}{*}{\begin{tabular}{@{}l@{}}
$N=500$\\
$n=100$\\
$f=0.2$
\end{tabular}}
& Classical (labeled only) & 0.0025 & 0.2750 & 0.4106 & 1.6258 & 95.1 \\
& PPI (oracle $m=m_0$) & 0.0036 & 0.1413 & 0.2081 & 0.8079 & 94.9 \\
& PPI (fixed $m$: step) & 0.0030 & 0.2899 & 0.4295 & 1.7247 & 95.3 \\
& PPI (fixed $m$: sin-only) & 0.0027 & 0.1960 & 0.2884 & 1.1019 & 94.4 \\
& PPI (fitted $\widehat m$: no CF, no VC) & 0.0019 & 0.1862 & 0.2812 & 0.7826 & 84.4 \\
& CF-PPI (fitted $\widehat m$: CF) & 0.0008 & 0.2020 & 0.3018 & 1.1202 & 93.7 \\
& SF-PPI-VC (fitted $\widehat m$: VC) & 0.0019 & 0.1862 & 0.2812 & 1.0249 & 93.6 \\
\hline

\multirow{7}{*}{\begin{tabular}{@{}l@{}}
$N=500$\\
$n=150$\\
$f=0.3$
\end{tabular}}
& Classical (labeled only) & 0.0046 & 0.2243 & 0.3367 & 1.3281 & 95.1 \\
& PPI (oracle $m=m_0$) & 0.0034 & 0.1354 & 0.1987 & 0.7756 & 95.0 \\
& PPI (fixed $m$: step) & 0.0048 & 0.2322 & 0.3500 & 1.4132 & 95.4 \\
& PPI (fixed $m$: sin-only) & 0.0032 & 0.1674 & 0.2499 & 0.9441 & 94.1 \\
& PPI (fitted $\widehat m$: no CF, no VC) & 0.0039 & 0.1513 & 0.2222 & 0.7627 & 91.3 \\
& CF-PPI (fitted $\widehat m$: CF) & 0.0041 & 0.1577 & 0.2350 & 0.9100 & 94.8 \\
& SF-PPI-VC (fitted $\widehat m$: VC) & 0.0039 & 0.1513 & 0.2222 & 0.8450 & 94.4 \\
\hline

\multirow{7}{*}{\begin{tabular}{@{}l@{}}
$N=500$\\
$n=200$\\
$f=0.4$
\end{tabular}}
& Classical (labeled only) & 0.0056 & 0.1934 & 0.2936 & 1.1502 & 94.9 \\
& PPI (oracle $m=m_0$) & 0.0036 & 0.1306 & 0.1943 & 0.7590 & 95.0 \\
& PPI (fixed $m$: step) & 0.0054 & 0.2018 & 0.3023 & 1.2280 & 95.7 \\
& PPI (fixed $m$: sin-only) & 0.0038 & 0.1546 & 0.2286 & 0.8545 & 93.7 \\
& PPI (fitted $\widehat m$: no CF, no VC) & 0.0037 & 0.1401 & 0.2060 & 0.7517 & 93.1 \\
& CF-PPI (fitted $\widehat m$: CF) & 0.0034 & 0.1452 & 0.2135 & 0.8332 & 94.9 \\
& SF-PPI-VC (fitted $\widehat m$: VC) & 0.0037 & 0.1401 & 0.2060 & 0.7876 & 94.6 \\
\hline

\multirow{7}{*}{\begin{tabular}{@{}l@{}}
$N=500$\\
$n=250$\\
$f=0.5$
\end{tabular}}
& Classical (labeled only) & 0.0039 & 0.1779 & 0.2620 & 1.0287 & 95.0 \\
& PPI (oracle $m=m_0$) & 0.0036 & 0.1281 & 0.1915 & 0.7488 & 95.0 \\
& PPI (fixed $m$: step) & 0.0036 & 0.1824 & 0.2684 & 1.1016 & 95.8 \\
& PPI (fixed $m$: sin-only) & 0.0030 & 0.1465 & 0.2157 & 0.7958 & 93.4 \\
& PPI (fitted $\widehat m$: no CF, no VC) & 0.0038 & 0.1331 & 0.1968 & 0.7441 & 93.9 \\
& CF-PPI (fitted $\widehat m$: CF) & 0.0041 & 0.1369 & 0.2013 & 0.7950 & 95.1 \\
& SF-PPI-VC (fitted $\widehat m$: VC) & 0.0038 & 0.1331 & 0.1968 & 0.7614 & 94.6 \\
\hline
\end{tabular}

\smallskip \begin{center} \begin{minipage}{0.88\textwidth} \footnotesize \raggedright \textit{Methods.} \textbf{Classical (labeled only)}: sample mean of $Y$ over the labeled set. \textbf{PPI (oracle $m=m_0$)}: prediction-powered inference using the true regression $m_0(x)=\mathbb{E}[Y\mid X=x]$. \textbf{PPI (fixed $m$: step)}: PPI with a misspecified, fixed step predictor (e.g., $m(x)=2.2\,\mathbf{1}\{x>0.5\}-0.2\,\mathbf{1}\{x\le 0.5\}$). \textbf{PPI (fixed $m$: sin-only)}: PPI with a misspecified dominant-trend surrogate for $m$ (e.g., $m(x)=4\sin(8x)$ in the periodic design). \textbf{PPI (fitted $\widehat m$: no CF, no VC)}: Vanilla PPI \citep{angelopous2023} with a learned $\widehat m$ fit once by KRR; plug-in variance---no cross-fitting or variance correction. \textbf{CF-PPI (fitted $\widehat m$: CF)}: CF-PPI \citep{zrnic2024cross} ($K=5$), training $\widehat m$ on $K-1$ folds and evaluating on the hold-out. \textbf{SF-PPI-VC (fitted $\widehat m$: VC)}: single-fit PPI with variance correction that adjusts for uncertainty in $\widehat m$ and outcome noise (degrees-of-freedom adjustment). \\
\smallskip \textit{Metrics.} \textbf{MAE} = median absolute error; \textbf{RMSE} = root mean squared error; \textbf{Cov.} = coverage; \textbf{Int. len.} = average $95\%$ central credible-interval length. Bias values with magnitude $<10^{-4}$ are reported as 0.0000. \end{minipage} \end{center}
\end{table*}

\subsubsection{Monte Carlo numerical results at $N = 2000$}
The simulation results are shown in Table~\ref{tab:ppi_n2000_simexp1} mirrors the $N{=}500$ findings (see Table~\ref{tab:ppi_n500_simexp1}): as $f$ increases from $0.1$ to $0.5$, MAE/RMSE and interval length decline monotonically while biases remain essentially zero. Classical coverage stays near $95\%$. PPI with the oracle predictor ($m{=}m_0$) uniformly dominates—smaller errors and substantially shorter intervals with approximately $95\%$ coverage. Among the misspecified fixed rules, PPI (fixed $m$: sin-only) improves on the classical estimator in both accuracy and interval length, whereas PPI (fixed $m$: step) is the least accurate across $f$ (largest MAE/RMSE and longer intervals), though its coverage remains near nominal.

As noted in the $N{=}500$ results (Table~\ref{tab:ppi_n500_simexp1}), the vanilla PPI (PPI (no CF, no VC)) exhibits substantial
undercoverage at small $f$ (e.g., about $87\%$ at $f{=}0.1$). Coverage improves as $f$ increases and approaches $95\%$ at $f=0.4$. On the other hand, introducing cross-fitting (CF-PPI) or variance correction (SF-PPI-VC) maintains nominal coverage across all scenarios (only about 1\% below nominal at $f=0.1$).

\begin{table*}[h!]
\caption{Results of Simulation Experiment 1 at $N=2000$ for varying labeled fractions $f=n/N$.}
\label{tab:ppi_n2000_simexp1}
\begin{tabular}{@{}llrrrrr@{}}
\hline
Sample size & Method &
\multicolumn{1}{c}{Bias} &
\multicolumn{1}{c}{MAE} &
\multicolumn{1}{c}{RMSE} &
\multicolumn{1}{c}{Int. len.} &
\multicolumn{1}{c@{}}{Cov. (\%)} \\
\hline

\multirow{7}{*}{\begin{tabular}{@{}l@{}}
$N=2000$\\
$n=200$\\
$f=0.1$
\end{tabular}}
& Classical (labeled only) & 0.0019 & 0.2009 & 0.2956 & 1.1508 & 94.9 \\
& PPI (oracle $m=m_0$) & 0.0020 & 0.0750 & 0.1130 & 0.4489 & 95.0 \\
& PPI (fixed $m$: step) & 0.0031 & 0.2066 & 0.3092 & 1.2167 & 94.9 \\
& PPI (fixed $m$: sin-only) & 0.0002 & 0.1296 & 0.1906 & 0.7390 & 94.7 \\
& PPI (fitted $\widehat m$: no CF, no VC) & 0.0014 & 0.0941 & 0.1408 & 0.4265 & 87.1 \\
& CF-PPI (fitted $\widehat m$: CF) & 0.0019 & 0.1004 & 0.1494 & 0.5615 & 94.0 \\
& SF-PPI-VC (fitted $\widehat m$: VC) & 0.0014 & 0.0941 & 0.1408 & 0.5326 & 94.2 \\
\hline

\multirow{7}{*}{\begin{tabular}{@{}l@{}}
$N=2000$\\
$n=400$\\
$f=0.2$
\end{tabular}}
& Classical (labeled only) & $-$0.0007 & 0.1386 & 0.2065 & 0.8141 & 95.2 \\
& PPI (oracle $m=m_0$) & 0.0011 & 0.0689 & 0.1015 & 0.4040 & 95.4 \\
& PPI (fixed $m$: step) & $-$0.0006 & 0.1439 & 0.2152 & 0.8633 & 95.5 \\
& PPI (fixed $m$: sin-only) & 0.0007 & 0.0987 & 0.1445 & 0.5513 & 94.4 \\
& PPI (fitted $\widehat m$: no CF, no VC) & 0.0008 & 0.0727 & 0.1075 & 0.3975 & 93.6 \\
& CF-PPI (fitted $\widehat m$: CF) & 0.0011 & 0.0745 & 0.1108 & 0.4329 & 95.0 \\
& SF-PPI-VC (fitted $\widehat m$: VC) & 0.0008 & 0.0727 & 0.1075 & 0.4201 & 94.8 \\
\hline

\multirow{7}{*}{\begin{tabular}{@{}l@{}}
$N=2000$\\
$n=600$\\
$f=0.3$
\end{tabular}}
& Classical (labeled only) & 0.0000 & 0.1147 & 0.1686 & 0.6647 & 95.1 \\
& PPI (oracle $m=m_0$) & 0.0013 & 0.0657 & 0.0971 & 0.3879 & 95.5 \\
& PPI (fixed $m$: step) & 0.0000 & 0.1197 & 0.1751 & 0.7072 & 95.7 \\
& PPI (fixed $m$: sin-only) & 0.0012 & 0.0857 & 0.1255 & 0.4724 & 94.3 \\
& PPI (fitted $\widehat m$: no CF, no VC) & 0.0013 & 0.0671 & 0.0993 & 0.3850 & 94.5 \\
& CF-PPI (fitted $\widehat m$: CF) & 0.0010 & 0.0682 & 0.1006 & 0.4006 & 95.2 \\
& SF-PPI-VC (fitted $\widehat m$: VC) & 0.0013 & 0.0671 & 0.0993 & 0.3934 & 95.1 \\
\hline

\multirow{7}{*}{\begin{tabular}{@{}l@{}}
$N=2000$\\
$n=800$\\
$f=0.4$
\end{tabular}}
& Classical (labeled only) & 0.0009 & 0.0974 & 0.1451 & 0.5756 & 95.2 \\
& PPI (oracle $m=m_0$) & 0.0016 & 0.0644 & 0.0952 & 0.3796 & 95.3 \\
& PPI (fixed $m$: step) & 0.0009 & 0.1008 & 0.1504 & 0.6143 & 95.9 \\
& PPI (fixed $m$: sin-only) & 0.0015 & 0.0774 & 0.1143 & 0.4275 & 93.9 \\
& PPI (fitted $\widehat m$: no CF, no VC) & 0.0018 & 0.0644 & 0.0964 & 0.3780 & 94.9 \\
& CF-PPI (fitted $\widehat m$: CF) & 0.0018 & 0.0655 & 0.0971 & 0.3868 & 95.4 \\
& SF-PPI-VC (fitted $\widehat m$: VC) & 0.0018 & 0.0644 & 0.0964 & 0.3819 & 95.1 \\
\hline

\multirow{7}{*}{\begin{tabular}{@{}l@{}}
$N=2000$\\
$n=1000$\\
$f=0.5$
\end{tabular}}
& Classical (labeled only) & 0.0013 & 0.0885 & 0.1308 & 0.5148 & 95.1 \\
& PPI (oracle $m=m_0$) & 0.0018 & 0.0636 & 0.0941 & 0.3745 & 95.4 \\
& PPI (fixed $m$: step) & 0.0013 & 0.0911 & 0.1347 & 0.5512 & 95.9 \\
& PPI (fixed $m$: sin-only) & 0.0015 & 0.0724 & 0.1076 & 0.3981 & 93.5 \\
& PPI (fitted $\widehat m$: no CF, no VC) & 0.0018 & 0.0635 & 0.0946 & 0.3735 & 95.2 \\
& CF-PPI (fitted $\widehat m$: CF) & 0.0018 & 0.0632 & 0.0951 & 0.3792 & 95.4 \\
& SF-PPI-VC (fitted $\widehat m$: VC) & 0.0018 & 0.0635 & 0.0946 & 0.3755 & 95.3 \\
\hline
\end{tabular}
\end{table*}

\subsection{Simulation experiment 2: tilt + local bumps specification}\label{subsec:Simulation Experiment 2}
\subsubsection{Monte Carlo numerical results at $N = 500$}
Table~\ref{tab:ppi_n500_simexp2} shows results for $N{=}500$. As the labeling fraction $f$ rises from $0.1$ to $0.5$, all procedures show monotonic decreases in MAE, RMSE, and interval length, while biases remain essentially zero. 

Among the misspecified fixed rules, PPI (fixed $m(x)=0.4x$: dom.trend) consistently outperforms PPI (fixed $m(x)=2.2\,\mathbf{1}\{x>0.5\}-0.2\,\mathbf{1}\{x\le 0.5\}$: step) in MAE/RMSE at every $f$, with intervals comparable to the classical estimator and coverage near $95\%$. By contrast, the step rule is the least efficient—largest MAE/RMSE and much longer intervals—though its coverage is typically near or above $95\%$. 

PPI (no CF, no VC) exhibits severe undercoverage at small $f$ (about $42\%$ at $f{=}0.1$, $70\%$ at $f{=}0.2$, and $83\%$ at $f{=}0.3$). Coverage improves with $f$ but remains far below the nominal level even at $f{=}0.5$. The loss of coverage for PPI in Simulation Experiment~2 (tilt $+$ local-bumps design) is generally greater than in Simulation Experiment~1 (periodic $+$ local-bumps design), indicating that the tilt design is more challenging for the naive PPI procedure.

Adding either cross-fitting (CF-PPI) or variance correction (SF-PPI-VC) successfully addresses this issue: both exhibit only mild undercoverage at $f{=}0.1$ (approximately $91\%$) and recover to approximately $95\%$ by $f{=}0.2$; by $f{=}0.3$ they essentially maintain nominal coverage.

\begin{table*}[h!]
\caption{Results of Simulation Experiment 2 at $N=500$ for varying labeled fractions $f=n/N$.}
\label{tab:ppi_n500_simexp2}
\begin{tabular}{@{}llrrrrr@{}}
\hline
Sample size & Method &
\multicolumn{1}{c}{Bias} &
\multicolumn{1}{c}{MAE} &
\multicolumn{1}{c}{RMSE} &
\multicolumn{1}{c}{Int. len.} &
\multicolumn{1}{c@{}}{Cov. (\%)} \\
\hline

\multirow{6}{*}{\begin{tabular}{@{}l@{}}
$N=500$\\
$n=50$\\
$f=0.1$
\end{tabular}}
& Classical (labeled only) & $-$0.0002 & 0.1531 & 0.2299 & 0.8799 & 94.2 \\
& PPI (oracle $m=m_0$) & $-$0.0006 & 0.1039 & 0.1525 & 0.5942 & 94.8  \\
& PPI (fixed $m$: step) & 0.0012 & 0.1723 & 0.2565 & 1.0250 & 95.1 \\
& PPI (fixed $m$: dom.trend) & 0.0002 & 0.1542 & 0.2285 & 0.8799 & 94.5 \\
& PPI (fitted $\widehat m$: no CF, no VC) & 0.0178 & 0.3053 & 0.5196 & 0.5342 & 41.8 \\
& CF-PPI (fitted $\widehat m$: CF) & 0.0090 & 0.3133 & 0.5108 & 1.7689 & 91.3 \\
& SF-PPI-VC (fitted $\widehat m$: VC) & 0.0178 & 0.3053 & 0.5196 & 1.7456 & 91.5 \\
\hline

\multirow{6}{*}{\begin{tabular}{@{}l@{}}
$N=500$\\
$n=100$\\
$f=0.2$
\end{tabular}}
& Classical (labeled only) & 0.0010 & 0.1107 & 0.1624 & 0.6248 & 94.5 \\
& PPI (oracle $m=m_0$) & $-$0.0002 & 0.0793 & 0.1157 & 0.4482 & 94.6 \\
& PPI (fixed $m$: step) & 0.0015 & 0.1215 & 0.1798 & 0.7395 & 96.0 \\
& PPI (fixed $m$: dom.trend) & 0.0010 & 0.1100 & 0.1614 & 0.6268 & 95.0 \\
& PPI (fitted $\widehat m$: no CF, no VC) & 0.0005 & 0.1330 & 0.2156 & 0.4215 & 70.0 \\
& CF-PPI (fitted $\widehat m$: CF) & $-$0.0028 & 0.1484 & 0.2338 & 0.8521 & 93.4 \\
& SF-PPI-VC (fitted $\widehat m$: VC) & 0.0005 & 0.1330 & 0.2156 & 0.7879 & 94.4 \\
\hline

\multirow{6}{*}{\begin{tabular}{@{}l@{}}
$N=500$\\
$n=150$\\
$f=0.3$
\end{tabular}}
& Classical (labeled only) & 0.0002 & 0.0879 & 0.1319 & 0.5107 & 94.6 \\
& PPI (oracle $m=m_0$) & $-$0.0004 & 0.0671 & 0.0997 & 0.3873 & 94.7 \\
& PPI (fixed $m$: step) & 0.0003 & 0.0963 & 0.1442 & 0.6144 & 96.6 \\
& PPI (fixed $m$: dom.trend) & 0.0000 & 0.0865 & 0.1310 & 0.5139 & 94.9 \\
& PPI (fitted $\widehat m$: no CF, no VC) & 0.0009 & 0.0904 & 0.1365 & 0.3735 & 83.4 \\
& CF-PPI (fitted $\widehat m$: CF) & $-$0.0005 & 0.1016 & 0.1496 & 0.5887 & 95.5 \\
& SF-PPI-VC (fitted $\widehat m$: VC) & 0.0009 & 0.0904 & 0.1365 & 0.5225 & 95.2 \\
\hline

\multirow{6}{*}{\begin{tabular}{@{}l@{}}
$N=500$\\
$n=200$\\
$f=0.4$
\end{tabular}}
& Classical (labeled only) & $-$0.0004 & 0.0766 & 0.1141 & 0.4427 & 94.7 \\
& PPI (oracle $m=m_0$) & $-$0.0002 & 0.0605 & 0.0906 & 0.3530 &  94.7 \\
& PPI (fixed $m$: step) & $-$0.0007 & 0.0830 & 0.1227 & 0.5412 & 97.1 \\
& PPI (fixed $m$: dom.trend) & $-$0.0007 & 0.0763 & 0.1134 & 0.4469 & 95.2 \\
& PPI (fitted $\widehat m$: no CF, no VC) & 0.0002 & 0.0731 & 0.1089 & 0.3456 & 89.5 \\
& CF-PPI (fitted $\widehat m$: CF) & $-$0.0016 & 0.0795 & 0.1187 & 0.4773 & 95.7 \\
& SF-PPI-VC (fitted $\widehat m$: VC) & 0.0002 & 0.0731 & 0.1089 & 0.4189 & 94.8 \\
\hline

\multirow{6}{*}{\begin{tabular}{@{}l@{}}
$N=500$\\
$n=250$\\
$f=0.5$
\end{tabular}}
& Classical (labeled only) & $-$0.0008 & 0.0694 & 0.1019 & 0.3961 & 94.9 \\
& PPI (oracle $m=m_0$) & $-$0.0003 & 0.0569 & 0.0850 & 0.3305 & 94.6 \\
& PPI (fixed $m$: step) & $-$0.0011 & 0.0728 & 0.1083 & 0.4918 & 97.7 \\
& PPI (fixed $m$: dom.trend) & $-$0.0009 & 0.0692 & 0.1016 & 0.4010 & 95.1 \\
& PPI (fitted $\widehat m$: no CF, no VC) & 0.0000 & 0.0624 & 0.0942 & 0.3261 & 91.7 \\
& CF-PPI (fitted $\widehat m$: CF) & $-$0.0010 & 0.0684 & 0.1022 & 0.4156 & 95.9 \\
& SF-PPI-VC (fitted $\widehat m$: VC) & 0.0000 & 0.0624 & 0.0942 & 0.3646 & 94.8 \\
\hline
\end{tabular}
\end{table*}

\subsubsection{Monte Carlo numerical results at $N = 2000$}
The results for Simulation Experiment 2 at $N{=}2000$ are reported in Table~\ref{tab:ppi_n2000_simexp2}. As $f$ increases from $0.1$ to $0.5$, all procedures show monotonic declines in MAE, RMSE, and interval length; biases are negligible throughout. The classical estimator maintains coverage near $95\%$ but with the longest intervals. PPI with the oracle predictor ($m{=}m_0$) dominates the classical estimator at every $f$, delivering smaller errors and substantially shorter intervals while keeping coverage at roughly $95\%$. Among the misspecified fixed rules, PPI (fixed $m$: dom.trend) continues to outperform the classical estimator in both accuracy and interval length, whereas PPI (fixed $m$: step) is the least accurate across $f$, though its coverage is near or slightly above nominal.

The learned–predictor variants mirror the $N{=}500$ case. PPI (no CF, no VC) exhibits severe undercoverage at small $f$—about $78.6\%$ at $f{=}0.1$, $89.5\%$ at $f{=}0.2$, and $92.9\%$ at $f{=}0.3$—with coverage improving as $f$ increases and approaching nominal only by $f{=}0.5$. Introducing cross-fitting (CF-PPI) or variance correction (SF-PPI-VC) successfully restores coverage: both show only mild undercoverage at $f{=}0.1$ and are near $95\%$ for $f\!\ge\!0.2$, while retaining markedly shorter intervals than the classical estimator. By $f{=}0.5$, their performance is very close to that of the oracle PPI.

\begin{table*}[h!]
\caption{Results of Simulation Experiment 2 at $N=2000$ for varying labeled fractions $f=n/N$.}
\label{tab:ppi_n2000_simexp2}
\begin{tabular}{@{}llrrrrr@{}}
\hline
Sample size & Method &
\multicolumn{1}{c}{Bias} &
\multicolumn{1}{c}{MAE} &
\multicolumn{1}{c}{RMSE} &
\multicolumn{1}{c}{Int. len.} &
\multicolumn{1}{c@{}}{Cov. (\%)} \\
\hline

\multirow{7}{*}{\begin{tabular}{@{}l@{}}
$N=2000$\\
$n=200$\\
$f=0.1$
\end{tabular}}
& Classical (labeled only) & 0.0002 & 0.0764 & 0.1130 & 0.4423 & 94.6 \\
& PPI (oracle $m=m_0$) & $-$0.0003 & 0.0513 & 0.0757 & 0.2976 & 95.1 \\
& PPI (fixed $m$: step) & 0.0013 & 0.0869 & 0.1287 & 0.5140 & 95.3 \\
& PPI (fixed $m$: dom.trend) & 0.0008 & 0.0754 & 0.1129 & 0.4423 & 94.8 \\
& PPI (fitted $\widehat m$: no CF, no VC) & 0.0005 & 0.0737 & 0.1103 & 0.2697 & 78.6 \\
& CF-PPI (fitted $\widehat m$: CF) & $-$0.0006 & 0.0787 & 0.1169 & 0.4286 & 93.4 \\
& SF-PPI-VC (fitted $\widehat m$: VC) & 0.0005 & 0.0737 & 0.1103 & 0.4206 & 94.6 \\
\hline

\multirow{7}{*}{\begin{tabular}{@{}l@{}}
$N=2000$\\
$n=400$\\
$f=0.2$
\end{tabular}}
& Classical (labeled only) & 0.0002 & 0.0538 & 0.0798 & 0.3131 & 95.3 \\
& PPI (oracle $m=m_0$) & $-$0.0012 & 0.0385 & 0.0575 & 0.2244 & 94.7 \\
& PPI (fixed $m$: step) & 0.0003 & 0.0603 & 0.0893 & 0.3703 & 96.1 \\
& PPI (fixed $m$: dom.trend) & 0.0003 & 0.0538 & 0.0799 & 0.3141 & 95.2 \\
& PPI (fitted $\widehat m$: no CF, no VC) & $-$0.0010 & 0.0436 & 0.0663 & 0.2163 & 89.5 \\
& CF-PPI (fitted $\widehat m$: CF) & $-$0.0013 & 0.0459 & 0.0690 & 0.2673 & 94.8 \\
& SF-PPI-VC (fitted $\widehat m$: VC) & $-$0.0010 & 0.0436 & 0.0663 & 0.2564 & 94.4 \\
\hline

\multirow{7}{*}{\begin{tabular}{@{}l@{}}
$N=2000$\\
$n=600$\\
$f=0.3$
\end{tabular}}
& Classical (labeled only) & $-$0.0003 & 0.0445 & 0.0653 & 0.2558 & 95.1 \\
& PPI (oracle $m=m_0$) & $-$0.0010 & 0.0324 & 0.0492 & 0.1939 & 95.0 \\
& PPI (fixed $m$: step) & $-$0.0002 & 0.0487 & 0.0719 & 0.3076 & 96.9 \\
& PPI (fixed $m$: dom.trend) & $-$0.0002 & 0.0443 & 0.0653 & 0.2574 & 95.4 \\
& PPI (fitted $\widehat m$: no CF, no VC) & $-$0.0008 & 0.0344 & 0.0525 & 0.1904 & 92.9 \\
& CF-PPI (fitted $\widehat m$: CF) & $-$0.0014 & 0.0357 & 0.0541 & 0.2164 & 95.3 \\
& SF-PPI-VC (fitted $\widehat m$: VC) & $-$0.0008 & 0.0344 & 0.0525 & 0.2074 & 94.9 \\
\hline

\multirow{7}{*}{\begin{tabular}{@{}l@{}}
$N=2000$\\
$n=800$\\
$f=0.4$
\end{tabular}}
& Classical (labeled only) & $-$0.0003 & 0.0374 & 0.0560 & 0.2216 & 95.3 \\
& PPI (oracle $m=m_0$) & $-$0.0007 & 0.0297 & 0.0444 & 0.1766 & 95.2 \\
& PPI (fixed $m$: step) & $-$0.0003 & 0.0417 & 0.0615 & 0.2709 & 97.2 \\
& PPI (fixed $m$: dom.trend) & $-$0.0003 & 0.0378 & 0.0561 & 0.2237 & 95.4 \\
& PPI (fitted $\widehat m$: no CF, no VC) & $-$0.0004 & 0.0307 & 0.0462 & 0.1749 & 94.2 \\
& CF-PPI (fitted $\widehat m$: CF) & $-$0.0007 & 0.0316 & 0.0473 & 0.1908 & 95.4 \\
& SF-PPI-VC (fitted $\widehat m$: VC) & $-$0.0004 & 0.0307 & 0.0462 & 0.1834 & 95.2 \\
\hline

\multirow{7}{*}{\begin{tabular}{@{}l@{}}
$N=2000$\\
$n=1000$\\
$f=0.5$
\end{tabular}}
& Classical (labeled only) & $-$0.0003 & 0.0340 & 0.0504 & 0.1983 & 95.2 \\
& PPI (oracle $m=m_0$) & $-$0.0005 & 0.0278 & 0.0417 & 0.1654 & 95.2 \\
& PPI (fixed $m$: step) & $-$0.0003 & 0.0374 & 0.0545 & 0.2461 & 97.9 \\
& PPI (fixed $m$: dom.trend) & $-$0.0004 & 0.0342 & 0.0504 & 0.2007 & 95.6 \\
& PPI (fitted $\widehat m$: no CF, no VC) & $-$0.0004 & 0.0286 & 0.0429 & 0.1644 & 94.1 \\
& CF-PPI (fitted $\widehat m$: CF) & $-$0.0006 & 0.0293 & 0.0436 & 0.1753 & 95.6 \\
& SF-PPI-VC (fitted $\widehat m$: VC) & $-$0.0004 & 0.0286 & 0.0429 & 0.1690 & 94.9 \\
\hline
\end{tabular}
\end{table*}

To summarize, the results of the simulation experiments (Table~\ref{tab:ppi_n500_simexp1}--\ref{tab:ppi_n2000_simexp2}) indicate that: (1) CF-PPI and SF-PPI-VC consistently outperform the classical estimator that relies solely on labeled data across all scenarios, ranging from a small to a modest labeling fraction and under both true function scenarios, highlighting the advantage of utilizing unlabeled data rather than discarding it; (2) a misspecified prediction rule, such as a crude step function or relying only on the dominant trend, may perform worse than the classical estimator, so PPI should be used with a reasonably accurate prediction rule (unbiasedness is guaranteed by design, but efficiency improvement depends on prediction accuracy); and (3) the efficiency of PPI, stemming from its systematic integration of the prediction rule, is best realized when either sample-splitting (CF-PPI) or variance adjustment is employed and the predictor is fitted only once (SF-PPI-VC). These strategies enable more reliable inference, particularly when the labeled sample is substantially smaller than the pool of unlabeled data. Vanilla PPI approaches without these safeguards may lead to undercoverage.

\section{Real data application: Energy Efficiency dataset}\label{sec:Real data applications}
We apply PPI methods to the \emph{Energy Efficiency} dataset, available from the \href{https://archive.ics.uci.edu/dataset/242/energy+efficiency}{UCI Machine Learning Repository}. The dataset comprises \(768\) building designs with eight continuous covariates: \(x_1\) = relative compactness, \(x_2\) = surface area, \(x_3\) = wall area, \(x_4\) = roof area, \(x_5\) = overall height, \(x_6\) = orientation, \(x_7\) = glazing area, and \(x_8\) = glazing area distribution. The data include two response variables, \emph{Heating Load} and \emph{Cooling Load}; in this application we set \(y\) to be \emph{Heating Load}. There are no missing values. This dataset was also analyzed in \cite{lee2026mec}.

We follow a semi-supervised setup, where a subset of observations is randomly designated as labeled and the remainder as unlabeled (covariates only). Specifically, we take \(n=115\) labeled pairs \(\{(X_j, Y_j): j \in S\}\) and \(N=653\) unlabeled covariates \(\{X_i: i=1,\ldots,N\}\), so the labeling fraction is \(f = n/N = 115/653 \approx 0.176\). Our target parameter is the population mean of the response, \(\theta = \mathbb{E}[Y]\) (i.e., the population mean of Heating Load), under the working model \(Y = m(X) + \varepsilon\) with \(\mathbb{E}[\varepsilon \mid X] = 0\), where $m$ is unknown. Because this is a real dataset, the true value of \(\theta\) is unknown; as a reference, we compute the full-sample mean using all 768 observations, obtaining \(\bar{Y}_{\mathrm{full}} = 22.307\), which we use as the ground truth for this application.

We compare the classical estimator $\bar{Y}_{n}$ based on the labeled subset ($n = 115$), the classical estimator $\bar{Y}_{\mathrm{full}}$ computed using the entire dataset (768 observations; used as a reference or pseudo–ground truth), vanilla PPI, CF--PPI, and SF--PPI--VC. Throughout, we use KRR with the same configuration as in the simulation experiments described in Subsection~\ref{subsec:Simulation Setting}.

\begin{figure}[h!]
    \centering
    \includegraphics[width=1\linewidth]{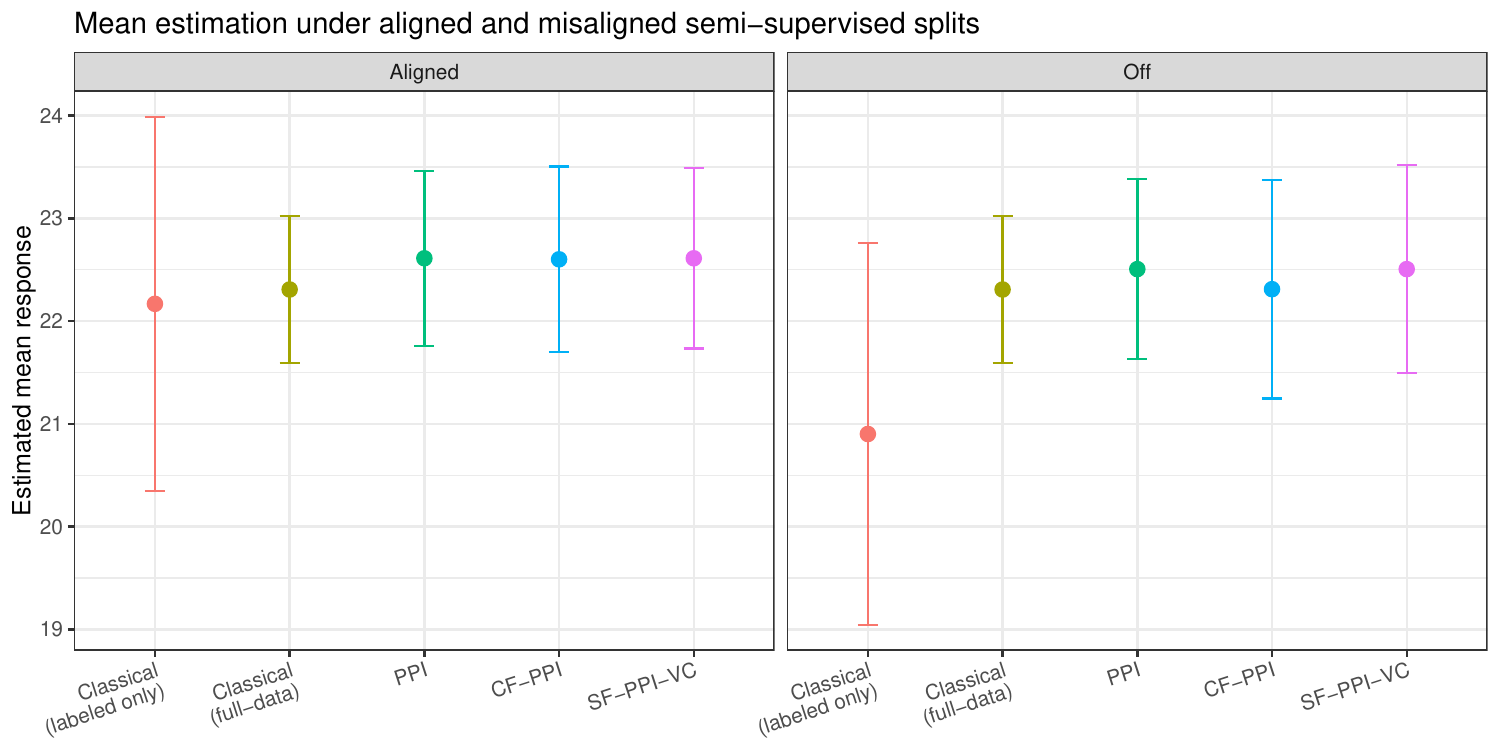}
    \caption{Mean estimation under aligned and misaligned semi-supervised splits in a real-data application for Energy Efficiency dataset. We compare the classical estimator based on the labeled subset, the classical estimator computed using the full dataset (used as a reference), and three PPI-based debiased estimators: vanilla PPI, CF--PPI, and SF--PPI--VC.
The labeled set is obtained by random subsampling, leading to two representative cases:
an \emph{aligned} case (left), in which the labeled-only estimate is close to the full-data reference, and a \emph{misaligned} case (right), in which the two differ due to sampling variability.
Error bars denote 95\% confidence intervals.
PPI-based methods produce estimates closer to the reference value with confidence intervals that are tighter than those of the labeled-only estimator.}
    \label{fig:real_data_application_plot}
\end{figure}




Because the labeled set is constructed by random subsampling from the full dataset, the labeled-only estimator $\bar{Y}_{n}$ may differ from the full-data estimator $\bar{Y}_{\mathrm{full}}$ due to sampling variability. To examine the robustness of the bias-correction mechanisms employed by PPI-based debiased estimators (vanilla PPI, CF--PPI, and SF--PPI--VC), we consider two representative scenarios: one in which $\bar{Y}_{n}$ is close to $\bar{Y}_{\mathrm{full}}$, and another in which the two quantities differ appreciably.

A desirable debiased estimator should satisfy two criteria: (i) its point estimate should be close to the reference value $\bar{Y}_{\mathrm{full}} = 22.307$, and (ii) its 95\% confidence interval should be narrower than that of the labeled-only estimator, $\bar{Y}_{n} \pm 1.96\,\widehat{\mathrm{SE}}_{n}$, while remaining no tighter than the benchmark interval based on the full dataset, $\bar{Y}_{\mathrm{full}} \pm 1.96\,\widehat{\mathrm{SE}}_{\mathrm{full}}$. The simultaneous appearance of these properties provides evidence that the debiasing mechanism is functioning as intended.

Figure~\ref{fig:real_data_application_plot} summarizes the results. In the aligned case (left panel), all estimators produce similar point estimates, with modest differences in interval width. The debiased methods yield confidence intervals that are consistently tighter than those of the classical labeled-only estimator. Among these, vanilla PPI attains the narrowest intervals, which may reflect overfitting issue due to the label reuse. In the misaligned case (right panel), the debiased estimators remain centered near the reference value, whereas the labeled-only estimator exhibits a noticeable deviation. Because this analysis is based on a single real-data instance, direct comparisons between CF--PPI and SF--PPI--VC are necessarily limited.

\section{Conclusion}\label{sec:Conclusion}
We studied the statistical optimality of PPI under both fixed and learned prediction rules, together with its associated semiparametric efficiency theory, from first principles. This work addressed a critical gap in the modern ML literature, where semi-supervised inference procedures have increasingly been built around PPI. While PPI and CF--PPI appeared in prior work \citep{angelopous2023,zrnic2024cross}, the theoretical analysis developed in this paper relied on novel empirical process and $M$-estimation techniques that were not employed in the original PPI or CF--PPI studies. Moreover, we derived the semiparametric efficiency bound for PPI primarily via the Riesz representation theorem, which enabled us to characterize the conditions under which efficiency was attained. Finally, we introduced SF--PPI--VC, a new variant of PPI that enabled valid inference without cross-fitting through an explicit variance correction and constituted the main methodological contribution of this work. Simulation studies and real-data applications supported our theoretical findings.

\begin{appendix}

\section{Proofs of theoretical results in the main document}
\thispagestyle{empty} 

\subsection{Preliminaries: setup, notation, and roadmap}\label{subsec:Preliminaries: Assumptions and Setup.}
Let $U(\theta;x,y)\in\mathbb{R}^p$ be a measurable full–data estimating function. 
Examples include the population mean ($p=1$, $\theta=\mathbb{E}[Y]$, $U(\theta;x,y)=y-\theta$), 
linear regression ($\theta\in\mathbb{R}^p$, $U(\beta;x,y)=x\{y-x^\top\theta\}$), 
logistic regression with the canonical link ($\theta\in\mathbb{R}^p$, $U(\beta;x,y)=x\{y-\mathrm{expit}(x^\top\theta)\}$, where $\mathrm{expit}(t)=1/(1+e^{-t})$), 
and the $\tau$–quantile of $Y$ ($p=1$, $\theta$, $U(\theta;x,y)=\tau-\mathbf{1}\{y\le \theta\}$; for example, the median corresponds to $\tau=1/2$).

Define the super–population score, the finite–population (oracle) score, and the prediction-powered inference (PPI) score \citep{angelopous2023} by
\begin{align*}
U(\theta)
&:= \mathbb{E}\{U(\theta;X,Y)\}
= \int U(\theta;x,y)\, dP_0(x,y),\\[2mm]
U_N(\theta)
&:= \frac{1}{N}\sum_{i=1}^N U(\theta;X_i,Y_i),\\[2mm]
\widehat U_{\mathrm{PPI}}(\theta)
&:= \frac{1}{N}\sum_{i=1}^N U\!\big(\theta;X_i,m(X_i)\big)
\;+\; \frac{1}{n}\sum_{j\in S}\!\Big\{U(\theta;X_j,Y_j)-U\!\big(\theta;X_j,m(X_j)\big)\Big\}.
\end{align*}
Note that $U(\theta)$ is the population moment condition under $P_0$, 
$U_N(\theta)$ is its finite–sample analogue based on the full data, 
and $\widehat U_{\mathrm{PPI}}(\theta)$ is the computable version that combines predictions $m(X_i)$ for all units with residual corrections from the labeled set $S$.

The corresponding roots are
\[
\theta_0 \in \{\theta:\ U(\theta)=0\}, \qquad
\theta_N \in \{\theta:\ U_N(\theta)=0\}, \qquad
\widehat\theta_{\mathrm{PPI}} \in \{\theta:\ \widehat U_{\mathrm{PPI}}(\theta)=0\}.
\]
Here, the inferential target (i.e., estimand) of PPI is $\theta_0$, the unique solution to the super–population moment condition $U(\theta)=0$. 
The quantity $\theta_N$ is the finite–population (oracle) analogue based on the full data, and $\widehat\theta_{\mathrm{PPI}}$ is the feasible estimator constructed from predictions and labeled residuals.


We begin by stating the regularity conditions for the PPI score, the finite–population (oracle) score, and the super–population score, together with their solutions (roots).

\paragraph{Assumption 1 (Design–level regularity).}\label{ass:design} 
\begin{enumerate}\itemsep4pt
\item[\textbf{(a)}] \textbf{Identification:} $U_N(\theta)$ has a unique zero $\theta_N\in\Theta$.
\item[\textbf{(b)}] \textbf{Uniform score consistency:} 
\[
\sup_{\theta\in\Theta}\,\big\|\widehat U_{\mathrm{PPI}}(\theta)-U_N(\theta)\big\|_2=o_p(1).
\]
Here, $o_p(1)$ and probability statements below are with respect to the
sampling design, conditional on the finite population $\mathcal F_N$.
\item[\textbf{(c)}] \textbf{Jacobian stability \& nonsingularity:}
\[
\sup_{\theta\in\Theta}\,\big\|\partial_\theta \widehat U_{\mathrm{PPI}}(\theta)-\partial_\theta U_N(\theta)\big\|_{\op}=o_p(1),
\qquad
I_N(\theta_N):=-\partial_\theta U_N(\theta_N)\ \text{ is nonsingular.}
\]
\end{enumerate}

\paragraph{Assumption 2 (Super–population regularity).}\label{ass:super}
Assume $(X_i,Y_i)\stackrel{\mathrm{i.i.d.}}{\sim}P_0$. Let $U(\theta):=\mathbb E\{U(\theta;X,Y)\}$.  
For every compact $K\subset\Theta$,
\[
\sup_{\theta\in K}\big\|U_N(\theta)-U(\theta)\big\|_2\xrightarrow{p}0.
\]
The map $U(\theta)$ has a unique zero $\theta_0\in\Theta$, and $U(\theta;x,y)$ is continuously
differentiable in a neighborhood of $\theta_0$ with
\[
I(\theta_0):=-\mathbb E\{\partial_\theta U(\theta_0;X,Y)\}\ \text{nonsingular.}
\]

\paragraph{Assumption 3 (Second moment regularity).}\label{ass:clt}
Finite second moments exist and a finite-population central limit theorem (CLT) applies under simple random sampling without replacement (SRSWOR):
\[
\mathbb E\big\|U(\theta_0;X,Y)\big\|_2^2<\infty,\quad
\mathbb E\big\|\Delta(\theta_0;X,Y)\big\|_2^2<\infty,\quad
\Delta(\theta;x,y):=U(\theta;x,y)-U\big(\theta;x,m(x)\big).
\]
With $n,N\to\infty$ and $n/N\to f\in(0,1)$, the centered averages of
$U(\theta_0;X_i,Y_i)$ and $\Delta(\theta_0;X_i,Y_i)$ satisfy a multivariate CLT under SRSWOR.

Before turning to the main results, we record several clarifying remarks on the assumptions and setup.

\paragraph{Asymptotic notation.}
Unless stated otherwise, limits are taken as the sample size \(N\to\infty\) (and \(n=n(N)\to\infty\) when present).
For deterministic sequences \(a_N,b_N>0\), we write \(a_N=O(b_N)\) if \(\sup_N a_N/b_N<\infty\), and \(a_N=o(b_N)\) if \(a_N/b_N\to 0\).
For random quantities \(X_N\) and positive scales \(a_N\), 
\(X_N=O_p(a_N)\) means \(X_N/a_N\) is tight (bounded in probability), and \(X_N=o_p(a_N)\) means \(X_N/a_N \xrightarrow{p} 0\).
We use \(\xrightarrow{p}\) and \(\xrightarrow{d}\) to denote convergence in probability and in distribution, respectively.

\paragraph{Labeling fraction.}
We define the labeling fraction by \(f_N := n/N\) and assume \(f_N \to f \in (0,1)\) as \(N \to \infty, \, n = n(N) \to \infty\) for a fixed constant \(f\). Under SRSWOR, \(f\) is the design inclusion probability (labeling propensity score) and is typically known. For notational convenience, we suppress the subscript and write simply \(f\) for \(f_N\) as this is clear from context. Throughout, we assume $f$ is bounded away from both $0$ and $1$. If $f \to 0$, only a vanishing proportion of units are labeled, and inference requires a different rare-label asymptotic regime, which is outside the scope of this work.
If $f \to 1$, the problem reduces to the fully supervised setting, in which case the rectifier is negligible; our theory extends to this case trivially, but it is not of practical interest for semi-supervised inference.
Hence, the intermediate regime $f \in (0,1)$ is the most relevant in practice for semi-supervised inference \citep{zhu2009introduction,sohn2020fixmatch,song2024general}.

\paragraph{Assumptions 1–3.}
Assumptions 1–3—design-level regularity, super-population regularity, and second-moment regularity—are mild and standard conditions used to establish the consistency and asymptotic normality of the PPI estimator and, more broadly, of estimators in semi-supervised learning; see \citep{song2024general,angelopous2023,fisch2024stratified}.
We invoke these assumptions only in Subsections~\ref{subsec:Consistency of the PPI Estimator} and~\ref{subsec:General $M$-Estimation Theory of Prediction-Powered Inference}, which analyze the baseline PPI under a fixed prediction rule \(m\) and develop consistency and a general \(M\)-estimation CLT.
By contrast, the semiparametric efficiency analysis and the asymptotic theory for learned predictors rely on self-contained assumption sets introduced in Subsection~\ref{subsec:Semiparametric Efficiency Theory of Prediction-Powered Inference} and in Subsections~\ref{subsec:Asymptotic Properties of CF-PPI} and~\ref{subsec:Asymptotic Properties of SF-PPI-VC}.

\paragraph{Empirical-process notation.}
In the proofs of Subsections \ref{subsec:Asymptotic Properties of CF-PPI} and \ref{subsec:Asymptotic Properties of SF-PPI-VC}, we adopt standard empirical‐process notation \citep{geer2000empirical,pollard1989asymptotics,kennedy2016semiparametric}.
Let $P$ denote the super–population distribution of $X$. Define the empirical measures
\[
P_N := \frac{1}{N}\sum_{i=1}^N \delta_{X_i},
\qquad
P_n := \frac{1}{n}\sum_{j\in S} \delta_{X_j},
\]
where $\delta_z$ is the Dirac measure at $z$ and $S\subset\{1,\ldots,N\}$ with $|S|=n$.
For any measurable $h$,
\begin{align*}
P_N h &:= \int h\,dP_N = \frac{1}{N}\sum_{i=1}^N h(X_i),\\
P_n h &:= \int h\,dP_n = \frac{1}{n}\sum_{j\in S} h(X_j),\\
P h &:= \int h\,dP = \mathbb{E}_P[h(X)].    
\end{align*}
When a function depends on both covariates and outcomes, we use the same notation with the obvious modification; e.g.,
\[
P_n h = \frac{1}{n}\sum_{j\in S} h(X_j,Y_j), \qquad P h = \mathbb{E}_P[h(X,Y)].
\]
For example, if $h(x)=\mathbf{1}\{x\in A\}$, then $P_n h$ is the empirical probability of the event $A$ within the labeled subset $S$.
By the law of large numbers, $P_N h \to P h$ and (under SRSWOR with $n/N\to f\in(0,1)$) $P_n h \to P h$ in probability. This notation streamlines decompositions of estimators and estimands and the statement of convergence results. 

Figure \ref{fig:roadmap} organizes the main theoretical results by how the prediction rule \(m\) is treated—fixed, treated as a nuisance (model-level), or learned—and shows where each result is proved.
\begin{figure}[h]
  \centering
  \includegraphics[width=0.8\columnwidth]{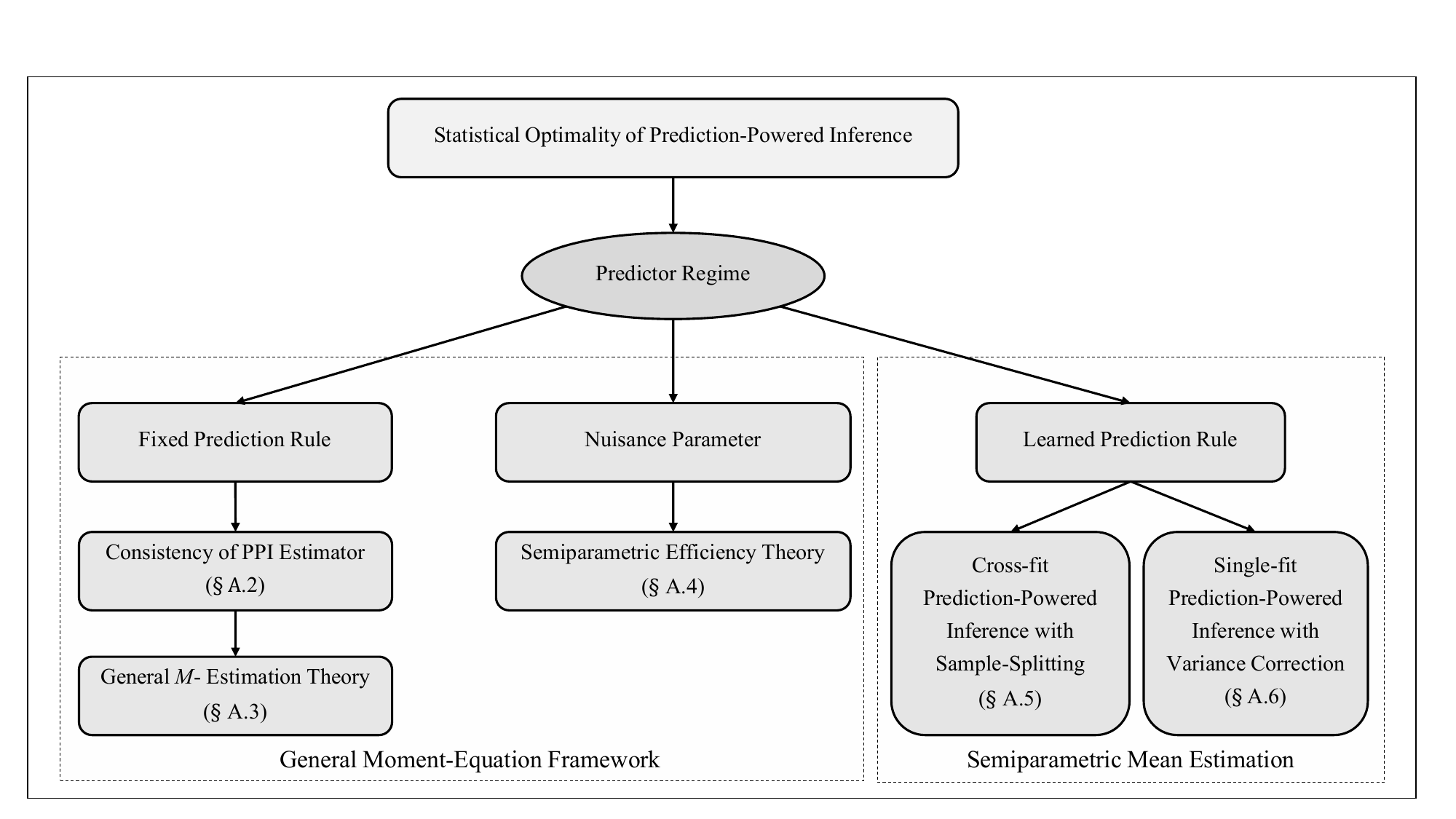}
  \caption{Roadmap of theoretical results by predictor regime (i.e., how the prediction rule \(m\) is treated).
  Left: with a fixed prediction rule, we establish consistency of the PPI estimator (\S~\ref{subsec:Consistency of the PPI Estimator}) and a general \(M\)-estimation CLT (\S~\ref{subsec:General $M$-Estimation Theory of Prediction-Powered Inference}).
  Center: treating \(m\) as a nuisance parameter yields the semiparametric efficiency theory (\S~\ref{subsec:Semiparametric Efficiency Theory of Prediction-Powered Inference}).
  Right: with a learned prediction rule, we analyze cross-fit PPI with sample-splitting (\S~\ref{subsec:Asymptotic Properties of CF-PPI}) and single-fit PPI with variance correction (\S~\ref{subsec:Asymptotic Properties of SF-PPI-VC}).}
  \label{fig:roadmap}
\end{figure}



\paragraph{Convex risk minimization.} In this paper, we mainly discuss the PPI estimator as a solution to the moment equation \eqref{eq:pop-moment}. It is important to note that any solution to the moment equation \eqref{eq:pop-moment} can also be characterized as a minimizer of a convex risk, and the authors of the original work on PPI \citep{angelopous2023} mainly discuss PPI as the minimizer of a convex risk.

More specifically, if there exists a measurable loss \(L:\Theta\times\mathcal X\times\mathcal Y\to\mathbb R\) that is convex in \(\theta\) and satisfies \(U(\theta;x,y)\in\partial_\theta L(\theta;x,y)\) for each \((x,y)\), then, under standard conditions allowing interchange of expectation and subdifferential, \(0\in\partial_\theta\,\E\!\big[L(\theta;X,Y)\big]
= \E\!\big[\partial_\theta L(\theta;X,Y)\big]
= \E\!\big[U(\theta;X,Y)\big]\). Hence any root \(\theta_0\) of the moment equation also satisfies
\(\theta_0\in\arg\min_{\theta\in\Theta}\E[L(\theta;X,Y)]\). Conversely, if \(\theta\mapsto\E[L(\theta;X,Y)]\) is strictly convex, its unique minimizer \(\theta_0\) satisfies \(\E[U(\theta_0;X,Y)]=0\).

\subsection{Consistency of the PPI estimator}\label{subsec:Consistency of the PPI Estimator}
The consistency of the PPI estimator was established in Subsection~\ref{subsec:ppi-consistency} of the main document. Here, we provide proofs of the propositions that were stated there without proof.

\begin{proposition}[Design--unbiasedness for an arbitrary fixed predictor]\label{thm:ppi-unbiased-detailed}
Let $\mathcal F_N=\{(X_i,Y_i)\}_{i=1}^N$ be a finite population and let
$U:\Theta\times\mathcal X\times\mathcal Y\to\mathbb R^p$ be measurable.
Fix any prediction rule $m:\mathcal X\to\mathcal Y$ (not necessarily $m_0(x)=\mathbb E[Y\mid X=x]$). Let $\widetilde U_i(\theta):=U\!\big(\theta; X_i, m(X_i)\big)$
denote the plug-in moment function, obtained by replacing the outcome $Y_i$ with its prediction $m(X_i)$, for each $i=1,\ldots,N$.

For $\theta\in\Theta$ define the measure of fit term
\[
m_\theta:=\frac{1}{N}\sum_{i=1}^N \widetilde U_i(\theta).
\]
Draw a labeled set $S\subset\{1,\dots,N\}$ of size $n$ by simple random sampling without replacement (SRSWOR),
and define the rectifier term
\[
\Delta_\theta:=\frac{1}{n}\sum_{j\in S}\Big\{U(\theta;X_j,Y_j)-\widetilde U_j(\theta)\Big\}.
\]
The PPI score is $\widehat U_{\mathrm{PPI}}(\theta):=m_\theta+\Delta_\theta$. Then, conditioning on $\mathcal F_N$,
\[
\mathbb E\!\left[\widehat U_{\mathrm{PPI}}(\theta)\mid\mathcal F_N\right]
=\frac{1}{N}\sum_{i=1}^N U(\theta;X_i,Y_i)
=U_N(\theta).
\]
That is, the PPI score is design-unbiased for the finite-population moment $U_N(\theta)$ under SRSWOR. In particular, if $\theta_N$ solves $U_N(\theta)=0$, then $\mathbb E[\widehat U_{\mathrm{PPI}}(\theta_N)\mid\mathcal F_N]=0$.
\end{proposition}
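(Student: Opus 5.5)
The plan is to rewrite the rectifier as a Horvitz--Thompson sum over the whole finite population and then use the first-order inclusion probabilities of SRSWOR. Fix $\theta\in\Theta$ and condition on $\mathcal F_N$ throughout. Conditionally on $\mathcal F_N$, every quantity $U(\theta;X_i,Y_i)$, $\widetilde U_i(\theta)$ and $\Delta_i(\theta):=U(\theta;X_i,Y_i)-\widetilde U_i(\theta)$ is a fixed vector in $\mathbb R^p$. The only source of randomness is the labeled set $S$, equivalently the inclusion indicators $\delta_i=\mathbf 1\{i\in S\}$. The fit term $m_\theta=N^{-1}\sum_{i=1}^N\widetilde U_i(\theta)$ does not involve $S$, so $\mathbb E[m_\theta\mid\mathcal F_N]=m_\theta$.

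For the rectifier, I will write $\Delta_\theta=n^{-1}\sum_{i=1}^N\delta_i\Delta_i(\theta)$. Since $n=fN$, this equals $N^{-1}\sum_{i=1}^N(\delta_i/f)\Delta_i(\theta)$, which is the Horvitz--Thompson identity mentioned in Subsection~\ref{subsec:ppi-consistency}. Under SRSWOR all $\binom{N}{n}$ subsets are equally likely. Hence, by symmetry (or by counting the subsets that contain $i$, namely $\binom{N-1}{n-1}$ of them), $\Pr(\delta_i=1\mid\mathcal F_N)=n/N=f$ for every $i$.

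Linearity of conditional expectation, applied componentwise to the finite sum, then gives
\[
\mathbb E[\Delta_\theta\mid\mathcal F_N]=\frac1N\sum_{i=1}^N\frac{\mathbb E[\delta_i\mid\mathcal F_N]}{f}\,\Delta_i(\theta)=\frac1N\sum_{i=1}^N\Delta_i(\theta).
\]
Adding $m_\theta$, the $\widetilde U_i(\theta)$ terms cancel exactly, which leaves $N^{-1}\sum_{i=1}^N U(\theta;X_i,Y_i)=U_N(\theta)$. Taking $\theta=\theta_N$ and using $U_N(\theta_N)=0$ gives the final claim.

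There is no genuine obstacle here. The one point needing care is that the argument uses only first-order inclusion probabilities, so the dependence among the $\delta_i$ under sampling without replacement is irrelevant. It also does not matter whether $m$ is correctly specified, because $m$ is fixed and therefore $\mathcal F_N$-measurable. Integrability is trivial, since all sums are finite given $\mathcal F_N$.
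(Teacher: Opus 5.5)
Your proposal is correct and follows the paper's proof essentially step for step: rewrite the rectifier as $n^{-1}\sum_{i=1}^N \delta_i\Delta_i(\theta)$, use $\mathbb E(\delta_i\mid\mathcal F_N)=n/N$ under SRSWOR together with linearity, and let the $\widetilde U_i(\theta)$ terms cancel against $m_\theta$. The Horvitz--Thompson rescaling by $1/f$ and the counting argument $\binom{N-1}{n-1}/\binom{N}{n}=n/N$ are only cosmetic additions; the paper proves the Horvitz--Thompson form separately in the next proposition.
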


\begin{proof}
Introduce the sample–membership indicators $\delta_i:=\mathbf 1\{i\in S\}$ and the pointwise difference
\[
\Delta_i(\theta):=U(\theta;X_i,Y_i)-\widetilde U_i(\theta),\qquad i=1,\dots,N.
\]
Then the rectifier can be rewritten as a population sum weighted by indicators:
\begin{equation}\label{eq:rectifier-indicator}
\Delta_\theta=\frac{1}{n}\sum_{j\in S} \Delta_j(\theta)
=\frac{1}{n}\sum_{i=1}^N \delta_i\,\Delta_i(\theta).
\end{equation}
Under SRSWOR of size $n$, each unit has equal inclusion probability
\[
\mathbb E(\delta_i\mid\mathcal F_N)=\mathbb{P}(i\in S\mid\mathcal F_N)=\frac{n}{N},\qquad i=1,\dots,N.
\]
Taking conditional expectation of \eqref{eq:rectifier-indicator} and using linearity,
\begin{align*}
\mathbb E(\Delta_\theta\mid\mathcal F_N)
&=\frac{1}{n}\sum_{i=1}^N \mathbb E(\delta_i\mid\mathcal F_N)\,\Delta_i(\theta)
=\frac{1}{n}\sum_{i=1}^N \frac{n}{N}\,\Delta_i(\theta)
=\frac{1}{N}\sum_{i=1}^N \Delta_i(\theta)\\
&=\frac{1}{N}\sum_{i=1}^N \Big\{U(\theta;X_i,Y_i)-\widetilde U_i(\theta)\Big\}.
\end{align*}
Therefore,
\begin{align*}
\mathbb E\!\left[\widehat U_{\mathrm{PPI}}(\theta)\mid\mathcal F_N\right]
&= m_\theta + \mathbb E(\Delta_\theta\mid\mathcal F_N)\\
&=\frac{1}{N}\sum_{i=1}^N \widetilde U_i(\theta)
+ \frac{1}{N}\sum_{i=1}^N \Big\{U(\theta;X_i,Y_i)-\widetilde U_i(\theta)\Big\}\\
&=\frac{1}{N}\sum_{i=1}^N \Big[
\cancel{\widetilde U_i(\theta)} + U(\theta;X_i,Y_i) - \cancel{\widetilde U_i(\theta)}
\Big]
=\frac{1}{N}\sum_{i=1}^N U(\theta;X_i,Y_i)=U_N(\theta).
\end{align*}
No property of $m$ beyond being fixed (measurable) given $\mathcal F_N$ was used; in particular, $m$ need not equal the oracle regression \(m_0(x)=\mathbb{E}[Y\mid X=x]\).
\end{proof}

\begin{proposition}[Rectifier in Horvitz--Thompson form; design unbiasedness and variance]
Consider the rectifier
\[
\Delta_\theta \;:=\; \frac{1}{n}\sum_{j\in S}\Delta_j(\theta),
\qquad
\Delta_i(\theta):=U(\theta;X_i,Y_i)-U(\theta;X_i,m(X_i)),
\]
where $S\subset\{1,\dots,N\}$ is drawn by SRSWOR of size $n$. Let $\mathcal F_N=\{(X_i,Y_i)\}_{i=1}^N$ be a finite population. Let $\delta_i=\mathbf 1\{i\in S\}$ and $f:=n/N$. Let $m$ be any fixed predictor.

Then:
\begin{enumerate}
\item[\emph{\textbf{(a)}}] \textbf{\emph{Horvitz--Thompson identity.}} The rectifier can be expressed in the Horvitz--Thompson form \citep{horvitz1952}:
\[
\Delta_\theta \;=\; \frac{1}{N}\sum_{i=1}^N \frac{\delta_i}{f}\,\Delta_i(\theta).
\]
\item[\emph{\textbf{(b)}}] \textbf{\emph{Design unbiasedness.}} The rectifier is design--unbiased for the finite--population mean:
\[
\mathbb E\!\left[\Delta_\theta \,\middle|\, \mathcal F_N\right]
\;=\;
\overline{\Delta}_N(\theta)
\;:=\;
\frac{1}{N}\sum_{i=1}^N \Delta_i(\theta).
\]
\item[\emph{\textbf{(c)}}] \textbf{\emph{Design variance under SRSWOR.}} The rectifier has the following finite--population variance:
\[
\Var\!\left(\Delta_\theta \,\middle|\, \mathcal F_N\right)
\;=\;
\frac{1-f}{n}\cdot\frac{1}{N-1}
\sum_{i=1}^N \big\{\Delta_i(\theta)-\overline{\Delta}_N(\theta)\big\}
\big\{\Delta_i(\theta)-\overline{\Delta}_N(\theta)\big\}^{\!\top}.
\]
\end{enumerate}
\end{proposition}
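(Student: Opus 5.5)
The plan is to work entirely with the inclusion indicators $\delta_i=\mathbf 1\{i\in S\}$, conditioning on $\mathcal F_N$ throughout. Under this conditioning the vectors $\Delta_i(\theta)$ are fixed constants and the only randomness comes from the SRSWOR design.

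For part (a), I would simply rewrite the sum over $S$ as a sum over all units weighted by $\delta_i$, exactly as in \eqref{eq:rectifier-indicator} from the proof of Proposition~\ref{thm:ppi-unbiased-detailed}. This gives $\Delta_\theta=n^{-1}\sum_{i=1}^N\delta_i\Delta_i(\theta)$. Since $1/n=1/(Nf)$ with $f=n/N$, this is the same as $N^{-1}\sum_i(\delta_i/f)\Delta_i(\theta)$, which is the Horvitz--Thompson form with inclusion probability $\pi_i=f$.

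For part (b), I would take conditional expectations using $\mathbb E(\delta_i\mid\mathcal F_N)=f$. Then $\mathbb E[\Delta_\theta\mid\mathcal F_N]=N^{-1}\sum_i\Delta_i(\theta)=\overline\Delta_N(\theta)$. This is the same computation already carried out in the proof of Proposition~\ref{thm:ppi-unbiased-detailed}, so it can be cited directly.

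For part (c), I would use the first- and second-order inclusion moments under SRSWOR. These are $\Var(\delta_i\mid\mathcal F_N)=f(1-f)$ and, for $i\neq k$,
\[
\Cov(\delta_i,\delta_k\mid\mathcal F_N)=\frac{n(n-1)}{N(N-1)}-f^2=-\frac{f(1-f)}{N-1}.
\]
Set $a_i:=\Delta_i(\theta)-\overline\Delta_N(\theta)$. Since $\sum_i\delta_i=n$ is deterministic, $\Delta_\theta-\overline\Delta_N(\theta)=n^{-1}\sum_i\delta_i a_i$. The conditional covariance matrix is then
\[
n^{-2}\sum_{i,k}\Cov(\delta_i,\delta_k\mid\mathcal F_N)\,a_ia_k^\top .
\]
I would split this into diagonal and off-diagonal parts:
\[
n^{-2}f(1-f)\Big[\sum_i a_ia_i^\top-\frac{1}{N-1}\sum_{i\neq k}a_ia_k^\top\Big].
\]
The key step is that $\sum_i a_i=0$, which gives $\sum_{i\neq k}a_ia_k^\top=-\sum_i a_ia_i^\top$. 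The bracket therefore collapses to $\frac{N}{N-1}\sum_i a_ia_i^\top$. Multiplying through, $n^{-2}f(1-f)N/(N-1)=(1-f)/\{n(N-1)\}$, which yields the stated formula.

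The main obstacle is the bookkeeping in (c): getting the joint inclusion probability right and applying the centering identity so that the cross terms collapse. Everything else is a direct algebraic rewriting.
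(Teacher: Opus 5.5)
Your proposal is correct and follows essentially the same route as the paper: rewrite the rectifier with inclusion indicators for (a)--(b), then compute the design variance from $\Var(\delta_i\mid\mathcal F_N)=f(1-f)$ and $\Cov(\delta_i,\delta_k\mid\mathcal F_N)=-f(1-f)/(N-1)$. The differences are cosmetic. You get the covariance from the joint inclusion probability, center the $\Delta_i$ first so the cross terms collapse via $\sum_i a_i=0$, and work with outer products directly. The paper instead derives the covariance from $\Var(\sum_i\delta_i\mid\mathcal F_N)=0$, works with uncentered scalars, and centers only at the end.
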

\paragraph{Remark -- Implications for PPI.} 
The rectifier has Horvitz--Thompson form and is design–unbiased for the finite–population residual moment for any $m$; only its variance depends on $m$, through the dispersion of $\{\Delta_i(\theta)\}$. Thus $m$ can improve efficiency but cannot induce design bias.

\begin{proof}
$\quad$

\textbf{(a)} By definition,
\[
\Delta_\theta
= \frac{1}{n}\sum_{j\in S}\Delta_j(\theta)
= \frac{1}{n}\sum_{i=1}^N \delta_i\,\Delta_i(\theta)
= \sum_{i=1}^N \!\Big(\frac{1}{n}\delta_i\Big)\Delta_i(\theta)
= \sum_{i=1}^N \!\Big(\frac{1}{N}\frac{\delta_i}{f}\Big)\Delta_i(\theta),
\]
since $f=n/N$.

\textbf{(b)} Conditioning on $\mathcal F_N$, the quantities $\Delta_i(\theta)$ are fixed (all randomness comes from the SRSWOR indicators $\delta_i$). For SRSWOR of size $n$ from $\{1,\dots,N\}$,
\[
\mathbb E(\delta_i\mid\mathcal F_N)=f=\frac{n}{N},
\]
Taking conditional expectation and using linearity on the rectifier gives
\[
\mathbb E[\Delta_\theta\mid\mathcal F_N]
= \frac{1}{N}\sum_{i=1}^N \frac{\mathbb E[\delta_i\mid\mathcal F_N]}{f}\,\Delta_i(\theta)
= \frac{1}{N}\sum_{i=1}^N \Delta_i(\theta)
= \overline{\Delta}_N(\theta).
\]

\textbf{(c)} For clarity of exposition, we present the proof for the scalar case $p=1$; the vector case $p>1$ follows by replacing squares with outer products. We know that, by conditioning on $\mathcal F_N$, the quantities $\Delta_i(\theta)$ are fixed, hence, $\mathbb E(\delta_i\mid\mathcal F_N)=f=n/N$ and $\Var(\delta_i\mid\mathcal F_N)=f(1-f)$. Moreover, by exchangeability and the identity $\sum_{i=1}^N\delta_i=n$ (deterministic),
\begin{align*}
0
&= \Var\!\Big(\sum_{i=1}^N \delta_i \,\Big|\, \mathcal F_N\Big)
 = \sum_{i=1}^N \Var(\delta_i \mid \mathcal F_N)
   + 2\sum_{i<k}\Cov(\delta_i,\delta_k \mid \mathcal F_N) \\
&= N\,f(1-f) \;+\; 2\sum_{i<k}\Cov(\delta_i,\delta_k \mid \mathcal F_N) \\
&= N\,f(1-f) \;+\; 2 \binom{N}{2}\,\Cov(\delta_i,\delta_k \mid \mathcal F_N)
\qquad\text{(all off–diagonals equal by symmetry)} \\
&= N\,f(1-f) \;+\; N(N-1)\,\Cov(\delta_i,\delta_k \mid \mathcal F_N).
\end{align*}
which implies (since all off–diagonal covariances are equal)
\[
\Cov(\delta_i,\delta_k\mid\mathcal F_N)
\;=\; -\,\frac{f(1-f)}{N-1}\qquad(i\neq k).
\]

In general, the following variance identity holds
$$\Var\!\Big(\sum_{i=1}^N a_i Z_i\Big)
= \sum_{i=1}^N a_i^2\,\Var(Z_i)
+ 2\sum_{i<k} a_i a_k\,\Cov(Z_i,Z_k).$$ Thus, the variance of the rectifier given $\mathcal F_N$ is
\begin{align*}
\Var(\Delta_\theta\mid\mathcal F_N)
&= \Var\!\left(\frac{1}{n}\sum_{i=1}^N \delta_i\,\Delta_i(\theta)\,\Big|\,\mathcal F_N\right) = \frac{1}{n^2}\Var\!\left(\sum_{i=1}^N \Delta_i(\theta)\, \delta_i\,\Big|\,\mathcal F_N\right) \\
&= \frac{1}{n^2}\left\{
\sum_{i=1}^N \Delta_i(\theta)^2\,\Var(\delta_i\mid\mathcal F_N)
\;+\; 2\sum_{i<k} \Delta_i(\theta)\Delta_k(\theta)\,\Cov(\delta_i,\delta_k\mid\mathcal F_N)
\right\} \\
&= \frac{1}{n^2}\left\{
\sum_{i=1}^N f(1-f)\,\Delta_i(\theta)^2
\;+\; 2\sum_{i<k}\!\left(-\frac{f(1-f)}{N-1}\right)\Delta_i(\theta)\Delta_k(\theta)
\right\}\\
&= \frac{f(1-f)}{n^2}\left\{
\sum_{i=1}^N \,\Delta_i(\theta)^2
\;-\; \frac{2}{N-1}\sum_{i<k}\!\Delta_i(\theta)\Delta_k(\theta)
\right\}.
\end{align*}
Use
\(
\sum_{i<k} x_i x_k
=
\frac{1}{2}\left[
\left(\sum_{i=1}^N x_i\right)^{\!2}
- \sum_{i=1}^N x_i^2
\right]
\)
to obtain
\begin{align*}
\Var(\Delta_\theta\mid\mathcal F_N)
&= \frac{f(1-f)}{n^2}\left[
\sum_{i=1}^N \Delta_i(\theta)^2
- \frac{1}{N-1}\Big\{ \big(\sum_{i=1}^N\Delta_i(\theta)\big)^{\!2} - \sum_{i=1}^N \Delta_i(\theta)^2 \Big\}
\right] \\
&= \frac{f(1-f)}{n^2}\left[
\sum_{i=1}^N \Delta_i(\theta)^2
- \frac{1}{N-1}\big(\sum_{i=1}^N\Delta_i(\theta)\big)^{\!2}
+ \frac{1}{N-1}\sum_{i=1}^N \Delta_i(\theta)^2
\right] \\
&= \frac{f(1-f)}{n^2}\left[
\frac{N}{N-1}\sum_{i=1}^N \Delta_i(\theta)^2
- \frac{1}{N-1}\big(\sum_{i=1}^N\Delta_i(\theta)\big)^{\!2}
\right] \\
&= \frac{f(1-f)}{n^2}\cdot \frac{N}{N-1}
\left[
\sum_{i=1}^N \Delta_i(\theta)^2
- \frac{1}{N}\Big(\sum_{i=1}^N\Delta_i(\theta)\Big)^{\!2}
\right] \\
&= \frac{1-f}{n}\cdot \frac{1}{N-1}
\left[
\sum_{i=1}^N \Delta_i(\theta)^2
- \frac{1}{N}\Big(\sum_{i=1}^N\Delta_i(\theta)\Big)^{\!2}
\right]
\qquad(\text{since } f=\tfrac{n}{N}) \\
&= \frac{1-f}{n}\cdot \frac{1}{N-1}
\sum_{i=1}^N \big\{\Delta_i(\theta)-\overline{\Delta}_N(\theta)\big\}^{2},
\end{align*}
where $\overline{\Delta}_N(\theta)=\frac{1}{N}\sum_{i=1}^N\Delta_i(\theta).$
\end{proof}

\subsection{General $M$-estimation theory of prediction-powered inference}\label{subsec:General $M$-Estimation Theory of Prediction-Powered Inference}
This subsection provides the proofs of the lemmas and theorems for the general $M$-estimation theory of PPI, and it proves Theorem~3.1 in the main document.


\begin{lemma}[Oracle expansion of the full–data root]\label{thm:oracle-root-expansion}
Let $\theta_0$ be the unique solution to $\mathbb{E}\{U(\theta;X,Y)\}=0$, and let
$\theta_N\in\Theta$ solve the full–data moment equation $U_N(\theta)=N^{-1}\sum_{i=1}^N $ $U(\theta;X_i,Y_i)=0$.
Assume the super–population regularity and second–moment conditions (Assumptions~2–3).
Then, as $N\to\infty$,
\[
\theta_N-\theta_0
=
I(\theta_0)^{-1}\,\frac1N\sum_{i=1}^N U(\theta_0;X_i,Y_i)
\;+\; o_p(N^{-1/2}),
\]
equivalently,
\[
\sqrt{N}\,(\theta_N-\theta_0)
=
I(\theta_0)^{-1}\,\frac1{\sqrt{N}}\sum_{i=1}^N U(\theta_0;X_i,Y_i)
\;+\; o_p(1),
\]
where $I(\theta_0):=-\mathbb{E}\{\partial_\theta U(\theta_0;X,Y)\}$.
\end{lemma}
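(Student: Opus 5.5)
This is the standard $Z$-estimator expansion, run in three steps: show $\theta_N$ is consistent, linearize $U_N$ around $\theta_0$, and solve the linearization for $\theta_N-\theta_0$.

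\emph{Step 1 (consistency).} Assumption~2 gives $\sup_{\theta\in K}\|U_N(\theta)-U(\theta)\|_2\xrightarrow{p}0$ on compacts $K\subset\Theta$, and $\theta_0$ is the unique zero of $U(\theta)$. Working on a compact neighbourhood $K$ of $\theta_0$, we get $\|U(\theta_N)\|_2=\|U(\theta_N)-U_N(\theta_N)\|_2=o_p(1)$ whenever $\theta_N\in K$. The argmin/argzero continuity theorem (well-separated zero on $K$, from continuity of $U$ and uniqueness) then yields $\theta_N\xrightarrow{p}\theta_0$. One subtlety is that $\theta_N$ must lie in $K$ with probability tending to one. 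I would handle this by restricting attention to the root in $K$, which is the usual convention in the $Z$-estimation theorem of \citet{vanDerVaart1998}.

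\emph{Step 2 (linearization).} Since $U(\theta;x,y)$ is $C^1$ near $\theta_0$, apply the mean value theorem row by row to $0=U_N(\theta_N)$:
\[
0=U_N(\theta_0)+\dot U_N(\bar\theta)\,(\theta_N-\theta_0),\qquad \dot U_N(\theta):=\frac1N\sum_{i=1}^N\partial_\theta U(\theta;X_i,Y_i).
\]
Here $\bar\theta$ lies between $\theta_N$ and $\theta_0$, possibly differing across rows. Next I need $\dot U_N(\bar\theta)\xrightarrow{p}-I(\theta_0)$. I would obtain this from a uniform law of large numbers for $\partial_\theta U$ over a shrinking neighbourhood of $\theta_0$, for example by assuming a local integrable envelope $\mathbb E\sup_{\|\theta-\theta_0\|\le\epsilon}\|\partial_\theta U(\theta;X,Y)\|<\infty$, combined with continuity of $\theta\mapsto\mathbb E\,\partial_\theta U(\theta;X,Y)$ and Step 1. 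This is the main obstacle: Assumption~2 only states uniform convergence of $U_N$ itself, so I would make explicit that this local domination condition is part of ``continuously differentiable in a neighborhood'' together with the moment conditions. Alternatively, for nonsmooth $U$, I would replace the mean value step with stochastic equicontinuity of $\sqrt N\{(U_N-U)(\theta)-(U_N-U)(\theta_0)\}$ and differentiability of $U(\theta)$ at $\theta_0$.

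\emph{Step 3 (solve and bound the remainder).} Since $I(\theta_0)$ is nonsingular, $\dot U_N(\bar\theta)$ is invertible with probability tending to one, and its inverse converges in probability to $-I(\theta_0)^{-1}$. Hence
\[
\theta_N-\theta_0=-\dot U_N(\bar\theta)^{-1}U_N(\theta_0)=\{I(\theta_0)^{-1}+o_p(1)\}\,U_N(\theta_0).
\]
By Assumption~3, $\mathbb E\|U(\theta_0;X,Y)\|_2^2<\infty$ and $\mathbb E\,U(\theta_0;X,Y)=0$, so the i.i.d.\ CLT gives $U_N(\theta_0)=O_p(N^{-1/2})$. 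The product $o_p(1)\cdot O_p(N^{-1/2})$ is therefore $o_p(N^{-1/2})$, which yields
\[
\theta_N-\theta_0=I(\theta_0)^{-1}\frac1N\sum_{i=1}^N U(\theta_0;X_i,Y_i)+o_p(N^{-1/2}).
\]
Multiplying by $\sqrt N$ gives the equivalent form in the statement.
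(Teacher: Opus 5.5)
Your argument is the standard $Z$-estimator expansion; the paper does not carry out a proof of this lemma (it calls it standard and refers to \citep{yuan1998}). You are also right that Assumptions~2--3 as written say nothing about the Jacobian average $\dot U_N$ near $\theta_0$, so a local domination or uniform-convergence condition on $\partial_\theta U$ like the one you add (or the stochastic-equicontinuity alternative you mention) has to be supplied and is not provided by the stated hypotheses. The loose end in Step~1 then closes without any convention: with probability tending to one, $\theta\mapsto\theta+I(\theta_0)^{-1}U_N(\theta)$ is a contraction of any sufficiently small closed ball around $\theta_0$ into itself (using $U_N(\theta_0)=o_p(1)$), so $U_N$ has a root in each fixed neighbourhood of $\theta_0$, and uniqueness of the zero of $U_N$ (Assumption~1(a)) identifies that root with $\theta_N$.
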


\begin{proof}
The proof is standard and is omitted here; see \citep{yuan1998} for a rigorous proof.
\end{proof}

\begin{lemma}[Representation of the PPI score at the oracle root]\label{lem:PPI-at-thetaN}
Fix the finite population $\mathcal F_N=\{(X_i,Y_i)\}_{i=1}^N$, a (fixed) prediction rule $m$, and define
\[
\Delta_i(\theta)\;:=\;U(\theta;X_i,Y_i)-U(\theta;X_i,m(X_i)) .
\]
Let $S\subset\{1,\ldots,N\}$ be the labeled set of size $n$, write $\delta_i=\mathbf 1\{i\in S\}$ and $f:=n/N$, and define
\[
\widehat U_{\mathrm{PPI}}(\theta)
=\frac1N\sum_{i=1}^N U(\theta;X_i,m(X_i))
+\frac1n\sum_{j\in S}\!\Big\{U(\theta;X_j,Y_j)-U(\theta;X_j,m(X_j))\Big\}.
\]
Let $U_N(\theta):=N^{-1}\sum_{i=1}^N U(\theta;X_i,Y_i)$ and let $\theta_N\in\Theta$ solve $U_N(\theta_N)=0$. Then
\begin{equation}\label{eq:PPI-repr}
\widehat U_{\mathrm{PPI}}(\theta_N)
=\frac1N\sum_{i=1}^N\!\Big(\frac{N}{n}\,\delta_i-1\Big)\,\Delta_i(\theta_N)
=\frac1N\sum_{i=1}^N\!\Big(\frac{\delta_i}{\,f\,}-1\Big)\,\Delta_i(\theta_N).
\end{equation}
\end{lemma}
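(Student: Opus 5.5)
This is a purely algebraic identity, so the plan is to manipulate the PPI score at $\theta=\theta_N$ directly and use only $U_N(\theta_N)=0$. No probabilistic argument is needed: the identity holds pointwise for every realization of $S$.

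First, I rewrite the rectifier in indicator form, exactly as in \eqref{eq:rectifier-indicator}. This gives
\[
\frac1n\sum_{j\in S}\Delta_j(\theta)=\frac1n\sum_{i=1}^N\delta_i\Delta_i(\theta)=\frac1N\sum_{i=1}^N\frac{\delta_i}{f}\Delta_i(\theta),
\]
which is the Horvitz--Thompson identity (a) of the preceding proposition.

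Next, I write the measure-of-fit term using $U(\theta;X_i,m(X_i))=U(\theta;X_i,Y_i)-\Delta_i(\theta)$. This gives
\[
\frac1N\sum_{i=1}^N U(\theta;X_i,m(X_i))=U_N(\theta)-\frac1N\sum_{i=1}^N\Delta_i(\theta).
\]
Adding the two pieces yields, for every $\theta\in\Theta$,
\[
\widehat U_{\mathrm{PPI}}(\theta)=U_N(\theta)+\frac1N\sum_{i=1}^N\Big(\frac{\delta_i}{f}-1\Big)\Delta_i(\theta).
\]

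Finally, I evaluate this at $\theta=\theta_N$, where $U_N(\theta_N)=0$. That gives the second expression in \eqref{eq:PPI-repr}. Since $1/f=N/n$, the first expression follows as well.

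There is no real obstacle here. The only point to note is that $\theta_N$ depends on $\mathcal F_N$ and not on $S$, so plugging it in is legitimate pointwise.
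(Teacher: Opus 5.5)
Your proof is correct and is essentially the paper's argument. Both rewrite the rectifier in indicator form and add and subtract $U_N$, letting $U_N(\theta_N)=0$ remove the extra term; you just do this for general $\theta$ before evaluating at $\theta_N$, which also makes your closing remark about $\theta_N$ not depending on $S$ unnecessary.
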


\begin{proof}
Insert indicators to express the labeled average as a population average:
\[
\widehat U_{\mathrm{PPI}}(\theta)
=\frac1N\sum_{i=1}^N U(\theta;X_i,m(X_i))
+\frac1N\sum_{i=1}^N\Big(\frac{N}{n}\,\delta_i\Big)\,\Delta_i(\theta).
\]
Evaluate at $\theta=\theta_N$ and add–subtract $N^{-1}\sum_{i=1}^N U(\theta_N;X_i,Y_i)$:
\begin{align*}
\widehat U_{\mathrm{PPI}}(\theta_N)
&=\frac1N\sum_{i=1}^N\!\Big[U(\theta_N;X_i,m(X_i))-U(\theta_N;X_i,Y_i)\Big]\\
  &+\underbrace{\frac1N\sum_{i=1}^N U(\theta_N;X_i,Y_i)}_{=\,U_N(\theta_N)=0}
  +\frac1N\sum_{i=1}^N\!\Big(\frac{N}{n}\,\delta_i\Big)\Delta_i(\theta_N) \\
&=\frac1N\sum_{i=1}^N\!\Big(\frac{N}{n}\,\delta_i-1\Big)\,\Delta_i(\theta_N).
\end{align*}
Since $(N/n)\delta_i-1=\delta_i/f-1$, this gives \eqref{eq:PPI-repr}.
\end{proof}

\begin{lemma}[First–order expansion of the PPI root around the oracle root]\label{lem:ppi-expansion}
Fix the finite population $\mathcal F_N=\{(X_i,Y_i)\}_{i=1}^N$, a (fixed) prediction rule $m$, and define
\[
\Delta_i(\theta)\;:=\;U(\theta;X_i,Y_i)-U(\theta;X_i,m(X_i)).
\]
Let $S\subset\{1,\ldots,N\}$ be a simple random sample without replacement of size $n$,
write $\delta_i=\mathbf 1\{i\in S\}$ and $f:=n/N\in(0,1)$, and let
\[
U_N(\theta):=\frac1N\sum_{i=1}^NU(\theta;X_i,Y_i),\qquad U_N(\theta_N)=0.
\]
Let $\hat\theta_{\mathrm{PPI}}$ be any solution of $\widehat U_{\mathrm{PPI}}(\theta)=0$, where
\[
\widehat U_{\mathrm{PPI}}(\theta)
=\frac1N\sum_{i=1}^N U(\theta;X_i,m(X_i))
+\frac1n\sum_{j\in S}\!\Big\{U(\theta;X_j,Y_j)-U(\theta;X_j,m(X_j))\Big\}.
\]
Assume: (i) $U(\theta;x,y)$ is continuously differentiable in a neighborhood of $\theta_N$;
(ii) $I(\theta):=-\,\mathbb E\{\partial_\theta U(\theta;X,Y)\}$ exists and $I(\theta_N)$ is nonsingular; and
(iii) a uniform LLN and stochastic–equicontinuity hold so that
\(
\sup_{\theta\in\Theta}\big\|\partial_\theta \widehat U_{\mathrm{PPI}}(\theta)-\partial_\theta U_N(\theta)\big\|_{\op}
=o_p(1)
\)
as $n,N\to\infty$ with $n/N\to f$.
Then
\begin{equation}\label{eq:ppi-expansion}
\hat\theta_{\mathrm{PPI}}-\theta_N
=\{I(\theta_N)\}^{-1}\!\left[
\frac1N\sum_{i=1}^N\!\Big(\frac{\delta_i}{\,f\,}-1\Big)\,\Delta_i(\theta_N)
\right]+o_p(n^{-1/2}).
\end{equation}
\end{lemma}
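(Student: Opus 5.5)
The argument is a standard mean-value (Taylor) expansion of the PPI score around the oracle root $\theta_N$, combined with Lemma~\ref{lem:PPI-at-thetaN}. Since $\widehat U_{\mathrm{PPI}}(\hat\theta_{\mathrm{PPI}})=0$, we expand each coordinate of $\widehat U_{\mathrm{PPI}}$ between $\theta_N$ and $\hat\theta_{\mathrm{PPI}}$. By assumption (i) this is legitimate once $\hat\theta_{\mathrm{PPI}}$ lies in the neighbourhood of $\theta_N$ where $U$ is $C^1$, which holds with probability tending to one by the consistency argument of Subsection~\ref{subsec:ppi-consistency} (Assumption~1(b)). Writing $\bar J_N$ for the Jacobian $\partial_\theta \widehat U_{\mathrm{PPI}}$ with rows evaluated at intermediate points $\bar\theta$, we obtain
\[
0=\widehat U_{\mathrm{PPI}}(\theta_N)+\bar J_N\,(\hat\theta_{\mathrm{PPI}}-\theta_N).
\]
Lemma~\ref{lem:PPI-at-thetaN} then identifies the first term exactly as $N^{-1}\sum_{i}(\delta_i/f-1)\Delta_i(\theta_N)$.

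Next we show that $-\bar J_N\to I(\theta_N)$ in probability, in operator norm. We split
\[
-\bar J_N-I(\theta_N)=\{-\partial_\theta\widehat U_{\mathrm{PPI}}(\bar\theta)+\partial_\theta U_N(\bar\theta)\}+\{-\partial_\theta U_N(\bar\theta)-I(\bar\theta)\}+\{I(\bar\theta)-I(\theta_N)\}.
\]
The first bracket is $o_p(1)$ uniformly by assumption (iii). The second is $o_p(1)$ by a uniform law of large numbers for $\partial_\theta U$ over a compact neighbourhood, which is implicit in (iii) and Assumption~2. The third is $o_p(1)$ by continuity of $I(\cdot)$, which follows from (i), together with $\bar\theta\to\theta_N$. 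Because $I(\theta_N)$ is nonsingular by (ii), $-\bar J_N$ is invertible with probability tending to one and $(-\bar J_N)^{-1}=I(\theta_N)^{-1}+o_p(1)$. Solving the expansion gives
\[
\hat\theta_{\mathrm{PPI}}-\theta_N=(-\bar J_N)^{-1}\widehat U_{\mathrm{PPI}}(\theta_N).
\]

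It remains to show that $\widehat U_{\mathrm{PPI}}(\theta_N)=O_p(n^{-1/2})$; the error term $\{(-\bar J_N)^{-1}-I(\theta_N)^{-1}\}\widehat U_{\mathrm{PPI}}(\theta_N)$ is then $o_p(1)\cdot O_p(n^{-1/2})=o_p(n^{-1/2})$, which is exactly \eqref{eq:ppi-expansion}. By the Horvitz--Thompson proposition, $\widehat U_{\mathrm{PPI}}(\theta_N)=\Delta_{\theta_N}-\overline\Delta_N(\theta_N)$ has conditional mean zero, and its conditional variance is
\[
\frac{1-f}{n}\cdot\frac{1}{N-1}\sum_{i=1}^N\{\Delta_i(\theta_N)-\overline\Delta_N(\theta_N)\}^{\otimes 2}.
\]
Conditional Chebyshev then gives $O_p(n^{-1/2})$, provided the finite-population dispersion of $\Delta_i(\theta_N)$ is $O_p(1)$.

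\textbf{Main obstacle.} The delicate step is controlling that dispersion, since it is evaluated at the random point $\theta_N$ rather than at $\theta_0$. I would bound it by $2N^{-1}\sum_i\|\Delta_i(\theta_0)\|^2$ plus $2N^{-1}\sum_i\|\Delta_i(\theta_N)-\Delta_i(\theta_0)\|^2$. The first term is $O_p(1)$ by Assumption~3 and the law of large numbers. The second term tends to zero by a local envelope or continuity argument from (i), using $\theta_N\to\theta_0$ (Lemma~\ref{thm:oracle-root-expansion}). If the assumptions do not directly provide an envelope, the fallback is to invoke the SRSWOR CLT of Assumption~3 at $\theta_0$ and absorb the difference between $\theta_N$ and $\theta_0$ into the same $o_p$ term using the stochastic equicontinuity in (iii).
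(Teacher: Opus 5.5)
Your proposal is correct and follows essentially the same route as the paper's proof: a mean-value expansion of $\widehat U_{\mathrm{PPI}}$ around $\theta_N$, the exact identification of $\widehat U_{\mathrm{PPI}}(\theta_N)$ via Lemma~\ref{lem:PPI-at-thetaN}, convergence of the Jacobian to $-I(\theta_N)$ via (iii), and the rate $\widehat U_{\mathrm{PPI}}(\theta_N)=O_p(n^{-1/2})$. You are more careful than the paper on several points it glosses over. It uses a single intermediate point where you correctly use row-wise ones, and it takes for granted both the consistency of $\hat\theta_{\mathrm{PPI}}$ and the passage from $\partial_\theta U_N$ to $-I$. It also asserts the $O_p(n^{-1/2})$ rate without noting that the dispersion is evaluated at the random point $\theta_N$. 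Your fallback settles that last issue: you bound $\|\widehat U_{\mathrm{PPI}}(\theta_N)-\{\widehat U_{\mathrm{PPI}}(\theta_0)-U_N(\theta_0)\}\|$ by the supremum in (iii) times $\|\theta_N-\theta_0\|=O_p(N^{-1/2})$.
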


\begin{proof}
$\quad$

\textbf{Step 1 -- Score at $\theta_N$.}
By Lemma~\ref{lem:PPI-at-thetaN},
\[
\widehat U_{\mathrm{PPI}}(\theta_N)
=\frac1N\sum_{i=1}^N\!\Big(\frac{\delta_i}{\,f\,}-1\Big)\,\Delta_i(\theta_N),
\]
a conditionally mean–zero sum with variance of order $n^{-1}$; hence
$\widehat U_{\mathrm{PPI}}(\theta_N)=O_p(n^{-1/2})$.

\smallskip
\textbf{Step 2 -- Linearization.}
A first–order Taylor expansion of $\widehat U_{\mathrm{PPI}}(\theta)$ around $\theta_N$ gives
\[
0=\widehat U_{\mathrm{PPI}}(\hat\theta_{\mathrm{PPI}})
=\widehat U_{\mathrm{PPI}}(\theta_N)
+\big[\widehat G_N(\tilde\theta)\big]\,
(\hat\theta_{\mathrm{PPI}}-\theta_N),
\]
where $\tilde\theta$ lies on the line segment between $\hat\theta_{\mathrm{PPI}}$ and $\theta_N$ and
$\widehat G_N(\theta):=\partial_\theta \widehat U_{\mathrm{PPI}}(\theta)$.
Thus
\[
\hat\theta_{\mathrm{PPI}}-\theta_N
=-\big[\widehat G_N(\tilde\theta)\big]^{-1}\widehat U_{\mathrm{PPI}}(\theta_N).
\]

\smallskip
\textbf{Step 3 -- Limit of the Jacobian.}
By assumption (iii) and $n/N\to f$,
\(
\widehat G_N(\tilde\theta)=\partial_\theta U_N(\theta_N)+o_p(1)
= -\,I(\theta_N)+o_p(1),
\)
so $\big[\widehat G_N(\tilde\theta)\big]^{-1}= -\,I(\theta_N)^{-1}+o_p(1)$.

\smallskip
\textbf{Step 4 -- Assemble.}
Combining the three steps yields
\begin{align*}
\hat\theta_{\mathrm{PPI}}-\theta_N
&=\{I(\theta_N)\}^{-1}\widehat U_{\mathrm{PPI}}(\theta_N)+o_p(n^{-1/2})\\
&=\{I(\theta_N)\}^{-1}\!\left[
\frac1N\sum_{i=1}^N\!\Big(\frac{\delta_i}{\,f\,}-1\Big)\,\Delta_i(\theta_N)
\right]+o_p(n^{-1/2}),    
\end{align*}
which is \eqref{eq:ppi-expansion}.
\end{proof}


\begin{theorem}[Asymptotic linearity of the PPI root]\label{thm:ppi-linear}
Let $\theta_0$ be the unique solution to $\mathbb{E}\{U(\theta;X,Y)\}=0$.  
Suppose the regularity conditions in Assumptions~1–3 hold and the labeled set
$S\subset\{1,\ldots,N\}$ is drawn by SRSWOR of size $n$.  Write
$\delta_i=\mathbf 1\{i\in S\}$ and $f:=n/N\in(0,1)$.  Let
$\hat\theta_{\mathrm{PPI}}$ be any solution to
$\widehat U_{\mathrm{PPI}}(\theta)=0$.  Then
\begin{equation}\label{eq:ppi-lin}
\hat\theta_{\mathrm{PPI}}-\theta_0
=\frac{1}{N}\sum_{i=1}^N \phi_i \;+\; o_p(N^{-1/2}),
\end{equation}
where the influence contribution of unit $i$ is
\begin{align*}
\phi_i
&=\phi(X_i,Y_i,\delta_i;\theta_0,m)\\
&:=\{I(\theta_0)\}^{-1}\!\left[
U(\theta_0;X_i,m(X_i))
+\frac{\delta_i}{\,f\,}\Big\{U(\theta_0;X_i,Y_i)-U(\theta_0;X_i,m(X_i))\Big\}
\right].
\end{align*}
\end{theorem}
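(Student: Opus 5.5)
The plan is to split $\hat\theta_{\mathrm{PPI}}-\theta_0$ at the oracle root:
\[
\hat\theta_{\mathrm{PPI}}-\theta_0=(\hat\theta_{\mathrm{PPI}}-\theta_N)+(\theta_N-\theta_0),
\]
and expand each piece with a lemma already proved. For the second term, Lemma~\ref{thm:oracle-root-expansion} (which uses Assumptions~2--3) gives
\[
\theta_N-\theta_0=I(\theta_0)^{-1}N^{-1}\sum_{i=1}^N U(\theta_0;X_i,Y_i)+o_p(N^{-1/2}).
\]
For the first term, Lemma~\ref{lem:ppi-expansion} (whose hypotheses follow from Assumption~1(c) and Assumption~2) gives
\[
\hat\theta_{\mathrm{PPI}}-\theta_N=I(\theta_N)^{-1}N^{-1}\sum_{i=1}^N(\delta_i/f-1)\Delta_i(\theta_N)+o_p(n^{-1/2}).
\]
Since $n/N\to f\in(0,1)$, the remainder $o_p(n^{-1/2})$ is also $o_p(N^{-1/2})$.

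The main obstacle is replacing $\theta_N$ by $\theta_0$ inside the rectifier term, for both $I(\theta_N)^{-1}$ and $\Delta_i(\theta_N)$. Consistency gives $\theta_N\to_p\theta_0$, and Assumption~2 makes $I$ continuous near $\theta_0$, so $I(\theta_N)^{-1}=I(\theta_0)^{-1}+o_p(1)$. This matrix multiplies a sum that is $O_p(n^{-1/2})$, by Step~1 of Lemma~\ref{lem:ppi-expansion} together with the design variance formula for the rectifier, so the swap costs only $o_p(N^{-1/2})$.

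For the $\Delta_i$ swap, I would write
\[
N^{-1}\sum_{i=1}^N(\delta_i/f-1)\{\Delta_i(\theta_N)-\Delta_i(\theta_0)\},
\]
which is conditionally mean zero. Its design variance is bounded by $\tfrac{1-f}{n}\cdot N^{-1}\sum_{i=1}^N\|\Delta_i(\theta_N)-\Delta_i(\theta_0)\|^2$, using part~(c) of the Horvitz--Thompson proposition. I would then argue that this empirical $L_2$ distance is $o_p(1)$. The route is a mean-value bound, using the continuous differentiability of $U$ near $\theta_0$ (Assumption~2), together with a local envelope on $\partial_\theta U$. This is a stochastic equicontinuity step, implicit in the regularity conditions; I would state that it follows from Assumption~1(c) and the moment conditions in Assumption~3. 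A conditional Chebyshev bound then makes the difference $o_p(n^{-1/2})$.

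Finally, I would assemble the pieces. Adding the two expansions and using
\[
U(\theta_0;X_i,Y_i)+(\delta_i/f-1)\Delta_i(\theta_0)=U(\theta_0;X_i,m(X_i))+(\delta_i/f)\Delta_i(\theta_0),
\]
which holds because $U-\Delta=U(\cdot;m)$, yields exactly $N^{-1}\sum_{i=1}^N\phi_i$ with the $\phi_i$ of the statement, plus a remainder that is $o_p(N^{-1/2})$.
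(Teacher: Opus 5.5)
Your proposal follows the paper's proof step for step: the split at $\theta_N$, Lemma~\ref{lem:ppi-expansion} for $\hat\theta_{\mathrm{PPI}}-\theta_N$, Lemma~\ref{thm:oracle-root-expansion} for $\theta_N-\theta_0$, the replacement of $\theta_N$ by $\theta_0$, the identity producing $\phi_i$, and $o_p(n^{-1/2})=o_p(N^{-1/2})$.

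\textbf{The replacement step has a gap.} The paper handles this step with a bare continuity assertion. Your version is more careful, but its key input is a square-integrable local envelope for $\partial_\theta U(\theta;x,y)$ and $\partial_\theta U(\theta;x,m(x))$. That envelope is not among Assumptions~1--3. Contrary to what you suggest, it also does not follow from Assumption~1(c) together with the second moments at $\theta_0$ in Assumption~3. As written, your Chebyshev route therefore needs an extra hypothesis.

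\textbf{You can avoid the envelope.} Write
\[
G(\theta):=\widehat U_{\mathrm{PPI}}(\theta)-U_N(\theta)=\frac1N\sum_{i=1}^N\Big(\frac{\delta_i}{f}-1\Big)\Delta_i(\theta).
\]
The error of your swap is exactly $G(\theta_N)-G(\theta_0)$. With probability tending to one, the segment from $\theta_0$ to $\theta_N$ lies in the neighborhood where $U$ is $C^1$. On that event, the integral mean value theorem and Assumption~1(c) give
\[
\|G(\theta_N)-G(\theta_0)\|_2\le\sup_{\theta\in\Theta}\big\|\partial_\theta\widehat U_{\mathrm{PPI}}(\theta)-\partial_\theta U_N(\theta)\big\|_{\op}\,\|\theta_N-\theta_0\|_2=o_p(1)\cdot O_p(N^{-1/2}).
\]
The rate $\theta_N-\theta_0=O_p(N^{-1/2})$ comes from Lemma~\ref{thm:oracle-root-expansion} and the finite second moment in Assumption~3.

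\textbf{This also fixes your $I(\theta_N)^{-1}$ swap.} The bound gives $G(\theta_N)=G(\theta_0)+o_p(N^{-1/2})=O_p(n^{-1/2})$. Here you need the design-variance formula only at $\theta_0$, where Assumption~3 supplies the moment. This is exactly the $O_p(n^{-1/2})$ bound your $I(\theta_N)^{-1}$ swap requires.
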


\begin{proof}
$\quad$

\textbf{Step 1 -- Expansion around the oracle root.}
Let $\theta_N$ solve $U_N(\theta)=N^{-1}\sum_{i=1}^N U(\theta;X_i,Y_i) $ $=0$.
By Lemma~\ref{lem:ppi-expansion},
\begin{equation}\label{eq:ppi-lin-step1}
\hat\theta_{\mathrm{PPI}}-\theta_N
=\{I(\theta_N)\}^{-1}\!\left[
\frac{1}{N}\sum_{i=1}^N\!\Big(\frac{\delta_i}{\,f\,}-1\Big)
\{U(\theta_N;X_i,Y_i)-U(\theta_N;X_i,m(X_i))\}
\right]+o_p(n^{-1/2}).
\end{equation}

\textbf{Step 2 -- Replace $\theta_N$ by $\theta_0$.}
Since $\theta_N\!\xrightarrow{p}\!\theta_0$ and $U(\theta;\cdot),I(\theta)$ are continuous near
$\theta_0$, we have $\{I(\theta_N)\}^{-1}=\{I(\theta_0)\}^{-1}+o_p(1)$ and
$U(\theta_N;\cdot)=U(\theta_0;\cdot)+o_p(1)$ uniformly.  Hence
\begin{equation}\label{eq:ppi-lin-step2}
\hat\theta_{\mathrm{PPI}}-\theta_N
=\{I(\theta_0)\}^{-1}\!\left[
\frac{1}{N}\sum_{i=1}^N\!\Big(\frac{\delta_i}{\,f\,}-1\Big)
\{U(\theta_0;X_i,Y_i)-U(\theta_0;X_i,m(X_i))\}
\right]+o_p(n^{-1/2}).
\end{equation}

\textbf{Step 3 -- Oracle expansion.}
From the oracle expansion,
\[
\theta_N-\theta_0
=\{I(\theta_0)\}^{-1}\frac{1}{N}\sum_{i=1}^N U(\theta_0;X_i,Y_i)
+o_p(N^{-1/2}).
\]

\textbf{Step 4 -- Assemble.}
Adding the last display to \eqref{eq:ppi-lin-step2} gives
\begin{small}
\begin{align*}
\hat\theta_{\mathrm{PPI}}-\theta_0
&=\{I(\theta_0)\}^{-1}\frac{1}{N}\sum_{i=1}^N\!\left[
U(\theta_0;X_i,Y_i)
+\Big(\frac{\delta_i}{\,f\,}-1\Big)\{U(\theta_0;X_i,Y_i)-U(\theta_0;X_i,m(X_i))\}
\right]\\
& +
\underbrace{o_p(N^{-1/2})
+o_p(n^{-1/2})}_{\star}.    
\end{align*}
\end{small}
Algebra inside the brackets yields
\(
U(\theta_0;X_i,m(X_i))
+\frac{\delta_i}{\,f\,}\{U(\theta_0;X_i,Y_i)-U(\theta_0;X_i,m(X_i))\},
\)
which is precisely $\phi_i$ in the statement. 

\textbf{Step 5 -- Remainder term.} Recall that, in our decomposition, the $N^{-1/2}$ term comes from the oracle expansion, while the replacement and rectifier terms are governed by the labeled part and are $o_p(n^{-1/2})$.

For the asymptotic analysis for the remainder term ($\star$), let $N\to\infty$ and $n = n(N) \to \infty$ with $n/N\to f\in(0,1)$ by a slight abuse of notation, reuse the symbol $f$ for the limit. By definition of convergence, there exists a sequence $\varepsilon_N \to 0$ such that
\[
\frac{n}{N} = f + \varepsilon_N.
\]
Multiply both sides by $N$:
\[
n = fN + \varepsilon_N N = fN\Big(1 + \frac{\varepsilon_N}{f}\Big).
\]
Since $f>0$ and $\varepsilon_N \to 0$, we have $\varepsilon_N/f \to 0$, thus $\frac{\varepsilon_N}{f} = o(1)$. Then
\[
n = fN\{1+o(1)\}.
\]
Then, we have
\[
n^{-1/2} \;=\; f^{-1/2}\,N^{-1/2}\{1+o(1)\},
\]
so $o_p(n^{-1/2})=o_p(N^{-1/2})$. Hence the remainder term ($\star$) is
\[
o_p(N^{-1/2}) + o_p(n^{-1/2})
\;=\; o_p(N^{-1/2}) + o_p(N^{-1/2})
\;=\; o_p(N^{-1/2}).
\]
This concludes the proof.
\end{proof}

\begin{corollary}[Limit law, confidence intervals, and Wald tests for the PPI estimator]\label{cor:ppi-wald}
Assume the conditions of Theorem~\ref{thm:ppi-linear} hold. Write $f:=n/N\in(0,1)$, let $\delta_i=\mathbf 1\{i\in S\}$, and define the influence contribution for unit $i$
\begin{align}\label{eq:phi-known-m}
\phi_i &\equiv \phi(X_i,Y_i,\delta_i;\theta_0,m)\\
\nonumber
&:= \{I(\theta_0)\}^{-1}\!\left[
U(\theta_0;X_i,m(X_i))
+ \frac{\delta_i}{\,f\,}\Big\{U(\theta_0;X_i,Y_i)-U(\theta_0;X_i,m(X_i))\Big\}
\right].
\end{align}
Let $a^{\otimes 2}:=aa^\top$ denote the outer product and set
\begin{align*}
V_1 &:= \{I(\theta_0)\}^{-1}\,\mathbb{E}\!\big[\,U(\theta_0;X,Y)^{\otimes 2}\big]\,\{I(\theta_0)\}^{-1},\\
V_2 &:= \{I(\theta_0)\}^{-1}\,\mathbb{E}\!\big[\{U(\theta_0;X,Y)-U(\theta_0;X,m(X))\}^{\otimes 2}\big]\,\{I(\theta_0)\}^{-1}.
\end{align*}
Then, with $\Sigma_f := V_1 + (f^{-1}-1)\,V_2$:

\begin{enumerate}\itemsep4pt
\item[\textbf{\textup{(i)}}] \textbf{\emph{Asymptotic distribution.}}
\[
\sqrt{N}\,(\hat\theta_{\mathrm{PPI}}-\theta_0)\ \xrightarrow{d}\ \mathcal N(0,\Sigma_f),
\qquad
\text{equivalently}\quad
\mathrm{Var}(\hat\theta_{\mathrm{PPI}})\ \approx\ N^{-1}V_1 + (n^{-1}-N^{-1})V_2.
\]

\item[\textbf{\textup{(ii)}}] \textbf{\emph{Componentwise $(1-\alpha)$ Wald intervals.}}
Define
\[
\widehat I \;:=\; -\frac1n\sum_{j\in S}\partial_\theta U(\hat\theta_{\mathrm{PPI}};X_j,Y_j),\quad
\widehat V_1 \;:=\; \widehat I^{-1}\!\Big(\frac1n\sum_{j\in S} U(\hat\theta_{\mathrm{PPI}};X_j,Y_j)^{\otimes 2}\Big)\widehat I^{-1},
\]
\[
\Delta_j(\theta)\!:=\!U(\theta;X_j,Y_j)-U(\theta;X_j,m(X_j)),\qquad
\widehat V_2 \;:=\; \widehat I^{-1}\!\Big(\frac1n\sum_{j\in S}\Delta_j(\hat\theta_{\mathrm{PPI}})^{\otimes 2}\Big)\widehat I^{-1},
\]
and set
\[
\widehat\Sigma_{\hat\theta} \;:=\; N^{-1}\widehat V_1 + (n^{-1}-N^{-1})\widehat V_2.
\]
Then the $j$-th component interval is
\[
\Big[\ \hat\theta_{\mathrm{PPI},j}\ \pm\ z_{1-\alpha/2}\,\sqrt{(\widehat\Sigma_{\hat\theta})_{jj}}\ \Big].
\]

\item[\textbf{\textup{(iii)}}] \textbf{\emph{Wald test for \(H_0:\theta_0=\theta^\star\).}}
Let \(p=\dim(\theta)\). The Wald statistic
\[
W \;=\; (\hat\theta_{\mathrm{PPI}}-\theta^\star)^\top\ \widehat\Sigma_{\hat\theta}^{-1}\ (\hat\theta_{\mathrm{PPI}}-\theta^\star)
\]
converges in distribution to \(\chi^2_p\) under \(H_0\). 

For \(p=1\),
\(
z = \dfrac{\hat\theta_{\mathrm{PPI}}-\theta^\star}{\sqrt{(\widehat\Sigma_{\hat\theta})_{11}}}
\)
is asymptotically standard normal.
\end{enumerate}
\end{corollary}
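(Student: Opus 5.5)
The plan is to start from the asymptotic linear expansion of Theorem~\ref{thm:ppi-linear}, $\hat\theta_{\mathrm{PPI}}-\theta_0=N^{-1}\sum_i\phi_i+o_p(N^{-1/2})$. We split the influence sum into an oracle part and a design part:
\[
\frac{1}{\sqrt N}\sum_{i=1}^N\phi_i
= I(\theta_0)^{-1}\frac{1}{\sqrt N}\sum_{i=1}^N U(\theta_0;X_i,Y_i)
+ I(\theta_0)^{-1}\frac{1}{\sqrt N}\sum_{i=1}^N\Big(\frac{\delta_i}{f}-1\Big)\Delta_i(\theta_0),
\]
with $\Delta_i(\theta_0)=U(\theta_0;X_i,Y_i)-U(\theta_0;X_i,m(X_i))$. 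The first term is an i.i.d.\ sum of mean-zero vectors with finite second moment (Assumptions~2--3). By the multivariate CLT it converges to $\mathcal N(0,V_1)$.

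For the second term we condition on $\mathcal F_N$. By the Horvitz--Thompson proposition it equals $\sqrt N\{\Delta_{\theta_0}-\overline\Delta_N(\theta_0)\}$, which is design-mean zero. Its design variance is
\[
N\frac{1-f}{n}\cdot\frac{1}{N-1}\sum_i(\Delta_i-\overline\Delta_N)^{\otimes2}
\;\xrightarrow{p}\;(f^{-1}-1)\Var(\Delta(\theta_0;X,Y)),
\]
using the LLN on the finite population. The SRSWOR CLT posited in Assumption~3 then gives conditional normality $\mathcal N(0,(f^{-1}-1)I^{-1}\Var(\Delta)I^{-1})$. Because this conditional limit is nonrandom and the first term is $\mathcal F_N$-measurable, the two pieces are asymptotically independent. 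To make this precise, we write the joint characteristic function as $\E[e^{it^\top A_N}\E(e^{is^\top B_N}\mid\mathcal F_N)]$ and apply dominated convergence, which yields the sum of the covariances. The identification of this variance with $V_2$ as defined uses $\E\Delta(\theta_0)$. That expectation need not vanish, so we either read $V_2$ with the centered second moment, or note that centering by $\overline\Delta_N$ removes the mean. I expect this bookkeeping, together with the conditional-to-unconditional CLT step, to be the main obstacle. The cleanest route is to state the limit with $\Var(\Delta)$ and remark that it coincides with $\E[\Delta^{\otimes2}]$ whenever $\E\Delta(\theta_0)=0$, for example when $m$ is score-calibrated. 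The same caveat applies to $V_1$, where $\E U(\theta_0)=0$ makes the second moment equal the variance. Slutsky's lemma then gives (i). The finite-sample form $N^{-1}V_1+(n^{-1}-N^{-1})V_2$ is simply $\Sigma_f/N$, since $f^{-1}-1=N/n-1$.

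For (ii) and (iii), we show $\widehat V_1\xrightarrow{p}V_1$ and $\widehat V_2\xrightarrow{p}V_2$. Since $\hat\theta_{\mathrm{PPI}}\xrightarrow{p}\theta_0$, and assuming a uniform LLN over a neighborhood of $\theta_0$ for the labeled averages of $\partial_\theta U$, $U^{\otimes2}$ and $\Delta^{\otimes2}$ (a dominating-envelope condition we would add to Assumption~3), each labeled average converges to its population counterpart. Under SRSWOR the labeled sample is an i.i.d.\ sample from $P_0$ unconditionally, so this follows. Continuity of matrix inversion gives $\widehat I^{-1}\to I(\theta_0)^{-1}$, and hence $N\widehat\Sigma_{\hat\theta}\xrightarrow{p}\Sigma_f$.

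Slutsky's lemma then gives $(\hat\theta_j-\theta_{0,j})/\sqrt{(\widehat\Sigma_{\hat\theta})_{jj}}\to\mathcal N(0,1)$, provided $(\Sigma_f)_{jj}>0$, which yields the coverage claim in (ii). For (iii), we write $W=\|(N\widehat\Sigma_{\hat\theta})^{-1/2}\sqrt N(\hat\theta-\theta^\star)\|^2$. Under $H_0$ the vector inside converges to $\mathcal N(0,I_p)$ when $\Sigma_f$ is nonsingular, and the continuous mapping theorem gives $W\to\chi^2_p$. The case $p=1$ is immediate.
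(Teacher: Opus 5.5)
Your proof of (i) takes a different route from the paper's. The paper applies a multivariate CLT directly to $N^{-1/2}\sum_i\phi_i$. It computes $\Sigma_f=\Var(\phi)$ for a \emph{single} unit by the law of total variance, with $\delta\sim\mathrm{Bernoulli}(f)$ independent of $(X,Y)$. You instead condition on $\mathcal F_N$, use the exact SRSWOR design variance from the Horvitz--Thompson proposition and a design-based CLT for the rectifier, and join the oracle and design pieces by the characteristic-function/bounded-convergence argument. You also make explicit the envelope condition behind the uniform LLN, which the paper's ``LLN and continuous mapping'' step leaves implicit.

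Your route is the one that respects SRSWOR. Since $\sum_{i=1}^N(\delta_i/f-1)=0$ identically, the design term is unchanged when every $\Delta_i$ is shifted by a constant. Its limiting covariance is therefore $(f^{-1}-1)I(\theta_0)^{-1}\Var\{\Delta(\theta_0;X,Y)\}I(\theta_0)^{-1}$. The paper's single-unit computation discards the cross-unit covariances $-f(1-f)/(N-1)$ that its own design-variance proposition derives. Summed over pairs, these asymptotically contribute $-(f^{-1}-1)I(\theta_0)^{-1}\mu_\Delta^{\otimes2}I(\theta_0)^{-1}$, where $\mu_\Delta:=\E\Delta(\theta_0;X,Y)$. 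Your caveat is therefore correct: the stated $V_2$ gives the right limit only when $\mu_\Delta=0$. For example, for the mean with $m=m_0+c$, the constant $c$ cancels identically from $\hat\theta_{\mathrm{PPI}}$, yet $\E[\Delta^2]=\Var\{Y-m_0(X)\}+c^2$.

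The gap is that you do not carry this correction through (ii) and (iii).
\begin{itemize}
\item There you assert $\widehat V_2\xrightarrow{p}V_2$ and $N\widehat\Sigma_{\hat\theta}\xrightarrow{p}\Sigma_f$, then use Slutsky to get an exact $\mathcal N(0,1)$ limit and $W\xrightarrow{d}\chi^2_p$.
\item But $\widehat V_2$ is built from the \emph{uncentered} labeled average $n^{-1}\sum_{j\in S}\Delta_j(\hat\theta_{\mathrm{PPI}})^{\otimes2}$, so it converges to the uncentered $V_2$.
\item By your own part (i), the true limiting covariance is smaller than $\Sigma_f$ by $(f^{-1}-1)I(\theta_0)^{-1}\mu_\Delta^{\otimes2}I(\theta_0)^{-1}$.
\item When $\mu_\Delta\neq0$, the studentized $j$-th coordinate converges to $\mathcal N(0,\rho_j)$ with $\rho_j\le 1$, strictly where $I(\theta_0)^{-1}\mu_\Delta$ has a nonzero entry. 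The interval is then only asymptotically conservative.
\item Likewise $W$ converges to a weighted sum of independent $\chi^2_1$ variables with exactly one weight strictly below $1$, not to $\chi^2_p$.
\end{itemize}
You cannot read $V_2$ as centered in (i) and as uncentered in (ii)--(iii).

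To close the argument, do one of two things.
\begin{itemize}
\item Add $\E\,U(\theta_0;X,m(X))=0$ (equivalently $\mu_\Delta=0$) as a hypothesis for the whole corollary. Then $\Var(\Delta)=\E[\Delta^{\otimes2}]$, and all three parts hold as stated by your argument.
\item Or redefine $V_2$ with $\Var(\Delta)$, and replace the middle factor of $\widehat V_2$ by the centered labeled covariance $n^{-1}\sum_{j\in S}\{\Delta_j(\hat\theta_{\mathrm{PPI}})-\bar\Delta_S\}^{\otimes2}$, where $\bar\Delta_S:=n^{-1}\sum_{j\in S}\Delta_j(\hat\theta_{\mathrm{PPI}})$. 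Your consistency step and Slutsky then give (ii)--(iii) exactly.
\end{itemize}
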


\begin{proof}
By Theorem~\ref{thm:ppi-linear},
\[
\hat\theta_{\mathrm{PPI}}-\theta_0 \;=\; \frac{1}{N}\sum_{i=1}^N \phi_i \;+\; o_p(N^{-1/2}),
\qquad
\phi_i \equiv \phi(X_i,Y_i,\delta_i;\theta_0,m),
\]
so by a multivariate CLT \citep{hajek1960limiting,serfling2009approximation} (Assumption 3 - Second moment regularity),
\[
\sqrt{N}\,\Big(\hat\theta_{\mathrm{PPI}}-\theta_0\Big)
\;=\;
\frac{1}{\sqrt{N}}\sum_{i=1}^N \phi_i \;\xrightarrow{d}\; \mathcal N(0,\Sigma_f),
\quad\text{with}\quad
\Sigma_f \;=\; \Var\!\big\{\phi(X,Y,\delta;\theta_0,m)\big\}.
\]
We now compute $\Sigma_f$ explicitly. Write $f:=n/N\in(0,1)$ and 
\begin{align*}
\phi(X,Y,\delta;\theta_0,m)
&=\{I(\theta_0)\}^{-1}\!\Big[
U(\theta_0;X,m(X))+\frac{\delta}{f}\,\{U(\theta_0;X,Y)-U(\theta_0;X,m(X))\}
\Big]\\
&=\{I(\theta_0)\}^{-1}\!\Big[
U(\theta_0;X,m(X))+\frac{\delta}{f} \Delta(\theta_0;X,Y)
\Big]    
\end{align*}
by denoting $\Delta(\theta_0;X,Y):=U(\theta_0;X,Y)-U(\theta_0;X,m(X))$.

By the law of total variance,
\[
\Sigma_f
=
\underbrace{\Var\!\Big(\,\mathbb E\!\big[\phi(X,Y,\delta;\theta_0,m)\mid X,Y\big]\,\Big)}_{\star}
\;+\;
\underbrace{\mathbb E\!\Big(\,\Var\!\big[\phi(X,Y,\delta;\theta_0,m)\mid X,Y\big]\,\Big)}_{\ast}
.
\]

\paragraph{First term ($\star$).}
Under SRSWOR, $\delta\perp (X,Y)$ and $\mathbb E(\delta\mid X,Y)=\mathbb E(\delta)=f$. Hence
\begin{align*}
\mathbb E\!\big[\phi(X,Y,\delta;\theta_0,m)\mid X,Y\big]
&=\{I(\theta_0)\}^{-1}\!\Big[
U(\theta_0;X,m(X))+\tfrac{1}{f}\,f\,\Delta(\theta_0;X,Y)
\Big]\\
&=\{I(\theta_0)\}^{-1}U(\theta_0;X,Y),
\end{align*}
so
\[
\Var\!\Big(\,\mathbb E\!\big[\phi(X,Y,\delta;\theta_0,m)\mid X,Y\big]\,\Big)
=\{I(\theta_0)\}^{-1}\,\mathbb E\!\big[U(\theta_0;X,Y)^{\otimes 2}\big]\,
\{I(\theta_0)\}^{-1}
=: V_1.
\]

\paragraph{Second term ($\ast$).}
Conditionally on $(X,Y)$, the only randomness in $\phi$ is $\delta$, and under SRSWOR
\(
\Var(\delta\mid X,Y)=\Var(\delta)=f(1-f)
\). (Note that this is the finite-population correction.) Therefore,
\begin{align*}
\Var\!&\big[\phi(X,Y,\delta;\theta_0,m)\mid X,Y\big]\\
&=\Var
\!\Bigg[
\{
\underbrace{I(\theta_0)\}^{-1}}_{\text{Constant}}\!\Big\{
\underbrace{U(\theta_0;X,m(X))}_{\text{Constant}}
+\underbrace{\delta}_{\text{Random}}  \underbrace{\frac{1}{f} \Delta(\theta_0;X,Y)}_{\text{Constant}}
\Big\}
\mid X,Y\Bigg]
\\
&=
\Var
\!\Bigg[
\underbrace{\{
I(\theta_0)\}^{-1}\!\Big\{
\frac{1}{f} \Delta(\theta_0;X,Y)
\Big\}}_{\text{Constant}}
\delta
\mid X,Y\Bigg]\\
&= \Big(\frac{1}{f}\Big)^{\!2}\Var(\delta\mid X,Y)\;
\{I(\theta_0)\}^{-1}\,\Delta(\theta_0;X,Y)^{\otimes 2}\,\{I(\theta_0)\}^{-1}\\
&= \Big(\frac{1}{f}-1\Big)\;
\{I(\theta_0)\}^{-1}\,\Delta(\theta_0;X,Y)^{\otimes 2}\,\{I(\theta_0)\}^{-1}.
\end{align*}

Taking expectations gives
\begin{align*}
\mathbb E\!\Big(\,\Var\!\big[\phi(X,Y,\delta;\theta_0,m)\mid X,Y\big]\,\Big)
&=\Big(\frac{1}{f}-1\Big)\;
\{I(\theta_0)\}^{-1}\,\mathbb E\!\big[\Delta(\theta_0;X,Y)^{\otimes 2}\big]\,
\{I(\theta_0)\}^{-1}\\
&= (f^{-1}-1)\,V_2,
\end{align*}
where
\[
V_2:=\{I(\theta_0)\}^{-1}\,\mathbb E\!\big[\{U(\theta_0;X,Y)-U(\theta_0;X,m(X))\}^{\otimes 2}\big]\,
\{I(\theta_0)\}^{-1}.
\]

\paragraph{Combine.}
Thus
\[
\Sigma_f \;=\; V_1+(f^{-1}-1)\,V_2,
\]
establishing the variance in the limit law.  
Consistency of the plug-in estimators $\widehat I,\widehat V_1,\widehat V_2$ follows from LLN and the continuous mapping theorem under Assumptions~1–3, and parts (ii)–(iii) of the corollary then follow by Slutsky’s theorem. 
\end{proof}

\subsection{Semiparametric efficiency theory of prediction-powered inference}\label{subsec:Semiparametric Efficiency Theory of Prediction-Powered Inference}
This subsection provides the proofs of the lemmas and theorems for the semiparametric efficiency theory of PPI, and it proves Theorem~4.1 in the main document.

\begin{lemma}[Implicit functional and efficient influence function via a moment map]
\label{lem:implicit-moment-eif}
Let $O\sim P_0$ on a measurable space $(\mathcal O,\mathcal A)$, where $\mathcal O$ is the sample space, $\mathcal A$ is a $\sigma$-algebra of measurable subsets of $\mathcal O$, and $P_0$ is a probability measure on $(\mathcal O,\mathcal A)$. We write $\mathbb E_{P_0}[\cdot]$ for expectation with respect to $P_0$. Define
\[
L_0^2(P_0):=\big\{f\in L^2(P_0): \mathbb E_{P_0}[f]=0\big\}.
\]

Let $\mathcal M$ be a statistical model for the observed–data law. The \emph{tangent space} of $\mathcal M$ at $P_0$, denoted $\mathcal T \subset L_0^2(P_0)$, is the closed linear span in $L_0^2(P_0)$ of all score functions
\[
S(O)\;=\;\left.\frac{d}{d\varepsilon}\log\frac{dP_\varepsilon}{dP_0}(O)\right|_{\varepsilon=0} \in \mathcal{T },
\]
arising from regular parametric submodels $\{P_\varepsilon:\varepsilon\in(-\eta,\eta)\}\subset\mathcal M$ through $P_0$.

For $P\in\mathcal M$ and $\theta\in\Theta\subset\mathbb R^p$, define
\[
M(P,\theta):=\mathbb E_P[m(P;\theta)(O)]
=\int_{\mathcal O} m(P;\theta)(o)\,dP(o),\qquad
M:\mathcal M\times\Theta\to\mathbb R^p,
\]
where $m(P;\theta)\in L^{2}(P_0;\mathbb R^p)$ for $P$ near $P_0$ and $\theta$ near $\theta_0$. Here $L^2(P_0;\mathbb R^p)=(L^2(P_0))^p$. For each fixed $\theta$, the map $P\mapsto M(P,\theta)$ is pathwise differentiable at $P_0$
(i.e., Gâteaux differentiable along every regular parametric submodel with score $S\in\mathcal T$),
and for each fixed $P$, the map $\theta\mapsto m(P;\theta)$ is differentiable near $\theta_0$. Define the parameter $\Psi:\mathcal M\to\Theta$ implicitly by
\[
\Psi(P)=\theta(P)\quad\text{such that}\quad M(P,\theta(P))=0.
\]
\textbf{Assumptions.}
\begin{itemize}
\item[A1] (\emph{Identification}) $M(P_0,\theta)=0$ has a unique solution $\theta_0\in\Theta$.

\item[A2] (\emph{$\theta$–smoothness})
  \begin{enumerate}
  \item The Jacobian matrix 
  \[
  A:=\partial_\theta M(P_0,\theta_0) 
     = \mathbb E_{P_0}\!\big[\partial_\theta m(P_0;\theta_0)(O)\big]
  \]
  exists and is $p\times p$;
  \item $A$ is nonsingular (invertible).
  \end{enumerate}


\item[A3] (\emph{Pathwise differentiability  of $M$ in $P$})
There exists a moment representer $\varphi=(\varphi_1,\ldots,\varphi_p)$ $\in L_0^2(P_0;\mathbb R^p)$ such that for every
regular parametric submodel $\{P_\varepsilon:\varepsilon\in(-\eta,\eta)\}\subset\mathcal M$ through $P_0$
with score $S\in\mathcal T$,
\begin{align*}
\left.\frac{d}{d\varepsilon}M(P_\varepsilon,\theta_0)\right|_{\varepsilon=0}
&\;=\;
\mathbb E_{P_0}\!\big[\varphi(O)\,S(O)\big]\\
&\;=\;
\big(\,\mathbb E_{P_0}[\varphi_1(O)S(O)],\,\ldots,\,\mathbb E_{P_0}[\varphi_p(O)S(O)]\,\big)
\;\in\mathbb R^p,    
\end{align*}
i.e., the derivative acts componentwise via the $L^2(P_0)$ inner product with $s$. 
\end{itemize}

Then, the parameter $\Psi$ is pathwise differentiable at $P_0$ with the canonical gradient (i.e., efficient influence function)
\[
\phi^{\mathrm{eff}}(O)\;=\;-\,A^{-1}\,\Pi_{\mathcal T}\varphi(O),
\]
where $\Pi_{\mathcal T}:L_0^2(P_0;\mathbb R^p)\to\mathcal T^p$ denotes the orthogonal projection applied componentwise. If, in addition, the moment representer satisfies $\varphi\in\mathcal T^p$, then the projection is redundant and
\[
\phi^{\mathrm{eff}}(O)=-\,A^{-1}\,\varphi(O).
\]
\end{lemma}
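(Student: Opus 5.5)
The plan is to differentiate the defining identity $M(P_\varepsilon,\theta(P_\varepsilon))=0$ along an arbitrary regular parametric submodel and read off the gradient through the Riesz representation on the tangent space. Fix a regular submodel $\{P_\varepsilon\}\subset\mathcal M$ through $P_0$ with score $S\in\mathcal T$, and write $\theta_\varepsilon:=\Psi(P_\varepsilon)$, so that $\theta_0=\Psi(P_0)$ by A1.

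\textbf{Step 1: differentiability of $\varepsilon\mapsto\theta_\varepsilon$.} Consider the map $G(\varepsilon,\theta):=M(P_\varepsilon,\theta)$ on a neighborhood of $(0,\theta_0)$. By A3 it is differentiable in $\varepsilon$ at $\varepsilon=0$. By A2, $\partial_\theta G(0,\theta_0)=A$ is nonsingular. An implicit function argument then shows two things: the solution $\theta_\varepsilon$ of $G(\varepsilon,\theta)=0$ is unique near $\theta_0$, which matches A1, and $\varepsilon\mapsto\theta_\varepsilon$ is differentiable at $0$.

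\textbf{Main obstacle.} This is the delicate step, because A3 only gives a Gâteaux derivative in $\varepsilon$ at the single point $\theta_0$. To apply the implicit function theorem I will need joint differentiability (or continuity of $\partial_\theta M(P_\varepsilon,\theta)$ near $(0,\theta_0)$, together with $\theta_\varepsilon\to\theta_0$).

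I would try the following route first. Expand
\[
0=M(P_\varepsilon,\theta_\varepsilon)-M(P_0,\theta_0)=\{M(P_\varepsilon,\theta_0)-M(P_0,\theta_0)\}+\{M(P_\varepsilon,\theta_\varepsilon)-M(P_\varepsilon,\theta_0)\}.
\]
Apply the mean value theorem in $\theta$ to the second brace. Then use consistency $\theta_\varepsilon\to\theta_0$, which follows from identifiability and the continuity implicit in the regularity conditions. Together these give
\[
0=\varepsilon\,\mathbb E_{P_0}[\varphi S]+\{A+o(1)\}(\theta_\varepsilon-\theta_0)+o(\varepsilon).
\]
Solving this yields
\[
\left.\frac{d}{d\varepsilon}\theta_\varepsilon\right|_{\varepsilon=0}=-A^{-1}\,\mathbb E_{P_0}[\varphi(O)S(O)].
\]

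\textbf{Step 2: identify the canonical gradient.} The derivative above equals $\mathbb E_{P_0}[\{-A^{-1}\varphi(O)\}S(O)]$ for every $S\in\mathcal T$. Hence $-A^{-1}\varphi\in L_0^2(P_0;\mathbb R^p)$ is a gradient of $\Psi$, which establishes pathwise differentiability. It is, however, not necessarily the canonical gradient, since it need not lie in $\mathcal T^p$.

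Decompose componentwise as $\varphi=\Pi_{\mathcal T}\varphi+(\varphi-\Pi_{\mathcal T}\varphi)$. The second piece is orthogonal to every $S\in\mathcal T$, so $\mathbb E[\varphi S]=\mathbb E[(\Pi_{\mathcal T}\varphi)S]$. Because $A^{-1}$ is a constant matrix and the projection acts componentwise, it commutes with $\Pi_{\mathcal T}$, giving $\Pi_{\mathcal T}(-A^{-1}\varphi)=-A^{-1}\Pi_{\mathcal T}\varphi$. So $-A^{-1}\Pi_{\mathcal T}\varphi$ also represents the pathwise derivative and lies in $\mathcal T^p$.

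By the Riesz representation theorem on the Hilbert space $\mathcal T$, the representer in $\mathcal T^p$ is unique: if two elements of $\mathcal T^p$ represent the same functional, their difference is orthogonal to $\mathcal T$ and lies in $\mathcal T$, hence is zero. This unique representer is by definition the canonical gradient, that is, the efficient influence function $\phi^{\mathrm{eff}}=-A^{-1}\Pi_{\mathcal T}\varphi$.

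\textbf{Step 3: the case $\varphi\in\mathcal T^p$.} If $\varphi\in\mathcal T^p$, then $\Pi_{\mathcal T}\varphi=\varphi$, and the final claim $\phi^{\mathrm{eff}}=-A^{-1}\varphi$ follows immediately.
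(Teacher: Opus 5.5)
Your argument is essentially the paper's. You differentiate $M(P_\varepsilon,\Psi(P_\varepsilon))=0$ along a submodel to get $\frac{d}{d\varepsilon}\Psi(P_\varepsilon)\big|_{\varepsilon=0}=-A^{-1}\mathbb E_{P_0}[\varphi S]$, and then use Riesz representation on $\mathcal T$ together with the fact that the constant matrix $A^{-1}$ commutes with the componentwise projection $\Pi_{\mathcal T}$. The paper obtains the Riesz representer first and then shows $A\phi^{\mathrm{eff}}+\varphi\in\mathcal T^\perp$, which is the same uniqueness argument in a different order.

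The only real difference is Step 1. The paper simply asserts that A2--A3 make $\varepsilon\mapsto M(P_\varepsilon,\Psi(P_\varepsilon))$ differentiable and applies a chain rule. You correctly point out that this tacitly needs $\theta_\varepsilon\to\theta_0$ and continuity of $\partial_\theta M(P_\varepsilon,\theta)$ near $(0,\theta_0)$, neither of which is among the stated assumptions, so the paper's proof relies on the same unstated conditions. Your expansion is the more careful version of that step, provided you use the integral form $\int_0^1\partial_\theta M(P_\varepsilon,\theta_0+t(\theta_\varepsilon-\theta_0))\,dt$, since a single-point mean value theorem fails for vector-valued maps.
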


\begin{proof}
We present the proof for the scalar case $p=1$. For general $p$, fix any $\nu\in \mathbb R^p$ and apply the same argument to the scalar composite $\nu^\top M(P, \Psi(P)) = 0$; since the resulting identity holds for all $\nu$ the vector formula follows.

Because $\Psi(P)$ is defined implicitly by $M(P,\Psi(P))=0$, the following identity holds
\begin{align}
\label{eq:moment_identity}
M(P,\Psi(P)) \;=\; 0 \qquad \text{for all $P$ in a neighborhood of $P_0$}.
\end{align}
Fix an arbitrary regular parametric submodel $\{P_\varepsilon:\varepsilon\in(-\eta,\eta)\}\subset\mathcal M$ through $P_0$ with score 
$S(O)=\left.\tfrac{d}{d\varepsilon}\log\frac{dP_\varepsilon}{dP_0}(O)\right|_{\varepsilon=0}\in\mathcal T$. For example, let $S\in L_0^2(P_0)$ with $\mathbb E_{P_0}[S]=0$. For $|\varepsilon|$ small enough so that $1+\varepsilon S(o)\ge 0$ $P_0$–a.s., define the (first–order) linear tilting submodel by
\[
\frac{dP_\varepsilon}{dP_0}(o)\;=\;1+\varepsilon\,S(o).
\]
Then $\log\!\left(\tfrac{dP_\varepsilon}{dP_0}(o)\right)=\log\!\big(1+\varepsilon S(o)\big)$ and hence
\[
S(o)\;=\;\left.\frac{d}{d\varepsilon}\log\frac{dP_\varepsilon}{dP_0}(o)\right|_{\varepsilon=0}
=\left.\frac{S(o)}{1+\varepsilon S(o)}\right|_{\varepsilon=0} \in L_0^2(P_0).
\]

Consider the composite map $g(\varepsilon):=M\!\big(P_\varepsilon,\Psi(P_\varepsilon)\big)$. 
By \eqref{eq:moment_identity}, we have $g(\varepsilon)=0$ for all $\varepsilon$ near $0$. 
Assumptions A2--A3 ensure that $g$ is differentiable at $\varepsilon=0$, and hence $g'(0)=0$. To compute $g'(0)$, we invoke the chain rule for $M$ in its two arguments: the distributional argument $P$ (handled via a Gâteaux derivative along the submodel $\{P_\varepsilon\}$) and the finite-dimensional argument $\theta$ (handled via the usual Jacobian). 
Specifically,
\begin{align}
\label{eq:semi_eq1}
\frac{d}{d\varepsilon} M\!\big(P_\varepsilon,\Psi(P_\varepsilon)\big)
=
\underbrace{
  \partial_{P} M\!\big(P_\varepsilon,\Psi(P_\varepsilon)\big)\,[\,\dot P_\varepsilon\,]
}_{\substack{\text{G\^ateaux derivative in $P$} \\ \text{}}}
\;+\;
\underbrace{\partial_{\theta} M\!\big(P_\varepsilon,\Psi(P_\varepsilon)\big)}_{\text{Jacobian in }\theta}
\cdot
\underbrace{\frac{d}{d\varepsilon}\Psi(P_\varepsilon)}_{\substack{\text{pathwise}\\ \text{derivative of }\Psi}}.    
\end{align}

Evaluating \eqref{eq:semi_eq1} at $\varepsilon=0$ (so $P_\varepsilon|_{\varepsilon=0}=P_0$ and $\Psi(P_0)=\theta_0$) yields
\begin{align}
\label{eq:semi_eq2}
0
=\left.\frac{d}{d\varepsilon}M\!\big(P_\varepsilon,\Psi(P_\varepsilon)\big)\right|_{\varepsilon=0}
=\left.\frac{d}{d\varepsilon}M(P_\varepsilon,\theta_0)\right|_{\varepsilon=0}
\;+\;
\partial_\theta M(P_0,\theta_0)\cdot \left.\frac{d}{d\varepsilon}\Psi(P_\varepsilon)\right|_{\varepsilon=0}.
\end{align}


Here we use the definition of the Gâteaux derivative for the first term on the
right–hand side of \eqref{eq:semi_eq2}. Fix \(\theta\) and write \(F(P,\theta):=M(P,\theta)\).
Let \(S\in L_0^2(P_0)\) and define a path \(P_\varepsilon\) by $dP_\varepsilon/dP_0 = 1+\varepsilon S$ for $|\varepsilon|$ small, so that \(P_{\varepsilon=0}=P_0\). The Gâteaux derivative of \(P\mapsto F(P,\theta)\) at \(P_0\)
in the direction \(S\) is
\[
\partial_P F(P_0,\theta)[S]
:=\lim_{\varepsilon\to 0}\frac{F(P_\varepsilon,\theta)-F(P_0,\theta)}{\varepsilon}
= \left.\frac{d}{d\varepsilon}F(P_\varepsilon,\theta)\right|_{\varepsilon=0},
\]
whenever the limit exists. In particular, with \(F=M\) and \(\theta=\theta_0\)
\[
\partial_P M(P_\varepsilon,\theta_0)[\dot P_\varepsilon] \bigg|_{\varepsilon = 0} = 
\partial_P M(P_0,\theta_0)[S]
= \left.\frac{d}{d\varepsilon}M(P_\varepsilon,\theta_0)\right|_{\varepsilon=0},
\]
which is the identity used in \eqref{eq:semi_eq2}.

Writing $A:=\partial_\theta M(P_0,\theta_0)$ (which exists and is invertible by A2), the equation \eqref{eq:semi_eq2} becomes
\begin{align}
\label{eq:semi_eq3}
0
=\left.\frac{d}{d\varepsilon}M(P_\varepsilon,\theta_0)\right|_{\varepsilon=0}
\;+\;
A\cdot \left.\frac{d}{d\varepsilon}\Psi(P_\varepsilon)\right|_{\varepsilon=0}.
\end{align}

By A3, for every regular submodel score $S\in\mathcal T$ the directional derivative of $P\mapsto M(P,\theta_0)$ along $\{P_\varepsilon\}$ admits the inner–product representation
\[
\left.\frac{d}{d\varepsilon}M(P_\varepsilon,\theta_0)\right|_{\varepsilon=0}
\,=\,\mathbb E_{P_0}\!\big[\varphi(O)\,S(O)\big]
\,=\,\langle \varphi,\,S\rangle_{L_0^2(P_0)},
\]
for some moment representer $\varphi\in L_0^2(P_0)$ (not necessarily in $\mathcal T$). \\
Here $S(O)=$ $\left.\tfrac{d}{d\varepsilon}\log \tfrac{dP_\varepsilon}{dP_0}(O)\right|_{\varepsilon=0} $ $\in\mathcal T$ is the submodel score and 
$\langle f,g\rangle_{L_0^2(P_0)}:=\mathbb E_{P_0}[f\,g]$ denotes the $L_0^2(P_0)$ inner product.

Equation \eqref{eq:semi_eq3} yields, for every score $S\in\mathcal T$,
\[
0 \;=\; \langle \varphi, S\rangle_{L_0^2(P_0)} \;+\; A \cdot \left.\frac{d}{d\varepsilon}\Psi(P_\varepsilon)\right|_{\varepsilon=0},
\]
hence
\begin{equation}
\nonumber
\left.\frac{d}{d\varepsilon}\Psi(P_\varepsilon)\right|_{\varepsilon=0} 
\;=\; -\,A^{-1}\, \langle \varphi, S\rangle_{L_0^2(P_0)} \qquad \text{for all } S\in\mathcal T .
\end{equation}
Define a linear map $L$ by
\begin{align}
\label{eq:functional_derivative_of_target_parameter}
L(S)\;:=\;-\,A^{-1}\,\langle \varphi, S\rangle_{L_0^2(P_0)}.    
\end{align}
Since $L$ is continuous and linear on the Hilbert space $(\mathcal T,\langle\cdot,\cdot\rangle_{L_0^2(P_0)})$, the Riesz representation theorem \citep{rudin1987real} guarantees a unique $\phi^{\mathrm{eff}}\in\mathcal T$ such that
\begin{equation}
\label{eq:Riesz_representer}
L(S)\;=\;\langle \phi^{\mathrm{eff}}, S\rangle_{L_0^2(P_0)} 
\qquad \text{for all } S\in\mathcal T.
\end{equation}
Comparing \eqref{eq:functional_derivative_of_target_parameter} and \eqref{eq:Riesz_representer} gives
\[
\langle A\phi^{\mathrm{eff}}+\varphi,\, S\rangle_{L_0^2(P_0)} \;=\; 0 \quad \text{for all } S\in\mathcal T.
\]
Thus $A\phi^{\mathrm{eff}}+\varphi\in \mathcal T^\perp$, so by the projection theorem
\[
\Pi_{\mathcal T}\!\big(A\phi^{\mathrm{eff}}+\varphi\big)=0
\quad\Longleftrightarrow\quad
A\,\Pi_{\mathcal T}\phi^{\mathrm{eff}}+\Pi_{\mathcal T}\varphi=0 .
\]
Because $\phi^{\mathrm{eff}}\in\mathcal T$, $\Pi_{\mathcal T}\phi^{\mathrm{eff}}=\phi^{\mathrm{eff}}$, and since $A$ is fixed we can pull it through the projection:
\[
A\,\phi^{\mathrm{eff}} \;=\; -\,\Pi_{\mathcal T}\varphi
\qquad\Longleftrightarrow\qquad
\phi^{\mathrm{eff}} \;=\; -\,A^{-1}\,\Pi_{\mathcal T}\varphi .
\]
In particular, if $\varphi\in\mathcal T$ then $\Pi_{\mathcal T}\varphi=\varphi$ and $\phi^{\mathrm{eff}}=-A^{-1}\varphi$. 

Note that this $\phi^{\text{eff}}$ satifies
$$\left.\frac{d}{d\varepsilon}\Psi(P_\varepsilon)\right|_{\varepsilon=0}
= \langle \phi^{\mathrm{eff}},\, S\rangle
= \left\langle -\,A^{-1}\Pi_{\mathcal T}\varphi,\, S \right\rangle, \quad \text{for all } S\in\mathcal T$$
by definition, the canonical gradient (i.e., efficient influence function) \citep{bickel1998,van2002semiparametric}.
\end{proof}


\begin{lemma}[Product rule for expectations along a regular submodel]\label{lemma:product_rule}
Let $\{P_\varepsilon:\varepsilon\in(-\eta,\eta)\}$ be a regular parametric submodel through $P_0$
dominated by a common measure $\mu$, with densities $p_\varepsilon=\frac{dP_\varepsilon}{d\mu}$.
Assume $p_\varepsilon$ is differentiable at $\varepsilon=0$ in $L_1(\mu) = \{f : \int |f| d\mu < \infty \}$ with norm $\|f\|_{L_1(\mu)} = \int |f| d\mu $, i.e., there exists
$\dot p_0\in L_1(\mu)$ such that
\[
\int \Big|\frac{p_\varepsilon-p_0}{\varepsilon}-\dot p_0\Big|\,d\mu \;\to\; 0
\quad\text{as }\varepsilon\to 0,
\]
and define the score
\[
S(O)\;:=\;\left.\frac{\partial}{\partial\varepsilon}\log p_\varepsilon(O)\right|_{\varepsilon=0}
\quad\text{so that}\quad
\left.\frac{\partial}{\partial\varepsilon}p_\varepsilon(O)\right|_{\varepsilon=0}
=p_0(O)\,S(O).
\]
Let $g_\varepsilon:\mathcal O\to\mathbb R^p$ be measurable functions such that
$\varepsilon\mapsto g_\varepsilon(o)$ is differentiable at $0$ for $P_0$-a.e.\ $o$, with
$\dot g_0(o):=\left.\frac{\partial}{\partial\varepsilon}g_\varepsilon(o)\right|_{\varepsilon=0}$,
and assume a dominated convergence condition allowing differentiation under the integral.
Then
\[
\left.\frac{d}{d\varepsilon}\,\mathbb E_{P_\varepsilon}\!\big[g_\varepsilon(O)\big]\right|_{\varepsilon=0}
\;=\;
\underbrace{\mathbb E_{P_0}\!\big[g_0(O)\,S(O)\big]}_{\text{\emph{change of law}}}
\;+\;
\underbrace{\mathbb E_{P_0}\!\big[\dot g_0(O)\big]}_{\text{\emph{change of integrand}}}.
\]
\end{lemma}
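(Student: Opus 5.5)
I will write $\mathbb E_{P_\varepsilon}[g_\varepsilon(O)]=\int g_\varepsilon\,p_\varepsilon\,d\mu$ and differentiate at $\varepsilon=0$ through the difference quotient. The natural split is to add and subtract $g_0 p_\varepsilon$:
\[
\frac{1}{\varepsilon}\Big\{\int g_\varepsilon p_\varepsilon\,d\mu-\int g_0p_0\,d\mu\Big\}
=\int g_0\,\frac{p_\varepsilon-p_0}{\varepsilon}\,d\mu
+\int \frac{g_\varepsilon-g_0}{\varepsilon}\,p_\varepsilon\,d\mu .
\]
I will treat the two terms separately and show that they converge to the ``change of law'' and ``change of integrand'' pieces, respectively. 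Throughout I work componentwise, since $g_\varepsilon\in\mathbb R^p$.

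\textbf{First term.} By the $L_1(\mu)$ differentiability of $p_\varepsilon$,
\[
\int g_0\,\frac{p_\varepsilon-p_0}{\varepsilon}\,d\mu\to\int g_0\,\dot p_0\,d\mu .
\]
This needs $g_0$ to be bounded, or more generally an integrability bound that lets $L_1$ convergence be paired with $g_0$. That is the weak point. I will absorb it into the stated dominated-convergence condition; otherwise I would truncate $g_0$ and use uniform integrability. On $\{p_0>0\}$ we have $\dot p_0=p_0S$, so the limit equals $\mathbb E_{P_0}[g_0S]$. On $\{p_0=0\}$, the fact that $p_\varepsilon\ge0$ forces $\dot p_0\ge0$ there. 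Since $\int p_\varepsilon\,d\mu=1$ for all $\varepsilon$, we get $\int\dot p_0\,d\mu=0$. Combining this with the defining relation $\dot p_0=p_0S$, which the statement takes as given, the null-set contribution vanishes.

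\textbf{Second term.} I will split it as
\[
\int \frac{g_\varepsilon-g_0}{\varepsilon}\,p_0\,d\mu+\int\frac{g_\varepsilon-g_0}{\varepsilon}\,(p_\varepsilon-p_0)\,d\mu .
\]
The first piece converges to $\mathbb E_{P_0}[\dot g_0]$ by pointwise differentiability of $g_\varepsilon$ ($P_0$-a.e.) together with the assumed domination of the difference quotients. The cross term is the main obstacle. I plan to show it vanishes by invoking the dominated-convergence hypothesis, read as a domination of $\varepsilon^{-1}(g_\varepsilon-g_0)$ by an envelope $G$ that is integrable uniformly along the path, i.e.\ $\int G\,p_\varepsilon\,d\mu$ bounded with $G p_\varepsilon$ uniformly integrable. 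Then, since $p_\varepsilon\to p_0$ in $L_1(\mu)$, the cross term is $o(1)$. Concretely, on $\{G\le M\}$ it is bounded by $M\|p_\varepsilon-p_0\|_{L_1}\to0$, and the tail over $\{G>M\}$ is made small uniformly in $\varepsilon$.

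Summing the two limits gives the stated product rule.
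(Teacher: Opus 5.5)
Your proposal is correct and follows the same route as the paper: write $\mathbb E_{P_\varepsilon}[g_\varepsilon(O)]=\int g_\varepsilon p_\varepsilon\,d\mu$, apply the product rule, and use $\dot p_0=p_0S$. You carry this out at the level of difference quotients, which makes explicit two things the paper absorbs into its unspecified ``dominated convergence condition'': the cross term $\int\varepsilon^{-1}(g_\varepsilon-g_0)(p_\varepsilon-p_0)\,d\mu$, and the integrability of $g_0$ needed to pass to the limit against the $L_1(\mu)$ convergence. One small simplification: on $\{p_0=0\}$ you get $\dot p_0=0$ $\mu$-a.e.\ directly, since $\varepsilon\downarrow0$ gives $\dot p_0\ge0$ and $\varepsilon\uparrow0$ gives $\dot p_0\le0$, so the detour through $\int\dot p_0\,d\mu=0$ is unnecessary.
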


\begin{proof}
Write the expectation as an integral against the common dominating measure:
\[
\mathbb E_{P_\varepsilon}\!\big[g_\varepsilon(O)\big]
=\int g_\varepsilon(o)\,p_\varepsilon(o)\,d\mu(o).
\]
Differentiate at $\varepsilon=0$ and use the usual product rule:
\[
\left.\frac{d}{d\varepsilon}\int g_\varepsilon\,p_\varepsilon\,d\mu\right|_{\varepsilon = 0}
=\int \dot g_0(o)\,p_0(o)\,d\mu(o)\;+\;\int g_0(o)\,\dot p_0(o)\,d\mu(o).
\]
By definition of the score, $\dot p_0=p_0\,S$, so the second term equals
$\int g_0\,p_0\,S\,d\mu=\mathbb E_{P_0}[g_0\,S]$. The first term is
$\mathbb E_{P_0}[\dot g_0]$. This yields the identity.
\end{proof}


\begin{theorem}[Semiparametric efficiency and asymptotics of PPI under SRSWOR with known sampling fraction]
\label{thm:eif-ppi-srswor}
Let $O=(X,\delta,\delta Y)\sim P_0$, where $\delta\in\{0,1\}$ indicates whether $Y$ is labeled.
Let $\{O_i=(X_i,\delta_i,\delta_i Y_i) \in \mathcal X\times\{0,1\}\times\mathcal Y: i=1,\ldots,N\}$ denote the observed data from $N$ subjects.
Assume a simple random sampling without replacement (SRSWOR) design that is independent of $(X,Y)$, with a known labeling fraction $f\in(0,1)$ and $n/N\to f$.

Let $U(\theta;x,y)\in\mathbb R^p$ be a full–data estimating function and suppose the super–population target $\theta_0\in\Theta\subset\mathbb R^p$ is the unique solution of
\[
\mathbb E_{P_0}\!\big[U(\theta_0;X,Y)\big]=0.
\]
For any observed–data law $P$ on $(\mathcal X\times\{0,1\}\times\mathcal Y,\mathcal A)$, factored as
\[
P(\mathrm dx,\mathrm d\delta,\mathrm dy)
= P_X(\mathrm dx)\,P_\delta(\mathrm d\delta)\,P_{Y\mid X}(\mathrm dy\mid x),
\]
assume $P_\delta=\mathrm{Bernoulli}(f)$ is fixed (non–varying) and independent of $(X,Y)$.

Define the conditional moment
\[
\bar U(\theta;x;P)
:= \mathbb E_{P}\!\big[U(\theta;x,Y)\mid X=x\big]
= \int_{\mathcal Y} U(\theta;x,y)\, P_{Y\mid X}(\mathrm dy\mid x),
\]
where $P_{Y\mid X}(\cdot\mid x)$ denotes a conditional distribution of $Y$ given $X=x$ under $P$.

Define the information matrix
\[
I(\theta_0):=-\,\mathbb E_{P_0}\!\big[\partial_\theta U(\theta_0;X,Y)\big].
\]
Let \(\mathcal M\) be a statistical model for the observed–data law containing \(P_0\), and let \(\mathcal T\subset L_0^2(P_0)\) denote its tangent space at \(P_0\).
For each $P \in \mathcal M$, define the observed–data moment map
\begin{align}
\label{eq:M_p_theta}
M(P,\theta)&:=\mathbb E_{P}\!\big[m(P;\theta)(O)\big],\quad
m(P;\theta)(O)
:= \bar U(\theta;X;P)+\frac{\delta}{f}\Big\{U(\theta;X,Y)-\bar U(\theta;X;P)\Big\}.    
\end{align}

Given $\{X_i\}_{i=1}^N$ and a labeled SRSWOR subset $S\subset\{1,\ldots,N\}$ of size $n$, define the target PPI score
\[
\widehat U_{\mathrm{PPI}}^{\mathrm{target}}(\theta)
:= \frac{1}{N}\sum_{i=1}^N \bar U(\theta;X_i;P_0)
 \;+\; \frac{1}{n}\sum_{j\in S}\!\Big\{ U(\theta;X_j,Y_j) - \bar U(\theta;X_j;P_0) \Big\},
\]
and the computable PPI score used by the estimator
\[
\widehat U_{\mathrm{PPI}}(\theta)
:= \frac{1}{N}\sum_{i=1}^N \widetilde U(\theta;X_i)
 \;+\; \frac{1}{n}\sum_{j\in S}\!\Big\{ U(\theta;X_j,Y_j) - \widetilde U(\theta;X_j) \Big\},
\]
where $\widetilde U(\theta;x)=U(\theta;x,m(x))$ is built from a predictor $m$ (possibly data–dependent, e.g., via cross–fitting). Let $\widehat\theta_{\mathrm{PPI}}$ be any solution of $\widehat U_{\mathrm{PPI}}(\theta)=0$.

\textbf{Assumptions.}
\begin{itemize}
\item[A1] (\emph{Identification and smoothness})
  \begin{enumerate}
  \item[(i)] $\mathbb E_{P_0}[U(\theta;X,Y)]=0$ has a unique root $\theta_0$ in a neighborhood of the truth;
  \item[(ii)] $I(\theta_0)=-\mathbb E_{P_0}[\partial_\theta U(\theta_0;X,Y)]$ exists and is nonsingular.
  \end{enumerate}

\item[A2] (\emph{Moments}) \quad $\mathbb E_{P_0}\|U(\theta_0;X,Y)\|^2<\infty$. (By Jensen’s inequality, $\mathbb{E}_{P_0}\!\big\|\bar U(\theta_0;X;P_0)\big\|^2\le \mathbb{E}_{P_0}\!\big\|U(\theta_0;X,Y)\big\|^2<\infty$.)

\item[A3] (\emph{Regularity for the plug–in score})
There exists a deterministic neighborhood $\mathcal N\subset\Theta$ of $\theta_0$
(e.g., $\mathcal N=\{\theta:\|\theta-\theta_0\|\le r\}$ for some $r>0$) such that:
\begin{enumerate}
\item[(i)] $\displaystyle \sup_{\theta\in\mathcal N}\,
\mathbb E_{P_0}\!\big\|\widetilde U(\theta;X)-\bar U(\theta;X;P_0)\big\|^2=o_p(1)$;
\item[(ii)] $\displaystyle \sup_{\theta\in\mathcal N}\,
\big\|\partial_\theta \widehat U_{\mathrm{PPI}}(\theta)-\partial_\theta M(P_0,\theta)\big\|_{\mathrm{op}}
= o_p(1)$, and for any $\hat\theta$ with $\Pr(\hat\theta\in\mathcal N)\to 1$,
\[
\widehat U_{\mathrm{PPI}}(\hat\theta)-\widehat U_{\mathrm{PPI}}(\theta_0)
= \partial_\theta M(P_0,\theta_0)\,(\hat\theta-\theta_0)+o_p(N^{-1/2}).
\]
\end{enumerate}
\end{itemize}

Then the following hold:
\begin{itemize}
\item[\textbf{\textup{(i)}}] \textbf{\emph{Efficient influence function.}}
The parameter $\theta(P)$ defined implicitly by $M(P,\theta)=0$ is pathwise differentiable at $P_0$ with efficient influence function
\[
\phi^{\mathrm{eff}}(X,Y,\delta;\theta_0)
= I(\theta_0)^{-1}\!\left[
\bar U(\theta_0;X;P_0)
+\frac{\delta}{f}\Big\{U(\theta_0;X,Y)-\bar U(\theta_0;X;P_0)\Big\}
\right].
\]

\item[\textbf{\textup{(ii)}}] \textbf{\emph{Asymptotic linearity and normality.}}
\[
\widehat\theta_{\mathrm{PPI}}-\theta_0
= \frac{1}{N}\sum_{i=1}^N \phi^{\mathrm{eff}}(O_i) \;+\; o_p\!\left(N^{-1/2}\right),
\quad
\sqrt N\,(\widehat\theta_{\mathrm{PPI}}-\theta_0)
\;\xrightarrow{d}\;
\mathcal N\!\big(0,\; \operatorname{Var}_{P_0}(\phi^{\mathrm{eff}}(O))\big),
\]
with
\begin{align*}
&\operatorname{Var}_{P_0}(\phi^{\mathrm{eff}}(O))
= I(\theta_0)^{-1}\,\Sigma\,I(\theta_0)^{-1}, \\   
&\Sigma
= \operatorname{Var}_{P_0}\!\big(\bar U(\theta_0;X;P_0)\big)
\;+\;\frac{1}{f}\,\mathbb E_{P_0}\!\Big[\operatorname{Var}\!\big(U(\theta_0;X,Y)\mid X\big)\Big]
\end{align*}
\end{itemize}
\end{theorem}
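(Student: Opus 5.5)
Part (i) follows from Lemma~\ref{lem:implicit-moment-eif} applied to the moment map $M(P,\theta)$ in \eqref{eq:M_p_theta}, and part (ii) from a linearization of the computable score around the target score.

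\textbf{Step 1: the Jacobian.} Under $P$ the augmentation term has mean zero, since $\delta\perp(X,Y)$ and $\mathbb E[\delta]=f$ give
\[
\mathbb E_P\big[(\delta/f)\{U-\bar U(\theta;X;P)\}\big]=\mathbb E_P\big[U-\bar U\big]=0 .
\]
Hence $M(P,\theta)=\mathbb E_P[U(\theta;X,Y)]$ and the implicit root is exactly $\theta(P)$. Differentiating in $\theta$ at $P_0$ gives $A=\partial_\theta M(P_0,\theta_0)=-I(\theta_0)$.

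\textbf{Step 2: the moment representer.} Take a regular submodel $P_\varepsilon$ that perturbs only $P_X$ and $P_{Y\mid X}$, since $P_\delta$ is fixed. Its score is $S=S_X(X)+\delta\,S_{Y\mid X}(Y\mid X)$, with $\mathbb E[S_{Y\mid X}\mid X]=0$; the factor $\delta$ appears because $Y$ is observed only when $\delta=1$. Apply Lemma~\ref{lemma:product_rule} to $g_\varepsilon=m(P_\varepsilon;\theta_0)$.
\begin{itemize}
\item The change-of-integrand term is
\[
\mathbb E_{P_0}\big[(1-\delta/f)\,\partial_\varepsilon\bar U_\varepsilon\big]=0,
\]
because $\partial_\varepsilon\bar U_\varepsilon$ is a function of $X$ alone and $\mathbb E[1-\delta/f\mid X]=0$. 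This is the key orthogonality.
\item The change-of-law term is $\mathbb E_{P_0}[m(P_0;\theta_0)(O)\,S(O)]$.
\end{itemize}
So we can take $\varphi=m(P_0;\theta_0)(O)$, which is mean zero, and A3 of Lemma~\ref{lem:implicit-moment-eif} holds.

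\textbf{Step 3: $\varphi$ lies in the tangent space.} The model is nonparametric in $(P_X,P_{Y\mid X})$, so
\[
\mathcal T=\{a(X)+\delta\,b(X,Y):\ \mathbb E a=0,\ \mathbb E[b\mid X]=0\}.
\]
Write $\varphi=a+\delta b$ with $a(X)=\bar U(\theta_0;X;P_0)$ and $b=\{U-\bar U\}/f$. Then $\mathbb E a=\mathbb E U=0$ and $\mathbb E[b\mid X]=0$, so $\varphi\in\mathcal T^p$ and no projection is needed. Lemma~\ref{lem:implicit-moment-eif} then gives
\[
\phi^{\mathrm{eff}}=-A^{-1}\varphi=I(\theta_0)^{-1}\varphi .
\]
I expect this step to be the main obstacle: the tangent-space characterization must be justified carefully, in particular why the $Y$-score enters multiplied by $\delta$ and why the fixed $P_\delta$ contributes no scores.

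\textbf{Step 4: variance of the efficient influence function.} Condition on $X$ and use independence of $\delta$. The two pieces $\bar U$ and $(\delta/f)(U-\bar U)$ are uncorrelated, since the conditional mean of the second given $X$ is zero. The first contributes $\Var(\bar U)$. For the second,
\[
\mathbb E\big[(\delta/f^2)(U-\bar U)^{\otimes2}\big]=f^{-1}\,\mathbb E\big[\Var(U\mid X)\big].
\]
Sandwiching between $I(\theta_0)^{-1}$ factors gives $I(\theta_0)^{-1}\Sigma\,I(\theta_0)^{-1}$.

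\textbf{Step 5: asymptotic linearity of $\widehat\theta_{\mathrm{PPI}}$.} Use A3(ii) with $\hat\theta=\widehat\theta_{\mathrm{PPI}}$, after first obtaining consistency as in Subsection~\ref{subsec:Consistency of the PPI Estimator}. This yields
\[
0=\widehat U_{\mathrm{PPI}}(\theta_0)-I(\theta_0)(\widehat\theta_{\mathrm{PPI}}-\theta_0)+o_p(N^{-1/2}).
\]
Next compare the computable score with the target score:
\[
\widehat U_{\mathrm{PPI}}(\theta_0)-\widehat U^{\mathrm{target}}_{\mathrm{PPI}}(\theta_0)=\frac1N\sum_{i=1}^N\Big(1-\frac{\delta_i}{f}\Big)\,D_i,
\qquad D_i=\widetilde U(\theta_0;X_i)-\bar U(\theta_0;X_i;P_0).
\]
This is a mean-zero weighted sum. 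When $m$ is independent of the labels, or cross-fitted, its conditional variance is $O(N^{-1}\mathbb E\|D\|^2)$, which is $o_p(N^{-1})$ by A3(i). A conditional Chebyshev bound then makes the difference $o_p(N^{-1/2})$. Finally, $\widehat U^{\mathrm{target}}_{\mathrm{PPI}}(\theta_0)=N^{-1}\sum_i I(\theta_0)\phi^{\mathrm{eff}}(O_i)$, which gives the linear expansion. The CLT follows from the SRSWOR/H\'ajek CLT, as in Corollary~\ref{cor:ppi-wald}, with the variance computed in Step 4.
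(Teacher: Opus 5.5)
Your proposal follows the paper's proof essentially step for step: $A=-I(\theta_0)$, the product-rule derivative whose change-of-integrand term vanishes because $\mathbb E[1-\delta/f\mid X]=0$, the check that $\varphi=m(P_0;\theta_0)\in\mathcal T^p$ so that Lemma~\ref{lem:implicit-moment-eif} gives $\phi^{\mathrm{eff}}=I(\theta_0)^{-1}\varphi$, the uncorrelated-pieces variance computation, and the A3(ii) linearization combined with $\widehat U^{\mathrm{target}}_{\mathrm{PPI}}(\theta_0)=N^{-1}\sum_i\varphi(O_i)$. The only place you go beyond the paper is the score comparison, which the paper simply asserts from A3(i): your identity $\widehat U_{\mathrm{PPI}}(\theta_0)-\widehat U^{\mathrm{target}}_{\mathrm{PPI}}(\theta_0)=N^{-1}\sum_i(1-\delta_i/f)D_i$ with a conditional Chebyshev bound justifies it when $m$ does not depend on the sample, but it does not cover your parenthetical ``or cross-fitted'' case, because there the full-population average and the rectifier use different predictors and the aggregate fit depends on labeled units that also appear in the full average, so the difference is no longer of that mean-zero form.
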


\begin{proof}
We begin with a basic identity used repeatedly in the proof. Under SRSWOR with known sampling fraction \(f\in(0,1)\), the labeling indicator \(\delta\) is
independent of \((X,Y)\), so \(\mathbb E_{P_0}[\delta\mid X,Y]=\mathbb E_{P_0}[\delta]=f\) a.s., i.e.,
\(\mathbb E_{P_0}[\delta/f\mid X,Y]=1\). Hence, by the tower property, for any integrable
\(Z(X,Y)\),
\begin{align}
\label{eq:semi_trick1}
\mathbb E_{P_0}\!\Big[\tfrac{\delta}{f}\,Z(X,Y)\Big]
&= \mathbb E_{P_0}\!\Big[\mathbb E_{P_0}\!\big[\tfrac{\delta}{f}\,Z(X,Y)\mid X,Y\big]\Big]\\
\nonumber
&= \mathbb E_{P_0}\!\big[Z(X,Y)\,\mathbb E_{P_0}[\delta/f\mid X,Y]\big]
= \mathbb E_{P_0}[Z(X,Y)].
\end{align}

\paragraph{Identification.}
By definition \eqref{eq:M_p_theta},
\[
M(P_0,\theta_0)
= \mathbb E_{P_0}\!\Big[\bar U(\theta_0;X;P_0)
+ \frac{\delta}{f}\big\{U(\theta_0;X,Y)-\bar U(\theta_0;X;P_0)\big\}\Big].
\]
Thus
\begin{align*}
M(P_0,\theta_0)
&= \mathbb E_{P_0}\!\big[\bar U(\theta_0;X;P_0)\big]
  + \mathbb E_{P_0}\!\Big[\frac{\delta}{f}\big\{U(\theta_0;X,Y)-\bar U(\theta_0;X;P_0)\big\}\Big] \\
&= \mathbb E_{P_0}\!\big[\bar U(\theta_0;X;P_0)\big]
  + \mathbb E_{P_0}\!\big[U(\theta_0;X,Y)-\bar U(\theta_0;X;P_0)\big]
  \qquad\text{(by \eqref{eq:semi_trick1})} \\
  &= \cancel{\mathbb E_{P_0}\!\big[\bar U(\theta_0;X;P_0)\big]} 
  + \mathbb E_{P_0}\!\big[U(\theta_0;X,Y)\big]- \cancel{\mathbb E_{P_0}\!\big[\bar U(\theta_0;X;P_0)\big]} \\
&= \mathbb E_{P_0}\!\big[U(\theta_0;X,Y)\big]
= 0,
\end{align*}
where the last equality is Assumption~A1(i). More generally, for any \(\theta\in\Theta\),
\[
M(P_0,\theta)
= \mathbb E_{P_0}\!\Big[\bar U(\theta;X;P_0)
+ \tfrac{\delta}{f}\big\{U(\theta;X,Y)-\bar U(\theta;X;P_0)\big\}\Big]
= \mathbb E_{P_0}\!\big[U(\theta;X,Y)\big].
\]
Hence, by Assumption~A1(i), \(\theta_0\) is the unique solution to \(M(P_0,\theta)=0\).

\paragraph{Jacobian matrix.}
The Jacobian with respect to \(\theta\) (evaluated at \((P_0,\theta_0)\)) is
\begin{align}
\nonumber
A&:=\partial_\theta M(P_0,\theta_0)\\
&= \mathbb E_{P_0}\!\Big[\partial_\theta\bar U(\theta_0;X;P_0)
   +\tfrac{\delta}{f}\big\{\partial_\theta U(\theta_0;X,Y)-\partial_\theta\bar U(\theta_0;X;P_0)\big\}\Big]\nonumber\\
&= \mathbb E_{P_0}\!\Big[\partial_\theta\bar U(\theta_0;X;P_0)\Big]
   +
   \mathbb E_{P_0}\!\Big[ \tfrac{\delta}{f}\big\{\partial_\theta U(\theta_0;X,Y)-\partial_\theta\bar U(\theta_0;X;P_0)\big\}\Big]\nonumber\\
&= \mathbb E_{P_0}\!\Big[\partial_\theta\bar U(\theta_0;X;P_0)\Big]
   +
   \mathbb E_{P_0}\!\Big[ \partial_\theta U(\theta_0;X,Y)-\partial_\theta\bar U(\theta_0;X;P_0)\Big]\nonumber \qquad\text{(by \eqref{eq:semi_trick1})}\\
&= \cancel{\mathbb E_{P_0}\!\big[\partial_\theta\bar U(\theta_0;X;P_0)\big]} 
   \;+\; \mathbb E_{P_0}\!\big[\partial_\theta U(\theta_0;X,Y)\big]
   \;-\; \cancel{\mathbb E_{P_0}\!\big[\partial_\theta\bar U(\theta_0;X;P_0)\big]} \label{eq:use-trick}\\
&= \mathbb E_{P_0}\!\big[\partial_\theta U(\theta_0;X,Y)\big]
= -\,I(\theta_0).\label{eq:Jacobian_matrix}
\end{align}
Note that \(A\) is nonsingular by Assumption~A1(ii).

\paragraph{Observed–data tangent space.}
Under SRSWOR with known $f\in(0,1)$, the observed–data density factorizes as
\[
p(o)=p(x,\delta,\delta y)
=p_X(x)\,\big[(1-f)\big]^{1-\delta}\,\big[f\,p_{Y\mid X}(y\mid x)\big]^{\delta},
\]
so only $p_X$ and $p_{Y\mid X}$ vary along regular submodels; the sampling law
for $\delta$ is fixed by assumption. The observed–data tangent space at $P_0$ is the closed
linear span
\[
\mathcal T=\overline{\mathcal T_X\oplus\mathcal T_{Y\mid X}}\subset L_0^2(P_0),
\]
with
\[
\mathcal T_X=\Big\{\,S_X(X):\ \mathbb E_{P_0}[S_X(X)]
=\int_{\mathcal X} S_X(x)\,P_{0,X}(\mathrm dx)=0\,\Big\},
\]
and
\begin{align*}
\mathcal T_{Y\mid X}&=\Big\{\,\delta\,S_{Y\mid X}(Y\mid X):\
\mathbb E_{P_0}\!\big[S_{Y\mid X}(Y\mid X)\mid X=x\big]\\
&\quad\quad=\int_{\mathcal Y} S_{Y\mid X}(y\mid x)\,P_{0,Y\mid X}(\mathrm dy\mid x)=0
\ \text{for }P_{0,X}\text{-a.e. }x\,\Big\}.    
\end{align*}
Hence any regular observed–data submodel has score
\[
S(O)=S_X(X)+\delta\,S_{Y\mid X}(Y\mid X),
\quad\text{with}\quad
\mathbb E_{P_0}[S_X(X)]=0,\ \ \mathbb E_{P_0}[S_{Y\mid X}(Y\mid X)\mid X]=0.
\]

\paragraph{Moment representer $\varphi$.}
Set the moment representer, obtained by evaluating the $m(P;\theta)(O)$ \eqref{eq:M_p_theta} at $(P_0,\theta_0)$
\begin{align}
\label{eq:moment_representer}
\varphi(O) & :=\varphi(X,Y,\delta;P_0,\theta_0)=m(P_0;\theta_0)(O) \\
&=\bar U(\theta_0;X;P_0)+\frac{\delta}{f}\Big\{U(\theta_0;X,Y)-\bar U(\theta_0;X;P_0)\Big\}\nonumber\\
&= \mathbb E_{P_0}\!\big[U(\theta_0;X,Y)\mid X\big]
  +\frac{\delta}{f}\!\left\{U(\theta_0;X,Y)-\mathbb E_{P_0}\!\big[U(\theta_0;X,Y)\mid X\big]\right\}.\nonumber
\end{align}
Note that we have
\begin{align}
\nonumber
\mathbb E_{P_0}\big\|\varphi(O)\big\|^2
&=\mathbb E_{P_0}\Big\|\bar U(\theta_0;X;P_0)+\tfrac{\delta}{f}\big\{U(\theta_0;X,Y)-\bar U(\theta_0;X;P_0)\big\}\Big\|^2\\
\nonumber
&=\mathbb E_{P_0}\big\|\bar U(\theta_0;X;P_0)\big\|^2
+\frac{2}{f}\,\mathbb E_{P_0}\!\Big[\delta\,\big\langle \bar U(\theta_0;X;P_0),\,U(\theta_0;X,Y)-\bar U(\theta_0;X;P_0)\big\rangle\Big]\\
&\quad+\frac{1}{f^2}\,\mathbb E_{P_0}\!\Big[\delta^2\,\big\|U(\theta_0;X,Y)-\bar U(\theta_0;X;P_0)\big\|^2\Big].
\label{eq:L2_psi_proof}
\end{align}
Here, the cross-term in \eqref{eq:L2_psi_proof} becomes zero because
\begin{small}
\begin{align*}
&\mathbb E_{P_0}\!\Big[\delta\,\big\langle \bar U(\theta_0;X;P_0),\,U(\theta_0;X,Y)-\bar U(\theta_0;X;P_0)\big\rangle\Big]\\
&=\mathbb E_{P_0}\!\Big[\mathbb E_{P_0}\!\big[\delta\,\langle \bar U(\theta_0;X;P_0),\,U(\theta_0;X,Y)-\bar U(\theta_0;X;P_0)\rangle\mid X,Y\big]\Big]\\
&=\mathbb E_{P_0}\!\Big[\big\langle \bar U(\theta_0;X;P_0),\,U(\theta_0;X,Y)-\bar U(\theta_0;X;P_0)\big\rangle\,\mathbb E_{P_0}[\delta\mid X,Y]\Big]\\
&=f\mathbb E_{P_0}\!\Big[\big\langle \bar U(\theta_0;X;P_0),\,U(\theta_0;X,Y)-\bar U(\theta_0;X;P_0)\big\rangle\Big]\\
&=f\,\mathbb E_{P_0}\!\Big[\mathbb E_{P_0}\!\big[\big\langle \bar U(\theta_0;X;P_0),\,U(\theta_0;X,Y)-\bar U(\theta_0;X;P_0)\big\rangle\mid X\big]\Big]\\
&=f\,\mathbb E_{P_0}\!\Big[\big\langle \bar U(\theta_0;X;P_0),\,\mathbb E_{P_0}\!\big[U(\theta_0;X,Y)\mid X\big]-\bar U(\theta_0;X;P_0)\big\rangle\Big]\\
&=0,
\end{align*}
\end{small}
and since \(\delta^2=\delta\), the last term in \eqref{eq:L2_psi_proof} becomes
\[
\mathbb E_{P_0}\!\Big[\delta^2\,\big\|U(\theta_0;X,Y)-\bar U(\theta_0;X;P_0)\big\|^2\Big]
=f\,\mathbb E_{P_0}\big\|U(\theta_0;X,Y)-\bar U(\theta_0;X;P_0)\big\|^2.
\]

Hence,
\[
\mathbb E_{P_0}\big\|\varphi(O)\big\|^2
=\mathbb E_{P_0}\big\|\bar U(\theta_0;X;P_0)\big\|^2
+\frac{1}{f}\,\mathbb E_{P_0}\big\|U(\theta_0;X,Y)-\bar U(\theta_0;X;P_0)\big\|^2.
\]
By Jensen’s inequality,
\(
\mathbb E_{P_0}\big\|\bar U(\theta_0;X;P_0)\big\|^2
\le \mathbb E_{P_0}\big\|U(\theta_0;X,Y)\big\|^2,
\)
and by the \(L^2\) projection identity,
\begin{align*}
\mathbb E_{P_0}\big\|U(\theta_0;X,Y)-\bar U(\theta_0;X;P_0)\big\|^2
&=\mathbb E_{P_0}\big\|U(\theta_0;X,Y)\big\|^2-\mathbb E_{P_0}\big\|\bar U(\theta_0;X;P_0)\big\|^2\\
&\le \mathbb E_{P_0}\big\|U(\theta_0;X,Y)\big\|^2.    
\end{align*}
Therefore, by Assumption A2,
\begin{align}
\label{eq:finite_variance_condition}
\mathbb E_{P_0}\big\|\varphi(O)\big\|^2
\le \Big(1+\frac{1}{f}\Big)\,\mathbb E_{P_0}\big\|U(\theta_0;X,Y)\big\|^2
<\infty.    
\end{align}
Furthermore, by the identity \eqref{eq:semi_trick1} (valid under SRSWOR with known $f$),
\begin{align}
\label{eq:mean_zero_condition}
\mathbb E_{P_0}[\varphi(O)]
&=\mathbb E_{P_0}\!\big[\bar U(\theta_0;X;P_0)\big]
+\mathbb E_{P_0}\!\Big[\tfrac{\delta}{f}\{U(\theta_0;X,Y)-\bar U(\theta_0;X;P_0)\}\Big]\\
\nonumber
&=\mathbb E_{P_0}\!\big[U(\theta_0;X,Y)\big]=0.    
\end{align}
By \eqref{eq:finite_variance_condition} and \eqref{eq:mean_zero_condition}, it follows that
\(\varphi\in L_0^2(P_0) = \{\,g \;:\;
\mathbb E_{P_0}\!\big[\|g(O)\|^2\big]<\infty,\ \ \mathbb E_{P_0}[g(O)]=0\}\).

Define
\[
S_X^\star(X):=\bar U(\theta_0;X;P_0)
\quad\text{and}\quad
S_{Y\mid X}^\star(Y\mid X):=\frac{1}{f}\Big\{U(\theta_0;X,Y)-\bar U(\theta_0;X;P_0)\Big\}.
\]
Then by the tower property,
\begin{align*}
\mathbb E_{P_0}[S_X^\star(X)]
&=\mathbb E_{P_0}\!\big[\mathbb E_{P_0}\{U(\theta_0;X,Y)\mid X\}\big] \\
&=\int_{\mathcal X}\!\left\{\int_{\mathcal Y} U(\theta_0;x,y)\,
  P_{0,Y\mid X}(\mathrm dy\mid x)\right\} P_{0,X}(\mathrm dx) \\
&=\iint_{\mathcal X\times\mathcal Y} U(\theta_0;x,y)\,
   P_{0,Y\mid X}(\mathrm dy\mid x)\,P_{0,X}(\mathrm dx) \\
&=\iint_{\mathcal X\times\mathcal Y} U(\theta_0;x,y)\,
   P_{0,XY}(\mathrm dx,\mathrm dy)
=\mathbb E_{P_0}\!\big[U(\theta_0;X,Y)\big]=0 \quad \text{by A1 (i)},
\end{align*}
hence $S_X^\star\in\mathcal T_X=\{S_X(X):\mathbb E_{P_0}[S_X(X)]=0\}$. Moreover,
\begin{align*}
\mathbb E_{P_0}\!\big[S_{Y\mid X}^\star(Y\mid X)\mid &X\big]
= \mathbb E_{P_0}\!\left[\frac{1}{f}\Big\{U(\theta_0;X,Y)-\bar U(\theta_0;X;P_0)\Big\}\,\Big|\,X\right]\\
&= \frac{1}{f}\left\{\mathbb E_{P_0}\!\big[U(\theta_0;X,Y)\mid X\big]-\bar U(\theta_0;X;P_0)\right\}\\
&= \frac{1}{f}\left\{\int_{\mathcal Y} U(\theta_0;X,y)\,P_{0,Y\mid X}(\mathrm dy\mid X)
         - \int_{\mathcal Y} U(\theta_0;X,y)\,P_{0,Y\mid X}(\mathrm dy\mid X)\right\}\\
&= 0
\end{align*}
so $\delta\,S_{Y\mid X}^\star(Y\mid X)\in\mathcal T_{Y\mid X}
=\{\delta\,S_{Y\mid X}(Y\mid X):\mathbb E_{P_0}[S_{Y\mid X}(Y\mid X)\mid X]=0\}$.
Therefore
\[
\varphi(O)=S_X^\star(X)+\delta\,S_{Y\mid X}^\star(Y\mid X)\in
\mathcal T_X\oplus\mathcal T_{Y\mid X}\subset\mathcal T,
\quad\text{and hence } \Pi_{\mathcal T}\varphi=\varphi.
\]

\paragraph{Pathwise derivative of $M$ in $P$.}
Fix any regular observed–data submodel $\{P_\varepsilon:\varepsilon\in(-\eta,\eta)\}$ through $P_0$
with score $S\in\mathcal T$, which decomposes as
$S(O)=S_X(X)+\delta\,S_{Y\mid X}(Y\mid X)$, where
$\mathbb E_{P_0}[S_X(X)]=0$ and $\mathbb E_{P_0}[S_{Y\mid X}(Y\mid X)\mid X]=0$.
By definition \eqref{eq:M_p_theta},
\begin{align*}
&M(P_\varepsilon,\theta_0)=\mathbb E_{P_\varepsilon}\!\big[m(P_\varepsilon;\theta_0)(O)\big],\\    
&m(P_\varepsilon;\theta_0)(O)
=\bar U(\theta_0;X;P_\varepsilon)+\frac{\delta}{f}\Big\{U(\theta_0;X,Y)-\bar U(\theta_0;X;P_\varepsilon)\Big\}.
\end{align*}

Applying Lemma~\ref{lemma:product_rule} (with
$g_\varepsilon(O)=m(P_\varepsilon;\theta_0)(O)$) along smooth submodels, we obtain
\begin{align}
\label{eq:product_form_M}
\left.\frac{d}{d\varepsilon}M(P_\varepsilon,\theta_0)\right|_{\varepsilon=0}
&=\left.\frac{d}{d\varepsilon}\,\mathbb E_{P_\varepsilon}\!\big[m(P_\varepsilon;\theta_0)(O)\big]\right|_{\varepsilon=0}  \\
\nonumber
&=\underbrace{\mathbb E_{P_0}\!\big[m(P_0;\theta_0)(O)\,S(O)\big]}_{\text{change of law}}
\;+\;
\underbrace{\mathbb E_{P_0}\!\Big[\left.\frac{d}{d\varepsilon}m(P_\varepsilon;\theta_0)(O)\right|_{\varepsilon=0}\Big]}_{\text{change of integrand}}.
\end{align}
Here, rewrite $m(P;\theta_0)(O)$ by 
\begin{align*}
m(P;\theta_0)(O)&=\bar U(\theta_0;X;P)+\frac{\delta}{f}\{U(\theta_0;X,Y)-\bar U(\theta_0;X;P)\}\\
&=\left(1 - \frac{\delta}{f}\right)\bar U(\theta_0;X;P)
+\frac{\delta}{f}U(\theta_0;X,Y).  
\end{align*}
Note that $m(P;\theta_0)(O)$ depends on $P$ (more specifically, the conditional distribution $P_{Y\mid X}$) only through the conditional mean $\bar U(\theta_0;X;P) = 
\mathbb{E}_{P}\!\big[U(\theta_0;X,Y)\mid X\big]
= \int_{\mathcal Y} U(\theta_0;X,y)\, P_{Y\mid X}(dy\mid X).
$

Hence, the integrand in the second term on the right-hand side of \eqref{eq:product_form_M} can be simplified to
\[
\left.\frac{d}{d\varepsilon}m(P_\varepsilon;\theta_0)(O)\right|_{\varepsilon=0}
=\Big(1-\frac{\delta}{f}\Big)\left.\frac{d}{d\varepsilon}\bar U(\theta_0;X;P_\varepsilon)\right|_{\varepsilon=0}.
\]
For a regular observed–data submodel, recall that the score decomposes as
$S(O)=S_X(X)+\delta\,S_{Y\mid X}(Y\mid X)$ with $\mathbb E_{P_0}[S_X(X)]=0$ and
$\mathbb E_{P_0}[S_{Y\mid X}(Y\mid X)\mid X]=0$. The conditional score identity gives
\begin{align*}
\left.\frac{d}{d\varepsilon}\bar U(\theta_0;X;P_\varepsilon)\right|_{\varepsilon=0}
&= \left.\frac{d}{d\varepsilon}\int_{\mathcal Y} U(\theta_0;X,y)\,P_{\varepsilon,Y\mid X}(dy\mid X)\right|_{\varepsilon=0} \\
&= \left.\frac{d}{d\varepsilon}\int_{\mathcal Y} U(\theta_0;X,y)\,p_\varepsilon(y\mid X)\,d\nu(y)\right|_{\varepsilon=0} \\
&= \int_{\mathcal Y} U(\theta_0;X,y)\,\left.\frac{\partial}{\partial\varepsilon}p_\varepsilon(y\mid X)\right|_{\varepsilon=0}\,d\nu(y) \\
&= \int_{\mathcal Y} U(\theta_0;X,y)\,p_0(y\mid X)\,S_{Y\mid X}(y\mid X)\,d\nu(y) \\
&= \mathbb E_{P_0}\!\big[\,U(\theta_0;X,Y)\,S_{Y\mid X}(Y\mid X)\mid X\,\big]\;=:\;\xi(X),
\end{align*}
where $\nu$ is a $\sigma$-finite dominating measure on $\mathcal Y$, 
$p_\varepsilon(y\mid X):=\frac{dP_{\varepsilon,Y\mid X}}{d\nu}(y\mid X)$, 
$S_{Y\mid X}(y\mid X):=\left.\partial_\varepsilon\log p_\varepsilon(y\mid X)\right|_{\varepsilon=0}$, 
so $\left.\partial_\varepsilon p_\varepsilon(y\mid X)\right|_{\varepsilon = 0}=p_0(y\mid X)\,S_{Y\mid X}(y\mid X)$, 
$\mathbb E_{P_0}\!\big[S_{Y\mid X}(Y\mid X)\mid X\big]=0$, 
and dominated convergence justifies differentiation under the integral.

Under SRSWOR, using $\delta\!\perp\!(X,Y)$ and $\mathbb E_{P_0}[\delta|X,Y] = \mathbb E_{P_0}[\delta] =f$,
\[
\begin{aligned}
\mathbb E_{P_0}\!\bigg[\Big(1-\frac{\delta}{f}\Big)\,\xi(X)\bigg]
&= \mathbb E_{P_0}\!\big[\xi(X)\big]
   - \mathbb E_{P_0}\!\bigg[\frac{\delta}{f}\,\xi(X)\bigg] \\
&= \mathbb E_{P_0}\!\big[\xi(X)\big]
   - \mathbb E_{P_0}\!\Big[\,\mathbb E_{P_0}\!\big[ \tfrac{\delta}{f}\,\xi(X) \,\big|\, X,Y \big]\,\Big] \\
&= \mathbb E_{P_0}\!\big[\xi(X)\big]
   - \mathbb E_{P_0}\!\Big[\, \xi(X)\,\mathbb E_{P_0}\!\big[ \tfrac{\delta}{f} \,\big|\, X,Y \big] \,\Big] \\
&= \mathbb E_{P_0}\!\big[\xi(X)\big]
   - \mathbb E_{P_0}\!\big[\xi(X)\big] \;=\; 0 .
\end{aligned}
\]
Therefore the change–of–integrand term (i.e., the second term on the right–hand side of \eqref{eq:product_form_M}) vanishes, and we conclude
\begin{align*}
\left.\frac{d}{d\varepsilon}M(P_\varepsilon,\theta_0)\right|_{\varepsilon=0}
&=\mathbb E_{P_0}\!\big[m(P_0;\theta_0)(O)\,S(O)\big]\\
&=\mathbb E_{P_0}\!\big[\varphi(O)\,S(O)\big]
=\langle \varphi,\,S\rangle_{L_0^2(P_0)}\quad \text{for any score }S\in\mathcal T.    
\end{align*}
Moreover, since \(\varphi\in\mathcal T\) (as shown above), if we view the map
\[
\mathcal L':\mathcal T\to\mathbb R^p,\qquad 
\mathcal L'(S):=\left.\frac{d}{d\varepsilon}M(P_\varepsilon,\theta_0)\right|_{\varepsilon=0}(S),
\]
as a (coordinatewise) linear functional of the score direction \(S\), then
\[
\mathcal L'(S)=\langle \varphi,S\rangle_{L_0^2(P_0)}\quad\text{for all }S\in\mathcal T.
\]
By the Riesz representation theorem (applied componentwise), \(\varphi\) is the unique canonical moment representer (canonical gradient of the moment map) at \((P_0,\theta_0)\).

\paragraph{Efficient influence function $\phi^{\mathrm{eff}}$.}
Because $\varphi(O)$ in \eqref{eq:moment_representer} belongs to the tangent space $\mathcal T$ and the regularity conditions hold (we already proved), Lemma~\ref{lem:implicit-moment-eif} yields the efficient influence function
\begin{align}
\label{eq:EIF_PPI}
\phi^{\mathrm{eff}}(O) 
&=\phi^{\mathrm{eff}}(X,Y,\delta;P_0,\theta_0)
= -\,A^{-1}\,\varphi(O)\\
\nonumber
&= I(\theta_0)^{-1}\!\left[
  \bar U(\theta_0;X;P_0)
  +\frac{\delta}{f}\Big\{U(\theta_0;X,Y)-\bar U(\theta_0;X;P_0)\Big\}
\right]\\
\nonumber
&= I(\theta_0)^{-1}\!\left[
  \mathbb E_{P_0}\!\big[U(\theta_0;X,Y)\mid X\big]
  +\frac{\delta}{f}\Big\{U(\theta_0;X,Y)-\mathbb E_{P_0}\!\big[U(\theta_0;X,Y)\mid X\big]\Big\}
\right].
\end{align}
In other words, $\phi^{\mathrm{eff}}$ is the (unique) Riesz representer of the pathwise derivative of $\Psi(P)=\theta(P)$—i.e., the canonical gradient—satisfying, for any score $S\in\mathcal T$,
\begin{align}
\label{eq:EIF_Riez_Form}
\left.\frac{d}{d\varepsilon}\Psi(P_\varepsilon)\right|_{\varepsilon=0}
= \big\langle \phi^{\mathrm{eff}},\, S\big\rangle_{L_0^2(P_0)}
= \big\langle -\,A^{-1}\Pi_{\mathcal T}\varphi,\, S \big\rangle_{L_0^2(P_0)}
= \big\langle -\,A^{-1}\varphi,\, S \big\rangle_{L_0^2(P_0)},    
\end{align}
where $\langle g,S\rangle_{L_0^2(P_0)}:=\mathbb E_{P_0}[g(O)S(O)]$ (applied componentwise).

\paragraph{Asymptotic linearity and normality.}
A first–order expansion of the computable score at $\theta_0$ (Assumption A3(ii)) gives
\begin{align}
\label{eq:first_expansion_PPI_TARGET}
0=\widehat U_{\mathrm{PPI}}(\widehat\theta_{\mathrm{PPI}})
=\widehat U_{\mathrm{PPI}}(\theta_0)
+\partial_\theta M(P_0,\theta_0)\,(\widehat\theta_{\mathrm{PPI}}-\theta_0)
+ o_p(N^{-1/2}).    
\end{align}
Assumption A3(i), together with the SRSWOR identity, $(1/n)\sum_{j\in S} g(X_j,Y_j)= $ $(1/N)\sum_{i=1}^N $ $(\delta_i/f) g(X_i,Y_i)$ with $f=n/N$, implies
\begin{align}
\label{eq:second_expansion_PPI_TARGET}
\sqrt N\,\widehat U_{\mathrm{PPI}}(\theta_0)
=\sqrt N\,\widehat U_{\mathrm{PPI}}^{\mathrm{target}}(\theta_0)+o_p(1)
=\frac{1}{\sqrt N}\sum_{i=1}^N \varphi(O_i)+o_p(1),    
\end{align}
because
\begin{align*}
\widehat U_{\mathrm{PPI}}^{\mathrm{target}}(\theta_0)
&= \frac{1}{N}\sum_{i=1}^N \bar U(\theta_0;X_i;P_0)
  + \frac{1}{n}\sum_{j\in S}\!\Big\{U(\theta_0;X_j,Y_j)-\bar U(\theta_0;X_j;P_0)\Big\}\\
&= \frac{1}{N}\sum_{i=1}^N \bar U(\theta_0;X_i;P_0)
  + \frac{1}{N}\sum_{i=1}^N \frac{\delta_i}{f}\!\Big\{U(\theta_0;X_i,Y_i)-\bar U(\theta_0;X_i;P_0)\Big\}\\ 
&= \frac{1}{N}\sum_{i=1}^N \left[\bar U(\theta_0;X_i;P_0)
  +    \frac{\delta_i}{f} \!\Big\{U(\theta_0;X_i,Y_i)-\bar U(\theta_0;X_i;P_0)\Big\}\right]\\ 
&= \frac{1}{N}\sum_{i=1}^N \varphi(O_i). \quad \text{by definition of $\varphi(O)$ \eqref{eq:moment_representer}}
\end{align*}

Rearranging the expansion \eqref{eq:first_expansion_PPI_TARGET} to get
\[
\partial_\theta M(P_0,\theta_0)\,(\widehat\theta_{\mathrm{PPI}}-\theta_0)
= -\,\widehat U_{\mathrm{PPI}}(\theta_0) + o_p(N^{-1/2}).
\]
By \eqref{eq:Jacobian_matrix}, $\partial_\theta M(P_0,\theta_0)=-I(\theta_0)$, so
\[
-\,I(\theta_0)\,(\widehat\theta_{\mathrm{PPI}}-\theta_0)
= -\,\widehat U_{\mathrm{PPI}}(\theta_0) + o_p(N^{-1/2}),
\]
which yields (using nonsingularity of $I(\theta_0)$ from A1(ii))
\[
\sqrt N\,(\widehat\theta_{\mathrm{PPI}}-\theta_0)
= I(\theta_0)^{-1}\,\sqrt N\,\widehat U_{\mathrm{PPI}}(\theta_0) + o_p(1).
\]
By \eqref{eq:second_expansion_PPI_TARGET} and by Slutsky’s lemma,
\begin{align*}
\sqrt N (\widehat\theta_{\mathrm{PPI}}-\theta_0)
&= I(\theta_0)^{-1} \frac{1}{\sqrt N}\sum_{i=1}^N \varphi(O_i) + o_p(1)= \frac{1}{\sqrt N}\sum_{i=1}^N I(\theta_0)^{-1} \varphi(O_i) + o_p(1)\\
&= \frac{1}{\sqrt N}\sum_{i=1}^N \phi^{\mathrm{eff}}(O_i) + o_p(1),    
\end{align*}
since $\phi^{\mathrm{eff}}(O)=I(\theta_0)^{-1}\varphi(O)$ \eqref{eq:EIF_PPI}.

Since $\mathbb E_{P_0}[\varphi(O)]=0$ and $\mathbb E_{P_0}\|\varphi(O)\|^2<\infty$ (shown above using A2),
the multivariate CLT yields
\[
\sqrt N\,(\widehat\theta_{\mathrm{PPI}}-\theta_0)\ \xrightarrow{d}\ 
\mathcal N\!\big(0,\ \operatorname{Var}_{P_0}(\phi^{\mathrm{eff}}(O))\big),
\qquad
\widehat\theta_{\mathrm{PPI}}-\theta_0
= \frac{1}{N}\sum_{i=1}^N \phi^{\mathrm{eff}}(O_i) + o_p(N^{-1/2}).
\]

For the asymptotic variance, we first write the variance of the moment representer $\varphi(O)$ \eqref{eq:moment_representer}
\[
\Sigma\;:=\;\operatorname{Var}_{P_0}\!\big(\varphi(O)\big)
=\operatorname{Var}_{P_0}\!\Big(\bar U(\theta_0;X;P_0)
+\frac{\delta}{f}\big\{U(\theta_0;X,Y)-\bar U(\theta_0;X;P_0)\big\}\Big).
\]
Expanding,
\begin{align*}
\Sigma
&=\operatorname{Var}_{P_0}\!\big(\bar U(\theta_0;X;P_0)\big)
+2\operatorname{Cov}_{P_0}\!\Big(\bar U(\theta_0;X;P_0),\,\frac{\delta}{f}\{U(\theta_0;X,Y)-\bar U(\theta_0;X;P_0)\}\Big)\\
&\quad\quad\quad +\operatorname{Var}_{P_0}\!\Big(\frac{\delta}{f}\{U(\theta_0;X,Y)-\bar U(\theta_0;X;P_0)\}\Big).    
\end{align*}
The cross term is zero:
\begin{align*}
&\operatorname{Cov}_{P_0}\!\Big(\bar U(\theta_0;X;P_0),\,\tfrac{\delta}{f}\{U(\theta_0;X,Y)-\bar U(\theta_0;X;P_0)\}\Big)\\
&\quad\quad=\mathbb E_{P_0}\!\Big[\bar U(\theta_0;X;P_0)\,\tfrac{\delta}{f}\{U(\theta_0;X,Y)-\bar U(\theta_0;X;P_0)\}^\top\Big]\\
&\quad\quad =\mathbb E_{P_0}\!\Big[\bar U(\theta_0;X;P_0)\,\mathbb E_{P_0}\!\big[\tfrac{\delta}{f}\{U(\theta_0;X,Y)-\bar U(\theta_0;X;P_0)\}^\top\mid X,Y\big]\Big]\\
&\quad\quad=\mathbb E_{P_0}\!\Big[\bar U(\theta_0;X;P_0)\{U(\theta_0;X,Y)-\bar U(\theta_0;X;P_0)\}^\top\Big]\\
&\quad\quad=\mathbb E_{P_0}\!\Big[\bar U(\theta_0;X;P_0)\,\mathbb E_{P_0}\!\big[\{U(\theta_0;X,Y)-\bar U(\theta_0;X;P_0)\}^\top\mid X\big]\Big]=0,
\end{align*}
since $\mathbb E_{P_0}\!\big[U(\theta_0;X,Y)-\bar U(\theta_0;X;P_0)\mid X\big]=0$. 

For the last term,
\begin{align*}
&\operatorname{Var}_{P_0}\!\Big(\frac{\delta}{f}\{U(\theta_0;X,Y)-\bar U(\theta_0;X;P_0)\}\Big)\\
&\qquad =\mathbb E_{P_0}\!\Big[\Big(\frac{\delta}{f}\Big)^2\{U(\theta_0;X,Y)-\bar U(\theta_0;X;P_0)\}
\{U(\theta_0;X,Y)-\bar U(\theta_0;X;P_0)\}^\top\Big]\\    
&\qquad
=\frac{1}{f}\,\mathbb E_{P_0}\!\Big[\{U(\theta_0;X,Y)-\bar U(\theta_0;X;P_0)\}
\{U(\theta_0;X,Y)-\bar U(\theta_0;X;P_0)\}^\top\Big]\\
&\qquad=\frac{1}{f}\,\mathbb E_{P_0}\!\big[\operatorname{Var}\!\big(U(\theta_0;X,Y)\mid X\big)\big]
\end{align*}
where we used $\delta^2=\delta$ and $\mathbb E_{P_0}[\delta\mid X,Y]=f$. 
Hence, we have 
\[
\Sigma =\;\operatorname{Var}_{P_0}\!\big(\varphi(O)\big)
=\operatorname{Var}_{P_0}\!\big(\bar U(\theta_0;X;P_0)\big)
\;+\;\frac{1}{f}\,\mathbb E_{P_0}\!\Big[\operatorname{Var}\!\big(U(\theta_0;X,Y)\mid X\big)\Big].
\]
Finally, the variance of the efficient influence function $\phi^{\mathrm{eff}}(O)$ \eqref{eq:EIF_PPI} is 
$$
\operatorname{Var}_{P_0}\!\big(\phi^{\mathrm{eff}}(O)\big)
=
\operatorname{Var}_{P_0}\!\big(I(\theta_0)^{-1} \varphi(O)\big)=
I(\theta_0)^{-1} \operatorname{Var}_{P_0}\!\big(\varphi(O)\big)I(\theta_0)^{-1} =
I(\theta_0)^{-1}\,\Sigma\,I(\theta_0)^{-1}.$$
\end{proof}

\begin{corollary}[Variance equivalence with M–estimation]
\label{cor:var-equivalence-calibrated}
Assume the conditions of Theorems~\ref{thm:eif-ppi-srswor} and~\ref{thm:ppi-linear}.
Suppose the predictor $m$ is score-calibrated at the truth in the sense that:
\[
\widetilde U(\theta_0;X)=U(\theta_0;X,m(X))
=\bar U(\theta_0;X;P_0)=\mathbb E_{P_0}\!\big[U(\theta_0;X,Y)\mid X\big].
\]
Then the semiparametric efficiency variance equals the M–estimation variance:
\[
\operatorname{Var}_{P_0}\!\big(\phi^{\mathrm{eff}}(O)\big)
=\Sigma_f=I(\theta_0)^{-1}\,\Sigma\,I(\theta_0)^{-1}
=V_1+\big(f^{-1}-1\big)V_2,
\]
where
\[
\Sigma
=\operatorname{Var}_{P_0}\!\big(\bar U(\theta_0;X;P_0)\big)
+\frac{1}{f}\,\mathbb E_{P_0}\!\Big[\operatorname{Var}\!\big(U(\theta_0;X,Y)\mid X\big)\Big],
\]
and, in the notation of Corollary~\ref{cor:ppi-wald},
\[
V_1:=I(\theta_0)^{-1}\,\mathbb E_{P_0}\!\big[U(\theta_0;X,Y)^{\otimes2}\big]\,I(\theta_0)^{-1},
\qquad
V_2:=I(\theta_0)^{-1}\,\mathbb E_{P_0}\!\big[\Delta(\theta_0;X,Y)^{\otimes2}\big]\,I(\theta_0)^{-1},
\]
where $\Delta(\theta_0;X,Y):=U(\theta_0;X,Y)-U(\theta_0;X,m(X))$ and $a^{\otimes2}:=aa^\top$.

Equivalently,
\[
\Sigma
=\mathbb E_{P_0}\!\big[U(\theta_0;X,Y)^{\otimes2}\big]
+\Big(f^{-1}-1\Big)\,\mathbb E_{P_0}\!\big[\Delta(\theta_0;X,Y)^{\otimes2}\big],
\]
so $I(\theta_0)^{-1}\Sigma I(\theta_0)^{-1}=V_1+(f^{-1}-1)V_2$.
\end{corollary}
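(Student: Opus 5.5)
The plan is a direct algebraic identification. The only substantive input is that score calibration turns the labeled residual $\Delta(\theta_0;X,Y)$ into the conditional-centering residual $U(\theta_0;X,Y)-\bar U(\theta_0;X;P_0)$. I would not redo any asymptotics. The expression $\operatorname{Var}_{P_0}(\phi^{\mathrm{eff}}(O))=I(\theta_0)^{-1}\Sigma I(\theta_0)^{-1}$ is already given by Theorem~\ref{thm:eif-ppi-srswor}(ii). The expression $\Sigma_f=V_1+(f^{-1}-1)V_2$ is already given by Corollary~\ref{cor:ppi-wald}. So it suffices to prove the matrix identity
\[
\operatorname{Var}_{P_0}\!\big(\bar U(\theta_0;X;P_0)\big)+\tfrac1f\,\mathbb E_{P_0}\!\big[\operatorname{Var}(U(\theta_0;X,Y)\mid X)\big]
=\mathbb E_{P_0}\!\big[U(\theta_0;X,Y)^{\otimes2}\big]+(f^{-1}-1)\,\mathbb E_{P_0}\!\big[\Delta(\theta_0;X,Y)^{\otimes2}\big],
\]
and then sandwich both sides by $I(\theta_0)^{-1}$.

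\textbf{Step 1 (mean zero).} Since $\mathbb E_{P_0}[U(\theta_0;X,Y)]=0$ by A1(i), the tower property gives $\mathbb E_{P_0}[\bar U(\theta_0;X;P_0)]=0$. Hence $\operatorname{Var}_{P_0}(\bar U)=\mathbb E_{P_0}[\bar U^{\otimes2}]$.

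\textbf{Step 2 (orthogonal decomposition).} Write $U=\bar U+(U-\bar U)$. Conditioning on $X$ kills the cross term $\mathbb E_{P_0}[\bar U(U-\bar U)^\top]$, exactly as in the proof of Theorem~\ref{thm:eif-ppi-srswor}; A2 guarantees integrability. This gives
\[
\mathbb E_{P_0}[U^{\otimes2}]=\mathbb E_{P_0}[\bar U^{\otimes2}]+\mathbb E_{P_0}[\operatorname{Var}(U\mid X)].
\]

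\textbf{Step 3 (use calibration).} Score calibration gives $\Delta(\theta_0;X,Y)=U(\theta_0;X,Y)-\bar U(\theta_0;X;P_0)$. Therefore $\mathbb E_{P_0}[\Delta^{\otimes2}]=\mathbb E_{P_0}[\operatorname{Var}(U\mid X)]$.

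\textbf{Step 4 (substitute).} Put Steps 1--3 into the left-hand side:
\[
\Sigma=\big(\mathbb E_{P_0}[U^{\otimes2}]-\mathbb E_{P_0}[\Delta^{\otimes2}]\big)+f^{-1}\,\mathbb E_{P_0}[\Delta^{\otimes2}]
=\mathbb E_{P_0}[U^{\otimes2}]+(f^{-1}-1)\,\mathbb E_{P_0}[\Delta^{\otimes2}].
\]
Pre- and post-multiplying by $I(\theta_0)^{-1}$ turns the two terms into $V_1$ and $(f^{-1}-1)V_2$, which is the claimed identity. As a consistency check, the influence function $\phi_i$ of Theorem~\ref{thm:ppi-linear} coincides pointwise with $\phi^{\mathrm{eff}}(O_i)$ once $U(\theta_0;X,m(X))=\bar U(\theta_0;X;P_0)$, so the two variances must agree.

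I expect no real obstacle. The only points needing care are the justification of the vanishing conditional cross term, which needs the second moments from A2, and remembering that $V_1$ is defined through $\mathbb E[U^{\otimes2}]$ rather than a variance. The latter is harmless by Step 1.
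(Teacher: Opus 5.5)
Your proposal is correct and follows essentially the same route as the paper: both use $\mathbb E_{P_0}[\bar U]=0$, the law of total variance (your orthogonal decomposition) to write $\operatorname{Var}_{P_0}(\bar U)=\mathbb E_{P_0}[U^{\otimes2}]-\mathbb E_{P_0}[\operatorname{Var}(U\mid X)]$, and calibration to identify $\mathbb E_{P_0}[\operatorname{Var}(U\mid X)]$ with $\mathbb E_{P_0}[\Delta^{\otimes2}]$, then sandwich by $I(\theta_0)^{-1}$. Your closing observation that $\phi_i$ coincides pointwise with $\phi^{\mathrm{eff}}(O_i)$ under calibration is a valid and quicker alternative justification that the paper does not spell out, but the main argument does not need it.
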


\begin{proof} 
Calibration condition
\[
\underbrace{\widetilde U(\theta_0;X)=U(\theta_0;X,m(X))}_{\text{used in M–estimation, Thm.~\ref{thm:ppi-linear}}}
\;=\;
\underbrace{\bar U(\theta_0;X;P_0)=\mathbb E_{P_0}\!\big[U(\theta_0;X,Y)\mid X\big]}_{\text{used in semiparametric efficiency, Thm.~\ref{thm:eif-ppi-srswor}}},
\]
implies that the predictor $m$ must be chosen to satisfy the \emph{\(U\)-}calibration equation $U(\theta_0; x, m(x)) $ $=\mathbb E_{P_0}\!\big[U(\theta_0; x, Y)\mid X=x\big]$ for $P_0$-a.e. $x$.

Introduce the shorthands
\[
U_0:=U(\theta_0;X,Y),\,
\bar U_0:=\bar U(\theta_0;X;P_0)=\mathbb E_{P_0}[U_0\mid X],\,
\Delta_0:=U(\theta_0;X,Y)-U(\theta_0;X,m(X)).
\]
Under the calibration condition, \(U(\theta_0;X,m(X))=\bar U_0\), so \(\Delta_0=U_0-\bar U_0\) and
\(\mathbb E_{P_0}[\Delta_0\mid X]=\mathbb E_{P_0}[U_0 - \bar U_0\mid X]=\mathbb E_{P_0}[U_0\mid X] - \mathbb E_{P_0}[\bar U_0\mid X]=\bar U_0 - \bar U_0 =0\). Also, by Assumption~A1(i) in Theorem~\ref{thm:eif-ppi-srswor},
\(\mathbb E_{P_0}[U_0]=0\) and hence \(\mathbb E_{P_0}[\bar U_0]=0\).

From Theorem~\ref{thm:eif-ppi-srswor}(ii), we have
\[
\Var_{P_0}\!\big(\phi^{\mathrm{eff}}(O)\big)
=I(\theta_0)^{-1}\,\Sigma\,I(\theta_0)^{-1},\qquad
\Sigma=\underbrace{\Var_{P_0}(\bar U_0)}_{\star} +\frac{1}{f}\,\mathbb E_{P_0}\!\big[\Var_{P_0}(U_0\mid X)\big].
\]
By the law of total variance, we have
\[
\Var_{P_0}(U_0)
= \Var_{P_0}\!\big(\,\mathbb E_{P_0}[U_0\mid X]\,\big)
  + \mathbb E_{P_0}\!\big[\Var_{P_0}(U_0\mid X)\big]
= \underbrace{\Var_{P_0}(\bar U_0)}_{\star}  + \mathbb E_{P_0}\!\big[\Var_{P_0}(U_0\mid X)\big],
\]
or equivalently, 
\[
\underbrace{\Var_{P_0}(\bar U_0)}_{\star}
= \Var_{P_0}(U_0) - \mathbb E_{P_0}\!\big[\Var_{P_0}(U_0\mid X)\big] = 
\mathbb E_{P_0}[U_0^{\otimes 2}] - \mathbb E_{P_0}\!\big[\Var_{P_0}(U_0\mid X)\big],
\]
since $\Var_{P_0}(U_0)=\mathbb E_{P_0}[U_0^{\otimes 2}]$ due to $\mathbb E_{P_0}[U_0]=0$. 

Therefore, $\Sigma$ can be expressed as
\begin{align}
\nonumber
\Sigma
&= 
\underbrace{\mathbb E_{P_0}[U_0^{\otimes 2}] - \mathbb E_{P_0}\!\big[\Var_{P_0}(U_0\mid X)\big]}_{\star}
+\frac{1}{f}\,\mathbb E_{P_0}\!\big[\Var_{P_0}(U_0\mid X)\big]\\
\label{eq:Sigma-rewrite-U0}
&=
\mathbb E_{P_0}[U_0^{\otimes2}]
+\Big(\tfrac{1}{f}-1\Big)\,\mathbb E_{P_0}\!\big[\Var_{P_0}(U_0\mid X)\big].
\end{align}

Under calibration, \(\Delta_0=U_0-\bar U_0\); hence
\[
\Var_{P_0}(U_0\mid X)
=\mathbb E_{P_0}\!\big[(U_0-\bar U_0)(U_0-\bar U_0)^\top\mid X\big]
=\mathbb E_{P_0}\!\big[\Delta_0^{\otimes2}\mid X\big].
\]
Taking expectations gives
\begin{equation}\label{eq:Delta-condvar}
\mathbb E_{P_0}\!\big[\Var_{P_0}(U_0\mid X)\big]=\mathbb E_{P_0}[\Delta_0^{\otimes2}].
\end{equation}

Substituting \eqref{eq:Delta-condvar} into \eqref{eq:Sigma-rewrite-U0},
\[
\Sigma
=\mathbb E_{P_0}[U_0^{\otimes2}]
+\Big(\tfrac{1}{f}-1\Big)\,\mathbb E_{P_0}[\Delta_0^{\otimes2}],
\]
and therefore
\begin{align*}
\Var_{P_0}\!\big(\phi^{\mathrm{eff}}(O)\big)
&=I(\theta_0)^{-1} \left\{ 
\mathbb E_{P_0}[U_0^{\otimes2}]
+\Big(\tfrac{1}{f}-1\Big)\,\mathbb E_{P_0}[\Delta_0^{\otimes2}]
\right\} I(\theta_0)^{-1} 
\\
&=
\underbrace{I(\theta_0)^{-1}\,\mathbb E_{P_0}[U_0^{\otimes2}]\,I(\theta_0)^{-1}}_{V_1}
+
\Big(\tfrac{1}{f}-1\Big)\,
\underbrace{I(\theta_0)^{-1}\,\mathbb E_{P_0}[\Delta_0^{\otimes2}]\,I(\theta_0)^{-1}}_{V_2}.    
\end{align*}
With the notation of Corollary~\ref{cor:ppi-wald},
\[
V_1:=I(\theta_0)^{-1}\,\mathbb E_{P_0}[U_0^{\otimes2}]\,I(\theta_0)^{-1},
\qquad
V_2:=I(\theta_0)^{-1}\,\mathbb E_{P_0}[\Delta_0^{\otimes2}]\,I(\theta_0)^{-1},
\]
so \(\Var_{P_0}(\phi^{\mathrm{eff}}(O))=V_1+(f^{-1}-1)V_2=\Sigma_f\).
\end{proof}

\subsection{Asymptotic properties of cross-fit prediction-powered inference with sample-splitting}\label{subsec:Asymptotic Properties of CF-PPI}
This subsection provides the proofs of the lemmas and theorems for cross-fit PPI with sample-splitting in the semisupervised mean estimation problem, and it proves Theorem~5.1 in the main document. We also provide proofs of the statements that were mentioned there without proof.

\begin{figure}[h!]
    \centering
    \includegraphics[width=0.75\linewidth]{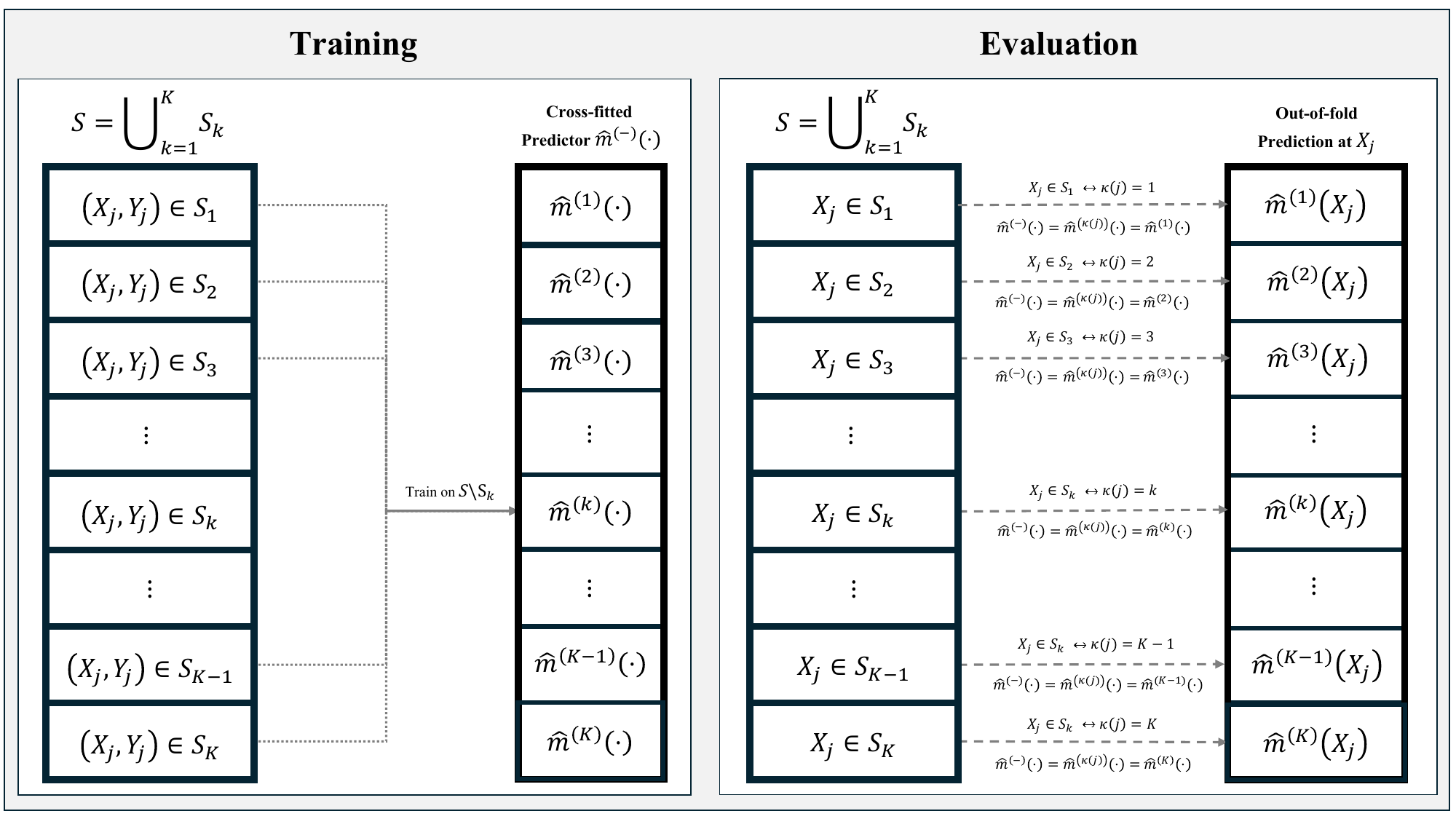}
    \caption{Illustration of the cross-prediction on the labeled set. The labeled set \(S\) is partitioned into \(K\) folds.
For each fold \(k\), a predictor \(\widehat m^{(k)}\) is trained on the dataset of covariates and responses from the complement \(S \setminus S_k\).
Each labeled covariate \(X_j \in S_k\) (or, symbolically, \(j \in S_k\)) is then evaluated using its corresponding out-of-fold predictor \(\widehat m^{(k)}(X_j)\),
ensuring that no labeled observation is evaluated by a model trained on its own label.
This induces conditional independence between the fitted predictor and the labeled residuals, thereby preventing label reuse.}
    \label{fig:Cross_Fitting_Idea}
\end{figure}

Before turning to the proof, we briefly recap the construction of the out-of-fold predictor function \(\widehat m^{(-)}(\cdot)\).
First, we partition the labeled set \(S\) into \(K\) folds \(S_1, \ldots, S_K\).
For the labeled covariates, for each fold \(k \in \{1, \ldots, K\}\), we fit a predictor \(\widehat m^{(k)}\) using only the labels in \(S \setminus S_k\).
We then use the resulting out-of-fold prediction \(\widehat m^{(k)}(X_i)\) for every labeled index \(i \in S_k\). Thus, the value of the out-of-fold predictor evaluated at a labeled covariate \(X_j\) is $\widehat m^{(-)}(X_j) = \widehat m^{(\kappa(j))}(X_j), \,\, j \in S,$ where \(\kappa(j)\) denotes the fold index such that \(j \in S_{\kappa(j)}\).
This construction is illustrated in Figure~\ref{fig:Cross_Fitting_Idea}.
For the unlabeled covariates, we use an aggregate predictor—either a single fit on all labeled data,
\(\widehat m^\star(x) = \widehat m_{\mathrm{all}}(x)\), or the average of the \(K\) fold-specific predictors, $\widehat m^\star(x) = \frac{1}{K} \sum_{k=1}^K \widehat m^{(k)}(x).$

\begin{theorem}[Consistency of the CF-PPI estimator for the mean]
\label{thm:cfppi_mean_consistency}
Let $(X,Y)$ have joint law $P$ with $X\sim P_X$ and $Y=m_0(X)+\varepsilon$, where $\mathbb E[\varepsilon\mid X]=0$ and $\mathbb E[Y^2]<\infty$. The target parameter is the population mean, $\theta_0:=\mathbb E[Y]$. Let $\{X_i\}_{i=1}^N \stackrel{\mathrm{i.i.d.}}{\sim} P_X$ be an unlabeled sample and $\{(X_j,Y_j)\}_{j\in S}$, $|S|=n$, be an independent labeled sample from $P$, with $N,n\to\infty$ and $n/N\to f\in(0,1)$.
Let $\widehat m^{(-)}$ be the cross–fitted predictor described above, and consider the CF–PPI estimator
\[
\widehat\theta^{\mathrm{cf}}_{\mathrm{PPI}}
=\frac{1}{N}\sum_{i=1}^N \widehat m^{(-)}(X_i)
+\frac{1}{n}\sum_{j\in S}\{Y_j-\widehat m^{(-)}(X_j)\}.
\]
If the out–of–fold error is stochastically bounded in $L_2(P_X)$,
\begin{align}
\|\widehat m^{(-)}-m_0\|_{L_2(P_X)}=\Big(\mathbb E\big[(\widehat m^{(-)}(X)-m_0(X))^2\big]\Big)^{1/2}=O_p(1),    
\label{eq:nuisance_condition_for_consistency_CF_PPI}
\end{align}
then $\widehat\theta^{\mathrm{cf}}_{\mathrm{PPI}}\xrightarrow{p}\theta_0$.
\end{theorem}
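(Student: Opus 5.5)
The plan is to split $\widehat\theta^{\mathrm{cf}}_{\mathrm{PPI}}-\theta_0$ into an oracle part, which does not depend on the fitted predictors, and a nuisance remainder. The remainder will be handled by conditioning on the fitted models and applying a Chebyshev bound. Write $h^\star:=\widehat m^\star-m_0$ and $h^{(k)}:=\widehat m^{(k)}-m_0$, and let $P_{n,k}$ denote the empirical measure on fold $S_k$, of size $n_k$. Adding and subtracting $m_0$ in both averages gives
\begin{align*}
\widehat\theta^{\mathrm{cf}}_{\mathrm{PPI}}-\theta_0
&=\underbrace{\big(P_N m_0-\theta_0\big)}_{T_1}
+\underbrace{P_n\big(Y-m_0(X)\big)}_{T_2}
+\underbrace{P_N h^\star-\sum_{k=1}^K\frac{n_k}{n}\,P_{n,k}h^{(k)}}_{R}.
\end{align*}
The oracle terms are straightforward. $T_1\xrightarrow{p}0$ by the weak law of large numbers, since $\mathbb E[m_0(X)^2]\le\mathbb E[Y^2]<\infty$ and $\mathbb E[m_0(X)]=\theta_0$. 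Likewise $T_2\xrightarrow{p}0$, since $\mathbb E[\varepsilon]=0$ and $\mathbb E[\varepsilon^2]<\infty$.

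For the remainder, I will center each empirical average at its population counterpart:
\[
R=(P_N-P)h^\star-\sum_{k}\frac{n_k}{n}(P_{n,k}-P)h^{(k)}+\Big\{Ph^\star-\sum_k\frac{n_k}{n}Ph^{(k)}\Big\}.
\]
The centered pieces are controlled by the conditional Chebyshev bound $(P_m-P)h=O_p(\|h\|_{L_2(P_X)}/\sqrt m)$. Conditionally on the labeled data, $h^\star$ is a fixed function, and it is independent of the unlabeled $X_i$. Hence
\[
\mathbb E\big[\{(P_N-P)h^\star\}^2\mid S\big]\le\|h^\star\|^2_{L_2(P_X)}/N,
\]
and so $(P_N-P)h^\star=O_p(\|h^\star\|_{L_2}/\sqrt N)$. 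The same reasoning applies to the labeled folds: conditionally on the data in $S\setminus S_k$, the function $h^{(k)}$ is fixed and the points $\{X_j\}_{j\in S_k}$ are i.i.d.\ and independent of it. Therefore $(P_{n,k}-P)h^{(k)}=O_p(\|h^{(k)}\|_{L_2}/\sqrt{n_k})$. To pass from conditional to unconditional statements I will use the standard lemma that if $\mathbb E[Z_N^2\mid\mathcal G_N]\le a_N^2$ with $a_N=O_p(1)\cdot o(1)$, then $Z_N=o_p(1)$. Under the assumption \eqref{eq:nuisance_condition_for_consistency_CF_PPI}, together with fixed $K$ and $n_k\asymp n/K\to\infty$, all centered pieces are therefore $O_p(1)\cdot O(n^{-1/2})=o_p(1)$.

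The main obstacle is the deterministic leftover $Ph^\star-\sum_k (n_k/n)Ph^{(k)}$. This is a population-level bias between the predictor used on the unlabeled points and the predictors used on the labeled points. An $O_p(1)$ bound on the errors alone does not make it small. My first approach is to take the averaged aggregate $\widehat m^\star=K^{-1}\sum_k\widehat m^{(k)}$ with balanced folds, $n_k=n/K$ (or to weight the average by $n_k/n$). In that case $Ph^\star=\sum_k(n_k/n)Ph^{(k)}$ and the leftover vanishes identically, which gives $R=o_p(1)$ and hence the claim. For the alternative $\widehat m^\star=\widehat m_{\mathrm{all}}$, I would have to add the mild requirement $P(\widehat m_{\mathrm{all}}-K^{-1}\sum_k\widehat m^{(k)})=o_p(1)$; this is implied, for example, by $L_2$-consistency of the learners. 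I would record this explicitly as the interpretation under which \eqref{eq:nuisance_condition_for_consistency_CF_PPI} suffices. Combining the three pieces, $\widehat\theta^{\mathrm{cf}}_{\mathrm{PPI}}-\theta_0=o_p(1)$.
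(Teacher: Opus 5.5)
Your proposal is correct and has the same skeleton as the paper's proof. The two oracle fluctuations $(P_N-P)m_0$ and $P_n(Y-m_0)$ are handled by the law of large numbers, and the nuisance remainder by a conditional Chebyshev bound. Where you differ is in how carefully the remainder is treated. The paper writes it as $(P_N-P_n)\Delta$ with a single function $\Delta=\widehat m^{(-)}-m_0$. It conditions on the $\sigma$-field $\mathcal T$ of all fitted models at once, and lets the two centering constants $P\Delta$ cancel under a shorthand (``$P\Delta$ understood conditional on the model that scores each $j$'').

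Your version fixes two things that this shorthand hides.
\begin{itemize}
\item You condition fold by fold on the data in $S\setminus S_k$. That is what actually makes $\{X_j\}_{j\in S_k}$ i.i.d.\ and independent of $h^{(k)}$. Conditioning on all fits jointly does not, because each labeled $X_j$ is a training point for the other folds' models and for $\widehat m^\star$.
\item You isolate the mismatch $Ph^\star-\sum_k (n_k/n)Ph^{(k)}$, which the paper silently sets to zero.
\end{itemize}

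Your assessment of that mismatch term is right. It vanishes identically for the fold-averaged aggregator with balanced folds. When fold sizes differ by at most one, it is $O_p(1/n)$, since $|n_k/n-1/K|\le 1/n$ and $|Ph^{(k)}|\le\|h^{(k)}\|_{L_2(P_X)}$.

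For $\widehat m^\star=\widehat m_{\mathrm{all}}$, however, the $O_p(1)$ hypothesis alone does not control the term. Take a learner whose $L_2$-bounded bias differs between training sizes $n$ and $n(1-1/K)$. Such a learner satisfies the hypothesis, yet shifts the estimator away from $\theta_0$ by the difference of the two biases. The extra condition you propose for that variant is therefore genuinely needed; it is implied by $L_2$-consistency of both $\widehat m_{\mathrm{all}}$ and the fold fits, as in part (ii).

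Your argument is thus a tightened version of the paper's rather than a different route. It also shows that the paper's proof is complete only for the fold-averaged aggregator.
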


\begin{proof}

Using the notations of empirical process theory, the difference between the CF-PPI estimator and the target mean parameter can be written as follows:
\begin{align*}
&\widehat\theta^{\mathrm{cf}}_{\mathrm{PPI}}-\theta_0
= \Big\{P_N \widehat m^{(-)}\Big\} + \Big\{P_n\big(Y-\widehat m^{(-)}\big)\Big\} - P\,m_0
\\[2mm]
&\quad\text{(since $\theta_0=\mathbb E[Y]=P\,Y=P\,m_0$ as $Y=m_0+\varepsilon$ and $\mathbb E[\varepsilon\mid X]=0$)}
\\[2mm]
&= \underbrace{\big(P_N \widehat m^{(-)} - P_N m_0\big)}_{A_1}
  \;+\; \underbrace{\big(P_n(Y-\widehat m^{(-)}) - P_n(Y-m_0)\big)}_{A_2}
  \;+\; \underbrace{\Big(P_N m_0 + P_n(Y-m_0) - P m_0\Big)}_{A_3}
\\[2mm]
&\quad\text{(add and subtract $P_N m_0$ and $P_n(Y-m_0)$)}
\\[2mm]
&= \underbrace{\big(P_N-P_n\big)\big(\widehat m^{(-)}-m_0\big)}_{A_1+A_2}
  \;+\; \underbrace{\big(P_N-P\big)m_0}_{\text{from the $P_N m_0 - P m_0$ part}}
  \;+\; \underbrace{P_n(Y-m_0)}_{\text{remaining part of }A_3}
\\[2mm]
&= \big(P_N-P\big)m_0 \;+\; \underbrace{\big(P_n-P\big)(Y-m_0)}_{\text{since }P(Y-m_0)=0}
   \;+\; \big(P_N-P_n\big)\big(\widehat m^{(-)}-m_0\big)
\\[2mm]
&=\underbrace{(P_N-P)\,m_0}_{\substack{\text{unlabeled fluctuation}\\(A)}}
  \;+\;\underbrace{(P_n-P)\,(Y-m_0)}_{\substack{\text{labeled residual fluctuation}\\(B)}}
  \;+\;\underbrace{(P_N-P_n)\,(\widehat m^{(-)}-m_0)}_{\substack{\text{nuisance remainder}\\(C)}}. \tag{$\star$}
\end{align*}

We briefly sketch the proof, as it is central to analyzing the asymptotic behavior of CF–PPI (Subsection~5.2 of the main document) and, later, SF–PPI–VC (Subsection~5.3). The CF–PPI decomposition yields three pieces: the first two, $(P_N-P)\,m_0$ and $(P_n-P)\{Y-m_0\}$, are empirical–process terms with \emph{fixed} indices because $m_0(x)=\mathbb{E}[Y\mid X=x]$ is nonrandom (oracle). Hence they fluctuate at the $O_p(N^{-1/2})$ and $O_p(n^{-1/2})$ scales (with $n/N\to f\in(0,1)$, so $O_p(n^{-1/2})=O_p(N^{-1/2})$), constitute the \emph{first–order} part of the expansion, and require no Donsker/entropy control. The third piece, $(P_N-P_n)(\widehat m-m_0)$, would \emph{without} cross–fitting typically require a Donsker condition for the nuisance class (see Section~4.2 of \citep{kennedy2016semiparametric}); however, under cross–fitting the nuisance $\widehat m=\widehat m^{(-)}$ is trained on folds disjoint from the evaluation sample, rendering it conditionally fixed and thus $o_p(N^{-1/2})$, i.e., only a \emph{second–order} remainder. Intuitively, cross–fitting preserves design–unbiasedness of the score and pushes the learning error from $\widehat m$ out of the first–order limit.

Now, we are ready to analyze each of the three terms in \((\star)\) separately; this is central to the proof of the theorem and will also be used later in establishing asymptotic normality.

\paragraph{Unlabeled fluctuation.}
Note that the first term $(A)=(P_N-P)\,m_0$ is the empirical average of the fixed function $m_0$, and hence, by the CLT, it behaves asymptotically as a centered normal random variable with variance $\Var\{m_0(X)\}/N$, up to $o_p(N^{-1/2})$ error. More precisely, by the i.i.d. \ CLT, the unlabeled part follows:
\begin{align}
\label{eq:asymptotic_distribution_first_part}
&\sqrt{N}\,\big\{(P_N-P)\,m_0\big\}
\xrightarrow{d}
\mathcal{N}\!\big(0,\Var\{m_0(X)\}\big),\\
\nonumber
&(P_N-P)\,m_0=\frac{1}{\sqrt{N}}\,Z_1+o_p(N^{-1/2}),
\end{align}
with \(Z_1\sim\mathcal{N}(0,\Var\{m_0(X)\})\). Because $\sqrt{N}\,\big\{(P_N-P)\,m_0\big\} = O_p(1)$, it holds $(P_N-P)m_0 = o_p(1)$. (One can use the weak law of large numbers directly for the same conclusion of $(P_N-P)m_0 = o_p(1)$.)

\paragraph{Labeled residual fluctuation.} Next, the second term $(B) = (P_n-P)\,(Y-m_0)$ is the empirical fluctuation of the residuals $Y-m_0(X)$. Since the residuals have mean zero under the true distribution, this term also satisfies a CLT with variance $\Var\{Y-m_0(X)\}/n$. Like the first term, by CLT, the labeled residual part follows (note \(P(Y-m_0)=0\)): 
\begin{align}
\label{eq:asymptotic_distribution_second_part}
&\sqrt{n}\,\big\{(P_n-P)\,(Y-m_0)\big\}
\xrightarrow{d}
\mathcal{N}\!\big(0,\Var\{Y-m_0(X)\}\big),\\
\nonumber
&(P_n-P)\,(Y-m_0)=\frac{1}{\sqrt{n}}\,Z_2+o_p(n^{-1/2}),    
\end{align}
with \(Z_2\sim\mathcal{N}(0,\Var\{Y-m_0(X)\})\). Because $\sqrt{n}\,\big\{(P_n-P)\,(Y-m_0)\big\} = O_p(1)$, it holds $(P_n-P)\,(Y-m_0) = o_p(1)$.


\paragraph{Nuisance remainder.} Finally, we prove $(C)=(P_N-P_n)\big(\widehat m^{(-)}-m_0\big) = o_p(1)$. Let $\mathcal T :=\sigma\!\big(\{\widehat m^{(k)}\}_{k=1}^K,\widehat m^\star,\kappa\big)$ denote the $\sigma$–field generated by the trained objects used to score
observations (the $K$ fold–specific fits $\{\widehat m^{(k)}\}_{k=1}^K$, the aggregator
$\widehat m^\star$, and the fold map $\kappa$).  Define $\Delta(x):=\widehat m^{(-)}(x)-m_0(x)$. Conditional on $\mathcal T$, the function $\Delta$ is deterministic. Note that the remainder term in \((\star)\) can be further decomposed as
\begin{align}
\label{eq:remainder}
(C)=(P_N-P_n)\Delta &=
\underbrace{(P_N-P)\Delta}_{\substack{\text{Unlabeled average}\\R_N} }-\underbrace{(P_n-P)\Delta}_{\substack{\text{Labeled average}\\R_n} } 
\end{align}

This remainder term $(C)$ in \eqref{eq:remainder} captures the mismatch between the estimated regression function and the truth; for \emph{consistency}, it suffices that $\|\widehat m^{(-)}-m_0\|_{L_2(P_X)}=O_p(1)$. Later for \emph{asymptotic normality} we require $\|\widehat m^{(-)}-m_0\|_{L_2(P_X)}=o_p(1)$ so that $(C) = (P_N-P_n)\Delta=O_p(N^{-1/2}\|\Delta\|_{L_2(P_X)})=o_p(N^{-1/2})$ and the first two fluctuation terms in \((\star)\) dominate.
  
The unlabeled average $R_N$ in the display above is benign: the unlabeled covariates are independent of the fitted functions, so it behaves like a standard empirical average of a fixed function (recall definition of  $m^{(-)}(X)$). The potentially troublesome piece is the \emph{labeled} average $R_n$. Without cross-fitting, the same labeled observations used to train $\widehat m$ would also be used to evaluate it, creating dependence and bias in this term. Cross-fitting restores \emph{honesty}—each evaluation point is scored by a model trained on other folds—so that the empirical fluctuations admit the variance bounds used above without invoking restrictive entropy/Donsker conditions \citep{kennedy2024semiparametric}.

\paragraph{(i) Unlabeled average $R_N$.}
Since the unlabeled sample $\{X_i\}_{i=1}^N$ is independent of $\mathcal T$
(the fits are trained on labeled data only), $\{\,\Delta(X_i)\,\}_{i=1}^N$
are i.i.d.\ given $\mathcal T$ with mean $P\Delta:=\mathbb{E}[\Delta(X)\mid\mathcal T]$.
Writing $R_N$ in centered summation form,
\[
R_N = \frac{1}{N}\sum_{i=1}^N\Big(\Delta(X_i)-P\Delta\Big),
\]
we have, by independence,
\[
\mathbb{E}[R_N\mid\mathcal T]=0,
\qquad
\Var(R_N\mid\mathcal T)=\frac{1}{N}\Var\!\big(\Delta(X)\mid\mathcal T\big)
\le \frac{1}{N}\mathbb{E}[\Delta(X)^2\mid\mathcal T]
= \frac{1}{N}\|\Delta\|_{L_2(P_X)}^{\,2}.
\]
Now, we use the \emph{Chebyshev’s inequality}: For any random variable $Z$ with $\mathbb{E}[Z]=0$,
\[
\mathbb{P}(|Z|\ge t)\le \frac{\Var(Z)}{t^2}\qquad\text{for all }t>0 .
\]
Apply this to $Z=\sqrt{N}\,R_N$ (so that $\mathbb{E}[Z\mid\mathcal T]=0$ and
$\Var(Z\mid\mathcal T)=N\,\Var(R_N\mid\mathcal T)\le \|\Delta\|_{L_2(P_X)}^{\,2}$).
Then, for any $s>0$,
\begin{align}
\label{eq:concentration_ineq_for_R_N}
\mathbb{P}\!\left(\,|\sqrt{N}\,R_N|\ge s \,\middle|\,\mathcal T\right)
\le \frac{\|\Delta\|_{L_2(P_X)}^{\,2}}{s^2}.    
\end{align}
Taking expectations in both sides of (\ref{eq:concentration_ineq_for_R_N}) and using $\|\Delta\|_{L_2(P_X)}=O_p(1)$ (i.e., assumption (\ref{eq:nuisance_condition_for_consistency_CF_PPI})) yields
$\sqrt{N}\,R_N=O_p(1)$, thus \ $R_N=O_p(N^{-1/2})$.

Note that we write \(X_N = O_p(1)\) if, for every \(\varepsilon>0\), there exists a finite constant \(M>0\) such that 
\(\mathbb{P}(|X_N|>M)<\varepsilon\) for all sufficiently large \(N\). The upper bound 
\(\|\Delta\|_{L_2(P_X)}^{2}/s^{2}\) in \eqref{eq:concentration_ineq_for_R_N} can be made arbitrarily small by taking \(s\) sufficiently large—smaller than any given \(\varepsilon>0\)—and any such \(s\) can be regarded as \(M\); hence, \(\sqrt{N}\,R_N = O_p(1)\). Henceforth, we omit this reasoning, as it is straightforward from the context once a bounding inequality of the form \eqref{eq:concentration_ineq_for_R_N} is established.

\paragraph{(ii) Labeled average $R_n$.} We provide a detailed illustration, as this term is the core part where cross-fitting is most essential for proving consistency (and also for proving asymptotic normality in Theorem \ref{thm:cfppi_mean_clt}.).

Let the labeled indices be partitioned into $K$ folds $S=S_1\cup\cdots\cup S_K$,
and for each $k$ let $\widehat m^{(k)}$ be the predictor trained on the labeled data
$\{(X_j,Y_j):j\in S\setminus S_k\}$. Let $\kappa(j)=k$ denote the fold map if $j\in S_k$ and the
cross–fitted predictor $\widehat m^{(-)}(x):=\widehat m^{(\kappa(j))}(x)$ when scoring index $j$.

\emph{Decoupling by cross–fitting.} 
The term $R_n$ can be decomposed as
\begin{align}
  R_n &= (P_n-P)\Delta 
  = \frac{1}{n}\sum_{j\in S} \big\{\widehat m^{(-)}(X_j)-m_0(X_j)\big\} -P\Delta  \nonumber \\
  &= \frac{1}{n}\sum_{j\in S} \widehat m^{(-)}(X_j)
     -\frac{1}{n}\sum_{j\in S} m_0(X_j) -P\Delta \nonumber \\
  &= 
  \underbrace{\frac{1}{n}\sum_{j\in S} \widehat m^{(\kappa(j))}(X_j)}_{\text{Cross-fitted model fit}}
  -\frac{1}{n}\sum_{j\in S} m_0(X_j) -P\Delta ,\label{eq:cross_fit_model_fit} 
\end{align}
where the first term in \eqref{eq:cross_fit_model_fit} is the average prediction from the cross-fitted models
and the second term is the corresponding population regression truth evaluated on the labeled sample. Here, term \(P\Delta\) are understood conditional on the model that scores each \(j\) (i.e., the cross-fitted training set that excludes \(j\)); we keep the shorthand \(P\Delta\) for readability.

This decomposition makes clear why cross-fitting is essential: it ensures that, for each $j\in S_k$, the model used to evaluate $X_j$ -- namely $\widehat m^{(\kappa(j))}$ -- is trained without the pair $(X_j,Y_j)$. Hence
\[
\widehat m^{(\kappa(j))}(X_j)\ \perp\!\!\!\perp\ Y_j \mid X_j,
\]
i.e., there is no label leakage. This conditional independence yields an unbiased expansion and clean conditional variance control, which establish consistency and enable the CLT for asymptotic normality later. Thus, the cross-fitted term behaves like an average of i.i.d.\ random variables. 
In particular, conditional on $\mathcal{T}$, the variables
\[
\Delta(X_j) = \widehat m^{(-)}(X_j)-m_0(X_j), \qquad j \in S,
\]
are i.i.d.\ with the same distribution as $\Delta(X)$, and they depend only on $X_j$ (since the fitted model never uses $Y_j$ in training). Refer to \citep{newey2018cross,kennedy2024semiparametric,chernozhukov2018double} for more details on similar ideas used in the development of cross-fitting and double machine learning methods.

\emph{Conditional mean and variance.} Writing $W_j:=\Delta(X_j)-P\Delta$ (so that
$\mathbb{E}[W_j\mid\mathcal T]=0$), we have
\[
R_n=\frac{1}{n}\sum_{j\in S} W_j,\qquad
\mathbb{E}[R_n\mid\mathcal T]
=\frac{1}{n}\sum_{j\in S}\mathbb{E}[W_j\mid\mathcal T]=0,
\]
and, by conditional independence and identical distribution of the $W_j$'s,
\[
\Var(R_n\mid\mathcal T)
=\frac{1}{n^2}\sum_{j\in S}\Var(W_j\mid\mathcal T)
=\frac{1}{n}\Var\!\big(\Delta(X)\mid\mathcal T\big)
\le \frac{1}{n}\,\mathbb{E}\!\big[\Delta(X)^2\mid\mathcal T\big]
=\frac{1}{n}\,\|\Delta\|_{L_2(P_X)}^{\,2}.
\]

Recall that Chebyshev's inequality in its conditional form states that, for any random variable $Z$ with $\mathbb{E}[Z\mid\mathcal{T}]=0$, and for all $s>0$,  
$\mathbb{P}\!\left(|Z|\ge s \,\middle|\, \mathcal{T}\right) \le \Var(Z\mid \mathcal{T}) / s^2.$

Apply this to $Z=\sqrt{n}\,R_n$ to obtain, for any $s>0$,
\begin{align}
\label{eq:concentration_ineq_for_R_n}
\mathbb{P}\!\left(\,|\sqrt{n}\,R_n|\ge s \,\middle|\,\mathcal T\right)
\le \frac{\Var(\sqrt{n}\,R_n\mid\mathcal T)}{s^2}
= \frac{n\,\Var(R_n\mid\mathcal T)}{s^2}
\le \frac{\|\Delta\|_{L_2(P_X)}^{\,2}}{s^2}.    
\end{align}
Taking expectations in both sides of (\ref{eq:concentration_ineq_for_R_n}) and using the stochastic boundedness
$\|\Delta\|_{L_2(P_X)}=O_p(1)$ yields $\sqrt{n}\,R_n=O_p(1)$, i.e.\ $R_n=O_p(n^{-1/2})$.

One should note that, without cross–fitting, $\Delta$ would be
measurable with respect to the same labeled data used in $P_n$, so the $W_j$'s would no longer
be conditionally independent and centered given the training objects. Cross–fitting ensures that,
conditional on $\mathcal T$, $W_j$ are i.i.d.\ mean-zero, allowing the variance bound and the
Chebyshev control above to hold without further entropy/Donsker assumptions.

\paragraph{(iii) Conclusion on the nuisance remainder.}
Combining the two parts, we have
\[
(P_N-P_n)\Delta = R_N - R_n = O_p(N^{-1/2})+O_p(n^{-1/2}) \;\xrightarrow{p}\; 0
\quad\text{as } N,n\to\infty .
\]


\paragraph{Conclusion.}
Each underbraced term in \((\star)\) converges to $0$ in probability, so
$\widehat\theta^{\mathrm{cf}}_{\mathrm{PPI}}\to_p \theta_0$.
\end{proof}


\begin{lemma}[Cross-fitted empirical-process bound; cf.\ Lemma~1 in \cite{kennedy2024semiparametric}]
\label{lem:cf_external_sample_bound_n}
Let $\{W_i\}_{i=1}^n$ be i.i.d.\ from a distribution $P$, and let $\mathcal T$ be a $\sigma$–field
independent of $\sigma(W_1,\dots,W_n)$ (e.g., the $\sigma$–field generated by the training objects
used to construct a cross–fitted predictor from data disjoint from $\{W_i\}_{i=1}^n$).
For any $\mathcal T$–measurable function $h$ with $\|h\|_{L_2(P)}^2=\mathbb E[h(W)^2]<\infty$,
writing $P_n^{W} h := n^{-1}\sum_{i=1}^n h(W_i)$, we have
\[
\big(P_n^{W}-P\big)h
\;=\; O_p\!\Big(\|h\|_{L_2(P)}/\sqrt{n}\Big).
\]
Equivalently, for every $t>0$,
\[
\mathbb P\!\left(
\frac{|(P_n^{W}-P)h|}{\|h\|_{L_2(P)}/\sqrt{n}} \ge t
\,\middle|\, \mathcal T\right) \le \frac{1}{t^2},
\qquad\text{hence}\quad
\frac{|(P_n^{W}-P)h|}{\|h\|_{L_2(P)}/\sqrt{n}} = O_p(1).
\]
\end{lemma}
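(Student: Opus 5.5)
The plan is to condition on $\mathcal T$ and use the conditional Chebyshev argument already used for $R_N$ and $R_n$ in the proof of Theorem~\ref{thm:cfppi_mean_consistency}. Because $\mathcal T$ is independent of $\sigma(W_1,\dots,W_n)$ and $h$ is $\mathcal T$-measurable, $h$ acts as a fixed function given $\mathcal T$. The variables $h(W_1),\dots,h(W_n)$ are then conditionally i.i.d.\ with conditional law equal to the law of $h(W)$ under $P$. The common conditional mean is $Ph=\int h\,dP$, evaluated at the realized $h$. I will read $\|h\|_{L_2(P)}$ in the same conditional sense, as $(\int h^2\,dP)^{1/2}$ for the realized $h$, which is $\mathcal T$-measurable.

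\emph{Step 1.} Write $Z:=(P_n^W-P)h=n^{-1}\sum_{i=1}^n\{h(W_i)-Ph\}$. Then $\mathbb E[Z\mid\mathcal T]=0$.

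\emph{Step 2.} By conditional independence,
\[
\Var(Z\mid\mathcal T)=n^{-1}\Var(h(W)\mid\mathcal T)\le n^{-1}\|h\|_{L_2(P)}^2 ,
\]
using $\Var\le$ second moment.

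\emph{Step 3.} Apply conditional Chebyshev on the event $\{\|h\|_{L_2(P)}>0\}$. For every $t>0$,
\[
\mathbb P\big(|Z|\ge t\,\|h\|_{L_2(P)}/\sqrt n\mid\mathcal T\big)\le \frac{\Var(Z\mid\mathcal T)\,n}{t^2\|h\|_{L_2(P)}^2}\le t^{-2}.
\]
On the complementary event $\{\|h\|_{L_2(P)}=0\}$ we have $h=0$ $P$-a.s., so $Z=0$ a.s. Interpreting $0/0$ as $0$, the bound holds trivially there.

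\emph{Step 4.} Take expectations over $\mathcal T$ to get the unconditional bound $\mathbb P(\cdot)\le t^{-2}$ uniformly in $n$. Given $\varepsilon>0$, choose $t=\varepsilon^{-1/2}$. This gives tightness of the normalized ratio, i.e.\ $(P_n^W-P)h=O_p(\|h\|_{L_2(P)}/\sqrt n)$.

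The only delicate point is justifying Step~1's conditional i.i.d.\ structure. I will handle it with the standard substitution or freezing lemma: for jointly measurable $g$ and independent $\mathcal T$ and $W$, $\mathbb E[g(h,W)\mid\mathcal T]=\int g(h,w)\,dP(w)$. Applying this with $g=h(w)$ and $g=h(w)^2$ gives the conditional mean and variance used above. Everything else is routine.
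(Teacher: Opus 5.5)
Your proposal is correct and follows essentially the same route as the paper: condition on $\mathcal T$ so that $h$ is frozen and the $W_i$ remain i.i.d.\ from $P$, bound $\Var\bigl((P_n^W-P)h\mid\mathcal T\bigr)\le \|h\|_{L_2(P)}^2/n$, apply conditional Chebyshev, treat the case $\|h\|_{L_2(P)}=0$ separately, and integrate out $\mathcal T$. Your handling of that degenerate case, through the ratio with $0/0:=0$, is slightly cleaner than the paper's: in the unnormalized form the event becomes $\{|Z|\ge 0\}$, which has probability one, not zero as the paper asserts.
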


\begin{proof}
Independence implies $(W_1,\dots,W_n)\mid\mathcal T \sim P^{\otimes n}$ a.s., so given $\mathcal T$ the $W_i$'s
are i.i.d.\ with common law $P$. Since $h$ is $\mathcal T$–measurable, it is fixed when conditioning on $\mathcal T$.
Thus,
\[
\mathbb E[h(W_i)\mid\mathcal T]=\int h\,dP=:Ph,\qquad
\mathbb E[h(W_i)^2\mid\mathcal T]=\int h^2\,dP=\|h\|_{L_2(P)}^2,
\]
and $\Var(h(W_i)\mid\mathcal T)\le \|h\|_{L_2(P)}^2$. With $P_n^{W} h := n^{-1}\sum_{i=1}^n h(W_i)$,
\[
\mathbb E\!\big[(P_n^{W}-P)h\mid\mathcal T\big]=0,
\]
and conditional i.i.d.-ness yields
\[
\Var\!\big((P_n^{W}-P)h\mid\mathcal T\big)
=\Var\!\big(P_n^{W} h\mid\mathcal T\big)
=\frac{1}{n}\Var\!\big(h(W)\mid\mathcal T\big)
\le \frac{\|h\|_{L_2(P)}^2}{n}.
\]
Let $Z:=(P_n^{W}-P)h$. From the calculation above,
$\mathbb{E}[Z\mid\mathcal T]=0$ and 
$\Var(Z\mid\mathcal T)\le \|h\|_{L_2(P)}^2/n$.
Chebyshev’s inequality in conditional form (obtained by applying conditional
Markov inequality to $Z^2$):
\[
\mathbb P\!\left(|Z|\ge a \,\middle|\, \mathcal T\right)
= \mathbb P\!\left(Z^2\ge a^2 \,\middle|\, \mathcal T\right)
\le \frac{\mathbb E[Z^2\mid \mathcal T]}{a^2}
= \frac{\Var(Z\mid\mathcal T)+\big(\mathbb E[Z\mid\mathcal T]\big)^2}{a^2}
= \frac{\Var(Z\mid\mathcal T)}{a^2}.
\]
Finally, choose $a:= t\,\|h\|_{L_2(P)}/\sqrt{n}$ (with $t>0$). Then
\[
\mathbb P\!\left(\, |(P_n^{W}-P)h| \ge t\,\frac{\|h\|_{L_2(P)}}{\sqrt{n}}
\,\middle|\, \mathcal T\right)
\le 
\frac{\Var(Z\mid\mathcal T)}{t^2\,\|h\|_{L_2(P)}^2/n}
\le \frac{1}{t^2}.
\tag{$\dagger$}
\]
If $\|h\|_{L_2(P)}=0$, then $h=0$ $P$–a.s., so $Z\equiv0$ and the event on the left of $(\dagger)$ has probability $0$; the bound still holds. Taking expectations over $\mathcal T$ yields
\[
\mathbb P\!\left(\, |(P_n^{W}-P)h| \ge t\,\frac{\|h\|_{L_2(P)}}{\sqrt{n}} \right)
\le \frac{1}{t^2},
\]
which is equivalent to
\(
(P_n^{W}-P)h = O_p\!\big(\|h\|_{L_2(P)}/\sqrt{n}\big).
\)
\end{proof}

\begin{theorem}[Asymptotic normality of the CF-PPI estimator for the mean]
\label{thm:cfppi_mean_clt}
Assume the setup of Theorem~\ref{thm:cfppi_mean_consistency}. 
If, in addition, the cross–fitted predictor satisfies
\begin{align}
\label{eq:nuisance_condition_for_asymptotic_normality_CF_PPI}
\|\widehat m^{(-)}-m_0\|_{L_2(P_X)} = o_p(1),    
\end{align}
then
\[
\sqrt{N}\,\big(\widehat\theta^{\mathrm{cf}}_{\mathrm{PPI}}-\theta_0\big)
\;\xrightarrow{d}\; \mathcal{N}\!\big(0,\sigma_f^2\big),
\]
with asymptotic variance
\[
\sigma_f^2 \;=\; \Var\!\big(m_0(X)\big) \;+\; \frac{1}{f}\,\Var\!\big(Y-m_0(X)\big).
\]
\end{theorem}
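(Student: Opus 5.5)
I reuse the decomposition $(\star)$ from the proof of Theorem~\ref{thm:cfppi_mean_consistency}:
\[
\widehat\theta^{\mathrm{cf}}_{\mathrm{PPI}}-\theta_0=(P_N-P)m_0+(P_n-P)(Y-m_0)+(P_N-P_n)\big(\widehat m^{(-)}-m_0\big).
\]
After multiplying by $\sqrt N$, the plan is to show three things: the third term is $o_p(1)$; the first two terms jointly converge to independent Gaussians; and the limits combine to give $\sigma_f^2$. The argument then closes by Slutsky's lemma.

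\textbf{Remainder.} Write $\Delta=\widehat m^{(-)}-m_0$ and split $(P_N-P_n)\Delta=R_N-R_n$ as in \eqref{eq:remainder}. I apply Lemma~\ref{lem:cf_external_sample_bound_n} to $R_N$ conditionally on $\mathcal T$. The unlabeled $X_i$ are independent of the fits, so the lemma gives $\sqrt N R_N=O_p(\|\Delta\|_{L_2(P_X)})$.

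For $R_n$, I apply the lemma fold by fold. On $S_k$ the function $\widehat m^{(k)}-m_0$ is measurable with respect to the data in $S\setminus S_k$, which is independent of $\{X_j\}_{j\in S_k}$. This gives $(P_{S_k}-P)(\widehat m^{(k)}-m_0)=O_p(\|\widehat m^{(k)}-m_0\|_{L_2}/\sqrt{|S_k|})$. Since $K$ is fixed and $|S_k|\asymp n\asymp N$, weighting by $|S_k|/n$ and summing over $k$ yields $\sqrt N R_n=O_p(\max_k\|\widehat m^{(k)}-m_0\|_{L_2})$. I read the hypothesis $\|\widehat m^{(-)}-m_0\|_{L_2(P_X)}=o_p(1)$ as covering every fold fit and the aggregate $\widehat m^\star$.

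To pass from this conditional bound to $o_p(1)$, I use the conditional Chebyshev bound: $\mathbb P(|\sqrt N R|>\eta\mid\mathcal T)\le \min\{1,C\|\Delta\|^2/\eta^2\}$. Taking expectations and applying dominated convergence, this tends to $0$ because $\|\Delta\|=o_p(1)$. Hence $\sqrt N(P_N-P_n)\Delta=o_p(1)$. I expect this step to be the main obstacle. The care is in making the centering $P\Delta$ fold-specific, and in justifying the move from an $o_p(1)$ conditional variance bound to unconditional convergence in probability.

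\textbf{Leading terms.} By the i.i.d.\ CLT, $\sqrt N(P_N-P)m_0\to\mathcal N(0,\Var m_0(X))$. By the same argument, $\sqrt n(P_n-P)(Y-m_0)\to\mathcal N(0,\Var(Y-m_0(X)))$, and since $\sqrt{N/n}\to f^{-1/2}$, the $\sqrt N$-scaled version has limit variance $f^{-1}\Var(Y-m_0(X))$.

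For the joint limit, I note that in the stated setup the labeled sample is independent of the unlabeled sample, so the two limits are independent and the variances add, giving $\sigma_f^2$. If instead the labeled units form a subset of the $N$ units, as in the SRSWOR setting, I would write both terms as a single sum $N^{-1}\sum_i\{m_0(X_i)-Pm_0+(\delta_i/f)(Y_i-m_0(X_i))\}$. The covariance between the two parts vanishes because $\mathbb E[\varepsilon\mid X]=0$, and the same limit follows from the CLT under Assumption~3 used in Corollary~\ref{cor:ppi-wald}.

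Combining the leading terms with the $o_p(1)$ remainder via Slutsky's lemma gives $\sqrt N(\widehat\theta^{\mathrm{cf}}_{\mathrm{PPI}}-\theta_0)\to\mathcal N(0,\sigma_f^2)$.
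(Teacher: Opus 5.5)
Your route is the paper's: decomposition $(\star)$, the cross-fitted Chebyshev bound applied to the unlabeled and labeled averages, the i.i.d.\ CLT for the two leading terms, independence, and Slutsky. Your fold-by-fold conditioning is a needed correction, not a detour. The paper applies the lemma to the whole labeled sample on the grounds that $X_j\perp\!\!\!\perp\mathcal T$ for all $j\in S$. That independence fails when $\mathcal T$ contains all $K$ fits and $\widehat m^\star$, because each labeled point is in the training set of the other $K-1$ fold fits. Your conditional-Chebyshev/bounded-convergence step is simply an explicit proof of the paper's $O_p(\|\Delta\|_{L_2(P_X)})=o_p(1)$, so it is fine.

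The gap is that, once the centering is fold-specific, the split $(P_N-P_n)\Delta=R_N-R_n$ is no longer an identity. Take $R_N=(P_N-P)(\widehat m^\star-m_0)$ and $R_n=\sum_k\frac{|S_k|}{n}(P_{S_k}-P)(\widehat m^{(k)}-m_0)$. Then
\[
(P_N-P_n)\Delta \;=\; R_N-R_n+P\Big(\widehat m^\star-\sum_{k=1}^K\frac{|S_k|}{n}\,\widehat m^{(k)}\Big).
\]
The last term is $\mathcal T$-measurable, so Chebyshev gives no control over it, and it must be $o_p(N^{-1/2})$ on its own.

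For the fold-average aggregator this term vanishes when the folds have equal size. When fold sizes differ by at most one, it is $O(n^{-1})\max_k\|\widehat m^{(k)}-m_0\|_{L_2(P_X)}$. Either way your argument closes.

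For $\widehat m^\star=\widehat m_{\mathrm{all}}$, however, the term is a difference of integrated biases at training sizes $n$ and roughly $n(K-1)/K$. Your reading of the hypothesis as $L_2$-consistency of every fit only makes it $o_p(1)$. For a concrete failure, take a learner that returns $m_0+m^{-1/4}$ when trained on $m$ points. It is $L_2$-consistent, yet $\sqrt N(\widehat\theta^{\mathrm{cf}}_{\mathrm{PPI}}-\theta_0)$ contains the deterministic term $\sqrt N\,n^{-1/4}\{1-(K/(K-1))^{1/4}\}$, which diverges.

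So you should either restrict to the fold-average aggregator, which is what the paper's CF-PPI mean algorithm uses, or add the assumption $\sqrt N\,P\big(\widehat m^\star-\sum_k\frac{|S_k|}{n}\widehat m^{(k)}\big)=o_p(1)$. The paper's proof has the same hole, hidden in its shorthand ``$P\Delta$ understood conditional on the model that scores each $j$.''
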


\begin{proof}
We start from the decomposition \((\star)\) in \emph{Proof} of Theorem \ref{thm:cfppi_mean_consistency} and multiply by \(\sqrt{N}\):
\[
\sqrt{N}\big(\widehat\theta^{\mathrm{cf}}_{\mathrm{PPI}}-\theta_0\big)
= \underbrace{\sqrt{N}\,(P_N-P)\,m_0}_{\text{(I)}} 
+ \underbrace{\sqrt{N}\,(P_n-P)\,(Y-m_0)}_{\text{(II)}}
+ \underbrace{\sqrt{N}\,(P_N-P_n)\big(\widehat m^{(-)}-m_0\big)}_{\text{(III)}}.
\]

\paragraph{Leading terms (I) and (II).}
By the i.i.d.\ CLT, \eqref{eq:asymptotic_distribution_first_part} gives
\[
\sqrt{N}\,(P_N-P)\,m_0 \;\xrightarrow{d}\; \mathcal N\!\big(0,\Var\{m_0(X)\}\big).
\]
Similarly, \eqref{eq:asymptotic_distribution_second_part} yields
\[
\sqrt{n}\,(P_n-P)\,(Y-m_0) \;\xrightarrow{d}\; \mathcal N\!\big(0,\Var\{Y-m_0(X)\}\big),
\]
so
\[
\sqrt{N}\,(P_n-P)\,(Y-m_0)
= \sqrt{\frac{N}{n}}\;\sqrt{n}\,(P_n-P)\,(Y-m_0)
\;\xrightarrow{d}\; \mathcal N\!\Big(0,\frac{1}{f}\Var\{Y-m_0(X)\}\Big),
\]
because \(N/n\to 1/f\). Since the unlabeled and labeled samples are independent, the limits above are independent.

\paragraph{Remainder term (III).}
Let $\Delta:=\widehat m^{(-)}-m_0$. From the decomposition,
\[
\text{(III)}
=\sqrt{N}\,(P_N-P_n)\Delta
=\sqrt{N}\Big\{(P_N-P)\Delta-(P_n-P)\Delta\Big\}
=\sqrt{N}\,R_N-\sqrt{N}\,R_n,
\]
where $R_N:=(P_N-P)\Delta$ and $R_n:=(P_n-P)\Delta$.

Apply Lemma~\ref{lem:cf_external_sample_bound_n} with $h=\Delta$ to the two evaluation samples:
(i) the unlabeled sample $\{X_i\}_{i=1}^N$ (take $W=X$, sample size $n=N$), and
(ii) the labeled evaluation sample $\{X_j\}_{j\in S}$ (cross–fitted, so $X_j\perp\!\!\!\perp\mathcal T$, sample size $n$).  
The lemma gives
\[
R_N=O_p\!\Big(\|\Delta\|_{L_2(P_X)}/\sqrt{N}\Big),
\qquad
R_n=O_p\!\Big(\|\Delta\|_{L_2(P_X)}/\sqrt{n}\Big).
\]
Hence
\begin{align*}
\text{(III)}
&=\sqrt{N}\,R_N-\sqrt{N}\,R_n
=O_p\!\big(\|\Delta\|_{L_2(P_X)}\big)
\;+\;
O_p\!\Big(\sqrt{\tfrac{N}{n}}\;\|\Delta\|_{L_2(P_X)}\Big) =O_p\!\big(\|\Delta\|_{L_2(P_X)}\big),
\end{align*}
since $n/N\to f\in(0,1]$ implies $\sqrt{N/n}=O(1)$. Therefore, under the condition (\ref{eq:nuisance_condition_for_asymptotic_normality_CF_PPI}), we have
\[
\text{(III)}=\sqrt{N}\,(P_N-P_n)\big(\widehat m^{(-)}-m_0\big)=o_p(1).
\]
This controls the nuisance remainder at the $\sqrt{N}$ scale and completes the treatment of term (III).

\paragraph{Conclusion.}
By Slutsky’s theorem and independence of the two leading limits,
\[
\sqrt{N}\big(\widehat\theta^{\mathrm{cf}}_{\mathrm{PPI}}-\theta_0\big)
\;\xrightarrow{d}\; \mathcal N\!\Big(0,\ \Var\{m_0(X)\}+\tfrac{1}{f}\Var\{Y-m_0(X)\}\Big)
= \mathcal N\!\big(0,\sigma_f^2\big),
\]
which is the claimed result.
\end{proof}


\begin{theorem}[Consistency and asymptotic normality of the CF-PPI estimator for the mean]
\label{thm:cfppi_mean_unified_conclusion}
Let $(X,Y)$ have joint law $P$ with $X\sim P_X$ and $Y=m_0(X)+\varepsilon$, where
$\mathbb E[\varepsilon\mid X]=0$ and $\mathbb E[Y^2]<\infty$. The target is the population mean
$\theta_0:=\mathbb E[Y]$. Let $\{X_i\}_{i=1}^N \stackrel{\mathrm{i.i.d.}}{\sim} P_X$ be an unlabeled
sample and $\{(X_j,Y_j)\}_{j\in S}$, $|S|=n$, an independent labeled sample from $P$, with
$N,n\to\infty$ and $n/N\to f\in(0,1)$. Let $\widehat m^{(-)}$ be the cross–fitted predictor
(each labeled index is scored by a model trained without its own fold), and consider
\begin{equation}\label{eq:cfppi_mean_estimator}
\widehat\theta^{\mathrm{cf}}_{\mathrm{PPI}}
=\frac{1}{N}\sum_{i=1}^N \widehat m^{(-)}(X_i)
+\frac{1}{n}\sum_{j\in S}\{Y_j-\widehat m^{(-)}(X_j)\}.
\end{equation}
Then:

\medskip
\noindent \textbf{\textup{(i) Consistency.}} 
If the out–of–fold error is stochastically bounded in $L_2(P_X)$,
\[
\|\widehat m^{(-)}-m_0\|_{L_2(P_X)} = O_p(1),
\]
then $\widehat\theta^{\mathrm{cf}}_{\mathrm{PPI}}\xrightarrow{p}\theta_0$.

\medskip
\noindent \textbf{\textup{(ii) Asymptotic normality.}}  
If, in addition, the cross–fitted predictor is $L_2(P_X)$–consistent,
\[
\|\widehat m^{(-)}-m_0\|_{L_2(P_X)} = o_p(1),
\]
then
\[
\sqrt{N}\,\big(\widehat\theta^{\mathrm{cf}}_{\mathrm{PPI}}-\theta_0\big)
\;\xrightarrow{d}\; \mathcal{N}\!\big(0,\sigma_f^2\big),
\qquad
\sigma_f^2 \;=\; \Var\!\big(m_0(X)\big) \;+\; \frac{1}{f}\,\Var\!\big(Y-m_0(X)\big).
\]
\end{theorem}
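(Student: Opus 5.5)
This unified statement packages together the two results already proved in this subsection, so the plan is to assemble them rather than argue afresh. Everything rests on the three-term decomposition $(\star)$ from the proof of Theorem~\ref{thm:cfppi_mean_consistency}:
\[
\widehat\theta^{\mathrm{cf}}_{\mathrm{PPI}}-\theta_0=(P_N-P)m_0+(P_n-P)(Y-m_0)+(P_N-P_n)\big(\widehat m^{(-)}-m_0\big).
\]
This identity is purely algebraic. It uses only $\theta_0=Pm_0$ and $P(Y-m_0)=0$, both of which follow from $\mathbb E[\varepsilon\mid X]=0$, so it holds under the hypotheses stated here.

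For part (i), I will invoke Theorem~\ref{thm:cfppi_mean_consistency} directly, since its hypotheses coincide with those of (i). For completeness, the argument runs as follows. The first two terms are centred averages of fixed square-integrable functions, because $\mathbb E[Y^2]<\infty$ gives $m_0,\,Y-m_0\in L_2$; hence they are $O_p(N^{-1/2})$ and $O_p(n^{-1/2})$. For the remainder, write $\Delta=\widehat m^{(-)}-m_0$ and split it as $R_N-R_n$, where $R_N:=(P_N-P)\Delta$ and $R_n:=(P_n-P)\Delta$.
\begin{itemize}
\item Conditionally on the training $\sigma$-field $\mathcal T$, $\Delta$ is fixed.
\item Both the unlabeled sample and each out-of-fold labeled point are independent of the model that scores them.
\item Lemma~\ref{lem:cf_external_sample_bound_n}, applied fold by fold, therefore gives $R_N=O_p(\|\Delta\|_{L_2(P_X)}/\sqrt N)$ and $R_n=O_p(\|\Delta\|_{L_2(P_X)}/\sqrt n)$.
\end{itemize}
Under $\|\Delta\|_{L_2(P_X)}=O_p(1)$ this gives $R_N-R_n=o_p(1)$, and consistency follows.

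For part (ii), I will invoke Theorem~\ref{thm:cfppi_mean_clt}. Multiply the decomposition by $\sqrt N$. The unlabeled CLT gives $\mathcal N(0,\Var\{m_0(X)\})$, and the labeled CLT rescaled by $\sqrt{N/n}\to f^{-1/2}$ gives $\mathcal N(0,f^{-1}\Var\{Y-m_0(X)\})$. The two limits are independent because the samples are independent, so the joint limit is the product of the marginals and the variances add to $\sigma_f^2$. The scaled remainder satisfies $\sqrt N(R_N-R_n)=O_p(\|\Delta\|_{L_2(P_X)})=o_p(1)$ by the same lemma together with $\sqrt{N/n}=O(1)$. Slutsky's theorem then concludes.

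The one delicate point is the labeled remainder $R_n$. Different folds are scored by different fitted functions, so the $\Delta(X_j)$ are not identically distributed across folds. I will handle this by writing
\[
R_n=\sum_k (n_k/n)\,(P_{n_k}-P)\Delta^{(k)},
\]
and applying Lemma~\ref{lem:cf_external_sample_bound_n} to each fold with $\mathcal T_k=\sigma(\text{data in }S\setminus S_k)$. This is valid because $S_k$ is independent of $\mathcal T_k$. With $K$ fixed and $n_k\asymp n/K$, summing the fold-wise bounds preserves the $O_p(\max_k\|\Delta^{(k)}\|_{L_2}/\sqrt n)$ rate.

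A second point needs care. The $L_2(P_X)$ condition on $\widehat m^{(-)}$ must be read as covering every fold-specific fit $\widehat m^{(k)}$ as well as the aggregate $\widehat m^\star$ used on unlabeled points. When $\widehat m^\star=\widehat m_{\mathrm{all}}$, that aggregate depends on all labels, but the unlabeled sample is still independent of it, so the lemma applies to $R_N$ unchanged.
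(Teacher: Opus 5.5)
Your route is the paper's: decomposition $(\star)$, Lemma~\ref{lem:cf_external_sample_bound_n} for the nuisance remainder, two independent CLTs, and Slutsky. Your fold-by-fold conditioning on $\mathcal T_k$ is more careful than the paper's proof. The paper conditions on all fold fits at once and then treats the labeled $\Delta(X_j)$ as i.i.d.\ given that $\sigma$-field, which fails because $X_j$ with $j\in S_k$ enters every $\widehat m^{(k')}$ with $k'\neq k$.

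\textbf{The gap: the step $(P_N-P_n)\Delta=R_N-R_n$.} This identity fails when unlabeled and labeled points are scored by different functions. Write $\Delta^\star:=\widehat m^\star-m_0$ and $\Delta^{(k)}:=\widehat m^{(k)}-m_0$, let $P_{n_k}$ be the empirical measure of fold $S_k$, and set $P_n\Delta^{(-)}:=\sum_k (n_k/n)P_{n_k}\Delta^{(k)}$. The exact identity is
\[
P_N\Delta^\star-P_n\Delta^{(-)}
=(P_N-P)\Delta^\star-\sum_{k=1}^K\frac{n_k}{n}(P_{n_k}-P)\Delta^{(k)}
+\Big\{P\Delta^\star-\sum_{k=1}^K\frac{n_k}{n}P\Delta^{(k)}\Big\}.
\]
The lemma controls the two centred terms. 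The bracket, however, is a non-centred, training-measurable quantity that no fluctuation bound reaches. Cauchy--Schwarz bounds it only by $\|\Delta^\star\|_{L_2(P_X)}+\max_k\|\Delta^{(k)}\|_{L_2(P_X)}$. That is $O_p(1)$ under (i) and $o_p(1)$ under (ii), whereas you need $o_p(1)$ and $o_p(N^{-1/2})$ respectively.

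\textbf{When the gap closes.} For the fold average $\widehat m^\star=K^{-1}\sum_k\widehat m^{(k)}$ with balanced folds ($|n_k-n/K|\le 1$), the bracket equals $\sum_k(K^{-1}-n_k/n)P\Delta^{(k)}=O_p(n^{-1})$. In that case your argument is complete.

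\textbf{Counterexample for $\widehat m^\star=\widehat m_{\mathrm{all}}$.} You explicitly claim this case is covered, but it fails. Take $m_0\equiv\theta_0\neq0$, $K$ equal folds, and the learner that returns the constant $(1-|D|^{-1/4})\bar Y_D$ on a training set $D$. Every fit is $L_2(P_X)$-consistent. Yet with $\tilde{n}:=n(K-1)/K$ you get $\widehat\theta^{\mathrm{cf}}_{\mathrm{PPI}}=\bar Y_S\{1+\tilde{n}^{-1/4}-n^{-1/4}\}$. Hence $\sqrt N(\widehat\theta^{\mathrm{cf}}_{\mathrm{PPI}}-\theta_0)$ carries a drift of order $N^{1/4}\theta_0$ and diverges. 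A bounded shrinkage factor whose values at sizes $n$ and $\tilde{n}$ stay apart breaks (i) in the same way.

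\textbf{Fix.} You must do one of the following:
\begin{itemize}
\item restrict the statement to the fold-average aggregator with balanced folds; or
\item add the hypothesis $P\{\widehat m^\star-\sum_k(n_k/n)\widehat m^{(k)}\}=o_p(N^{-1/2})$, or $o_p(1)$ for (i).
\end{itemize}
The paper's own proof hides the same issue behind its convention that $P\Delta$ is ``understood conditional on the model that scores each $j$'', so this is not something you could have imported from it.
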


\begin{proof}
The theorem follows from the decomposition used in the proof of
Theorem~\ref{thm:cfppi_mean_consistency} (consistency) together with the
remainder control and limit calculations in the proof of
Theorem~\ref{thm:cfppi_mean_clt} (asymptotic normality); no additional
ingredients are required.
\end{proof}

\subsection{Asymptotic properties of single-fit prediction-powered inference with variance correction}\label{subsec:Asymptotic Properties of SF-PPI-VC}
This subsection provides the proofs of the lemmas and theorems for single-fit PPI with variance correction in the semisupervised mean estimation problem, and it proves Theorem~5.2 in the main document.

\begin{proposition}[Linear–smoother notation and basic identities]
\label{prop:linear-smoother-notation}
Let $(X,Y)\sim P$ with $X\in\mathcal X\subset\mathbb R^d$, $Y\in\mathcal Y\subset\mathbb R$, and
$\mathbb E[Y^2]<\infty$. Suppose $Y=m_0(X)+\varepsilon$ with $\mathbb E[\varepsilon\mid X]=0$.
The target is the population mean $\theta_0:=\mathbb E[Y]$.


Let \(\{X_i\}_{i=1}^N\) be an unlabeled sample drawn i.i.d.\ from \(P_X\), and let
\(\{(X_j,Y_j)\}_{j=1}^n\) be an independent labeled sample drawn i.i.d.\ from \(P\),
with \(n/N\to f\in(0,1)\). For notational simplicity, we write
\(S=\{1,\dots,n\}\) for the labeled sample indices. Assume the prediction rule is a single–fit affine linear smoother trained on the labeled set $S$:
\[
\widehat m(x)=\sum_{j\in S} s_j(x)\,Y_j + b(x)=s(x)^\top Y_S+b(x):\mathcal X\to\mathbb R,
\]
where $s(x)=(s_j(x))_{j\in S}\in\mathbb R^n$ is the vector of smoothing weights at $x$, and
$b:\mathcal X\to\mathbb R$ is a covariate–only offset term. The weights $s_j(\cdot)$ and $b(\cdot)$
depend only on covariates (in particular, on $\{X_j:j\in S\}$ and the index set $S$), not on $Y_S$.

Define the labeled hat matrix $H\in\mathbb R^{n\times n}$ and the unlabeled weights matrix
$S_U\in\mathbb R^{N\times n}$ by
\begin{align}
\label{eq:matrix_H}
H
=\begin{bmatrix}
s(X_1)^\top\\
s(X_2)^\top\\
\vdots\\
s(X_n)^\top
\end{bmatrix}
=
\begin{bmatrix}
s_1(X_1) & s_2(X_1) & \cdots & s_n(X_1) \\
s_1(X_2) & s_2(X_2) & \cdots & s_n(X_2) \\
\vdots   & \vdots   & \ddots & \vdots   \\
s_1(X_n) & s_2(X_n) & \cdots & s_n(X_n)
\end{bmatrix}\in\mathbb R^{n\times n},
\end{align}
\begin{align}
\label{eq:matrix_S_U}
S_U
=\begin{bmatrix}
s(X_1)^\top\\
s(X_2)^\top\\
\vdots\\
s(X_N)^\top
\end{bmatrix}
=
\begin{bmatrix}
s_1(X_1) & s_2(X_1) & \cdots & s_n(X_1) \\
s_1(X_2) & s_2(X_2) & \cdots & s_n(X_2) \\
\vdots   & \vdots   & \ddots & \vdots   \\
s_1(X_N) & s_2(X_N) & \cdots & s_n(X_N)
\end{bmatrix}\in\mathbb R^{N\times n}.
\end{align}

Define the average squared norm of the smoothing weights by
\[
\overline{\|s(X)\|_2^2}
:=\frac{1}{N}\sum_{i=1}^N\|s(X_i)\|_2^2
=
\frac{1}{N} \|S_U \|_{F}^2
=
\frac{1}{N}\operatorname{tr}(S_U^\top S_U)\in [0,\infty),
\]
where $\|\cdot \|_F$ denotes the Frobenius norm.

Define three $n$–dimensional vectors summarizing the weights:
\begin{align}
\label{def:a_and_hbar}
a:=\frac{1}{N}S_U^\top\mathbf 1_N\in\mathbb R^n,\qquad
\bar h:=\frac{1}{n}H^\top\mathbf 1_n\in\mathbb R^n,    
\end{align}
and
\begin{align}
\label{def:c}
c:=\frac{1}{N}S_U^\top \mathbf 1_N
+\frac{1}{n}\mathbf 1_n
-\frac{1}{n}H^\top \mathbf 1_n
=\frac{1}{N}S_U^\top \mathbf 1_N+\frac{1}{n}(I - H^\top)\mathbf 1_n
= a+\frac{1}{n}\mathbf 1_n-\bar h\in \mathbb R^n.    
\end{align}

Let $P_N h:=N^{-1}\sum_{i=1}^N h(X_i)$ and $P_n h:=n^{-1}\sum_{j\in S}h(X_j)$. Then:
\begin{itemize}
  \item[\textbf{\emph{(a)}}] $P_N \widehat m= a^\top Y_S+P_N b.$
  \item[\textbf{\emph{(b)}}] $P_n \widehat m= \bar h^\top Y_S+P_n b.$
  \item[\textbf{\emph{(c)}}] $P_n(Y-\widehat m)= \frac{1}{n}\mathbf 1_n^\top Y_S-\bar h^\top Y_S-P_n b
  =\big(\frac{1}{n}\mathbf 1_n-\bar h\big)^\top Y_S-P_n b.$
  \item[\textbf{\emph{(d)}}] $P_N \widehat m+P_n(Y-\widehat m)=\big(a+\frac{1}{n}\mathbf 1_n-\bar h\big)^\top Y_S+(P_N-P_n)b
  = c^\top Y_S+(P_N-P_n)b.$
  \item[\textbf{\emph{(e)}}] With $Y_S=m_0(X_S)+\varepsilon_S$, one has
  \[
  c^\top Y_S=c^\top m_0(X_S)+c^\top \varepsilon_S.
  \]
  \item[\textbf{\emph{(f)}}] If $\sum_{j\in S}s_j(x)=1$ for all $x$ (i.e., mass preservation), then
  \[
  \mathbf 1_n^\top a=\mathbf 1_n^\top \bar h=1,\qquad
  \|c\|_2^2=\frac{1}{n}+\|a-\bar h\|_2^2.
  \]
\end{itemize}
\end{proposition}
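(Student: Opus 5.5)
The identities are all linear-algebraic consequences of the representation $\widehat m(x)=s(x)^\top Y_S+b(x)$ together with the definitions of $H$, $S_U$, $a$, $\bar h$, $c$. We will verify them in the order (a)–(f), each building on the previous ones. No probabilistic argument is needed; the only care required is index bookkeeping. Here $S_U$ and $H$ stack the row vectors $s(X_i)^\top$ over the unlabeled and labeled covariates, respectively.

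For (a), write
\[
P_N\widehat m=\frac1N\sum_{i=1}^N s(X_i)^\top Y_S+P_N b=\frac1N\mathbf 1_N^\top S_U Y_S+P_N b,
\]
and transpose the scalar to get $a^\top Y_S$. Part (b) is identical, with $H$ in place of $S_U$ and $n$ in place of $N$, giving $\bar h^\top Y_S+P_n b$. For (c), observe that $P_n Y=\frac1n\mathbf 1_n^\top Y_S$ and subtract (b). Part (d) follows by adding (a) and (c) and collecting the $Y_S$ coefficient into $a+\frac1n\mathbf 1_n-\bar h=c$ as in \eqref{def:c}; the offset terms combine to $(P_N-P_n)b$. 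Part (e) is linearity after substituting $Y_S=m_0(X_S)+\varepsilon_S$.

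For (f), mass preservation means $s(x)^\top\mathbf 1_n=1$ for every $x$, so $S_U\mathbf 1_n=\mathbf 1_N$ and $H\mathbf 1_n=\mathbf 1_n$. Hence
\[
\mathbf 1_n^\top a=\frac1N\mathbf 1_N^\top S_U\mathbf 1_n=1,
\qquad
\mathbf 1_n^\top\bar h=\frac1n\mathbf 1_n^\top H\mathbf 1_n=1.
\]
Then expand
\[
\|c\|_2^2=\Big\|\tfrac1n\mathbf 1_n+(a-\bar h)\Big\|_2^2=\tfrac1n+\tfrac2n\mathbf 1_n^\top(a-\bar h)+\|a-\bar h\|_2^2 .
\]
The cross term vanishes because $\mathbf 1_n^\top a=\mathbf 1_n^\top\bar h$.

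The only step requiring attention is (f). The row-sum condition must be converted into the column-type statements $\mathbf 1^\top a=\mathbf 1^\top\bar h=1$, and this conversion goes through the products $S_U\mathbf 1_n$ and $H\mathbf 1_n$ rather than through any symmetry of $H$. Everything else is direct substitution.
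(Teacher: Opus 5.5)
Your proposal is correct and follows essentially the same route as the paper: direct substitution of $\widehat m=s^\top Y_S+b$ for (a)--(e), and for (f) the row-sum condition written as $S_U\mathbf 1_n=\mathbf 1_N$, $H\mathbf 1_n=\mathbf 1_n$, then expansion of $\|\tfrac1n\mathbf 1_n+(a-\bar h)\|_2^2$ with a vanishing cross term. The paper phrases the step in (f) as summing all entries of $S_U$ and $H$, which is the same computation.
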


\begin{proof}
Note that $s(x)=(s_j(x))_{j\in S}=(s_1(x),\dots,s_n(x))^\top \in \mathbb R^n$ (we assumed $S=\{1,\dots,n\} \subset\{1,\dots,N\}$ for simplicity), so the usual
vector–matrix algebra applies. In particular,
\[
\widehat m(x)=s(x)^\top Y_S+b(x)=\sum_{j\in S}s_j(x)\,Y_j+b(x),\qquad
\|s(x)\|_2^2=s(x)^\top s(x).
\]
Stacking rows $s(X_i)^\top$ gives the unlabeled weights matrix
$S_U=\big[s(X_1)^\top;\dots; s(X_N)^\top\big]\in\mathbb R^{N\times n}$ and, restricting to the
labeled indices, the hat matrix $H=\big[s(X_1)^\top;\dots; s(X_n)^\top\big]\in\mathbb R^{n\times n}$.
Consequently,
\[
a=\frac{1}{N}S_U^\top \mathbf 1_N,\qquad
\bar h=\frac{1}{n}H^\top \mathbf 1_n,\qquad
\overline{\|s(X)\|_2^2}=\frac{1}{N}\sum_{i=1}^N\|s(X_i)\|_2^2=\frac{1}{N}\operatorname{tr}(S_U^\top S_U),
\]
and all identities in Proposition~\ref{prop:linear-smoother-notation} follow by standard
matrix–vector manipulations (linearity of sums, associativity of matrix products).

Now, we are ready to prove (a) -- (f):

\paragraph{(a)}
\[
P_N \widehat m
=\frac{1}{N}\sum_{i=1}^N \big(s(X_i)^\top Y_S+b(X_i)\big)
=\Big(\frac{1}{N}\sum_{i=1}^N s(X_i)\Big)^\top Y_S+P_N b
=a^\top Y_S+P_N b.
\]

\paragraph{(b)}
\[
P_n \widehat m
=\frac{1}{n}\sum_{j\in S} \big(s(X_j)^\top Y_S+b(X_j)\big)
=\Big(\frac{1}{n}\sum_{j\in S} s(X_j)\Big)^\top Y_S+P_n b
=\bar h^\top Y_S+P_n b.
\]

\paragraph{(c)}
Since $P_n Y = \frac{1}{n}\sum_{j\in S} Y_j = \frac{1}{n}\mathbf 1_n^\top Y_S$ and by (b),
\[
P_n(Y-\widehat m)=\frac{1}{n}\mathbf 1_n^\top Y_S-\bar h^\top Y_S-P_n b
=\Big(\frac{1}{n}\mathbf 1_n-\bar h\Big)^\top Y_S-P_n b.
\]

\paragraph{(d)}
Add (a) and (c):
\begin{align*}
P_N\widehat m+P_n(Y-\widehat m)
&= a^\top Y_S+P_N b + \Big(\frac{1}{n}\mathbf 1_n-\bar h\Big)^\top Y_S-P_n b \\
&= \Big(a+\frac{1}{n}\mathbf 1_n-\bar h\Big)^\top Y_S+(P_N-P_n)b
=c^\top Y_S+(P_N-P_n)b,    
\end{align*}
using the definition of $c$.

\paragraph{(e)}
Insert $Y_S=m_0(X_S)+\varepsilon_S$ into $c^\top Y_S$:
\[
c^\top Y_S=c^\top m_0(X_S)+c^\top \varepsilon_S.
\]

\paragraph{(f)}
If $\sum_{j\in S}s_j(x)=1$ for all $x$, then
\begin{align*}
\mathbf 1_n^\top a
&= \mathbf{1}_n^\top\!\left(\tfrac{1}{N} S_U^\top \mathbf 1_N\right)
= \tfrac{1}{N}\,\mathbf 1_n^\top S_U^\top \mathbf 1_N
= \tfrac{1}{N}\,\mathbf 1_N^\top S_U \mathbf 1_n \\
&= \tfrac{1}{N}\sum_{i=1}^N \sum_{j=1}^n s_j(X_i)
= \tfrac{1}{N}\sum_{i=1}^N 1
= 1,
\end{align*}
since $\mathbf 1^\top A \mathbf 1 $ is sum of all elements of matrix $A$.

Similarly,
\begin{align*}
\mathbf 1_n^\top \bar h
=\mathbf 1_n^\top \left(\frac{1}{n}H^\top \mathbf 1_n\right) 
=\frac{1}{n}\mathbf 1_n^\top H^\top \mathbf 1_n 
=\frac{1}{n}\sum_{j=1}^n \sum_{k=1}^n s_k(X_j) 
=\frac{1}{n}\sum_{j=1}^n 1 =1.
\end{align*}

Since $c=\frac{1}{n}\mathbf 1_n+(a-\bar h)$, expand
\[
\|c\|_2^2=\Big\|\frac{1}{n}\mathbf 1_n\Big\|_2^2+\frac{2}{n}\mathbf 1_n^\top(a-\bar h)+\|a-\bar h\|_2^2
=
\left(\frac{1}{n}\right)^2
\Big\|\mathbf 1_n\Big\|_2^2+\frac{2}{n} (1 - 1)  +\|a-\bar h\|_2^2
=\frac{1}{n}+0+\|a-\bar h\|_2^2,
\]
because $\mathbf 1_n^\top(a-\bar h)=\mathbf 1_n^\top a-\mathbf 1_n^\top \bar h=0$.
\end{proof}


\begin{proposition}[Kernel ridge regression with an unpenalized intercept: closed form and mass preservation]\label{prop:KRR_MP}
Let $K\in\mathbb{R}^{n\times n}$ be a symmetric positive semidefinite kernel matrix built on the labeled inputs $(X_j)_{j\in S}$ and let $Y=Y_S\in\mathbb{R}^n$ be the response vector. 

Consider
\[
(\hat\beta,\hat\alpha)
=\arg\min_{\beta\in\mathbb{R}^n,\ \alpha\in\mathbb{R}}
\ \frac1n\big\|Y-(K\beta+\alpha\mathbf 1_n)\big\|_2^2+\lambda\,\beta^\top K\beta,
\qquad \lambda>0.
\]
Then the intercept and coefficients are
\[
\hat\alpha
=\frac{\mathbf 1_n^\top (K+n\lambda I_n)^{-1}Y}
        {\mathbf 1_n^\top (K+n\lambda I_n)^{-1}\mathbf 1_n} \in \mathbb R,
\qquad
\hat\beta
=(K+n\lambda I_n)^{-1}\big(Y-\hat\alpha\,\mathbf 1_n\big) \in \mathbb R^n.
\]
Let
\[
w^\top
:=\frac{\mathbf 1_n^\top (K+n\lambda I_n)^{-1}}
        {\mathbf 1_n^\top (K+n\lambda I_n)^{-1}\mathbf 1_n} \in \mathbb R^{1 \times n},
\qquad
H:=K(K+n\lambda I_n)^{-1}(I_n-\mathbf 1_n w^\top)+\mathbf 1_n w^\top \in \mathbb R^{n \times n}.
\]
Then the fitted values on the training points $(\widehat m_S:=(\widehat m(X_j))_{j\in S}\in\mathbb{R}^n)$ satisfy $$\widehat m_S:=K\hat\beta+\hat\alpha\mathbf 1_n=H\,Y,$$ and $H$ is symmetric. Moreover, for any new input $x$ with kernel vector $k_x\in\mathbb{R}^n$ (entries $(k_x)_j=k(X_j,x)$), the prediction can be written
\[
\widehat m(x)=s(x)^\top Y \in \mathbb R,\qquad
s(x)^\top=k_x^\top (K+n\lambda I_n)^{-1}(I_n-\mathbf 1_n w^\top)+w^\top \in \mathbb R^{1 \times n},
\]
and the weights sum to one, $s(x)^\top\mathbf 1_n=1$ for all $x$ (i.e., the mass–preserving property holds.)

\paragraph{\emph{Remark -- Offset term.}} 
Typically, we fit an affine linear smoother 
\(\widehat m(x) = s(x)^\top Y + b(x)\), 
with $s(x)$ denoting the smoothing weights and $b(x)$ an offset term, as stated in Proposition \ref{prop:linear-smoother-notation}. The proposition shows that, when using kernel ridge regression with an unpenalized intercept, 
the offset term automatically vanishes, i.e., \(b(x) = 0\).

\paragraph{\emph{Remark -- hat matrix $H$.}}
Stacking the row weights $s(X_i)^\top \in \mathbb R^{1 \times n}$ for $i\in S (=\{1,\ldots,n\}$ for notational convenience) gives
\[
H
=
\begin{bmatrix}
s(X_1)^\top\\
s(X_2)^\top\\
\vdots\\
s(X_n)^\top
\end{bmatrix}
=
\begin{bmatrix}
s_1(X_1) & s_2(X_1) & \cdots & s_n(X_1)\\
s_1(X_2) & s_2(X_2) & \cdots & s_n(X_2)\\
\vdots   & \vdots   & \ddots & \vdots\\
s_1(X_n) & s_2(X_n) & \cdots & s_n(X_n)
\end{bmatrix}
\in\mathbb R^{n\times n}.
\]
For kernel ridge regression with an unpenalized intercept,
\[
H
=
\underbrace{\begin{bmatrix}
k_{X_1}^\top\\
k_{X_2}^\top\\
\vdots\\
k_{X_n}^\top
\end{bmatrix}}_{=\,K}
\,(K+n\lambda I_n)^{-1}(I_n-\mathbf 1_n w^\top)
\;+\;
\mathbf 1_n w^\top,
\]
where $k_{X_i}^\top=(k(X_i,X_1),\ldots,k(X_i,X_n))$.

\paragraph{\emph{Remark -- $S_U$ matrix (Proposition \ref{prop:linear-smoother-notation}).}}
Stacking the row weights for all $N$ design points yields
\[
S_U
=
\begin{bmatrix}
s(X_1)^\top\\
s(X_2)^\top\\
\vdots\\
s(X_N)^\top
\end{bmatrix}
=
\begin{bmatrix}
s_1(X_1) & s_2(X_1) & \cdots & s_n(X_1)\\
s_1(X_2) & s_2(X_2) & \cdots & s_n(X_2)\\
\vdots   & \vdots   & \ddots & \vdots\\
s_1(X_N) & s_2(X_N) & \cdots & s_n(X_N)
\end{bmatrix}
\in\mathbb R^{N\times n}.
\]
For kernel ridge regression with an unpenalized intercept,
\[
S_U
=
\underbrace{\begin{bmatrix}
k_{X_1}^\top\\
k_{X_2}^\top\\
\vdots\\
k_{X_N}^\top
\end{bmatrix}}_{=\,K_U}
\,(K+n\lambda I_n)^{-1}(I_n-\mathbf 1_n w^\top)
\;+\;
\mathbf 1_N w^\top,
\]
where $(K_U)_{ij}=k(X_i,X_j)$ for $i=1,\ldots,N$ and $j\in S$, and $\mathbf 1_N\in\mathbb R^N$ is the all-ones vector.
\end{proposition}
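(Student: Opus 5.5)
We solve the penalized least-squares problem through its first-order conditions and then read off the smoother weights. Since the objective is convex in $(\beta,\alpha)$, it suffices to find a stationary point. Write the residual as $r=Y-K\beta-\alpha\mathbf 1_n$. The gradient with respect to $\beta$ gives $-\tfrac{2}{n}Kr+2\lambda K\beta=0$, that is, $K\{r-n\lambda\beta\}=0$. We look for the solution with $r=n\lambda\beta$; any other solution differs from it only by an element of $\ker K$, which changes neither $K\beta$ nor the fitted values. Substituting $r=n\lambda\beta$ into the definition of $r$ gives $(K+n\lambda I_n)\beta=Y-\alpha\mathbf 1_n$. Because $K\succeq0$ and $\lambda>0$, the matrix $K+n\lambda I_n$ is invertible, so $\hat\beta=(K+n\lambda I_n)^{-1}(Y-\alpha\mathbf 1_n)$. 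The gradient with respect to $\alpha$ gives $\mathbf 1_n^\top r=0$, which becomes $\mathbf 1_n^\top\hat\beta=0$, i.e. $\mathbf 1_n^\top(K+n\lambda I_n)^{-1}(Y-\alpha\mathbf 1_n)=0$. The denominator $\mathbf 1_n^\top(K+n\lambda I_n)^{-1}\mathbf 1_n$ is strictly positive, since the inverse is positive definite. Solving for $\alpha$ therefore yields the stated $\hat\alpha=w^\top Y$.

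Next we obtain the fitted values and the prediction weights. Plugging $\hat\alpha\mathbf 1_n=\mathbf 1_n w^\top Y$ into $\hat\beta$ gives $\hat\beta=(K+n\lambda I_n)^{-1}(I_n-\mathbf 1_n w^\top)Y$. Hence $\widehat m_S=K\hat\beta+\hat\alpha\mathbf 1_n=HY$ with $H$ as stated. For a new input $x$, $\widehat m(x)=k_x^\top\hat\beta+\hat\alpha$, which gives the displayed $s(x)^\top$ with $b(x)=0$. Stacking these rows over the training points reproduces $H$, and stacking them over the $N$ design points gives $S_U$. Mass preservation is immediate: $w^\top\mathbf 1_n=1$ by construction, so $(I_n-\mathbf 1_n w^\top)\mathbf 1_n=0$, and therefore $s(x)^\top\mathbf 1_n=0+w^\top\mathbf 1_n=1$ for every $x$.

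The main obstacle is the symmetry of $H$, because it is not visible from the displayed formula. Set $A=(K+n\lambda I_n)^{-1}$, which is symmetric, and $\gamma=\mathbf 1_n^\top A\mathbf 1_n$, so that $w=A\mathbf 1_n/\gamma$. Using $KA=I_n-n\lambda A$, we rewrite
\[
H=(I_n-n\lambda A)(I_n-\mathbf 1_n w^\top)+\mathbf 1_n w^\top = I_n-n\lambda A+n\lambda A\mathbf 1_n w^\top .
\]
The last term equals $n\lambda\,A\mathbf 1_n\mathbf 1_n^\top A/\gamma$, which is symmetric. Therefore $H=I_n-n\lambda\{A-A\mathbf 1_n\mathbf 1_n^\top A/\gamma\}$ is symmetric. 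This form is the identity I would verify first.

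As a consistency check, $H\mathbf 1_n=\mathbf 1_n-n\lambda\{A\mathbf 1_n-A\mathbf 1_n\}=\mathbf 1_n$, which agrees with mass preservation.
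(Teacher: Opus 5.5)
Your proposal is correct and follows essentially the same route as the paper: first-order conditions, the particular solution $(K+n\lambda I_n)\beta=Y-\alpha\mathbf 1_n$, solving the intercept equation to get $\hat\alpha=w^\top Y$, and proving symmetry of $H$ through the identity $K(K+n\lambda I_n)^{-1}=I_n-n\lambda(K+n\lambda I_n)^{-1}$. Two of your steps are slightly cleaner than the paper's: combining $\mathbf 1_n^\top r=0$ with $r=n\lambda\beta$ gives $\mathbf 1_n^\top\hat\beta=0$ directly, avoiding the paper's substitution, and your remark that other minimizers differ only by an element of $\ker K$ explicitly handles the non-uniqueness that the paper passes over as ``a convenient exact solution.''
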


\begin{proof}
Consider the loss function of kernel ridge regression \citep{cheng2024comprehensive} with an unpenalized intercept. Let
\[
\mathcal L(\beta,\alpha)
=\frac1n\big\|Y-(K\beta+\alpha\mathbf 1_n)\big\|_2^2+\lambda\,\beta^\top K\beta,
\qquad
r(\beta,\alpha):=Y-K\beta-\alpha\mathbf 1_n,
\]
so that
\(
\mathcal L(\beta,\alpha)=\frac1n\,r^\top r+\lambda\,\beta^\top K\beta.
\)

By the chain rule,
\[
\frac{\partial}{\partial\beta}\,(r^\top r)=2\Big(\frac{\partial r}{\partial\beta}\Big)^\top r
=2(-K)^\top r=-2K r,
\qquad
\frac{\partial}{\partial\alpha}\,(r^\top r)=2\Big(\frac{\partial r}{\partial\alpha}\Big)^\top r
=2(-\mathbf 1_n)^\top r,
\]
and
\[
\frac{\partial}{\partial\beta}\,(\beta^\top K\beta)=(K+K^\top)\beta=2K\beta.
\]
Therefore
\begin{align*}
\frac{\partial\mathcal L}{\partial\beta}
&=\frac1n(-2K)r+ \lambda(2K)\beta
= -\frac{2}{n}K\big(Y-K\beta-\alpha\mathbf 1_n\big)+2\lambda K\beta,\\
\frac{\partial\mathcal L}{\partial\alpha}
&=\frac1n(-2\mathbf 1_n^\top)r
= -\frac{2}{n}\,\mathbf 1_n^\top\big(Y-K\beta-\alpha\mathbf 1_n\big).
\end{align*}

Setting the gradients to zero and simplifying gives
\begin{align}
K\big(Y-K\beta-\alpha\mathbf 1_n\big)&=n\lambda\,K\beta, \label{eq:g1}\\
\mathbf 1_n^\top K\beta+n\alpha&=\mathbf 1_n^\top Y. \label{eq:g2}
\end{align}
For $\lambda>0$ and $K\succeq 0$, $\mathcal L$ is convex in $(\beta,\alpha)$, so
\eqref{eq:g1}--\eqref{eq:g2} characterize the set of minimizers.


From \eqref{eq:g1} a convenient exact solution is
\[
(K+n\lambda I_n)\beta=Y-\alpha\mathbf 1_n
\quad\Longrightarrow\quad
\beta=(K+n\lambda I_n)^{-1}(Y-\alpha\mathbf 1_n),
\]
because left–multiplying by $K$ recovers \eqref{eq:g1}. Substitute this into \eqref{eq:g2}:
\[
\mathbf 1_n^\top \underbrace{K (K+n\lambda I_n)^{-1}}_{\text{(A)}} (Y-\alpha\mathbf 1_n)+n\alpha=\mathbf 1_n^\top Y.
\]

At this point, we prove a useful identity to proceed on the computation for term (A). Start from
\[
K \;=\; (K+n\lambda I_n) - n\lambda I_n.
\]
Right–multiply by \((K+n\lambda I_n)^{-1}\) (which exists for \(\lambda>0\)):
\begin{align}
\nonumber
K(K+n\lambda I_n)^{-1}
&= \big[(K+n\lambda I_n) - n\lambda I_n\big](K+n\lambda I_n)^{-1}\\
\nonumber
&= (K+n\lambda I_n)(K+n\lambda I_n)^{-1} - n\lambda I_n (K+n\lambda I_n)^{-1}\\
\label{eq:kernel_equation}
&= I_n - n\lambda (K+n\lambda I_n)^{-1}.
\end{align}


Using the identity \eqref{eq:kernel_equation} in term (A) to get,
\begin{align*}
&\mathbf 1_n^\top \underbrace{[I_n-n\lambda (K+n\lambda I_n)^{-1}]}_{\text{(A)} \quad (\because\,\, \text{Identity}  \eqref{eq:kernel_equation})} (Y-\alpha\mathbf 1_n)+n\alpha
=
\mathbf 1_n^\top Y.
\end{align*}
After distribution, cancel \(\mathbf 1_n^\top Y\) on both sides and use \(-\alpha n+n\alpha=0\) to obtain
\begin{align*}
\cancel{\mathbf 1_n^\top Y}
- \cancel{\alpha n} 
-n\lambda\,\mathbf 1_n^\top (K+n\lambda I_n)^{-1}Y
+n\lambda\alpha\,\mathbf 1_n^\top (K+n\lambda I_n)^{-1}\mathbf 1_n
+
\cancel{n\alpha}
=
\cancel{\mathbf 1_n^\top Y},
\end{align*}
which is
\[
-n\lambda\,\mathbf 1_n^\top (K+n\lambda I_n)^{-1}Y
+n\lambda\alpha\,\mathbf 1_n^\top (K+n\lambda I_n)^{-1}\mathbf 1_n=0,
\]
or equivalently, 
\[
\alpha\big(\cancel{n\lambda} \,\mathbf 1_n^\top (K+n\lambda I_n)^{-1}\mathbf 1_n\big)
=\cancel{n\lambda} \,\mathbf 1_n^\top (K+n\lambda I_n)^{-1}Y.
\]
This derives the intercept and coefficients
\begin{align}
\label{eq:intercept_term_derived}
  \hat\alpha
&=\frac{\mathbf 1_n^\top (K+n\lambda I_n)^{-1}Y}
        {\mathbf 1_n^\top (K+n\lambda I_n)^{-1}\mathbf 1_n}=
        \underbrace{\left\{
        \frac{\mathbf 1_n^\top (K+n\lambda I_n)^{-1}}
        {\mathbf 1_n^\top (K+n\lambda I_n)^{-1}\mathbf 1_n}\right\}}_{w^\top}         Y,
\\
\hat\beta
&=(K+n\lambda I_n)^{-1}\big(Y-\hat\alpha\,\mathbf 1_n\big)
=
(K+n\lambda I_n)^{-1}\big(I_nY-\hat\alpha\,\mathbf 1_n\big).  
\nonumber
\end{align}

For the linear smoother form, considering the form of the equation, \eqref{eq:intercept_term_derived}, set
\[
w^\top=\frac{\mathbf 1_n^\top (K+n\lambda I_n)^{-1}}{\mathbf 1_n^\top (K+n\lambda I_n)^{-1}\mathbf 1_n} \quad \text{(it holds } w^\top\mathbf 1_n=1\,\text{)} \tag{$\star$},
\]
so the intercept and coefficients can be expressed as linear transformation of $Y$
\[
\begin{pmatrix}
\hat\alpha \\[6pt]
\hat\beta
\end{pmatrix}
=
\begin{pmatrix}
w^\top Y \\[6pt]
(K+n\lambda I_n)^{-1}\big(I_n-\mathbf 1_n w^\top\big)Y
\end{pmatrix}=
\underbrace{\begin{pmatrix}
w^\top \\[6pt]
(K+n\lambda I_n)^{-1}\!\big(I_n - \mathbf 1_n w^\top\big)
\end{pmatrix}}_{(n+1) \times n}
\underbrace{Y}_{n \times 1} \in \mathbb{R}^{n+1}.
\]
Then the vector of fitted values on the labeled (training) points,
\(
\widehat m_S := \big(\widehat m(X_j)\big)_{j\in S}\in\mathbb{R}^n,
\)
can be written as a linear smoother of \(Y\):
\begin{align*}
\widehat m_S
&=
\begin{pmatrix}
\mathbf 1_n \quad  K
\end{pmatrix}
\begin{pmatrix}
\hat\alpha \\[6pt]
\hat\beta
\end{pmatrix}
=
K\hat\beta+\hat\alpha\mathbf 1_n
=K(K+n\lambda I_n)^{-1}\big(I_n-\mathbf 1_n w^\top\big)Y+\mathbf 1_n w^\top Y\\
&=
\left\{K(K+n\lambda I_n)^{-1}\big(I_n-\mathbf 1_n w^\top\big)+\mathbf 1_n w^\top   \right\}Y\\
&=H\,Y,    
\end{align*}
with the (training-set) hat matrix $H \in \mathbb R^{n \times n}$ given by
\begin{align*}
    H=K(K+n\lambda I_n)^{-1}(I_n-\mathbf 1_n w^\top)+\mathbf 1_n w^\top.
\end{align*}

Now, we show the hat matrix $H$ is symmetric. Set $A:=K+n\lambda I_n$. Since $A$ is symmetric positive definite, $A^{-1}$ is symmetric. By \eqref{eq:kernel_equation}, we have the identity
\[
K A^{-1}=(A-n\lambda I_n)A^{-1}=I_n-n\lambda A^{-1}.
\]
Hence
\begin{align*}
H
&= KA^{-1}(I_n-\mathbf 1_n w^\top)+\mathbf 1_n w^\top\\
&=(I_n-n\lambda A^{-1})(I_n-\mathbf{1}_n w^\top)+\mathbf{1}_n w^\top \\
&= I_n-n\lambda A^{-1}-\mathbf{1}_n w^\top+n\lambda A^{-1}\mathbf{1}_n w^\top+\mathbf{1}_n w^\top \\
&= I_n-n\lambda A^{-1}+n\lambda A^{-1}\mathbf{1}_n w^\top .
\end{align*}
Taking transposes and using $A^{-1}=A^{-\top}$ gives
\[
H^\top= I_n-n\lambda A^{-1}+n\lambda\, w\,\mathbf{1}_n^\top A^{-1}.
\]
Let $b:=\mathbf{1}_n^\top A^{-1}\mathbf{1}_n>0$. By the definition of $w$ ($\star$), $A^{-1}\mathbf{1}_n=b\,w$ and $\mathbf{1}_n^\top A^{-1}=b\,w^\top$, so
\[
A^{-1}\mathbf{1}_n w^\top
= (b\,w) w^\top
= b\,w w^\top
= w (b\,w^\top)
= w\,\mathbf{1}_n^\top A^{-1}.
\]
Therefore the last terms in the expressions for $H$ and $H^\top$ coincide, and we conclude $H=H^\top$.

We verify mass preservation:
\begin{align*}
H\mathbf 1_n
&=
\left\{K(K+n\lambda I_n)^{-1}\big(I_n-\mathbf 1_n w^\top\big)+\mathbf 1_n w^\top   \right\}\mathbf 1_n\\
&=K(K+n\lambda I_n)^{-1}\big(\mathbf 1_n-\mathbf 1_n w^\top\mathbf 1_n\big)+\mathbf 1_n w^\top\mathbf 1_n\\
&=K(K+n\lambda I_n)^{-1}\cdot \mathbf 0+\mathbf 1_n\cdot 1
=\mathbf 1_n,    
\end{align*}
since $w^\top\mathbf 1_n=1$ ($\star$). Thus constants are reproduced exactly on the training set.

For a new input point $x$, we defined the kernel vector $k_x$ as kernel evaluations between $x$ and the labeled covariates
\[
k_x := 
\begin{pmatrix}
k(X_1,x) \\
k(X_2,x) \\
\vdots \\
k(X_n,x)
\end{pmatrix}
\in \mathbb{R}^n,
\]
so that its $j$th entry is $(k_x)_j = k(X_j,x)$. The same algebra used for the training fits, the fitted value at a new point $x$, $\widehat m(x) \in \mathbb R$, can be
written as a linear functional of $Y$:
\begin{align*}
\widehat m(x)
&=k_x^\top (K+n\lambda I_n)^{-1}(I_n-\mathbf 1_n w^\top)Y+w^\top Y\\
&=
\left\{k_x^\top (K+n\lambda I_n)^{-1}(I_n-\mathbf 1_n w^\top)+w^\top  \right\}Y \\
&=:s(x)^\top Y,
\end{align*}
where we define the (row) smoothing vector
\[
s(x)^\top
:= k_x^\top (K+n\lambda I_n)\!^{-1}\!\big(I_n-\mathbf 1_n w^\top\big) \;+\; w^\top
\;\in\; \mathbb R^{1\times n}.
\]
Equivalently,
\[
\widehat m(x) = 
\underbrace{s(x)^\top}_{1\times n}\,\underbrace{Y}_{n\times 1}.
\]

The sum of the weights equals one:
\[
s(x)^\top\mathbf 1_n
=k_x^\top (K+n\lambda I_n)^{-1}(I_n-\mathbf 1_n w^\top)\mathbf 1_n+w^\top \mathbf 1_n
=k_x^\top (K+n\lambda I_n)^{-1}\big(\mathbf 1_n-\mathbf 1_n\cdot 1\big)+1
=1.
\]
Hence the smoother vector $s(x)$ is mass–preserving pointwise as well.
\end{proof}


\begin{theorem}[Consistency and asymptotic normality of SF--PPI with variance correction (linear–smoother case)]
\label{thm:sfppi_mean_unified}
Let $(X,Y)\sim P$ with $X\in\mathcal X\subset\mathbb R^d$, $Y\in\mathcal Y\subset\mathbb R$, and $\mathbb E[Y^2]<\infty$.
Assume $Y=m_0(X)+\varepsilon$ with $\mathbb E[\varepsilon\mid X]=0$ and $\Var(\varepsilon|X)=\sigma^2$.
The target is $\theta_0:=\mathbb E[Y]=\mathbb E[m_0(X)]$.


Let \(\{X_i\}_{i=1}^N\) be an unlabeled sample drawn i.i.d.\ from \(P_X\), and let
\(\{(X_j,Y_j)\}_{j=1}^n\) be an independent labeled sample drawn i.i.d.\ from \(P\),
with \(n/N\to f\in(0,1)\). For notational simplicity, we write
\(S=\{1,\dots,n\}\) for the labeled sample indices. 

Let $\widehat m$ be a single–fit affine linear smoother trained on $S$:
\[
\widehat m(x)=s(x)^\top Y_S+b(x),\qquad s(x)=(s_j(x))_{j\in S}\in\mathbb R^n.
\]
Assume the smoother is \emph{mass–preserving (MP)}: $\sum_{j\in S}s_j(x)=1$ for all $x$, so
$\mathbf 1_n^\top a=\mathbf 1_n^\top\bar h=1$ and
$\|c\|_2^2=\frac{1}{n}+\|a-\bar h\|_2^2$ ($a$, $\bar h$, and $c$ defined in Proposition~\ref{prop:linear-smoother-notation}).
Define $S_U,H,a,\bar h,c$ and $\overline{\|s(X)\|_2^2}$ as in that proposition.
Consider the SF–PPI estimator
\[
\widehat\theta^{\mathrm{sf}}_{\mathrm{PPI}}
:= P_N\widehat m+P_n(Y-\widehat m)
= \frac{1}{N}\sum_{i=1}^N \widehat m(X_i)+\frac{1}{n}\sum_{j\in S}\{Y_j-\widehat m(X_j)\}.
\]

\noindent\textbf{Assumptions.}
\begin{enumerate}[label=\textup{A\arabic*}, leftmargin=3.2em]
\item \label{ass:A1} \emph{Predictor accuracy:} $\|\widehat m-m_0\|_{L_2(P_X)}=o_p(1)$.
\item \label{ass:A2} \emph{Smoother stability:} (i) $\max_{1\le i\le N}\|s(X_i)\|_2^2=O_p(1/n)$, (ii) $\max_{j\in S}|c_j|=o_p(N^{-1/2})$, and (iii) $\|a-\bar h\|_2^2=o_p(1/n)$ (equivalently, $N\|c\|_2^2 \to 1/f$ by MP).
\item \label{ass:A3} \emph{Offset regularity:}
      $(P_N-P_n)b=o_p(N^{-1/2})$ and $\|b\|_{L_2(P_X)}\to 0$.
\end{enumerate}

Then:

\smallskip
\noindent \textbf{\textup{(i) Consistency.}}
Under \((\ref{ass:A1})\)–\((\ref{ass:A3})\), \(\widehat\theta^{\mathrm{sf}}_{\mathrm{PPI}}\xrightarrow{p}\theta_0\).

\smallskip
\noindent \textbf{\textup{(ii) Asymptotic normality.}}
Under \((\ref{ass:A1})\)–\((\ref{ass:A3})\),
\[
\sqrt{N}\,(\,\widehat\theta^{\mathrm{sf}}_{\mathrm{PPI}}-\theta_0)
\ \xrightarrow{d}\ \mathcal N\!(0, \sigma_f^2 \,).
\]
where $\sigma_f^2=\Var(m_0(X))+\tfrac{1}{f} \sigma^2=\Var(m_0(X))+\tfrac{1}{f} \Var (Y - m_0(X))$.

\smallskip
\noindent \textbf{\textup{(iii) Variance correction and studentized CLT.} }
Let $\widehat\sigma^2\xrightarrow{p}\sigma^2$ be any consistent estimator and define
\[
\widehat{\Var}\!\big(\widehat\theta^{\mathrm{sf}}_{\mathrm{PPI}}\big)
:=\frac{1}{N}\Big\{\Var_N(\widehat m(X_1),\ldots,\widehat m(X_N))
-\widehat\sigma^2\,\overline{\|s(X)\|_2^2}\Big\}
+\widehat\sigma^2\,\|c\|_2^2.
\]
Under \((\ref{ass:A2})\)–\((\ref{ass:A3})\),
\(\widehat{\Var}(\widehat\theta^{\mathrm{sf}}_{\mathrm{PPI}})\xrightarrow{p}\sigma_f^2/N\) and
\[
\frac{\widehat\theta^{\mathrm{sf}}_{\mathrm{PPI}}-\theta_0}{\sqrt{\widehat{\Var}(\widehat\theta^{\mathrm{sf}}_{\mathrm{PPI}})}}
\ \xrightarrow{d}\ \mathcal N(0,1).
\]
\end{theorem}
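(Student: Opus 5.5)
The plan is to start from the exact linear representation in Proposition~\ref{prop:linear-smoother-notation}(d)--(e):
\[
\widehat\theta^{\mathrm{sf}}_{\mathrm{PPI}}=c^\top m_0(X_S)+c^\top\varepsilon_S+(P_N-P_n)b .
\]
Subtracting $\theta_0$ and mirroring the decomposition $(\star)$ used for CF--PPI gives
\[
\widehat\theta^{\mathrm{sf}}_{\mathrm{PPI}}-\theta_0=(P_N-P)m_0+(P_N-P_n)(\widetilde m-m_0)+c^\top\varepsilon_S+(P_N-P_n)b,
\]
where $\widetilde m(x):=s(x)^\top m_0(X_S)$ is the noiseless smoother. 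The point of this split is that the noise $\varepsilon_S$ now enters only through the explicit linear form $c^\top\varepsilon_S$. The weights $c$ depend only on covariates, so conditional on $\mathcal X:=\sigma(X_S,X_1,\dots,X_N)$ this term is a weighted sum of independent mean-zero variables with variance $\sigma^2\|c\|_2^2$. By mass preservation, $\mathbf 1_n^\top c=1$, so $c^\top m_0(X_S)-P_Nm_0=(P_N-P_n)(\widetilde m-m_0)$ after using $P_N\widetilde m=a^\top m_0(X_S)$ and $P_n\widetilde m=\bar h^\top m_0(X_S)$.

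For the leading terms, $\sqrt N(P_N-P)m_0\to\mathcal N(0,\Var m_0(X))$ by the i.i.d.\ CLT. For $c^\top\varepsilon_S$ I would apply the conditional Lindeberg--Feller CLT given $\mathcal X$. The conditional variance is $N\sigma^2\|c\|_2^2\to\sigma^2/f$ by A2(iii). The Lindeberg condition follows from A2(ii): $\max_j|c_j|/\|c\|_2\to0$ in probability, and together with homoscedasticity and $\mathbb E\varepsilon^2<\infty$ this forces $\sum_j c_j^2\,\mathbb E[\varepsilon_j^2\mathbf 1\{|c_j\varepsilon_j|>\eta\|c\|_2\}\mid X_j]\big/\|c\|_2^2\to0$. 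Since the conditional limit does not depend on $\mathcal X$, while $(P_N-P)m_0$ is $\mathcal X$-measurable, a characteristic-function argument yields joint convergence to independent normals. The offset term is $o_p(N^{-1/2})$ by A3.

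The main obstacle is the smoothing-bias remainder $\sqrt N(P_N-P_n)(\widetilde m-m_0)$. Here $\widetilde m$ was fit on the same $X_S$, so the cross-fitting argument (Lemma~\ref{lem:cf_external_sample_bound_n}) does not apply to $P_n$. I would first split it as $\sqrt N(P_N-P)(\widetilde m-m_0)-\sqrt N(P_n-P)(\widetilde m-m_0)$. The unlabeled piece is handled exactly as before: conditionally on $X_S$, the function $\widetilde m-m_0$ is fixed, so the piece is $O_p(\|\widetilde m-m_0\|_{L_2})$. This is $o_p(1)$ because $\widetilde m-m_0=(\widehat m-m_0)-s(\cdot)^\top\varepsilon_S-b$, with $\|s^\top\varepsilon_S\|_{L_2}^2=O_p(1/n)$ by A2(i), and A1 and A3 control the other two parts.

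For the labeled piece I would try to write $(P_N-P_n)\widetilde m=(a-\bar h)^\top m_0(X_S)$. By Cauchy--Schwarz, and after recentring $m_0(X_S)$ by a constant (harmless since $\mathbf 1^\top(a-\bar h)=0$), it is bounded by $\|a-\bar h\|_2\,\|m_0(X_S)-\bar m_0\mathbf 1\|_2=o_p(n^{-1/2})\cdot O_p(n^{1/2})$. That gives only $o_p(1)$, not $o_p(N^{-1/2})$. The fallback is to replace $m_0$ by $m_0-\widetilde m$ evaluated on the labeled points, using the KRR in-sample residual $(I-H)m_0(X_S)$ having norm $o_p(\sqrt n)$ when $\lambda_n\to0$. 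Then $(P_N-P_n)(\widetilde m-m_0)=a^\top m_0(X_S)-P_Nm_0-\tfrac1n\mathbf 1^\top(H-I)m_0(X_S)$, and the last term vanishes by the symmetry $H=H^\top$ together with $H\mathbf 1=\mathbf 1$, while the first difference is handled conditionally as before. I expect this step to require the most care. If it does not close, I will state it as an additional requirement implied by A1--A2. Consistency in (i) then follows because every term above is $o_p(1)$.

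For (iii), I would show $\Var_N(\widehat m(X_i))\to\Var(m_0(X))$ after subtracting the noise inflation $\sigma^2\overline{\|s(X)\|_2^2}$. Since $\widehat m(X_i)=\widetilde m(X_i)+s(X_i)^\top\varepsilon_S$, the cross term has conditional mean zero, and the quadratic term concentrates at $\sigma^2\overline{\|s\|^2}=O_p(1/n)$ by A2(i). This correction is asymptotically negligible but is retained, as in the statement. Finally, $\widehat\sigma^2\|c\|_2^2N\to\sigma^2/f$ by A2(iii), and Slutsky's theorem gives the studentized CLT. For Theorem~\ref{thm:sfppi_mean_krr}, I would verify these assumptions for KRR via Proposition~\ref{prop:KRR_MP}, including the consistency of $\widehat\sigma^2_{\mathrm{KRR}}$, using $\tr(H)=o(n)$ under $n\lambda_n\to\infty$.
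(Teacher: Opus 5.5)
\paragraph{Verdict: there is a genuine gap.} Your decomposition, the conditional Lindeberg step for $c^\top\varepsilon_S$, and the unlabeled half $\sqrt N(P_N-P)(\widetilde m-m_0)$ follow the paper's proof. Your Cauchy--Schwarz bound $(a-\bar h)^\top m_0(X_S)=o_p(1)$ already gives part (i). You have also located the crux correctly: $\widetilde m=s^\top m_0$ is built from $X_S$, so $\sqrt N(P_n-P)(\widetilde m-m_0)$ cannot be handled by conditioning.

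\paragraph{Why your fallback does not close it.} First, it uses $H=H^\top$. That is a KRR fact, not available for the general affine smoother of this theorem: A2(iii) gives only $\|a-\bar h\|_2^2=o_p(1/n)$, not $\bar h=\tfrac1n\mathbf 1_n$. Second, and more importantly, even when $\bar h=\tfrac1n\mathbf 1_n$ the argument is circular. In that case (with $b\equiv0$) the rectifier $P_n(Y-\widehat m)=\tfrac1n\mathbf 1_n^\top(I_n-H)Y_S$ vanishes identically. The surviving term $a^\top m_0(X_S)-P_Nm_0=P_N(\widetilde m-m_0)$ is not centered given $X_S$: its conditional mean is the integrated smoothing bias $P(\widetilde m-m_0)$. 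Because $P_n(\widetilde m-m_0)=\bar h^\top m_0(X_S)-P_nm_0=0$, that mean equals $-(P_n-P)(\widetilde m-m_0)$, which is exactly the quantity you set out to bound. A1 gives only $|P(\widetilde m-m_0)|\le\|\widetilde m-m_0\|_{L_2(P_X)}=o_p(1)$, not $o_p(N^{-1/2})$.

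\paragraph{The missing bound is not implied by A1--A2.} It must be added as a hypothesis, as the following example shows.
\begin{itemize}
\item Take $X\sim\mathrm{Unif}[0,1]$, $m_0(x)=x$, and $\varepsilon\sim\mathcal N(0,1)$ independent of $X$.
\item Let $s^{p}$ be the OLS smoother on $(1,x)$. Put $v_j:=\tfrac1n(u_j-\bar u)$ with $u_j:=\mathbf 1\{X_j>1/2\}$.
\item Define $s(X_j):=s^{p}(X_j)$ for $j\in S$, $s(x):=s^{p}(x)+n^{-1/4}v$ for every other $x$, and $b\equiv0$.
\end{itemize}
Since $\mathbf 1_n^\top v=0$, this smoother is mass-preserving, $H$ is the symmetric OLS hat matrix, and $c=a$. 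One checks A1, A2(i)--(iii) and A3; for instance $\max_j|c_j|=O_p(1/n)$ and $\|a-\bar h\|_2^2=O_p(n^{-3/2})=o_p(1/n)$. Yet
\[
\widehat\theta^{\mathrm{sf}}_{\mathrm{PPI}}=c^\top Y_S=P_N\widehat m^{p}+n^{-1/4}v^\top Y_S ,
\]
where $\widehat m^{p}(x):=s^{p}(x)^\top Y_S$ is the OLS fit and $P_N\widehat m^{p}$ is $\sqrt N$-consistent. Since $v^\top Y_S\xrightarrow{p}\Cov(\mathbf 1\{X>1/2\},X)=1/8$, the quantity $\sqrt N(\widehat\theta^{\mathrm{sf}}_{\mathrm{PPI}}-\theta_0)$ diverges.

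\paragraph{What you need instead.} Part (ii), and hence (iii), require an extra condition. Either assume $\sqrt N(P_N-P_n)(s^\top m_0-m_0)=o_p(1)$ outright, or impose a Donsker/asymptotic-equicontinuity condition on the random functions $s^\top m_0-m_0$ together with $\|s^\top m_0-m_0\|_{L_2(P_X)}\to0$. Whichever you choose must then be verified separately for KRR.

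\paragraph{The paper's route has the same hole.} The paper closes this step with the generic bound \eqref{eq:PNPn-generic-correct}. Its labeled half (``exactly the same argument'') treats $\{h(X_j)\}_{j\in S}$ as i.i.d.\ draws given $h$, which is false for $h=s^\top m_0-m_0$; this is precisely your objection. In the example above, $|\sqrt N(P_N-P_n)h|/\|h\|_{L_2(P_X)}=\sqrt N$. So following the paper at this point would not fill the gap either.
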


\begin{proof}
Throughout write \(Y=m_0(X)+\varepsilon\) with \(\mathbb E[\varepsilon\mid X]=0\) and
\(\sigma^2:=\mathbb E[\varepsilon^2]=\Var(Y-m_0(X))<\infty\). We often use empirical process notations; for any integrable \(h:\mathcal X\to\mathbb R\), set $P_N h=(1/N)\sum_{i=1}^N h(X_i),\,\,
P_n h=(1/n)\sum_{j\in S} h(X_j),\,\,
P h=\mathbb E\{h(X)\}.$ Reserve \(\mathcal X\subset\mathbb R^d\) for the covariate space.
Let \(X_{1:N}:=(X_1,\ldots,X_N)\) denote the full list of population covariates and
\(X_S:=(X_j)_{j\in S}\) the labeled sublist.
Write \(\mathscr{X}_N:=\sigma(X_{1:N})\).
Expectations/variances “given \(\mathscr{X}_N\)” treat \(\varepsilon_S\) as the only source of randomness.

\paragraph{Smoother stability assumption (A2).} As for \ref{ass:A2}-(i), we have
\[
\overline{\|s(X)\|_2^2}
:= \frac1N\sum_{i=1}^N\|s(X_i)\|_2^2
\;\le\; \max_{1\le i\le N}\|s(X_i)\|_2^2
= O_p(n^{-1}).
\]

Furtherfmore, under mass preservation, we have $\|c\|_2^2 = \frac1n + \|a-\bar h\|_2^2$ (by Proposition~\ref{prop:linear-smoother-notation}-(f)), hence, using $N/n\to 1/f$, we have
\begin{align}
\label{eq:c_limit_behavior}
N\|c\|_2^2 \to \frac1f
\;\Longleftrightarrow\;
N\|a-\bar h\|_2^2 \to 0
\;\Longleftrightarrow\;
\|a-\bar h\|_2^2 = o_p(N^{-1})
\;\Longleftrightarrow\;
\|a-\bar h\|_2^2 = o_p(n^{-1}),
\end{align}
where $\Longleftrightarrow$ implies the equivalence. Therefore, we have $\|c\|_2^{2}
= \frac{1}{n} + \|a-\bar h\|_2^{2}
= \frac{1}{n} + o_p\!\left(\frac{1}{n}\right)$, which leads to $\frac{\|c\|_2^{2}}{1/n}
= 1 + n\|a-\bar h\|_2^{2}
\;\xrightarrow{p}\; 1.$ Thus, $\|c\|_2^{2}
= \frac{1}{n}\{1+o_p(1)\},$ or equivalently, $\frac{\|c\|_2^{2}}{1/n}\;\xrightarrow{p}\;1$, which means that \(\|c\|_2^{2}\) is asymptotically equivalent to \(1/n\).

\paragraph{Initial three-term decomposition of $\hat\theta^{\sf sf}_{\rm PPI}-\theta_0$.}
Note that the difference between the single–fit PPI estimator and the true parameter, 
$\widehat\theta^{\sf sf}_{\rm PPI}-\theta_0$, can be decomposed as
\begin{align}
\widehat\theta^{\sf sf}_{\rm PPI}-\theta_0
&= P_N\widehat m+P_n(Y-\widehat m) - P m_0 \nonumber\\
&=(P_N-P)\,m_0 + (P_n-P)\,(Y-m_0) + (P_N-P_n)\,(\widehat m-m_0)
\tag{$\star$}\\
&= \underbrace{(P_N-P)m_0}_{\text{unlabeled fluctuation}}
 \;+\; \underbrace{P_n\varepsilon}_{\text{labeled noise}}
 \;+\; \underbrace{(P_N-P_n)(\widehat m-m_0)}_{\text{remainder from learning }\widehat m}
 \qquad\text{(since $Y=m_0+\varepsilon$)}, 
\label{eq:basic-identity}
\end{align}
where decomposition ($\star$) has been derived in the proof of Theorem~\ref{thm:cfppi_mean_consistency}.

\paragraph{Remark -- remainder term of cross-fit PPI estimator.}
For the cross-fit PPI estimator, the predictor evaluated on labeled points is trained on a disjoint fold; denote the corresponding out-of-fold predictor by $\widehat m^{(-)}$. Then
\[
(P_N-P_n)\big(\widehat m^{(-)}-m_0\big)
= O_p\!\left(\|\widehat m^{(-)}-m_0\|_{L_2(P_X)}\left(\tfrac{1}{\sqrt{N}}+\tfrac{1}{\sqrt{n}}\right)\right),
\]
and consequently
\[
\sqrt{N}\,(P_N-P_n)\big(\widehat m^{(-)}-m_0\big)
= O_p\!\left(\|\widehat m^{(-)}-m_0\|_{L_2(P_X)}\right).
\]
Hence the remainder $\sqrt{N}(P_N-P_n)(\widehat m^{(-)}-m_0)$ is $O_p(1)$ if 
$\|\widehat m^{(-)}-m_0\|_{L_2(P_X)} = O_p(1)$, used for consistency (Theorem \ref{thm:cfppi_mean_consistency}), and it is $o_p(1)$ if $\|\widehat m^{(-)}-m_0\|_{L_2(P_X)}=o_p(1)$ used for asymptotic normality (Theorem \ref{thm:cfppi_mean_clt}).

\paragraph{Three-term decomposition of remainder term $(P_N-P_n)(\widehat m-m_0)$ in $\hat\theta^{\sf sf}_{\rm PPI}-\theta_0$.} Note that, we consider the fitted predictor  \(\widehat m\) expressed as an affine linear smoother trained on \(S\): $\widehat m(x)=s(x)^\top Y_S+b(x),$ and $Y_S=m_0(X_S)+\varepsilon_S$.

Then, for each $x\in\mathcal X$,
\begin{align*}
\widehat m(x)-m_0(x)
&= s(x)^\top\big(m_0(X_S)+\varepsilon_S\big) + b(x) - m_0(x)\\
&=\underbrace{\big(s(x)^\top m_0(X_S)-m_0(x)\big)}_{\text{bias transport}}
\;+\;\underbrace{s(x)^\top \varepsilon_S}_{\text{labeled-noise leakage}}
\;+\; \underbrace{b(x)}_{\text{offset}}.
\end{align*}
For brevity, we use the function notation
\[
s^\top m_0:\ x\mapsto s(x)^\top m_0(X_S),\qquad
s^\top \varepsilon_S:\ x\mapsto s(x)^\top \varepsilon_S,
\]
so that, as a function of $x$,
\[
\widehat m - m_0
=\underbrace{(s^\top m_0 - m_0)}_{\text{bias transport}}
\;+\;\underbrace{s^\top\varepsilon_S}_{\text{labeled-noise leakage}}
\;+\; \underbrace{b}_{\text{offset}}.
\]
Insert this into the remainder of \eqref{eq:basic-identity}:
\begin{equation}\label{eq:R-decomp}
(P_N-P_n)(\widehat m-m_0)
=(P_N-P_n)(s^\top m_0-m_0)+(P_N-P_n)(s^\top\varepsilon_S)+(P_N-P_n)b.
\end{equation}

We decomposed the remainder into three terms, each with a different role: (i). $(P_N-P_n)(s^\top m_0-m_0)$ is a \emph{bias–transport} piece (covariate–only); (ii). $(P_N-P_n)(s^\top\varepsilon_S)$ is the \emph{labeled–noise leakage}; and (iii). $(P_N-P_n)b$ is the \emph{offset} piece (covariate–only).

\paragraph{Final three–term decomposition of $\hat\theta^{\sf sf}_{\rm PPI}-\theta_0$ via regrouping.} By inserting remainder term (\ref{eq:R-decomp}) to the decomposition (\ref{eq:basic-identity}), we can now decompose the difference $\hat\theta^{\sf sf}_{\rm PPI}-\theta_0$ into three terms
\begin{align}
&\hat\theta^{\sf sf}_{\rm PPI}-\theta_0
= 
(P_N-P)m_0 + 
P_n\varepsilon +
(P_N-P_n)(\widehat m-m_0)
 \nonumber
 \\
 &= (P_N-P)m_0
 + P_n\varepsilon
 +(P_N-P_n)(s^\top m_0-m_0)+(P_N-P_n)(s^\top\varepsilon_S)+(P_N-P_n)b
 \label{eq:five-decomp_before}
 \\
 &=\underbrace{(P_N-P)m_0}_{\substack{\text{(U) unlabeled}\\\text{fluctuation}}}
\;+\;
\underbrace{\tfrac{1}{n}\mathbf 1_n^\top\varepsilon_S}_{\substack{\text{(L) raw}\\\text{labeled noise}}}
\;+\;
\underbrace{(P_N-P_n)\!\big(s^\top m_0-m_0\big)}_{\substack{\text{(T) bias}\\\text{transport}}}
\;+\;
\underbrace{(a-\bar h)^\top\varepsilon_S}_{\substack{\text{(R) noise}\\\text{leakage}}}
\;+\;
\underbrace{(P_N-P_n)b}_{\substack{\text{(B)}\\\text{offset}}}.\label{eq:five-decomp}\\
&=\underbrace{(P_N-P)m_0}_{\substack{\text{(U) unlabeled}\\\text{fluctuation}}}
\;+\;
\underbrace{\left(\tfrac{1}{n}\mathbf 1_n + (a-\bar h)\right)^\top \varepsilon_S}_{\substack{\text{(L) + (R) }\\\text{raw labeled noise + noise leakage}}}
\;+\;
\underbrace{(P_N-P_n)\!\big(s^\top m_0-m_0\big)}_{\substack{\text{(T) bias}\\\text{transport}}}
\;+\;
\underbrace{(P_N-P_n)b}_{\substack{\text{(B)}\\\text{offset}}}.\label{eq:four-decomp}\\
&=\underbrace{(P_N-P)m_0}_{\substack{\text{(U) unlabeled}\\\text{fluctuation} \\\text{$=:U_N$} }}
\;+\;
\underbrace{c^\top \varepsilon_S}_{\substack{\text{(N) }\\\text{label-noise leakage}\\ \text{$=:L_n$}}}
\;+\;
\underbrace{\underbrace{(P_N-P_n)\!\big(s^\top m_0-m_0\big)}_{\substack{\text{(T) bias}\\\text{transport}}}
\;+\;
\underbrace{(P_N-P_n)b}_{\substack{\text{(B)}\\\text{offset}}}.\label{eq:four-decomp_final}}_{\text{$=:R_{N,n}$}}
\end{align}
Here, from \eqref{eq:five-decomp_before} to \eqref{eq:five-decomp}, we used
$P_n\varepsilon
= \frac{1}{n}\sum_{j\in S}\varepsilon_j
= \tfrac{1}{n}\mathbf 1_n^\top \varepsilon_S$
to obtain term (L) and
\[
(P_N-P_n)\big(s^\top\varepsilon_S\big)
= P_N\big(s^\top\varepsilon_S\big)-P_n\big(s^\top\varepsilon_S\big) 
= \Big(\tfrac{1}{N}S_U^\top \mathbf 1_N\Big)^\top \varepsilon_S
   - \Big(\tfrac{1}{n}H^\top \mathbf 1_n\Big)^\top \varepsilon_S 
= (a-\bar h)^\top \varepsilon_S
\]
to obtain term (R), where $a:=N^{-1}S_U^\top\mathbf 1_N$ and $\bar h:=n^{-1}H^\top\mathbf 1_n$
with $S_U,H$ as in Proposition~\ref{prop:linear-smoother-notation}.
From \eqref{eq:five-decomp} to \eqref{eq:four-decomp}, we grouped the two noise terms (L) and (R),
and from \eqref{eq:four-decomp} to \eqref{eq:four-decomp_final}
we introduced $c:=a+\tfrac1n\mathbf 1_n-\bar h$.

Finally, we denote the three terms in \eqref{eq:four-decomp_final} by
\[
U_N := (P_N-P)m_0,\qquad
L_n := c^\top\varepsilon_S,\qquad
R_{N,n} := (P_N-P_n)(s^\top m_0 - m_0) + (P_N-P_n)b,
\]
which yields the final three-term decomposition
\begin{equation}\label{eq:UNLN}
\widehat\theta^{\sf sf}_{\rm PPI}-\theta_0 \;=\; U_N + L_n + R_{N,n}.
\end{equation}

\paragraph{Contrast with decomposition used for asymptotic behavior of CF-PPI.} Before proceeding with the main proof, we briefly outline the idea by contrasting it with CF-PPI decomposition used in the proofs of consistency/CLT for CF–PPI
(Theorems~\ref{thm:cfppi_mean_consistency}–\ref{thm:cfppi_mean_clt}). 

In the SF--PPI decomposition
\eqref{eq:five-decomp_before}--\eqref{eq:four-decomp_final}, the structure differs from CF-PPI decomposition (i.e., ($\star$) in the proof of Theorem \ref{thm:cfppi_mean_consistency}). The
final decomposition \eqref{eq:four-decomp_final} yields three principal components: (i) the \(U_N\) term---the
unlabeled fluctuation \((P_N-P)m_0\), a fixed--function empirical--process term of order \(O_p(N^{-1/2})\);
(ii) the \(L_n\) term---the label--noise leakage (terms (L) and (R) regrouped into \(L_n\)), arising because
the labeled noise \(\varepsilon_S\) appears both in fitting \(\widehat m\) and in evaluating residuals; and
(iii) the \(R_{N,n}\) term---the bias--transport piece \((P_N-P_n)(s^\top m_0 - m_0)\), which resembles a
standard residual fluctuation but is now entangled with the fitted rule, together with an offset
\((P_N-P_n)b\) that is often negligible (e.g., for KRR with an unpenalized intercept,
\((P_N-P_n)b=0\)).


One should note that the decomposition \eqref{eq:four-decomp_final} was obtained by
first expanding the remainder $(P_N-P_n)(\widehat m-m_0)$ and then
\emph{regrouping all terms that depend on the labeled noise} $\varepsilon_S$.
Indeed,
\[
P_n\varepsilon\;+\;(P_N-P_n)(s^\top\varepsilon_S)
=\Big(\tfrac{1}{n}\mathbf 1_n^\top+(a-\bar h)^\top\Big)\varepsilon_S
= c^\top\varepsilon_S,
\]
so the only randomness coming from the label noise (i.e., the residuals 
$\varepsilon_S = (Y_j - m_0(X_j))_{j \in S}$) is isolated in the single linear form $c^\top \varepsilon_S$. This step is essential in the single–fit setting: because \(\widehat m\) is trained on the
same labeled sample, the remainder \((P_N-P_n)(\widehat m - m_0)\) shares the label noise and,
after regrouping, contributes at the \(\sqrt N\)-scale via the “leakage’’ term
\(c^\top \varepsilon_S\), with
\[
\Var\!\big(\sqrt N\,c^\top \varepsilon_S \mid \mathcal X_N\big)
  \longrightarrow \frac{\sigma^2}{f}.
\]
The appearance of the noise variance \(\sigma^2\) in the first–order limit suggests that a variance correction is required for valid and efficient inference (rigorous details follow).

By contrast, in CF–PPI the cross-fitted predictor $\widehat m^{(-)}$ used on the labeled points is trained on disjoint folds, so $(P_N-P_n)(\widehat m^{(-)}-m_0)$ is (measurably) a function of the covariates only.  Consequently, no leakage term appears and the simpler three–term decomposition (unlabeled fluctuation $+$ raw labeled noise $+$ negligible remainder) suffices for the CF–PPI proofs. Refer to proof of Theoerem \ref{thm:cfppi_mean_consistency} for more detail.

We are now ready to proceed to the main part of the proofs:
\paragraph{Term (U): unlabeled fluctuation.}
Consider 
\[
U_N=(P_N-P)m_0=\frac1N\sum_{i=1}^N\{m_0(X_i)-\theta_0\},\qquad
\theta_0=\mathbb E[m_0(X)].
\]
Let \(Z_i:=m_0(X_i)-\theta_0\). Then \((Z_i)_{i=1}^N\) are i.i.d. with
\(\mathbb E[Z_i]=0\) and
\[
\Var(Z_i)=\Var\!\big(m_0(X)\big)<\infty,
\]
since $\mathbb E\!\big[m_0(X)^2\big]
=\mathbb E\!\big[(\mathbb E[Y\mid X])^2\big]
\le \mathbb E\!\big[\mathbb E(Y^2\mid X)\big]
=\mathbb E[Y^2]<\infty$. 

Hence, by the classical i.i.d.\ CLT,
\begin{align}
\label{eq:term_U_N}
\sqrt{N}\,U_N
=\frac{1}{\sqrt N}\sum_{i=1}^N Z_i
\ \xrightarrow{d}\ \mathcal N\!\big(0,\Var(m_0(X))\big),    
\end{align}
so that \(\mathbb E[U_N]=0\) and \(\Var(U_N)=\Var(m_0(X))/N\).
This step uses only i.i.d.\ sampling of \(X_i\) and the finite second moment of \(Y\),
and does not rely on Assumptions~\((\ref{ass:A1})\)–\((\ref{ass:A3})\) nor on mass preservation.

\paragraph{Term (L): label–noise leakage.}
Consider 
$$L_n=c^\top\varepsilon_S=\sum_{j\in S} c_j\,\varepsilon_j,$$
where the weights are
\[
c_j \;=\; a_j+\frac{1}{n}-\bar h_j
=\; \frac{1}{N}\sum_{i=1}^N s_j(X_i)\;+\;\frac{1}{n}
\;-\;\frac{1}{n}\sum_{i=1}^n s_j(X_i),
\]
since $a:=\frac{1}{N}S_U^\top\mathbf 1_N$ and $\bar h:=\frac{1}{n}H^\top\mathbf 1_n$.

Thus \(c=(c_j)_{j\in S}\) depends only on the covariates (and the realized labeled
index set \(S\)), and is measurable with respect to \(\mathscr X_N:=\sigma(X_{1:N})\).


Define $w_{N,j}:=\sqrt{N}\,c_j$. Then
\[
\sqrt{N}\,L_n=\sqrt{N}\,c^\top\varepsilon_S=\sum_{j\in S}\sqrt{N}\,c_j\,\varepsilon_j
=\sum_{j\in S} w_{N,j}\,\varepsilon_j .
\]
Hence, conditioning on $\mathscr X_N:=\sigma(X_{1:N})$ and using independence of
$\{\varepsilon_j\}_{j\in S}$ across $j$,
\begin{align}
\label{eq:v_N^2}
v_N^2
&:=\Var(\sqrt{N}\,L_n\mid \mathscr X_N)
= \sum_{j\in S} w_{N,j}^2 \Var(\varepsilon_j\mid \mathscr X_N) =\sigma^2 \displaystyle\sum_{j\in S} w_{N,j}^2 = \sigma^2 N\|c\|_2^2
\end{align}

By Proposition~\ref{prop:linear-smoother-notation}(f) (mass preservation) and Assumption~\ref{ass:A2}, we already show that it holds
\[
\|c\|_2^2=\frac{1}{n}+\|a-\bar h\|_2^2=\frac{1}{n}+o_p\!\left(\frac{1}{n}\right).
\]
Multiplying by \(N\) and using \(N/n \to 1/f\) together with
\(n\|a-\bar h\|_2^2 \xrightarrow{p}0\) (i.e., $\|a-\bar h\|_2^2 = o_p(1/n)$), we obtain $N\|c\|_2^2
= \frac{N}{n} + N\|a-\bar h\|_2^2
= \frac{N}{n} + \frac{N}{n}\,\big(n\|a-\bar h\|_2^2\big)
\xrightarrow{p} \frac{1}{f}.$ 

Therefore, we have
$$v_N^2 \xrightarrow{p} \frac{\sigma^2}{f}.$$


\paragraph{Term (L): label–noise leakage — CLT via Lindeberg.} The variance calculation $v_N^2 =\Var(\sqrt{N}\,c^\top\varepsilon_S\mid\mathcal X_N)=\Var(\sqrt{N}\,L_n\mid \mathscr X_N)=\sigma^2 N\|c\|_2^2\to\sigma^2/f$ identifies the scale of the label–noise contribution, but variance
convergence alone does not imply asymptotic normality. To upgrade to a
Gaussian limit we invoke a conditional Lindeberg–Feller CLT for triangular
arrays.

We first state a conditional Lindeberg–Feller CLT \citep{lindeberg1922neue} for triangular arrays.

\begin{theorem}[Conditional Lindeberg–Feller CLT]
\label{theorem:cond-lind}
For each $N$, let $\{Z_{N,j}\}_{j=1}^{n}$ be random variables that are conditionally independent on a $\sigma$–field
$\mathscr G_N$. Assume
\[
\mathbb E[Z_{N,j}\mid \mathscr G_N]=0,
\qquad
\Var(Z_{N,j}\mid \mathscr G_N)=\sigma_{N,j}^2 \in [0,\infty),
\]
and write $s_N^2:=\sum_{j=1}^{n}\sigma_{N,j}^2$.
If for some $s^2\in(0,\infty)$ we have
\[
s_N^2 \xrightarrow{p} s^2,
\]
and, for every $\tau>0$,
\[
L_N(\tau)
:=\frac{1}{s_N^2}\sum_{j=1}^{n}
\mathbb E\!\left[ Z_{N,j}^2\,\mathbf 1\{|Z_{N,j}|>\tau\}\,\middle|\,\mathscr G_N\right]
\xrightarrow{p} 0,
\]
then
\[
\frac{1}{s_N}\sum_{j=1}^{n} Z_{N,j}
\ \xrightarrow{d}\ \mathcal N(0,1)
\quad\text{conditionally on }\mathscr G_N\ \text{(in probability)}.
\]
Equivalently, unconditionally,
\(
\sum_{j=1}^{n} Z_{N,j} \xrightarrow{d} \mathcal N(0,s^2).
\)
\end{theorem}

\medskip
We now apply Theorem~\ref{theorem:cond-lind} to the leakage term
\(L_n=c^\top\varepsilon_S=\sum_{j\in S} c_j\varepsilon_j\).
Let the conditioning $\sigma$–field be $\mathscr G_N:=\mathscr X_N=\sigma(X_{1:N})$, and define
\[
Z_{N,j}:=w_{N,j}\,\varepsilon_j,
\qquad
w_{N,j}:=\sqrt N\,c_j.
\]
Given $\mathscr X_N$, the $\{\varepsilon_j:j\in S\}$ are i.i.d.\ with
$\mathbb E[\varepsilon_j\mid\mathscr X_N]=0$ and
$\Var(\varepsilon_j\mid\mathscr X_N)=\sigma^2$, hence
\[
\mathbb E[Z_{N,j}\mid \mathscr X_N]=0,
\qquad
\Var(Z_{N,j}\mid \mathscr X_N)=\sigma^2 w_{N,j}^2 .
\]
Therefore the conditional variance of the sum is
\[
s_N^2
=\sum_{j\in S}\sigma^2 w_{N,j}^2
= \sigma^2\sum_{j\in S} N c_j^2
= \sigma^2 N\|c\|_2^2
=: v_N^2 .
\]
By mass preservation and Assumption~\ref{ass:A2},
\(
\|c\|_2^2=\tfrac1n+\|a-\bar h\|_2^2
=\tfrac1n+o_p(1/n)
\),
so \(N\|c\|_2^2\to_p 1/f\) and hence
\[
s_N^2=v_N^2 \ \xrightarrow{p}\ \sigma^2/f.
\]

\paragraph{Term (L): label–noise leakage — Lindeberg condition verification.}
Recall $Z_{N,j}=w_{N,j}\varepsilon_j$ and
$v_N^2=\Var\!\big(\sum_{j\in S}Z_{N,j}\,\big|\,\mathscr X_N\big)
=\sigma^2\sum_{\ell\in S} w_{N,\ell}^2$.
For fixed $\tau>0$, set
\[
L_N(\tau)
:=\frac{1}{v_N^2}\sum_{j\in S}
\mathbb E\!\Big[Z_{N,j}^2\,\mathbf 1\{|Z_{N,j}|>\tau\}\,\Big|\,\mathscr X_N\Big].
\]
Then
\begin{align*}
L_N(\tau)
&=\frac{1}{\sigma^2\sum_{\ell}w_{N,\ell}^2}
\sum_{j\in S}\mathbb E\!\Big[(w_{N,j}\varepsilon_j)^2\mathbf 1\{|w_{N,j}\varepsilon_j|>\tau\}\Big]\\
&=\sum_{j\in S}\pi_{N,j}\,
\frac{\mathbb E\!\big[\varepsilon^2\mathbf 1\{|\varepsilon|>\tau/|w_{N,j}|\}\big]}{\sigma^2},
\qquad
\pi_{N,j}:=\frac{w_{N,j}^2}{\sum_\ell w_{N,\ell}^2}.
\end{align*}
Let $W_N:=\max_{j\in S}|w_{N,j}|$ and define $\phi(t):=\sigma^{-2}
\mathbb E\!\big[\varepsilon^2\,\mathbf 1\{|\varepsilon|>\tau/t\}\big]$ for $t>0$
(with $\phi(0):=0$).
Since $|w_{N,j}|\le W_N$,
\[
\mathbf 1\{|\varepsilon|>\tau/|w_{N,j}|\}\ \le\
\mathbf 1\{|\varepsilon|>\tau/W_N\},
\]
hence
\begin{align}
\nonumber
0\ \le\ L_N(\tau)
&=\sum_{j \in S}\pi_{N,j}\,\frac{\mathbb E\!\big[\varepsilon^2\mathbf 1\{|\varepsilon|>\tau/|w_{N,j}|\}\big]}{\sigma^2}
\le \sum_{j \in S}\pi_{N,j}\,\frac{\mathbb E\!\big[\varepsilon^2\mathbf 1\{|\varepsilon|>\tau/W_N\}\big]}{\sigma^2}\\
&=\,\frac{\mathbb E\!\big[\varepsilon^2\mathbf 1\{|\varepsilon|>\tau/W_N\}\big]}{\sigma^2} \cdot\sum_{j \in S}\pi_{N,j}
= \phi(W_N).
\tag{$\ast$}    
\end{align}
Because $\mathbb E[\varepsilon^2]<\infty$, we have $\phi(t)\downarrow 0$ as $t\downarrow 0$
(by monotone/dominated convergence). By Assumption~\ref{ass:A2},
$W_N=\max_j|w_{N,j}|=\sqrt N\max_j|c_j|\xrightarrow{p}0$,
so the right-hand side of \((\ast)\) satisfies $\phi(W_N)\xrightarrow{p}0$.
Therefore $L_N(\tau)\xrightarrow{p}0$ for every $\tau>0$, i.e.,
\[
\frac{1}{v_N^2}\sum_{j\in S}
\mathbb E\!\Big[Z_{N,j}^2\,\mathbf 1\{|Z_{N,j}|>\tau\}\,\Big|\,\mathscr X_N\Big]\ \xrightarrow{p}\ 0,
\]
which is the conditional Lindeberg condition.

Now, we can apply Theorem~\ref{theorem:cond-lind}, which yields
\[
\frac{1}{v_N}\sum_{j\in S} Z_{N,j}
=\frac{\sqrt N\,L_n}{\sqrt{\sigma^2 N\|c\|_2^2}}
\ \xrightarrow{d}\ \mathcal N(0,1)
\qquad\text{conditionally on } \mathscr X_N.
\]
Finally, since $v_N^2=\sigma^2 N\|c\|_2^2 \xrightarrow{p}\sigma^2/f$,
Slutsky’s theorem yields the unconditional limit
\begin{equation}\label{eq:N-CLT}
\sqrt N\,L_n =\sqrt N\,c^\top\varepsilon_S = \sum_{j\in S} w_{N,j}\varepsilon_j \ \xrightarrow{d}\ \mathcal N\!\Big(0,\frac{\sigma^2}{f}\Big).
\end{equation}


\paragraph{Term (R): remainder.} The remaining part is to show $\sqrt N\,R_{N,n}=o_p(1)$. Recall
$R_{N,n}=(P_N-P_n)(s^\top m_0-m_0)+(P_N-P_n)b$. Note that both
$s^\top m_0-m_0$ and $b$ are \emph{functions of the covariates only}
(measurable w.r.t.\ $\mathscr X_N$). 

We first state a proposition for a generic bound for $(P_N-P_n)h$ for any square–integrable function $h\in L_2(P_X)$,
\[
\sqrt N\,(P_N-P_n)h=\underbrace{\sqrt N\,(P_N-P)h}_{(I)}
-\underbrace{\sqrt N\,(P_n-P)h}_{(II)}.
\]

Fix any $h\in L_2(P_X)$ that is measurable w.r.t.\ the training sample.
Conditionally on $h$, the variables $\{h(X_i)\}_{i=1}^N$ are i.i.d.\ with
mean $Ph:=\E[h(X)]$ and variance $\Var(h(X))$.
Hence $\mathbb E\!\big[\sqrt N\,(P_N-P)h\mid h\big]=0$ and $\Var\!\big(\sqrt N\,(P_N-P)h\mid h\big)=\Var\!\big(h(X)\big).$

Since $\Var(h(X))=\mathbb E[h(X)^2]-(Ph)^2\le \mathbb E[h(X)^2]=\|h\|_{L_2(P_X)}^2$, we get
\[
\mathbb E\!\big[(\sqrt N\,(P_N-P)h)^2\mid h\big]
=\Var\!\big(\sqrt N\,(P_N-P)h\mid h\big)
\le \|h\|_{L_2(P_X)}^2.
\]

Now apply Chebyshev’s inequality (for any $t>0$),
\[
\mathbb P\!\Big(\big|\sqrt N\,(P_N-P)h\big|>t\,\|h\|_{L_2(P_X)}\,\Big|\,h\Big)
\;\le\; \frac{\mathbb E[(\sqrt N\,(P_N-P)h)^2\mid h]}{t^2\|h\|_{L_2(P_X)}^2}
\;\le\; \frac{1}{t^2}.
\]
Taking expectations in $h$ yields the unconditional bound
\[
\mathbb P \!\Big(\big|\sqrt N\,(P_N-P)h\big|>t\,\|h\|_{L_2}\Big)\le \frac{1}{t^2},
\]
so $\sqrt N\,(P_N-P)h=O_p(\|h\|_{L_2(P_X)})$.

Exactly the same argument with the labeled average gives
\[
\sqrt n\,(P_n-P)h=O_p(\|h\|_{L_2(P_X)}).
\]
Therefore,
\begin{align*}
\sqrt N\,(P_N-P_n)h
&= \sqrt N\,(P_N-P)h - \sqrt N\,(P_n-P)h\\
&= O_p(\|h\|_{L_2(P_X)}) - \sqrt{\tfrac{N}{n}}\;O_p(\|h\|_{L_2(P_X)})
= O_p(\|h\|_{L_2(P_X)}),    
\end{align*}
because $N/n\to 1/f\,\,\, (\text{a finite constant})$ is bounded (i.e., $1/f = O(1)$). Consequently,
\begin{equation}\label{eq:PNPn-generic-correct}
\sqrt N\,(P_N-P_n)h \;=\; O_p\!\big(\|h\|_{L_2(P_X)}\big).
\end{equation}

Now, we go back to our main proof. We first want to control $\|s^\top m_0-m_0\|_{L_2}$ (write $L_2 = L_2(P_X)$ for notational simplicity). Recall that, using the affine smoother representation $\widehat m=s^\top Y_S+b$ and $Y_S=m_0(X_S)+\varepsilon_S$, $\widehat m - m_0 =(s^\top m_0-m_0)\;+\;s^\top\varepsilon_S\;+\;b,$
hence, by the triangle inequality,
\begin{equation}\label{eq:bias-transport-L2}
\|s^\top m_0-m_0\|_{L_2}
\ \le\ \|\widehat m-m_0\|_{L_2}
      +\|s^\top\varepsilon_S\|_{L_2}
      +\|b\|_{L_2}.
\end{equation}
By Assumption~\ref{ass:A1}, $\|\widehat m-m_0\|_{L_2}=o_p(1)$.
By Assumption~\ref{ass:A3}, $\|b\|_{L_2}\to 0$.

Note that the following equality holds:
\begin{align}
\nonumber
\big\|s^\top \varepsilon_S\big\|_{L_2(P_N)}^{2} &= P_N\!\left\{ \big( s(X)^\top \varepsilon_S \big)^2 \right\}
= \frac{1}{N}\sum_{i=1}^{N} \big( s(X_i)^\top \varepsilon_S \big)^2 
\\
\nonumber
&= \frac{1}{N}\,\varepsilon_S^\top
\left(\sum_{i=1}^{N} s(X_i)\,s(X_i)^\top\right)\varepsilon_S
= \frac{1}{N}\,\varepsilon_S^\top S_U^\top S_U\,\varepsilon_S,
\end{align}
where $S_U$ is the $N$-by-$n$ matrix whose $i$-th row is $s(X_i)^\top$: See Proposition \ref{prop:linear-smoother-notation} for the notation. The first equality used the notational convention $\|g\|_{L_2(P_N)}^2 := \int g(x)^2\, dP_N(x)
= \frac{1}{N}\sum_{i=1}^N g(X_i)^2
= P_N\!\big(g(X)^2\big).$

For the middle term, condition on $\mathscr X_N$: we have
\begin{align*}
\mathbb E\!\big[\|s^\top\varepsilon_S\|_{L_2}^2\,\big|\,\mathscr X_N\big]
&= \frac{1}{N}
\mathbb{E}\!\left[\varepsilon_S^\top S_U^\top S_U\,\varepsilon_S \,\middle|\, \mathscr X_N\right] \\&= \frac{1}{N}\,
\mathrm{tr}\!\left(S_U^\top S_U\,\mathbb{E}\!\left[\varepsilon_S \varepsilon_S^\top \,\middle|\, \mathscr X_N\right]\right) = \frac{\sigma^2}{N}\,\mathrm{tr}\!\left(S_U^\top S_U\right)
= \sigma^2\,\overline{\|s(X)\|_2^2}=O_p\!\Big(\frac{1}{n}\Big)    
\end{align*}
by Assumption~\ref{ass:A2}. Thus, we have $\|s^\top\varepsilon_S\|_{L_2}=O_p\!\Big(\frac{1}{\sqrt n}\Big),$ after which we plug it into \eqref{eq:bias-transport-L2} yields
\[
\|s^\top m_0-m_0\|_{L_2}
=o_p(1)+O_p(n^{-1/2})+o(1)=o_p(1).
\]

Applying \eqref{eq:PNPn-generic-correct} with $h=s^\top m_0-m_0$ leads to 
\begin{align}
\label{eq:remainder_1}
\sqrt N\,(P_N-P_n)\!\big(s^\top m_0-m_0\big)
=O_p\!\big(\|s^\top m_0-m_0\|_{L_2}\big)
=o_p(1).    
\end{align}

As for the offset piece we may either apply \eqref{eq:PNPn-generic-correct} with $h=b$ and use
$\|b\|_{L_2}\to 0$, or invoke directly Assumption~\ref{ass:A3}, which states
\begin{align}
\label{eq:remainder_2}
(P_N-P_n)b=o_p(N^{-1/2})
\quad\Rightarrow\quad
\sqrt N\,(P_N-P_n)b=o_p(1).    
\end{align}

Combining the two displays \eqref{eq:remainder_1} and \eqref{eq:remainder_2},
\begin{align}
\label{eq:remainder_final}
\sqrt N\,R_{N,n}
=\sqrt N\,(P_N-P_n)\!\big(s^\top m_0-m_0\big)
+\sqrt N\,(P_N-P_n)b
=o_p(1)+o_p(1)=o_p(1).    
\end{align}


\paragraph{Conclusion for (i)–(ii).}
Combining \eqref{eq:term_U_N}, \eqref{eq:N-CLT}, and \eqref{eq:remainder_final} with the
decomposition \eqref{eq:UNLN}, and noting that
$U_N$ is $\mathscr X_N$–measurable while $L_n$ is a function only of
$\varepsilon_S$ (hence $U_N \perp\!\!\!\perp L_n$ because
$\varepsilon$ is independent of $X$), we have
\[
\sqrt{N}\big(\widehat\theta^{\sf sf}_{\rm PPI}-\theta_0\big)
=\underbrace{\sqrt{N}\,U_N}_{\xrightarrow{d}\ \mathcal N(0,\Var(m_0(X)))}\;
 +\;\underbrace{\sqrt{N}\,L_n}_{\xrightarrow{d}\ \mathcal N(0,\sigma^2/f)}\;
 +\;\underbrace{\sqrt{N}\,R_{N,n}}_{\xrightarrow{p}\ 0}.
\]
By independence and Slutsky’s theorem, the sum of the two limiting normal
terms is normal with variance equal to the sum of variances, and the
negligible remainder does not affect the limit. Therefore,
\begin{align}
\label{eq:asymptotic_distribution_sf_ppi}
\sqrt N\big(\widehat\theta^{\sf sf}_{\rm PPI}-\theta_0\big)
\ \xrightarrow{d}\ \mathcal N\!\Big(0,\ \Var(m_0(X))+\tfrac{1}{f}\sigma^2\Big).    
\end{align}
Without $\sqrt N$–scaling, $U_N=O_p(N^{-1/2})$,
$L_n=O_p(n^{-1/2})$ (which is also $O_p(N^{-1/2})$ since $n/N\to f \in (0,1)$),
and $R_{N,n}=o_p(N^{-1/2})$ by \eqref{eq:remainder_final}.
Hence $\widehat\theta^{\sf sf}_{\rm PPI}\xrightarrow{p}\theta_0$.

\paragraph{Studentization (iii) — Overview and motivation.} 
One should note that the variance in the asymptotic distribution
\eqref{eq:asymptotic_distribution_sf_ppi} involves the nuisance quantities
$\Var(m_0(X))$ and $\sigma^2$, so the CLT as stated is not yet feasible for
inference. A naive plug–in $\Var_N(\widehat m(X_1),\ldots,\widehat m(X_N))/N$
would overestimate $\Var(m_0(X))/N$, because $\widehat m(X_i)$ contains
in–sample label noise. Thus, in the following, we show how to \emph{studentize}
the statistic by constructing a consistent estimator of its variance:
\[
\widehat{\Var}\!\big(\widehat\theta^{\sf sf}_{\mathrm{PPI}}\big)
:=\frac{1}{N}\!\left\{\Var_N(\widehat m(X_1),\ldots,\widehat m(X_N))
-\widehat\sigma^2\,\overline{\|s(X)\|_2^2}\right\}
+\widehat\sigma^2\,\|c\|_2^2,
\]
where $\widehat\sigma^2\xrightarrow{p}\sigma^2$. The subtraction removes the
inflation $\sigma^2\,\overline{\|s(X)\|_2^2}/N$ caused by training noise in
$\widehat m(X_i)$, while the addition $\widehat\sigma^2\|c\|_2^2$ accounts for
the “leakage’’ of labeled noise coming from the $P_n(Y-\widehat m)$ term. Under
(A2)–(A3) this estimator satisfies
$\widehat{\Var}(\widehat\theta^{\sf sf}_{\mathrm{PPI}})
\xrightarrow{p}\{\Var(m_0(X))+\sigma^2/f\}/N$, so the studentized statistic
\[
\frac{\widehat\theta^{\sf sf}_{\mathrm{PPI}}-\theta_0}
{\sqrt{\widehat{\Var}(\widehat\theta^{\sf sf}_{\mathrm{PPI}})}}
\ \xrightarrow{d}\ \mathcal N(0,1),
\]
yielding feasible standard errors and confidence intervals.

\paragraph{Studentization (iii) — Recall notations} 
Recall that for each \(i\in\{1,\dots,N\}\), the linear smoother evaluated at $X_i$ can be expressed as (Proposition \ref{prop:linear-smoother-notation})
\[
\widehat m(X_i)
= s(X_i)^\top Y_S + b(X_i)
= s(X_i)^\top\{m_0(X_S)+\varepsilon_S\}+b(X_i),
\]
where \(Y_S=(Y_j)_{j\in S} \in \mathbb R^n\), \(m_0(X_S)=(m_0(X_j))_{j\in S} \in \mathbb R^n\), and
\(\varepsilon_S=(\varepsilon_j)_{j\in S} \in \mathbb R^n\).

Introduce the function
\begin{align}
\label{eq:sm0}
(s^\top m_0)(x)\;:=\; s(x)^\top m_0(X_S)
\;=\;\sum_{j\in S} s_j(x)\,m_0(X_j): \mathcal{X} \xrightarrow{} \mathbb R,    
\end{align}
so that evaluating at \(x=X_i\), \(i\in\{1,\dots,N\}\), yields \((s^\top m_0)(X_i)=s(X_i)^\top m_0(X_S)\). One should note that the function (\ref{eq:sm0}) is the weighted average of the true values $m_0(X_j)$ with weights $s_j(x)$, and it is a function of $x$ for a fixed training set $X_S$.

Then
\begin{align*}
\Delta_i &:= \widehat m(X_i)-m_0(X_i) = s(X_i)^\top\{m_0(X_S)+\varepsilon_S\}+b(X_i)-m_0(X_i) \\
&= \underbrace{\big[(s^\top m_0)(X_i)-m_0(X_i)\big]}_{=:B_i}
\;+\; \underbrace{s(X_i)^\top\varepsilon_S}_{=:E_i}
\;+\; \underbrace{b(X_i)}_{=:b_i}.
\end{align*}
Equivalently,
\[
\widehat m(X_i)=m_0(X_i)+\Delta_i,
\qquad
\Delta_i=B_i+E_i+b_i.
\]

In terms of measurability with respect to \(\mathscr X_N:=\sigma(X_{1:N})\), observe that $B_i=(s^\top m_0-m_0)(X_i)$ is \(\mathscr X_N\)-measurable because it depends only on the smoothing weights \(s(\cdot)\) (functions of the
covariates and the index set \(S\)) and on the values \(\{m_0(X_j):j\in S\}\); all of these are
determined by \(X_{1:N}\). Likewise, the offset \(b_i=b(X_i)\) is \(\mathscr X_N\)-measurable, as it is a
covariate-only function. In contrast, \(E_i=s(X_i)^\top\varepsilon_S\) is \emph{not}
\(\mathscr X_N\)-measurable because it depends on the label noises \(\varepsilon_S\); rather, it satisfies
\(\mathbb E[E_i\mid\mathscr X_N]=0\) and (under homoskedasticity) 
\(\Cov(E_i,E_k\mid\mathscr X_N)=\sigma^2\, s(X_i)^\top s(X_k)\).


If the smoother is mass–preserving, $\sum_{j\in S}s_j(x)=1$ for all $x$. Then, for
$B_i:=(s^\top m_0-m_0)(X_i)$,
\begin{align*}
B_i
&= (s^\top m_0)(X_i) - m_0(X_i)
= \sum_{j\in S} s_j(X_i)\,m_0(X_j) - m_0(X_i) \\
&= \sum_{j\in S} s_j(X_i)\,m_0(X_j)
   - \sum_{j\in S} s_j(X_i)\,m_0(X_i)
   \qquad\big(\text{since }\sum_{j\in S}s_j(X_i)=1\big) \\
&= \sum_{j\in S} s_j(X_i)\,\big\{m_0(X_j)-m_0(X_i)\big\}.
\end{align*}

Abbreviate \(Z_i:=\widehat m(X_i)\) and recall the empirical variance
\begin{align*}
\Var_N(Z_1,\ldots,Z_N)&:=
P_N\!\big((Z-\bar Z_N)^2\big)=P_N(Z^2)-\{P_N Z\}^2
=\frac{1}{N}\sum_{i=1}^N Z_i^2
-\Big(\frac{1}{N}\sum_{i=1}^N Z_i\Big)^2,\\
P_N h&:=\frac1N\sum_{i=1}^N h(X_i),\quad \bar Z_N:=P_N Z.
\end{align*}
The empirical covariance $\Cov_N(A,B) = P_N [ (A - P_NA) (B - P_N B)]$ can also be similarly obtained.


\paragraph{Studentization (iii) — Step 1: Conditional expansion.}


Conditioning on $\mathscr X_N:=\sigma(X_{1:N})$ and using $Z_i=\widehat m(X_i)=m_0(X_i)+\Delta_i$,
\[
\Var_N(Z)\;=\;\Var_N(m_0)+\Var_N(\Delta)+2\,\Cov_N(m_0,\Delta),
\]
where $\Var_N,\Cov_N$ are taken with respect to \ $P_N$. With $\Delta=B+E+b$,
\[
\Var_N(\Delta)=\Var_N(E)+\Var_N(B+b)+2\,\Cov_N(E,B+b).
\]
Since $\{\varepsilon_j:j\in S\}$ are independent of $\mathscr X_N$ with
$\mathbb E[\varepsilon_j\mid\mathscr X_N]=0$ and $\Var(\varepsilon_j\mid\mathscr X_N)=\sigma^2$,
\[
\mathbb E[\,\Cov_N(m_0,E)\mid\mathscr X_N]=0,
\qquad
\mathbb E[\,\Cov_N(E,B+b)\mid\mathscr X_N]=0.
\]

For the $E$–part,
\begin{align*}
&\mathbb E\!\big[\Var_N(E)\mid\mathscr X_N\big]
=\mathbb E\!\big[P_N(E_i^2)-\{P_NE_i\}^2\mid\mathscr X_N\big]\\
&\quad = P_N\Big(\mathbb E\!\big[E_i^2\mid\mathscr X_N\big]\Big)
   \;-\;\mathbb E\!\Big[\big(P_NE\big)^2\,\Big|\,\mathscr X_N\Big]\\
&\quad= \frac{1}{N}\sum_{i=1}^N 
   \mathbb E\!\big[(s(X_i)^\top\varepsilon_S)^2\mid\mathscr X_N\big]
   \;-\;\mathbb E\!\Big[\Big(\frac{1}{N}\sum_{i=1}^N s(X_i)\Big)^\top\varepsilon_S
                       \Big(\frac{1}{N}\sum_{k=1}^N s(X_k)\Big)^\top\varepsilon_S
                       \,\Big|\,\mathscr X_N\Big]\\
&\quad= \frac{1}{N}\sum_{i=1}^N 
   s(X_i)^\top\,\mathbb E\!\big[\varepsilon_S\varepsilon_S^\top\mid\mathscr X_N\big]\,s(X_i)
   \;-\;\mathbb E\!\big[(a^\top\varepsilon_S)^2\mid\mathscr X_N\big]\\
&\quad= \frac{1}{N}\sum_{i=1}^N s(X_i)^\top(\sigma^2 I_n)s(X_i)
   \;-\;\sigma^2\,\|a\|_2^2\\
&\quad= \sigma^2\cdot \frac{1}{N}\sum_{i=1}^N \|s(X_i)\|_2^2
   \;-\;\sigma^2\,\|a\|_2^2\\
&\quad= \sigma^2\,\overline{\|s(X)\|_2^2}\;-\;\sigma^2\,\|a\|_2^2,
\end{align*}
where $a:=N^{-1}\sum_{i=1}^N s(X_i)=N^{-1}S_U^\top\mathbf 1_N$ and
$\overline{\|s(X)\|_2^2}:=N^{-1}\sum_{i=1}^N\|s(X_i)\|_2^2$. (Refer Proposition \ref{prop:linear-smoother-notation} for the notations for $S_U$.)

Putting the pieces together we obtain
\begin{align*}
& \mathbb{E}\!\big[\Var_N(\widehat m(X_1),\ldots,\widehat m(X_N)) \,\big|\, \mathscr X_N\big]
= \mathbb{E}\!\big[\Var_N(Z) \,\big|\, \mathscr X_N\big]  \\
&= \Var_N(m_0) + \mathbb{E}\!\big[\Var_N(\Delta) \,\big|\, \mathscr X_N\big]
   + 2\,\mathbb{E}\!\big[\Cov_N(m_0,\Delta) \,\big|\, \mathscr X_N\big] \\
&= \Var_N(m_0)
   + \underbrace{\mathbb{E}\!\big[\Var_N(E) \,\big|\, \mathscr X_N\big]}_{ =\sigma^2\,\overline{\|s(X)\|_2^2}\;-\;\sigma^2\,\|a\|_2^2} 
   + \underbrace{\mathbb{E}\!\big[\Var_N(B+b) \,\big|\, \mathscr X_N\big] + 2\,\mathbb{E}\!\big[\Cov_N(E,B+b) \,\big|\, \mathscr X_N\big]}_{R_N}  \\
&\qquad   
   + \underbrace{2\,\mathbb{E}\!\big[\Cov_N(m_0,E) \,\big|\, \mathscr X_N\big]}_{=0}
   + \underbrace{2\,\mathbb{E}\!\big[\Cov_N(m_0,B+b) \,\big|\, \mathscr X_N\big]}_{=0},
\end{align*}
and we get
\begin{align}\label{eq:Step1}
&\mathbb E\!\big[\Var_N(\widehat m(X_1),\ldots,\widehat m(X_N))\mid\mathscr X_N\big]\\
\nonumber
&\quad\quad =\Var_N(m_0(X_1),\ldots,m_0(X_N))
+\sigma^2\overline{\|s(X)\|_2^2}-\sigma^2\|a\|_2^2+R_N,
\end{align}
with the remainder
\[
R_N:=\mathbb E\!\big[\Var_N(B+b)\mid\mathscr X_N\big]
   \;+\;2\,\mathbb E\!\big[\Cov_N(m_0,B+b)\mid\mathscr X_N\big].
\]

\paragraph{Studentization (iii) — Step 2: Size of the remainder $R_N$.}
By the triangle inequality together with Jensen’s inequality for conditional
expectations,
\begin{align*}
|R_N|
&= \Big|\mathbb E\!\big[\Var_N(B+b)\mid\mathscr X_N\big]
      + 2\,\mathbb E\!\big[\Cov_N(m_0,B+b)\mid\mathscr X_N\big]\Big|\\
&\le \mathbb E\!\big[\,\Var_N(B+b)\,\bigm|\mathscr X_N\big]
   + 2\,\mathbb E\!\big[\,|\Cov_N(m_0,B+b)|\,\bigm|\mathscr X_N\big].   
\end{align*}
Moreover, for any function $g$, $\Var_N(g)=P_N(g^2)-\{P_N g\}^2 \;\le\; P_N(g^2),$ and, using $(u+v)^2\le 2u^2+2v^2$,
\[
\Var_N(B+b)\;\le\;P_N\!\big((B+b)^2\big)
\;\le\;2\,P_N(B^2)+2\,P_N(b^2).
\]

For the covariance term, by Cauchy–Schwarz under the empirical measure $P_N$, we know that it holds $|\Cov_N(U,V)|
= \big|P_N\{(U-\bar U)(V-\bar V)\}\big|
   \le \sqrt{P_N\{(U-\bar U)^2\}}\sqrt{P_N\{(V-\bar V)^2\}}      
= \sqrt{\Var_N(U)}\,\sqrt{\Var_N(V)}
   \le \sqrt{P_N(U^2)}\,\sqrt{P_N(V^2)}.$
   
Applying this with $U=m_0$ and $V=B+b$ and then using $(B+b)^2\le 2(B^2+b^2)$ gives
\[
|\Cov_N(m_0,B+b)|
\;\le\;\sqrt{P_N(m_0^2)}\,\sqrt{P_N\!\big((B+b)^2\big)}
\;\le\;\sqrt{2\,P_N(m_0^2)}\,\sqrt{P_N(B^2)+P_N(b^2)}.
\]
By summing up all the inequalities and using $2uv \leq u^2 + v^2$, we have
\begin{align}
\nonumber
  |R_N| &\leq 2\,P_N(B^2)+2\,P_N(b^2) + 2\sqrt{2\,P_N(m_0^2)}\,\sqrt{P_N(B^2)+P_N(b^2)}  \\
  \nonumber
  &\leq 2\,P_N(B^2)+2\,P_N(b^2) + 2P_N(m_0^2) + P_N(B^2)+P_N(b^2)\\
  \label{eq:R_N}
  &\leq 3\{\underbrace{P_N(B^2)}_{\textbf{(i)}} + \underbrace{P_N(b^2)}_{\textbf{(ii)}} \} + 2 \underbrace{P_N(m_0^2)}_{\textbf{(iii)}} 
\end{align}

Now, we show that the three terms on the right hand side of the inequality \eqref{eq:R_N} is $O_p(1)$.
\begin{itemize}
    \item[]\textbf{Term (i).}  Recall
\[
B_i \;=\; (s^\top m_0-m_0)(X_i)
      \;=\; \sum_{j\in S}s_j(X_i)\,\{m_0(X_j)-m_0(X_i)\}.
\]
Write \(w_{ij}:=s_j(X_i)\) and \(d_{ij}:=m_0(X_j)-m_0(X_i)\).  
By Cauchy–Schwarz for the inner product \(\sum_{j} w_{ij}d_{ij}\),
\begin{equation}\label{eq:Bi-CS}
B_i^2 \;\le\; \Big(\sum_{j\in S} w_{ij}^2\Big)\Big(\sum_{j\in S} d_{ij}^2\Big)
          \;=\; \|s(X_i)\|_2^2 \sum_{j\in S}\{m_0(X_j)-m_0(X_i)\}^2 .
\end{equation}
Using \((a-b)^2\le 2a^2+2b^2\),
\begin{equation}\label{eq:dij-bound}
\sum_{j\in S}\{m_0(X_j)-m_0(X_i)\}^2
\;\le\; 2\sum_{j\in S} m_0(X_j)^2 + 2n\,m_0(X_i)^2 .
\end{equation}
Combining \eqref{eq:Bi-CS}–\eqref{eq:dij-bound} and averaging over \(i\) gives
\begin{align}
P_N(B^2)
&:= \frac1N\sum_{i=1}^N B_i^2
 \;\le\; \frac1N\sum_{i=1}^N \|s(X_i)\|_2^2
        \Big(2\sum_{j\in S} m_0(X_j)^2 + 2n\,m_0(X_i)^2\Big) \notag\\
&= 2\,\overline{\|s(X)\|_2^2}\;\sum_{j\in S} m_0(X_j)^2
  \;+\; \frac{2n}{N}\sum_{i=1}^N \|s(X_i)\|_2^2\,m_0(X_i)^2,
\label{eq:PNB2-two-terms}
\end{align}
where \(\overline{\|s(X)\|_2^2}:=N^{-1}\sum_{i=1}^N\|s(X_i)\|_2^2\).

By Assumption \ref{ass:A2}, note that it holds \(\overline{\|s(X)\|_2^2}=O_p(1/n)\) since $\overline{\|s(X)\|_2^2}$ $ \leq \max_{1\le i\le N} $ $\|s(X_i)\|_2^2 $ $ =O_p(1/n)$. By \(\mathbb E[Y^2]<\infty\) we have (by the LLN)
\[
P_n\!\big(m_0(X)^2\big)=\frac1n\sum_{j\in S}m_0(X_j)^2=O_p(1),
\qquad
P_N\!\big(m_0(X)^2\big)=\frac1N\sum_{i=1}^N m_0(X_i)^2=O_p(1).
\]
Hence the first term on the right-hand side of \eqref{eq:PNB2-two-terms} is
\[
2\,\overline{\|s(X)\|_2^2}\;\sum_{j\in S} m_0(X_j)^2
\;=\; 2\,O_p(1/n)\cdot n\,P_n\!\big(m_0(X)^2\big)
\;=\; O_p(1).
\]

For the second term in the equation \eqref{eq:PNB2-two-terms}, it holds
\begin{align*}
\frac{2n}{N}\sum_{i=1}^N \|s(X_i)\|_2^2\,m_0(X_i)^2
&\;\le\; \frac{2n}{N}\,\Big(\max_{i}\|s(X_i)\|_2^2\Big)\sum_{i=1}^N m_0(X_i)^2\\
&\;=\; 2\,O_p(1)\cdot P_N\!\big(m_0(X)^2\big)
\;=\; O_p(1),    
\end{align*}
where \(\max_{1\le i\le N}\|s(X_i)\|_2^2=O_p(1/n)\) is from Assumption \ref{ass:A2}.

Therefore,
\[
P_N(B^2)=O_p(1).
\]
\item[]\textbf{Term (ii).}
Set
\(T_N:=P_N(b^2)=\frac1N\sum_{i=1}^N b(X_i)^2\ge 0\).
For any $\varepsilon>0$, by Markov’s inequality and the tower property,
\begin{align*}
\mathbb P(T_N>\varepsilon)
&\;\le\; \frac{1}{\varepsilon}\,\mathbb E[T_N]
\;=\; \frac{1}{\varepsilon}\,\mathbb E\!\left[\frac1N\sum_{i=1}^N b(X_i)^2\right]\\
&\;=\; \frac{1}{\varepsilon}\,\mathbb E\!\left[\int b(x)^2\,dP_X(x)\right]
\;=\; \frac{1}{\varepsilon}\,\mathbb E\!\big[\|b\|_{L_2(P_X)}^2\big].   
\end{align*}

Thus, if in addition to \ref{ass:A3} we assume the integrability
\(\mathbb E[\|b\|_{L_2(P_X)}^2]\to 0\) (which holds, e.g., whenever
$\|b\|_{L_2(P_X)}\xrightarrow{p}0$ and the sequence
$\{\|b\|_{L_2(P_X)}^2\}_N$ is uniformly integrable), then
\(\mathbb P(T_N>\varepsilon)\to 0\) for every $\varepsilon>0$ and hence
\[
P_N(b^2)\;=\;o_p(1).
\]
\item[]\textbf{Term (iii).} By $\mathbb E[Y^2]<\infty$ we have
$\mathbb E[m_0(X)^2]<\infty$, hence, by the LLN,
\[
P_N(m_0^2)=\frac1N\sum_{i=1}^N m_0(X_i)^2 \ \xrightarrow{p}\ \mathbb E[m_0(X)^2],
\]
so $P_N(m_0^2)=O_p(1)$.
\end{itemize}

Plugging these into \eqref{eq:R_N} gives \(R_N=O_p(1)\).
Consequently,
\begin{align}
\label{eq:R_N/N}
\frac{R_N}{N}=O_p\!\Big(\frac{1}{N}\Big)=o_p(1)    
\end{align}
which is the form needed in the subsequent steps.

\paragraph{Studentization (iii) — Step 3: Consistency of the plug–in variance for the $U$–part.}
Divide \eqref{eq:Step1} by $N$, and add and subtract $\hat{\sigma}^2$ in the term $\sigma^2\overline{\|s(X)\|_2^2}/N$ by any $\widehat\sigma^2\xrightarrow{p}\sigma^2$:
\begin{align*}
 &\frac{1}{N} \left\{ \mathbb E\!\big[\Var_N(\widehat m(X_1),\ldots,\widehat m(X_N))\mid\mathscr X_N\big] \right \}\\
&\quad=
\frac{1}{N}
\left\{
\Var_N(m_0(X_1),\ldots,m_0(X_N))
+\sigma^2\overline{\|s(X)\|_2^2}-\sigma^2\|a\|_2^2+R_N
\right\}\\
&\quad =
\frac{\Var_N(m_0(X_1),\ldots,m_0(X_N))}{N}
+
\frac{(\sigma^2 - \widehat{\sigma}^2 + \widehat{\sigma}^2)\overline{\|s(X)\|_2^2}}{N}-
\frac{\sigma^2\|a\|_2^2}{N}
+ \frac{R_N}{N} 
\end{align*}
Now, moving the term $ \widehat \sigma^2\overline{\|s(X)\|_2^2}/N$ from the right hand side to the left hand side provides
\begin{align}
\nonumber
&\frac1N\!\left\{\Var_N(\widehat m(X_1),\ldots,\widehat m(X_N))
-\widehat\sigma^2\,\overline{\|s(X)\|_2^2}\right\}\\
&\quad\quad\quad=\frac{\Var_N(m_0(X_1),\ldots,m_0(X_N))}{N} 
+\frac{(\sigma^2-\widehat\sigma^2)\,\overline{\|s(X)\|_2^2}}{N}
     -\frac{\sigma^2\|a\|_2^2}{N}
     +\frac{R_N}{N}. \label{eq:Step3}
\end{align}
By the LLN, $\Var_N(m_0(X_1),\ldots,m_0(X_N))/N\to \Var(m_0(X))/N$.
Since $\overline{\|s(X)\|_2^2}=O_p(1/n)$ by \ref{ass:A2} and $\widehat\sigma^2\to_p\sigma^2$,
the second term in \eqref{eq:Step3} is $o_p(1/N)$.
Moreover, Jensen gives $\|a\|_2^2=\big\|\tfrac1N\sum_i s(X_i)\big\|_2^2
\le \tfrac1N\sum_i\|s(X_i)\|_2^2=\overline{\|s(X)\|_2^2}=O_p(1/n)$, hence
$\sigma^2\|a\|_2^2/N=O_p(1/(nN))=o_p(1/N)$.
Together with \eqref{eq:R_N/N} (i.e., $R_N/N=o_p(1)$) we conclude that
\begin{equation}\label{eq:Step3-limit}
\frac1N\!\left\{\Var_N(\widehat m(X_1),\ldots,\widehat m(X_N))
-\widehat\sigma^2\,\overline{\|s(X)\|_2^2}\right\}
\ \xrightarrow{p}\ \frac{\Var(m_0(X))}{N}.
\end{equation}

\paragraph{Studentization (iii) — Step 4: Add the labeled–noise piece.}
From the analysis of Term (L) we already have
$\Var(\sqrt N\,L_n\mid\mathscr X_N)=\sigma^2 N\|c\|_2^2$ (see Equation \eqref{eq:v_N^2}) and, under \ref{ass:A2} with MP,
$N\|c\|_2^2\to 1/f$ (equivalently, $\|c\|_2^2=\tfrac1n+o_p(1/n)$).
Thus, with any $\widehat\sigma^2\xrightarrow{p}\sigma^2$,
\begin{equation}\label{eq:Step4}
\widehat\sigma^2\,\|c\|_2^2
=\frac{\widehat\sigma^2}{N}\,\big(N\|c\|_2^2\big)
\ \xrightarrow{p}\ \frac{\sigma^2}{f\,N}.
\end{equation}

\paragraph{Studentization (iii) — Step 5: Consistency and studentized CLT.}
Combining \eqref{eq:Step3-limit} and \eqref{eq:Step4} yields
\[
\widehat{\Var}\!\big(\widehat\theta^{\sf sf}_{\mathrm{PPI}}\big)
:=\frac{1}{N}\Big\{\Var_N(\widehat m(X_1),\ldots,\widehat m(X_N))
-\widehat\sigma^2\,\overline{\|s(X)\|_2^2}\Big\}
+\widehat\sigma^2\,\|c\|_2^2
\ \xrightarrow{p}\ \frac{\Var(m_0(X))}{N}+\frac{\sigma^2}{f\,N}.
\]
By part (ii) we already know
$\sqrt N(\widehat\theta^{\sf sf}_{\mathrm{PPI}}-\theta_0)\xrightarrow{d}
\mathcal N\!\big(0,\Var(m_0(X))+\sigma^2/f\big)$.
Since the denominator $\sqrt{\widehat{\Var}(\widehat\theta^{\sf sf}_{\mathrm{PPI}})}$
converges in probability to the same positive constant,
Slutsky’s lemma gives the studentized limit
\[
\frac{\widehat\theta^{\sf sf}_{\mathrm{PPI}}-\theta_0}
     {\sqrt{\widehat{\Var}(\widehat\theta^{\sf sf}_{\mathrm{PPI}})}}
\ \xrightarrow{d}\ \mathcal N(0,1).
\]
\end{proof}


\begin{proposition}[KRR with a bounded positive semi-definite (PSD) kernel satisfies MP, (A1)–(A3), and yields a consistent $\widehat\sigma^2$]
\label{prop:krr-gauss-A123}
Assume $Y=m_0(X)+\varepsilon$ with $\mathbb E[\varepsilon\mid X]=0$ and
$\Var(\varepsilon\mid X)=\sigma^2\in(0,\infty)$, and $\mathbb E[Y^2]<\infty$. Let $k:\mathcal X\times\mathcal X\to\mathbb R$ be a symmetric positive–semidefinite kernel with $\sup_{x\in\mathcal X} k(x,x)\ \le\ \kappa^2\ <\ \infty.$ For example, for Gaussian kernel. $k(x,x')=\exp\!\big(-\|x-x'\|_2^2/(2\ell^2)\big)$, so $k(x,x)=1$ for all $x$ and we may take $\kappa=1$. 

Let the labeled sample be $S=\{(X_j,Y_j)\}_{j=1}^n$ and write $K\in\mathbb R^{n\times n}$ for the Gram matrix on $X_S$, $K_{ij}=k(X_i,X_j)$, and $A:=K+n\lambda I_n$ with $\lambda=\lambda_n>0$.

Define
\[
w^\top:=\frac{\mathbf 1_n^\top A^{-1}}{\mathbf 1_n^\top A^{-1}\mathbf 1_n},
\qquad
H:=K A^{-1}(I_n-\mathbf 1_n w^\top)+\mathbf 1_n w^\top,
\]
and for $x\in\mathcal X$ set $k_x:=(k(X_1,x),\ldots,k(X_n,x))^\top$ and
\[
s(x)^\top:=k_x^\top A^{-1}(I_n-\mathbf 1_n w^\top)+w^\top,\qquad
\widehat m(x):=s(x)^\top Y_S .
\]
For unlabeled covariates $X_1,\dots,X_N$ (i.i.d.\ copies of $X$, independent of $S$) define the matrix $S_U\in\mathbb R^{N\times n}$
with $i$th row $s(X_i)^\top$:
\[
S_U:=K_U (K+n\lambda I_n)^{-1}(I_n-\mathbf 1_n w^\top)+\mathbf 1_N w^\top,
\qquad (K_U)_{ij}=k(X_i,X_j),
\]
and set
\[
a:=\frac{1}{N}S_U^\top\mathbf 1_N,\qquad
\bar h:=\frac{1}{n}H^\top\mathbf 1_n,\qquad
c:=a+\frac{1}{n}\mathbf 1_n-\bar h.
\]
Assume $N,n\to\infty$ with $n/N\to f\in(0,1)$.

Then the following hold:
\begin{itemize}[leftmargin=*, itemsep=2pt]

\item[] \textup{\textbf{Mass preservation (MP).}}
$H\mathbf 1_n=\mathbf 1_n$ and $\sum_{j=1}^n s_j(x)=1$ for all $x$.


\item[] \textup{\textbf{A1 Predictor accuracy.}}
If $m_0$ can be approximated arbitrarily well in $L_2(P_X)$ by functions generated by the kernel $k$
(for example, when $k$ is universal and $m_0$ is bounded and uniformly continuous on
$\mathcal X:=\mathrm{supp}(P_X)$), then $\|\widehat m - m_0\|_{L_2(P_X)} = o_p(1)$.

\item[] \textup{\textbf{A2 Smoother stability.}} (i) $\max_{1\le i\le N}\|s(X_i)\|_2^2=O_p (1/n)$, (ii) $\sqrt N\,\max_{1\le j\le n}|c_j|\ \to\ 0$ in probability, and (iii) $N\|c\|_2^2\ \to\ 1/f$.

\item[] \textup{\textbf{A3 Offset regularity.}} $b\equiv 0$, so $(P_N-P_n)b=0$ and $\|b\|_{L_2(P_X)}=0$.

\item[] \textup{\textbf{Plug–in variance estimator for SF–PPI with variance correction.}}

The degrees-of-freedom–adjusted residual variance
\[
\widehat\sigma^2_{\mathrm{KRR}}
:=\frac{\|(I_n-H)Y_S\|_2^2}{\,n-\mathrm{tr}(H)\,}
\]
satisfies $\widehat\sigma^2_{\mathrm{KRR}}\xrightarrow{p}\sigma^2$. With the above definitions,
\begin{align*}
\widehat{\Var}\!\big(\widehat\theta^{\sf sf}_{\mathrm{PPI}}\big)
&= \frac{1}{N}\Big\{\Var_N\big(\widehat m(X_1),\ldots,\widehat m(X_N)\big)
    - \widehat{\sigma}^2_{\mathrm{KRR}}\,\overline{\|s(X)\|_2^2}\Big\}
   + \widehat{\sigma}^2_{\mathrm{KRR}}\,\|c\|_2^2.
\end{align*}
\end{itemize}
\end{proposition}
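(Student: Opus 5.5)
The plan is to verify the items in the order stated, reusing the algebra already proved in Proposition~\ref{prop:KRR_MP}. Mass preservation needs no new argument: Proposition~\ref{prop:KRR_MP} gives $w^\top\mathbf 1_n=1$, hence $H\mathbf 1_n=\mathbf 1_n$ and $s(x)^\top\mathbf 1_n=1$. A3 is also immediate, since the same proposition shows $\widehat m(x)=s(x)^\top Y_S$ with no offset, so $b\equiv0$.

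For A1 I would invoke standard KRR consistency theory and not reprove it. With a bounded kernel, $\lambda_n\downarrow0$ and $n\lambda_n\to\infty$, the usual decomposition applies. The approximation error $\inf_{g\in\mathcal H_k}\{\|g-m_0\|_{L_2}^2+\lambda\|g\|_{\mathcal H}^2\}\to0$ holds by density of $\mathcal H_k$ (plus constants) in $L_2(P_X)$, which is the universality hypothesis. The estimation error is $O_p\big(\kappa^2/(n\lambda_n)\big)^{1/2}$, which follows from concentration of the empirical covariance operator in Hilbert--Schmidt norm. The unpenalized intercept only changes the problem by centering, so the same bounds go through.

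A2 is the real work, and I expect it to be the main obstacle. I would use the representation $H=I_n-n\lambda A^{-1}+n\lambda A^{-1}\mathbf 1_nw^\top$ from the symmetry proof, together with the analogous formula for $s(x)$.

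\textbf{A2(i).} Write $s(x)^\top=k_x^\top A^{-1}(I-\mathbf 1w^\top)+w^\top$. Since $A\succeq n\lambda I$ and $\|k_x\|_2\le\sqrt n\,\kappa^2$, a crude bound on the kernel part gives only $O(1/(n\lambda^2))$. That is too weak. A sharper route is to write $k_x^\top A^{-1}=\Phi(x)^\top(\Phi^\top\Phi+n\lambda)^{-1}\Phi^\top$ in feature space. Its squared norm is then $\Phi(x)^\top T_n(T_n+\lambda)^{-2}\Phi(x)/n\le\kappa^2/(n\lambda)$, where $T_n$ is the empirical covariance operator. This yields $O_p(1/(n\lambda_n))$. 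Getting the exact rate $O_p(1/n)$ needs something more: either an effective-dimension argument, via $\mathcal N(\lambda)$ bounded, or simply accepting the stated condition as the extra assumption flagged in Theorem~\ref{thm:sfppi_mean_krr}. I will try the effective-dimension route first and fall back on the stated assumption otherwise. The $w$ part is $\|w\|_2^2$, which is $O(1/n)$ because $A^{-1}\mathbf 1$ has comparable entries when $K$ is bounded relative to $n\lambda$.

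\textbf{A2(ii)--(iii).} Note $c=\tfrac1n\mathbf 1+(a-\bar h)$. Here $a-\bar h=(P_N-P_n)s(\cdot)$, the difference between the unlabeled and labeled averages of the weight vector. Symmetry of $H$ gives $\bar h=\tfrac1nH\mathbf 1=\tfrac1n\mathbf 1$, so $a-\bar h=a-\tfrac1n\mathbf 1$. Substituting, $a_j-\tfrac1n$ equals $e_j^\top(I-w\mathbf 1^\top)A^{-1}\big(P_Nk_\cdot-\tfrac1nK\mathbf 1\big)$. The quantity $P_Nk_\cdot-P_nk_\cdot$ is an empirical-process fluctuation of the bounded feature map, so its RKHS norm is $O_p(n^{-1/2})$. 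Combining this with the feature-space bound above gives $\|a-\bar h\|_2^2=O_p\big(1/(n^2\lambda)\big)=o_p(1/n)$, using $n\lambda\to\infty$. This is exactly (iii). Item (ii) then follows because $\max_j|c_j|\le\tfrac1n+\|a-\bar h\|_2$ and both pieces are $o_p(N^{-1/2})$.

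Finally, for $\widehat\sigma^2_{\mathrm{KRR}}$ I would expand $(I-H)Y_S=(I-H)m_0(X_S)+(I-H)\varepsilon_S$. Conditionally on $X_S$, the noise part has expectation $\sigma^2\operatorname{tr}\big((I-H)^2\big)$. The mean of the signal part, $\|(I-H)m_0\|^2/n$, is the in-sample bias, which is $o_p(1)$ by the A1 argument. The identity $\operatorname{tr}\big((I-H)^2\big)=n-2\operatorname{tr}H+\operatorname{tr}H^2$ together with $\operatorname{tr}H=o(n)$, which holds since $\operatorname{tr}H\lesssim 1+\operatorname{tr}\big(K(K+n\lambda)^{-1}\big)=o(n)$ when $\lambda$ is not too small, makes the denominator $n-\operatorname{tr}H\sim n$. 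A conditional variance bound on the quadratic form then gives $\widehat\sigma^2_{\mathrm{KRR}}\to_p\sigma^2$. The displayed variance formula is just the definition from Theorem~\ref{thm:sfppi_mean_unified}(iii) with this $\widehat\sigma^2$ plugged in.
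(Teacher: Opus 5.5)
Your treatment of MP, A3, A1 and $\widehat\sigma^2_{\mathrm{KRR}}$ matches the paper's proof, apart from two minor points at the end. The substantive issues are in A2.

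\textbf{Gap: A2(i) is not proved, and neither of your routes can prove it as stated.} Your feature-space bound gives only $\|s(x)\|_2^2=O(1/(n\lambda_n))$.

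Treating $O_p(1/n)$ as an extra assumption does not prove the proposition. The effective-dimension route also fails when $\lambda_n\downarrow0$. For $X$ independent of $S$, $n\,\mathbb E[\|s(X)\|_2^2\mid S]$ behaves like $\sum_k\mu_k^2/(\mu_k+\lambda_n)^2$ over the eigenvalues of the kernel integral operator, and the unpenalized intercept does not reduce this. That sum diverges for any kernel with infinitely many positive eigenvalues (e.g.\ a universal one), and $\max_i\|s(X_i)\|_2^2$ dominates the empirical average. So $O_p(1/n)$ is false in general under $\lambda_n\downarrow0$, not merely hard to prove.

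Your claim $\|w\|_2^2=O(1/n)$ has the same defect. $K$ has bounded entries but eigenvalues of order $n$, so in general one only has $\|w\|_2^2\le\lambda_{\max}(A)/(n\lambda_{\min}(A))\le(\kappa^2+\lambda)/(n\lambda)$.

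The paper closes A2(i) with exactly the crude bound you discarded. It shows $\|s(x)\|_2\le(1+3/\lambda+1/\lambda^2)/\sqrt n$ uniformly in $x$, and concludes $O_p(1/n)$ only when $\inf_n\lambda_n>0$; its remark says A2 holds for $\lambda\ge\lambda_\star>0$. That argument is complete for $\lambda$ bounded below. It conflicts, however, with the $\lambda_n\downarrow0$ required for A1 and for $\widehat\sigma^2_{\mathrm{KRR}}$, and no argument removes that conflict.

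You therefore have two honest options:
\begin{itemize}
\item State A2(i) for $\lambda$ bounded below and use the deterministic bound, as the paper does.
\item Prove the weaker $\max_i\|s(X_i)\|_2^2=O(1/(n\lambda_n))=o(1)$. The Loewner argument below gives $\|s(x)\|_2^2\le(4\kappa^2+2\lambda)/(n\lambda)$ for every $x$. Then check that the uses of A2(i) in Theorem~\ref{thm:sfppi_mean_unified}, through $\overline{\|s(X)\|_2^2}$, $\|a\|_2^2$ and $\|s^\top\varepsilon_S\|_{L_2}$, only need $o_p(1)$.
\end{itemize}

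\textbf{A2(ii)--(iii): your route differs from the paper's and is the sounder one.} After $\bar h=n^{-1}\mathbf 1_n$, the paper uses Jensen, $\|a-n^{-1}\mathbf 1_n\|_2^2\le N^{-1}\sum_i\|s(X_i)-n^{-1}\mathbf 1_n\|_2^2$. It then writes the conditional mean of the right side as $N^{-1}\mathbb E[\|s(X)-n^{-1}\mathbf 1_n\|_2^2\mid S]$. But an average of $N$ equal conditional means equals that mean, not $N^{-1}$ times it. Once corrected, the argument yields at best $N\|a-n^{-1}\mathbf 1_n\|_2^2=O_p(1)$. The reason is that it never controls the bias $\mathbb E[s(X)\mid S]-n^{-1}\mathbf 1_n=(P-P_n)s$.

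Your identity $a-\bar h=(I_n-w\mathbf 1_n^\top)A^{-1}u$ is what handles that bias. Here $u:=P_Nk_\cdot-P_nk_\cdot$ has $j$th entry $\langle\Phi(X_j),\mu_N-\mu_n\rangle_{\mathcal H_k}$, with $\mu_N,\mu_n$ the empirical kernel mean embeddings. Hence $u^\top A^{-1}u\le\|\mu_N-\mu_n\|_{\mathcal H_k}^2=O_p(1/n)$.

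One repair is needed: $\|I_n-w\mathbf 1_n^\top\|_{\op}=\sqrt n\|w\|_2$ is not $O(1)$ in general when $\lambda_n\to0$. You do not need it. The matrix $M:=(I_n-w\mathbf 1_n^\top)A^{-1}=A^{-1}-A^{-1}\mathbf 1_n\mathbf 1_n^\top A^{-1}/(\mathbf 1_n^\top A^{-1}\mathbf 1_n)$ is symmetric with $0\preceq M\preceq A^{-1}\preceq(n\lambda)^{-1}I_n$. Therefore $M^2\preceq(n\lambda)^{-1}M$, and
\[
\|a-\bar h\|_2^2=u^\top M^2u\le\frac{u^\top A^{-1}u}{n\lambda}\le\frac{\|\mu_N-\mu_n\|_{\mathcal H_k}^2}{n\lambda}=O_p\Big(\frac{1}{n^2\lambda_n}\Big).
\]
This gives (iii), and then (ii) via $\max_j|c_j|\le n^{-1}+\|a-\bar h\|_2$, under $n\lambda_n\to\infty$ alone. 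Taking $u=k_x$ in the same chain gives $\|Mk_x\|_2^2\le\kappa^2/(n\lambda)$. Combined with $s(x)=Mk_x+w$, this is the bound quoted above.

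\textbf{Minor, shared with the paper.}
\begin{itemize}
\item In the $\widehat\sigma^2_{\mathrm{KRR}}$ step, the in-sample bias $n^{-1}\|(I_n-H)m_S\|_2^2$ is not a consequence of A1, which concerns the out-of-sample error of the noisy-label fit. It is, however, at most the noiseless penalized empirical risk $P_n(m_0-\alpha-g)^2+\lambda\|g\|_{\mathcal H_k}^2$ at any fixed $(\alpha,g)$. The LLN plus density then gives it directly.
\item Your conditional variance bound for the quadratic form uses $\mathbb E[\varepsilon^4]<\infty$, which is not assumed. Write $B:=(I_n-H)^2$. The off-diagonal part of $\varepsilon_S^\top B\varepsilon_S$ needs only second moments. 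Since $I_n-H=n\lambda M$ gives $0\preceq I_n-H\preceq I_n$, the diagonal part $\sum_jB_{jj}\varepsilon_j^2$ has weights in $[0,1]$ and is handled by truncation.
\end{itemize}
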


\paragraph{Remark -- tuning of $\lambda$.}
The assumptions $\lambda=\lambda_n\downarrow0$ and $n\lambda_n\to\infty$ are needed for A1 and for the consistency of $\widehat\sigma^2_{\mathrm{KRR}}$ to control the regularization bias. Items in A2 remain valid uniformly for any $\lambda\ge\lambda_\star>0$. 
\paragraph{Remark -- notation (matrix norms).}
For a matrix $A\in\mathbb R^{p\times q}$,
\[
\|A\|_{\op}\ :=\ \sup_{\|x\|_2=1}\|Ax\|_2\ =\ \sqrt{\lambda_{\max}(A^\top A)},
\qquad
\|A\|_{F}\ :=\ \sqrt{\mathrm{tr}(A^\top A)}\ =\ \Big(\sum_{i=1}^p\sum_{j=1}^q A_{ij}^2\Big)^{1/2}.
\]
Here $\lambda_{\max}(B)$ denotes the largest eigenvalue of the (symmetric) matrix $B$.
\emph{Examples.} If $A=\mathrm{diag}(d_1,\dots,d_r)$, then $\|A\|_{\op}=\max_i |d_i|$ and
$\|A\|_{F}=(\sum_i d_i^2)^{1/2}$. If $A=uv^\top$ with $u\in\mathbb R^p$, $v\in\mathbb R^q$, then
$\|A\|_{\op}=\|u\|_2\|v\|_2$ and $\|A\|_{F}=\|u\|_2\|v\|_2$ (rank-one case).

\begin{proof}$\quad$
\paragraph{MP.} From the definitions,
$(I_n-\mathbf 1_n w^\top)\mathbf 1_n=\mathbf 1_n-(\mathbf 1_n w^\top)\mathbf 1_n=0$,
so $H\mathbf 1_n=\{ K A^{-1}(I_n-\mathbf 1_n w^\top)+\mathbf 1_n w^\top \}\mathbf 1_n = K A^{-1}0+\mathbf 1_n w^\top\mathbf 1_n=\mathbf 1_n$; similarly
$s(x)^\top\mathbf 1_n= \{k_x^\top A^{-1}(I_n-\mathbf 1_n w^\top)+w^\top \}\mathbf 1_n =k_x^\top A^{-1}(I_n\mathbf 1_n-\mathbf 1_n w^\top\mathbf 1_n) +w^\top\mathbf 1_n=k_x^\top A^{-1}0+w^\top\mathbf 1_n=1$.

\paragraph{A1.}
Let $k:\mathcal X\times\mathcal X\to\mathbb R$ be a symmetric positive–semidefinite kernel on $\mathbb{R}^d$ with $\sup_{x\in\mathcal X} k(x,x)\ $ $ \le\ \kappa^2\ <\ \infty$. Throughout the proof, without loss of generality, we assume that $\kappa = 1$ (which holds for many bounded PSD kernels including Gaussian and Mat\'ern kernels \citep{hastie2009elements,hofmann2008kernel}). 


Let $\mathcal H_{k}$ be its reproducing kernel Hilbert space (RKHS) \citep{wahba1990spline}. Define the kernel–ridge estimator
\[
\widehat m_{n,\lambda}
:=\arg\min_{m\in\mathcal H_{k}}
\left\{\frac1n\sum_{j\in S}(Y_j-m(X_j))^2+\lambda\|m\|_{\mathcal H_{k}}^2\right\},
\qquad \lambda=\lambda_n>0.
\]
Let $m_\lambda:=\arg\min_{m\in\mathcal H_{k}}
\{\mathbb E[(Y-m(X))^2]+\lambda\|m\|_{\mathcal H_{k}}^2\}$ be the population Tikhonov minimizer.
Since $k(x,x)\le \kappa^2(=1)$ for all $x$ and $\mathbb E[Y^2]<\infty$, standard RKHS learning bounds
\citep{caponnetto2007optimal,smale2007learning} yield, for some constant $C$,
\begin{equation}\label{eq:eq1_A1}
\mathbb E\!\left[\|\widehat m_{n,\lambda}-m_\lambda\|_{L_2(P_X)}^2\right]
\;\le\; \frac{C}{n\lambda}.
\end{equation}
Moreover, if
$m_0\in\overline{\mathcal H_{k}}^{\,L_2(P_X)}$ (e.g., this holds when $k$ is universal and $m_0$ is bounded and uniformly
continuous on $\mathcal{X} = \mathrm{supp}(P_X)$ \citep{micchelli2006universal,sriperumbudur2011universality}), then
\begin{equation}\label{eq:eq2_A1}
\|m_\lambda-m_0\|_{L_2(P_X)}\xrightarrow[\lambda\downarrow0]{}0 .
\end{equation}
By the triangle inequality,
$\|\widehat m_{n,\lambda}-m_0\|_{L_2(P_X)}
\le \|\widehat m_{n,\lambda}-m_\lambda\|_{L_2(P_X)}
   +\|m_\lambda-m_0\|_{L_2(P_X)}$.
Choose any $\lambda_n\downarrow0$ with $n\lambda_n\to\infty$. Then \eqref{eq:eq1_A1} and Markov’s
inequality give $\|\widehat m_{n,\lambda_n}-m_{\lambda_n}\|_{L_2(P_X)}=o_p(1)$, while
\eqref{eq:eq2_A1} gives $\|m_{\lambda_n}-m_0\|_{L_2(P_X)}\to0$. Therefore
\[
\|\widehat m_{n,\lambda_n}-m_0\|_{L_2(P_X)}\ \xrightarrow{p}\ 0,
\]
which is (A1).

\paragraph{A2--(i).} We first derive the deterministic operator-norm bounds. From $| k(x,x')|\le \kappa^2 (= 1)$ (by RKHS Cauchy–Schwarz) we have
\[
\|A^{-1}\|_{\op}\le \frac{1}{n\lambda},
\qquad
\|k_x\|_2 \le \sqrt n.
\]
(Indeed, $K\succeq0$ implies $\lambda_{\min}(A)\ge n\lambda$, while
$\|k_x\|_2^2 = \sum_{j=1}^n k(X_j,x)^2\le n$.)

For $w = \frac{A^{-1}\mathbf 1_n}{\mathbf 1_n^\top A^{-1}\mathbf 1_n},$ Rayleigh bounds give
\[
\|w\|_2^2 
= \frac{\mathbf 1_n^\top A^{-2}\mathbf 1_n}{(\mathbf 1_n^\top A^{-1}\mathbf 1_n)^2}
\;\le\; \frac{\lambda_{\max}(A)^2}{n\,\lambda_{\min}(A)^2}
\;\le\; \frac{(1+\lambda)^2}{n\lambda^2},
\]
hence $\|w\|_2 \le \tfrac{1+\lambda}{\sqrt n\,\lambda}$. Moreover,
\[
\|I_n - \mathbf 1_n w^\top\|_{\op}
\le \|I_n\|_{\op}+\|\mathbf 1_n w^\top\|_{\op}
= 1 + \|\mathbf 1_n\|_2\,\|w\|_2
\le 1+\frac{1+\lambda}{\lambda}.
\]

By the triangle inequality and $\|Mv\|_2\le \|M\|_{\op}\|v\|_2$,
\begin{align*}
\|s(x)\|_2
&=
\|(I_n - \mathbf 1_n w^\top) A^{-1} k_x+ w\|_2
\le\|I_n - \mathbf 1_n w^\top\|_{\op}\,\|A^{-1}\|_{\op}\,\|k_x\|_2 + \|w\|_2\\
&\leq \left(1+\frac{1+\lambda}{\lambda} \right) \frac{1}{n\lambda} \sqrt{n} + \frac{1+\lambda}{\sqrt{n} \lambda} = \frac{2+1/\lambda}{\sqrt{n}\,\lambda}
   + \frac{1+\lambda}{\sqrt{n}\,\lambda}\\
&= \frac{\,3+\lambda+1/\lambda\,}{\sqrt{n}\,\lambda}
 = \frac{\, 1 + 3/\lambda + 1/\lambda^{2}\,}{\sqrt{n}}
 = \frac{C_1(\lambda)}{\sqrt{n}},
\end{align*}
where $C_1(\lambda) =  1 + 3/\lambda  + 1/\lambda^{2}.$

Therefore, it holds $\|s(x)\|_2 \le C_1(\lambda)/\sqrt n$, thus, $\|s(x)\|_2^2\le C(\lambda)/n,$ uniformly in $x$ and for every realization of $X_S$ for constants $C_1(\lambda)$ and $C(\lambda) = C_1(\lambda)^2$. This immediately gives (i) $\max_{1\le i\le N}\|s(X_i)\|_2^2=O_p\!(C(\lambda)/n)$; if $\inf_n \lambda_n>0$ then $C(\lambda_n)\le C_\star<\infty$ and 
$\max_i\|s(X_i)\|_2^2 = O_p(1/n)$. (One may alternatively use the standard stability to yield the similar conclusion \citep{bousquet2002stability,cortes2010generalization}.)

\paragraph{A2--(iii).} By mass preservation, $S_U\mathbf 1_n=\mathbf 1_N$ and $H\mathbf 1_n=\mathbf 1_n$, hence
$\mathbf 1_n^\top a=\mathbf 1_n^\top \bar h=1$ (Prop.~\ref{prop:linear-smoother-notation}(f)).
Using $A^{-1}K=I_n-n\lambda A^{-1}$ (since $A=K+n\lambda I_n$), we have $H^\top=(I_n-w\mathbf 1_n^\top)A^{-1}K+w\mathbf 1_n^\top
      =I_n-n\lambda A^{-1}+n\lambda\,w\,\mathbf 1_n^\top A^{-1},$ and because $A^{-1}\mathbf 1_n=(\mathbf 1_n^\top A^{-1}\mathbf 1_n)\,w$, it follows that
$H^\top\mathbf 1_n=\mathbf 1_n$, i.e.
\begin{equation}\label{eq:hbar-uniform}
\bar h=\tfrac{1}{n}H^\top\mathbf 1_n=\tfrac{1}{n}\mathbf 1_n.
\end{equation}
Therefore
\begin{equation}\label{eq:c-a}
c=a+\tfrac{1}{n}\mathbf 1_n-\bar h=a,
\end{equation}
and by Prop.~\ref{prop:linear-smoother-notation}(f),
\begin{equation}\label{eq:c-split}
\|c\|_2^2=\frac{1}{n}+\|a-\bar h\|_2^2
=\frac{1}{n}+\Big\|a-\tfrac{1}{n}\mathbf 1_n\Big\|_2^2.
\end{equation}
Multiplying \eqref{eq:c-split} by $N$ gives
\begin{equation}\label{eq:Nc2-decomp}
N\|c\|_2^2=\frac{N}{n}+N\Big\|a-\tfrac{1}{n}\mathbf 1_n\Big\|_2^2.
\end{equation}

Recall $a=\frac{1}{N}\sum_{i=1}^N s(X_i)$ and $\sum_{j=1}^n s_j(x)=1$ by mass preservation. Fix the labeled sample $S$. Then the map $x\mapsto s(x)$ is deterministic, and
$X_1,\dots,X_N$ are i.i.d.\ draws from $P_X$, independent of $S$. By Jensen's inequality, since $v\mapsto \|v\|_2^2$ is convex,
\begin{align}
\Big\|a-\tfrac{1}{n}\mathbf 1_n\Big\|_2^2
=\left\|\frac{1}{N}\sum_{i=1}^N \big(s(X_i)-\tfrac{1}{n}\mathbf 1_n\big)\right\|_2^2
\ \le\ \frac{1}{N}\sum_{i=1}^N \big\|s(X_i)-\tfrac{1}{n}\mathbf 1_n\big\|_2^2 .
\label{eq:jensen}
\end{align}
Taking conditional expectation given $S$ and using i.i.d.\ of $X_i$,
\begin{align}
\mathbb E\!\left[\Big\|a-\tfrac{1}{n}\mathbf 1_n\Big\|_2^2 \,\middle|\, S\right]
&\le \frac{1}{N}\sum_{i=1}^N \mathbb E\!\left[\big\|s(X_i)-\tfrac{1}{n}\mathbf 1_n\big\|_2^2 \,\middle|\, S\right]= \frac{1}{N}\, \mathbb E\!\left[\big\|s(X)-\tfrac{1}{n}\mathbf 1_n\big\|_2^2 \,\middle|\, S\right].
\label{eq:cond-exp}
\end{align}

Note that for any vector $u$,
\(
\|u-\tfrac{1}{n}\mathbf 1_n\|_2^2
= \|u\|_2^2 - \tfrac{2}{n}\mathbf 1_n^\top u + \tfrac{1}{n}.
\)
Applying this with $u=s(X)$ and using MP ($\mathbf 1_n^\top s(X)=1$ deterministically),
\begin{align}
\big\|s(X)-\tfrac{1}{n}\mathbf 1_n\big\|_2^2
= \|s(X)\|_2^2 - \frac{2}{n}\cdot 1 + \frac{1}{n}
= \|s(X)\|_2^2 - \frac{1}{n}.
\label{eq:expand}
\end{align}
Plugging \eqref{eq:expand} into \eqref{eq:cond-exp} gives
\begin{align}
\mathbb E\!\left[\Big\|a-\tfrac{1}{n}\mathbf 1_n\Big\|_2^2 \,\middle|\, S\right]
\le \frac{1}{N}\left\{\mathbb E\!\left[\|s(X)\|_2^2 \,\middle|\, S\right]-\frac{1}{n}\right\}.
\label{eq:main-line}
\end{align}

From the derivation of A2-(i), we have $\mathbb E\!\left[\|s(X)\|_2^2 \,\middle|\, S\right] = O\!\left(1/n\right)$. Also, by Cauchy--Schwarz,
\(
(\mathbf 1_n^\top s(X))^2 \le \|\mathbf 1_n\|_2^2\,\|s(X)\|_2^2 = n\,\|s(X)\|_2^2,
\)
and since $\mathbf 1_n^\top s(X)=1$, we have the \emph{deterministic} lower bound
\[
\|s(X)\|_2^2 \ \ge\ \frac{1}{n}\qquad\text{(hence the bracket in \eqref{eq:main-line} is nonnegative).}
\]
Multiplying \eqref{eq:main-line} by $N$,
\begin{align}
\mathbb E\!\left[ N\Big\|a-\tfrac{1}{n}\mathbf 1_n\Big\|_2^2 \,\middle|\, S\right]
\le \mathbb E\!\left[\|s(X)\|_2^2 \,\middle|\, S\right]-\frac{1}{n}
= O\!\left(\frac{1}{n}\right).
\label{eq:ENfluct}
\end{align}

For any $\varepsilon>0$, Markov’s inequality applied to \eqref{eq:ENfluct} yields
\[
\mathbb P\!\left( N\Big\|a-\tfrac{1}{n}\mathbf 1_n\Big\|_2^2 > \varepsilon \,\middle|\, S\right)
\ \le\ \frac{1}{\varepsilon}\,
\mathbb E\!\left[ N\Big\|a-\tfrac{1}{n}\mathbf 1_n\Big\|_2^2 \,\middle|\, S\right]
\ =\ O\!\left(\frac{1}{n}\right)\xrightarrow[n\to\infty]{}0.
\]
Hence
\[
N\Big\|a-\tfrac{1}{n}\mathbf 1_n\Big\|_2^2 \ \xrightarrow{p}\ 0.
\]

From \eqref{eq:Nc2-decomp} and $n/N\to f\in(0,1)$,
\[
N\|c\|_2^2 \;=\; \frac{N}{n} + o_p(1) \;\longrightarrow\; \frac{1}{f}.
\qedhere
\]

\paragraph{A2--(ii).} Recall that we have $H^\top\mathbf 1_n=\mathbf 1_n$, hence
$\bar h=\tfrac{1}{n}\mathbf 1_n$ and therefore $c = a+\tfrac{1}{n}\mathbf 1_n-\bar h = a$. 
Write
\[
\delta \;:=\; a-\tfrac{1}{n}\mathbf 1_n \in\mathbb R^n,
\qquad\text{so that}\qquad
c=\delta+\tfrac{1}{n}\mathbf 1_n.
\]
Then, for any $\varepsilon>0$,
\begin{equation}\label{eq:triangle}
\sqrt N\,\max_{1\le j\le n}|c_j|
\;\le\; \sqrt N\,\max_{j}|\delta_j| \;+\; \frac{\sqrt N}{n}
\;\le\; \sqrt N\,\|\delta\|_2 \;+\; \frac{\sqrt N}{n},
\end{equation}
where we used $\|\delta\|_\infty\le \|\delta\|_2$.

Thus it suffices to prove $\sqrt N\,\|\delta\|_2 \xrightarrow{p}0$ and
$\sqrt N/n\to 0$. The latter is trivial since $n/N\to f\in(0,1)$ implies $\frac{\sqrt N}{n} \;=\; \frac{1}{\sqrt{f\,n}}\;\longrightarrow\; 0$. 

Recall $a=\tfrac{1}{N}\sum_{i=1}^N s(X_i)$ with $X_1,\dots,X_N$ i.i.d.\ (independent of $S$),
and $\sum_{j=1}^n s_j(x)=1$ (MP). Conditioning on $S$, Jensen’s inequality for the convex map $v\mapsto\|v\|_2^2$ gives
\begin{align}
\mathbb E\!\left[\|\delta\|_2^2 \,\middle|\, S\right]
&=\mathbb E\!\left[\Big\|a-\tfrac{1}{n}\mathbf 1_n\Big\|_2^2 \,\middle|\, S\right]
\le \frac{1}{N}\,\mathbb E\!\left[\big\|s(X)-\tfrac{1}{n}\mathbf 1_n\big\|_2^2 \,\middle|\, S\right]
= \frac{1}{N}\left\{\mathbb E\!\left[\|s(X)\|_2^2 \,\middle|\, S\right]-\frac{1}{n}\right\}.
\label{eq:delta-second-moment}
\end{align}
(The last equality uses
$\|s-\tfrac{1}{n}\mathbf 1_n\|_2^2=\|s\|_2^2-\tfrac{2}{n}\mathbf 1_n^\top s+\tfrac{1}{n}
=\|s\|_2^2-\tfrac{1}{n}$ and MP: $\mathbf 1_n^\top s(X)=1$.)

By A2-(i) we already established $\mathbb E\!\left[\|s(X)\|_2^2 \,\middle|\, S\right]=O\!(1/n)$, 
and by Cauchy--Schwarz with mass preservation one has the (deterministic) lower bound
$\|s(X)\|_2^2\ge \tfrac{1}{n}$, so the bracket in
\eqref{eq:delta-second-moment} is nonnegative. Hence
\begin{equation}\label{eq:N-delta-mean}
\mathbb E\!\left[\,N\|\delta\|_2^2 \,\middle|\, S\right]
\;\le\; \mathbb E\!\left[\|s(X)\|_2^2 \,\middle|\, S\right]-\frac{1}{n}
\;=\; O\!\left(\frac{1}{n}\right).
\end{equation}

For any $\varepsilon>0$, by Markov’s inequality applied conditionally on $S$,
\[
\mathbb P\!\left(\sqrt N\,\|\delta\|_2>\varepsilon \,\middle|\, S\right)
\;=\; \mathbb P \!\left(N\|\delta\|_2^2>\varepsilon^2 \,\middle|\, S\right)
\;\le\; \frac{1}{\varepsilon^2}\,
\mathbb E\!\left[N\|\delta\|_2^2 \,\middle|\, S\right]
\;=\; O\!\left(\frac{1}{n}\right).
\]
Therefore $\sqrt N\,\|\delta\|_2 \xrightarrow{p} 0$.

Combine the two limits with \eqref{eq:triangle}:
\[
\sqrt N\,\max_{1\le j\le n}|c_j|
\;\le\; \sqrt N\,\|\delta\|_2 + \frac{\sqrt N}{n}
\;\xrightarrow{p}\; 0 + 0 \;=\; 0.
\qedhere
\]
\paragraph{A3.}
By construction (unpenalized intercept), $\widehat m$ is linear in $Y_S$ with weights summing to one,
so $b\equiv 0$.

\paragraph{Variance correction -- plug–in variance estimator for SF–PPI.} Recall the notations $Y_S=m_S+\varepsilon$, where $m_S=(m_0(X_1),\ldots,m_0(X_n))^\top$
and $\varepsilon=(\varepsilon_1,\ldots,\varepsilon_n)^\top$ with
$\mathbb E[\varepsilon\mid X_S]=0$ and $\Var(\varepsilon\mid X_S)=\sigma^2 I_n$.
Let $R:=(I_n-H)Y_S$. We can decompose $\|R\|_2^2$ into three terms
\begin{align}
\|R\|_2^2
&= \big\|(I_n-H)Y_S\big\|_2^2
 = \big\|(I_n-H)(m_S+\varepsilon)\big\|_2^2 \nonumber = \big\|(I_n-H)m_S + (I_n-H)\varepsilon\big\|_2^2  \nonumber\\
&= \|(I_n-H)m_S\|_2^2
   + 2\,\big\langle (I_n-H)m_S,(I_n-H)\varepsilon\big\rangle
   + \|(I_n-H)\varepsilon\|_2^2  \nonumber\\
&= \|(I_n-H)m_S\|_2^2
   + 2\,m_S^\top (I_n-H)^\top (I_n-H)\varepsilon
   + \varepsilon^\top (I_n-H)^\top (I_n-H)\varepsilon  \nonumber\\
&= \underbrace{\|(I_n-H)m_S\|_2^2}_{\text{Unscaled bias}} 
   + \underbrace{2\,\varepsilon^\top (I_n-H)^2 m_S}_{\text{Unscaled cross}} 
   + \underbrace{\varepsilon^\top (I_n-H)^2 \varepsilon}_{\text{Unscaled noise}}.
\label{eq:res-expansion}
\end{align}

Recall $A=K+n\lambda I_n$ and 
\[
H \;=\; K A^{-1}(I_n-\mathbf 1_n w^\top)+\mathbf 1_n w^\top,
\qquad
w=\frac{A^{-1}\mathbf 1_n}{\mathbf 1_n^\top A^{-1}\mathbf 1_n}.
\]
Using $\mathrm{tr}(UV)=\mathrm{tr}(VU)$,
\begin{align}
\label{eq:tag1_vc}
\mathrm{tr}(H)
= \mathrm{tr}\!\big(KA^{-1}(I_n-\mathbf 1_n w^\top)\big) + \mathrm{tr}(\mathbf 1_n w^\top)
= \mathrm{tr}(KA^{-1}) - w^\top K A^{-1}\mathbf 1_n + 1.    
\end{align}

Since $K\succeq 0$ and $A=K+n\lambda I_n$, the eigenvalues of $KA^{-1}$ are
$\tfrac{\lambda_i(K)}{\lambda_i(K)+n\lambda}\in[0,1]$. Hence
\begin{align}
\label{eq:tag_2_vc}
\|KA^{-1}\|_{\op}\le 1,
\qquad
\mathrm{tr}(KA^{-1})=\sum_{i=1}^n \frac{\lambda_i(K)}{\lambda_i(K)+n\lambda}
\le \frac{1}{n\lambda}\sum_{i=1}^n \lambda_i(K)
= \frac{\mathrm{tr}(K)}{n\lambda}\le \frac{1}{\lambda},
\end{align}
because $0\le k(x,x)\le 1$ implies $\mathrm{tr}(K)\le n$.

At this point, we derive a tight upper bound on $\|w\|_2^2$. Recall that we have
\[
\|w\|_2^2
= \frac{\mathbf 1_n^\top A^{-2}\mathbf 1_n}{\big(\mathbf 1_n^\top A^{-1}\mathbf 1_n\big)^2}.
\]
Let $z:=A^{-1/2}\mathbf 1_n$. Then
\[
\mathbf 1_n^\top A^{-2}\mathbf 1_n = z^\top A^{-1} z,
\qquad
\mathbf 1_n^\top A^{-1}\mathbf 1_n = z^\top z,
\]
so
\begin{equation}
\label{eq:rayleigh}
\|w\|_2^2
= \frac{z^\top A^{-1} z}{(z^\top z)^2}
\le \frac{\lambda_{\max}(A^{-1})\,z^\top z}{(z^\top z)^2}
= \frac{\lambda_{\max}(A^{-1})}{\mathbf 1_n^\top A^{-1}\mathbf 1_n}.
\end{equation}

By Rayleigh’s inequality,
\begin{equation}
\label{eq:denom-lb}
\mathbf 1_n^\top A^{-1}\mathbf 1_n
\;\ge\; \frac{\|\mathbf 1_n\|_2^2}{\lambda_{\max}(A)}
\;=\; \frac{n}{\lambda_{\max}(A)}.
\end{equation}

Using $\lambda_{\max}(A^{-1})=1/\lambda_{\min}(A)$ and \eqref{eq:rayleigh}–\eqref{eq:denom-lb},
\begin{equation}
\label{eq:master}
\|w\|_2^2
\le \frac{\lambda_{\max}(A)}{n\,\lambda_{\min}(A)}.
\end{equation}

Since $K\succeq 0$ and $k(x_i,x_i)\le 1$, $\lambda_{\max}(K)\le \operatorname{tr}(K)=\sum_{i=1}^n k(x_i,x_i)\le n.$ Hence
\[
\lambda_{\max}(A)\le \lambda_{\max}(K)+n\lambda \le n(1+\lambda),
\qquad
\lambda_{\min}(A)\ge n\lambda.
\]
Plugging these into \eqref{eq:master} yields a bound of $\|w\|_2^2$
\begin{align}
\label{eq:tight_upper_bound_w^2}
\|w\|_2^2
\le \frac{n(1+\lambda)}{n\cdot n\lambda}
= \frac{1+\lambda}{n\lambda}.    
\end{align}

For the mixed term, by Cauchy–Schwarz and $\|\mathbf 1_n\|_2=\sqrt n$,
\begin{align}
\label{eq:tag3_vc_mixterm}
\big|\,w^\top K A^{-1}\mathbf 1_n\,\big|
\le \|KA^{-1}\|_{\op}\,\|w\|_2\,\|\mathbf 1_n\|_2
\le \sqrt{\frac{1+\lambda}{\lambda}},    
\end{align}
where we used the bound $\|w\|_2^2$ \eqref{eq:tight_upper_bound_w^2}. Combining \eqref{eq:tag1_vc}, 
\eqref{eq:tag_2_vc}, and \eqref{eq:tag3_vc_mixterm}
\begin{align}
\label{eq:trace_H}
\mathrm{tr}(H)\ \le\ \frac{1}{\lambda} + 1 + \sqrt{\frac{1+\lambda}{\lambda}}.
\ 
\end{align}
Thus, we have $\frac{\mathrm{tr}(H)}{n}
\leq \frac{1}{ n  \lambda } + \frac{1}{ n  }+ \frac{1}{n} \sqrt{\frac{1+\lambda}{\lambda}}$, which implies $\mathrm{tr}(H) = o(n)$ since $n \lambda \xrightarrow{} \infty$.

Therefore,
\[
n-\mathrm{tr}(H)\ \ge\ n - O\!\Big(\frac{1}{\lambda}\Big)
\ \xrightarrow[n\to\infty]{}\ \infty
\qquad\text{whenever } n\lambda\to\infty.
\]

Set $B:=(I_n-H)^2$. Since $H$ is symmetric, $I_n-H$ is symmetric, hence $B\succeq0$ and
\[
\varepsilon^\top B\varepsilon=\|(I_n-H)\varepsilon\|_2^2\ge0.
\]
Conditional on $X_S$ (so $H$ and $B$ are deterministic), with $\varepsilon=(\varepsilon_1,\ldots,\varepsilon_n)^\top$
i.i.d.\ mean $0$, variance $\sigma^2$, and finite fourth moment, the standard quadratic–form
identities give
\begin{align}
\label{eq:QF-moments_E}
\mathbb E\!\left[\varepsilon^\top B\varepsilon \,\middle|\, X_S\right]
&= \mathbb E\!\left[\operatorname{tr}\!\big(B\,\varepsilon\varepsilon^\top\big)\,\middle|\, X_S\right]
 = \operatorname{tr}\!\Big(B\,\mathbb E[\varepsilon\varepsilon^\top\mid X_S]\Big)
 = \operatorname{tr}(B\,\sigma^2 I_n)
 = \sigma^2\,\operatorname{tr}(B),\\
 \label{eq:QF-moments_V}
\Var\!\left(\varepsilon^\top B\varepsilon \,\middle|\, X_S\right)
&= 2\sigma^4\,\operatorname{tr}(B^2)\;+\;\kappa_4\,\operatorname{tr}(B\circ B)
 \;\le\; \big(2\sigma^4+|\kappa_4|\big)\,\operatorname{tr}(B^2)
 \;=\; C\,\operatorname{tr}(B^2),
\end{align}
for a constant $C$ depending only on $\mathbb E[\varepsilon_1^4]$. Here, $B\circ B$ denotes the Hadamard (entrywise) square of B, and we used $\operatorname{tr}(B\circ B)
= \sum_{i=1}^n B_{ii}^2
\le \sum_{i=1}^n\sum_{j=1}^n B_{ij}^2
= \|B\|_F^2
= \operatorname{tr}(B^\top B)
= \operatorname{tr}(B^2)$ since $B$ is symmetric.

Next we bound $\mathrm{tr}(B)$ and $\mathrm{tr}(B^2)$ in terms of $n$.
Using $A=K+n\lambda I_n$ and $KA^{-1}=I_n-n\lambda A^{-1}$, one can write
\begin{align*}
I_n-H
&= I_n - \big(KA^{-1}(I_n-\mathbf 1_n w^\top)+\mathbf 1_n w^\top\big)
= \big(I_n - KA^{-1}\big) + \big(KA^{-1}\mathbf 1_n w^\top - \mathbf 1_n w^\top\big)\\
&= \big(I_n - KA^{-1}\big) + \big(KA^{-1}-I_n\big)\mathbf 1_n w^\top = \big(I_n - KA^{-1}\big)\big(I_n-\mathbf 1_n w^\top\big)\\
&= n\lambda\,A^{-1}(I_n-\mathbf 1_n w^\top).    
\end{align*}
Hence
\[
B=(I_n-H)^2=(n\lambda)^2\,(I_n-\mathbf 1_n w^\top)^\top A^{-2}(I_n-\mathbf 1_n w^\top)\succeq0.
\]
From $\|A^{-1}\|_{\op}\le 1/(n\lambda)$ and 
$\|MN\|_{\op} \;\le\; \|M\|_{\op}\,\|N\|_{\op}$ (i.e., submultiplicativity of spectral norm), we obtain
\begin{align*}
\|I_n-H\|_{\op}\;&\le\; n\lambda\,\|A^{-1}\|_{\op}\,\|I_n-\mathbf 1_n w^\top\|_{\op}
\;\le\; 1+\|\mathbf 1_n w^\top\|_{\op}\\
&\;=\; 1+\sqrt n\,\|w\|_2
\leq
1 + \sqrt{\frac{1 + \lambda}{\lambda}} =:C(\lambda),    
\end{align*}
using $\sqrt n\|w\|_2\le \sqrt{(1+\lambda)/\lambda} $ proved earlier. 

Thus, with $B=(I_n-H)^2$,
\begin{align}
\mathrm{tr}(B^2)
&= \mathrm{tr}\!\big((I_n-H)^4\big)
 = \mathrm{tr}\!\big(((I_n-H)^2)^2\big) \notag\\
&\le \big\|(I_n-H)^2\big\|_{\op}\;\mathrm{tr}\!\big((I_n-H)^2\big)
\qquad\text{(since $M\succeq0\Rightarrow \mathrm{tr}(M^2)\le \|M\|_{\op}\mathrm{tr}(M)$)} \notag\\
&\leq \|I_n-H\|_{\op}^2\;\mathrm{tr}(B)
 \;\le\; C(\lambda)^2\,\mathrm{tr}(B),
 \label{eq:traceB2-bound}
\end{align}

Moreover,
\begin{equation}\label{eq:traceB-identity}
\mathrm{tr}(B)=\mathrm{tr}\!\big((I_n-H)^2\big)=n-2\,\mathrm{tr}(H)+\mathrm{tr}(H^2).
\end{equation}

We already have $\mathrm{tr}(H)=O(1/\lambda)=o(n)$ since $n\lambda\to\infty$ \eqref{eq:trace_H}. A convenient bound for $\mathrm{tr}(H^2)$ follows from the decomposition
\[
H \;=\; KA^{-1} + \big(I_n-KA^{-1}\big)\,\mathbf 1_n w^\top.
\]
Write
\[
Q:=KA^{-1},\qquad T:=(I_n-Q)\,\mathbf 1_n w^\top\quad(\text{rank one matrix}),
\]
so that $H=Q+T$. Then
\[
\mathrm{tr}(H^2)=\mathrm{tr}(Q^2)+2\,\mathrm{tr}(QT)+\mathrm{tr}(T^2).
\]

Since $A=K+n\lambda I_n$ is a polynomial in $K$, $K$ and $A$ commute and share eigenvectors. If $K v_i=\lambda_i(K)v_i$ with $\lambda_i(K)\ge0$, then
\[
Q v_i
= K A^{-1} v_i
= \frac{\lambda_i(K)}{\lambda_i(K)+n\lambda}\,v_i
=: \mu_i v_i,\qquad \mu_i\in[0,1].
\]
Hence
\[
\mathrm{tr}(Q^2)=\sum_i \mu_i^2 \;\le\; \sum_i \mu_i=\mathrm{tr}(Q).
\]
Moreover,
\[
\mathrm{tr}(Q)=\sum_i \mu_i
\le \sum_i \frac{\lambda_i(K)}{n\lambda}
= \frac{\mathrm{tr}(K)}{n\lambda}
\le \frac{n}{n\lambda}=\frac{1}{\lambda},
\]
since $k(X_i,X_i)\le 1$ implies $\mathrm{tr}(K)=\sum_i K_{ii}\le n$. Note that, if $n\lambda\to\infty$ then $\frac{1}{\lambda}=o(n)$ because
$\frac{(1/\lambda)}{n}=\frac{1}{n\lambda}\to0$. Hence, it holds 
\begin{align}
\label{eq:trace_S2}
  \mathrm{tr}(Q^2)\le \mathrm{tr}(Q)=\mathrm{tr}(KA^{-1})\le 1/\lambda=o(n).  
\end{align}

Moreover,
\[
\mathrm{tr}(QT)=\mathrm{tr}\!\big(Q(I_n-Q)\,\mathbf 1_n w^\top\big)=w^\top Q(I_n-Q)\,\mathbf 1_n,
\]
so
\begin{align}
\label{eq:tract_ST}
|\mathrm{tr}(QT)|
  &\le \|Q(I_n-Q)\|_{\op}\,\|w\|_2\,\|\mathbf 1_n\|_2
   \le \Big(\max_{x\in[0,1]}x(1-x)\Big)\cdot \sqrt{\frac{1+\lambda}{n\lambda}}\cdot\sqrt n \\
   \nonumber
  &= \frac{1}{4}\,\sqrt{\frac{1+\lambda}{\lambda}}
   = O\!\Big(\frac{1}{\sqrt{\lambda}}\Big)
   = o(n).
\end{align}
where the second inequality uses that $Q=KA^{-1}$ is symmetric with spectrum in $[0,1]$ (hence
$\|Q(I_n-Q)\|_{\op}\le \max_{x\in[0,1]} x(1-x)=\tfrac14$), together with
$\|w\|_2 \le \sqrt{\frac{1+\lambda}{n\lambda}}$ and $\|\mathbf 1_n\|_2=\sqrt n$.

Finally, for $T=(I_n-Q)\mathbf 1_n w^\top$ we have
\[
\mathrm{tr}(T^2)=\big(w^\top (I_n-Q)\mathbf 1_n\big)^2.
\]

Using $I_n-Q=I_n-KA^{-1}=n\lambda\,A^{-1}$ and 
$w=\dfrac{A^{-1}\mathbf 1_n}{\mathbf 1_n^\top A^{-1}\mathbf 1_n}$,
\begin{align*}
w^\top (I_n-Q)\mathbf 1_n
&= \left(\frac{A^{-1}\mathbf 1_n}{\mathbf 1_n^\top A^{-1}\mathbf 1_n}\right)^\top
   \big(n\lambda\,A^{-1}\mathbf 1_n\big) = n\lambda\,\frac{(A^{-1}\mathbf 1_n)^\top A^{-1}\mathbf 1_n}{\mathbf 1_n^\top A^{-1}\mathbf 1_n} \\
&= n\lambda\,\frac{\mathbf 1_n^\top A^{-2}\mathbf 1_n}{\mathbf 1_n^\top A^{-1}\mathbf 1_n}
\;\le\; n\lambda\,\lambda_{\max}(A^{-1})
\;\le\; n\lambda\,\frac{1}{\lambda_{\min}(A)}
\;\le\; n\lambda\,\frac{1}{n\lambda}
\;=\;1,
\end{align*}
where the first inequality is a Rayleigh–quotient bound,
\[
\frac{\mathbf 1_n^\top A^{-2}\mathbf 1_n}{\mathbf 1_n^\top A^{-1}\mathbf 1_n}
=\frac{z^\top A^{-1} z}{z^\top z}\le \lambda_{\max}(A^{-1})\quad
(z:=A^{-1/2}\mathbf 1_n),
\]
the second uses eigenvalue reciprocity for SPD matrices,
\(\lambda_{\max}(A^{-1})=1/\lambda_{\min}(A)\), and the third uses
\(\lambda_{\min}(A)\ge n\lambda\) since \(A=K+n\lambda I_n\) with \(K\succeq 0\).

Therefore, 
\begin{align}
\label{eq:tract_T2}
 \mathrm{tr}(T^2)\le 1=O(1)=o(n).
\end{align}

Therefore, by summing \eqref{eq:trace_S2}, \eqref{eq:tract_ST}, and \eqref{eq:tract_T2},
\[
0\le \mathrm{tr}(H^2)
= \mathrm{tr}(Q^2)+2\,\mathrm{tr}(QT)+\mathrm{tr}(T^2)
\le \mathrm{tr}(Q^2)+2\,|\mathrm{tr}(QT)|+\mathrm{tr}(T^2)
\le \frac{1}{\lambda}+
\frac{1}{2}\sqrt{\frac{1 + \lambda}{\lambda }}
+1.
\]
Dividing by $n$ and using $n\lambda\to\infty$,
\[
\frac{\mathrm{tr}(H^2)}{n}
\le \frac{1}{\lambda n}
+
\frac{1}{2n}\sqrt{\frac{1 + \lambda}{\lambda }}
+\frac{1}{n}
\;\xrightarrow[n\to\infty]{}\;0,
\]
hence $\mathrm{tr}(H^2)=o(n)$.

Recall that $B=(I_n-H)^2=I_n-2H+H^2,$ so taking traces gives $\mathrm{tr}(B)=\mathrm{tr}(I_n)-2\,\mathrm{tr}(H)+\mathrm{tr}(H^2)
= n-2\,\mathrm{tr}(H)+\mathrm{tr}(H^2).$ From the bounds already established, $\mathrm{tr}(H)=o(n)$ and $\mathrm{tr}(H^2)=o(n)$, hence
\[
\mathrm{tr}(B)=n+o(n).
\]
Consequently,
\begin{align}
\label{eq:trB_over_nminustrH}
\frac{\mathrm{tr}(B)}{\,n-\mathrm{tr}(H)\,}
=\frac{n-2\,\mathrm{tr}(H)+\mathrm{tr}(H^2)}{\,n-\mathrm{tr}(H)\,}
=1-\frac{\mathrm{tr}(H)-\mathrm{tr}(H^2)}{\,n-\mathrm{tr}(H)\,}
\;\longrightarrow\;1,
\end{align}
because the second term vanishes due to $\mathrm{tr}(H)-\mathrm{tr}(H^2)=o(n)$ and $n-\mathrm{tr}(H)\sim n$ (i.e., $(n-\mathrm{tr}(H))/n\to 1$).

Our eventual goal is to prove the df–adjusted residual variance is consistent:
\[
\widehat\sigma^2_{\mathrm{KRR}}
:=\frac{\|(I_n-H)Y_S\|_2^2}{\,n-\mathrm{tr}(H)\,}
\ \xrightarrow{p}\ \sigma^2 .
\]
Using \eqref{eq:res-expansion} and writing $B:=(I_n-H)^2$, we divide by $n-\mathrm{tr}(H)$ to obtain
\begin{align}
\widehat\sigma^2_{\mathrm{KRR}}
&=\underbrace{\frac{\|(I_n-H)m_S\|_2^2}{\,n-\mathrm{tr}(H)\,}}_{\text{Bias}} 
  \;+\; \underbrace{\frac{2\,\varepsilon^\top B\,m_S}{\,n-\mathrm{tr}(H)\,}}_{\text{Cross}} 
  \;+\; \underbrace{\frac{\varepsilon^\top B\,\varepsilon}{\,n-\mathrm{tr}(H)\,}}_{\text{Noise}}.
\label{eq:three-terms}
\end{align}
Thus it suffices to show the following three statements:
\begin{align}
&\frac{\|(I_n-H)m_S\|_2^2}{\,n-\mathrm{tr}(H)\,} \xrightarrow{p} 0,
\qquad\text{(Bias vanishes)}
\label{eq:bias-vanish}\\[2mm]
&\frac{\varepsilon^\top B\,m_S}{\,n-\mathrm{tr}(H)\,} \xrightarrow{p} 0,
\qquad\text{(Cross vanishes)}
\label{eq:cross-vanish}\\[2mm]
&\frac{\varepsilon^\top B\,\varepsilon}{\,n-\mathrm{tr}(H)\,} \xrightarrow{p} \sigma^2.
\qquad\text{(Noise converges)}
\label{eq:noise-converge}
\end{align}

We established the following facts, which yield the conclusion:
\begin{itemize}[leftmargin=*,itemsep=2pt]
\item[--]$\mathrm{tr}(B)=\mathrm{tr}\!\big((I_n-H)^2\big)=n-2\,\mathrm{tr}(H)+\mathrm{tr}(H^2)=n+o(n)$ and
$\dfrac{\mathrm{tr}(B)}{\,n-\mathrm{tr}(H)\,}\to 1$;
\item[--]$\mathrm{tr}(B^2)\le C(\lambda)^2\,\mathrm{tr}(B)$ for a finite constant $C(\lambda)$, and
$\|I_n-H\|_{\op}\le C(\lambda)$;
\item[--]conditional quadratic–form moments:
$\mathbb E[\varepsilon^\top B\varepsilon\mid X_S]=\sigma^2\,\mathrm{tr}(B)$ and
$\Var(\varepsilon^\top B\varepsilon\mid X_S)\le C\,\mathrm{tr}(B^2)$ for some constant $C$;
\item[--]$n-\mathrm{tr}(H)\sim n$ as $n\to\infty$ (here $a\sim b$ denotes asymptotic equivalence, i.e., $a/b\to 1$; equivalently $\operatorname{tr}(H)=o(n)$).
\end{itemize}

\paragraph{Variance correction -- bias term.} Let $\tilde m(x):=s(x)^\top m_S$ be the KRR predictor trained on noiseless labels; then
$\|(I_n-H)m_S\|_2^2 = \sum_{j=1}^n(\tilde m(X_j)-m_0(X_j))^2$.
By A1 and the uniform stability of KRR with square loss, $\frac{1}{n}\,\|(I_n-H)m_S\|_2^2 \xrightarrow{p} 0.$ Since $n-\mathrm{tr}(H)\sim n$, it follows that
\[
\frac{\|(I_n-H)m_S\|_2^2}{\,n-\mathrm{tr}(H)\,}
\xrightarrow{p}  0,
\]
so \eqref{eq:bias-vanish} holds.

\paragraph{Variance correction -- noise term.} Using the moment bounds in \eqref{eq:QF-moments_E}–\eqref{eq:QF-moments_V}, define
\[
Z_n\;:=\;\frac{\varepsilon^\top B\varepsilon-\sigma^2 \,\mathrm{tr}(B)}{\,n-\mathrm{tr}(H)\,}.
\]
Conditioning on $X_S$, we have $\mathbb E[Z_n\mid X_S]=0$ and
\[
\Var(Z_n\mid X_S)
=\frac{\Var(\varepsilon^\top B\varepsilon\mid X_S)}{(n-\mathrm{tr}(H))^2}
\;\le\;\frac{C\,\mathrm{tr}(B^2)}{(n-\mathrm{tr}(H))^2}
\;\le\;\frac{C\,C(\lambda)^2\,\mathrm{tr}(B)}{(n-\mathrm{tr}(H))^2},
\]
where we used $\mathrm{tr}(B^2)\le C(\lambda)^2\,\mathrm{tr}(B)$ \eqref{eq:traceB2-bound}. Since $\mathrm{tr}(B)=n+o(n)$ and
$n-\mathrm{tr}(H)\sim n$, the right-hand side is $O(1/n)\to 0$. By Chebyshev’s inequality, for any $\epsilon>0$,
\[
\mathbb P\!\left(\left|Z_n\right|>\epsilon\,\middle|\,X_S\right)
\;\le\;\frac{\Var(Z_n\mid X_S)}{\epsilon^2}\;\longrightarrow\;0,
\]
hence
\[
\frac{\varepsilon^\top B\varepsilon-\sigma^2\,\mathrm{tr}(B)}{\,n-\mathrm{tr}(H)\,}\xrightarrow{p}0.
\]
Moreover,
\[
\frac{\sigma^2\,\mathrm{tr}(B)}{\,n-\mathrm{tr}(H)\,}
=\sigma^2\,\frac{\mathrm{tr}(B)}{\,n-\mathrm{tr}(H)\,}\ \longrightarrow\ \sigma^2,
\]
because $\mathrm{tr}(B)/(n-\mathrm{tr}(H))\to 1$. Combining the two displays yields
\[
\frac{\varepsilon^\top (I_n-H)^2\varepsilon}{\,n-\mathrm{tr}(H)\,}\xrightarrow{p}\sigma^2.
\]

\paragraph{Variance correction -- cross term.} 
By Cauchy–Schwarz,
\[
\frac{\big|\varepsilon^\top B m_S\big|}{\,n-\mathrm{tr}(H)\,}
\;\le\;
\frac{\|(I_n-H)\varepsilon\|_2}{\sqrt{\,n-\mathrm{tr}(H)\,}}
\cdot
\frac{\|(I_n-H)m_S\|_2}{\sqrt{\,n-\mathrm{tr}(H)\,}}.
\]
The first factor equals
\[
\frac{\|(I_n-H)\varepsilon\|_2}{\sqrt{\,n-\mathrm{tr}(H)\,}}
=\sqrt{\frac{\varepsilon^\top (I_n-H)^2\varepsilon}{\,n-\mathrm{tr}(H)\,}},
\]
which converges in probability to $\sigma$ by the noise term analysis and the continuous mapping theorem. The second factor equals
\[
\frac{\|(I_n-H)m_S\|_2}{\sqrt{\,n-\mathrm{tr}(H)\,}}
=\sqrt{\frac{\|(I_n-H)m_S\|_2^2}{\,n-\mathrm{tr}(H)\,}}
\ \xrightarrow{p}\ 0
\]
by \eqref{eq:bias-vanish}. Therefore, by Slutsky’s theorem (product of a sequence
converging to a constant and a sequence converging to $0$),
\[
\frac{\big|\varepsilon^\top B m_S\big|}{\,n-\mathrm{tr}(H)\,}
\ \xrightarrow{p}\ 0,
\]
which proves \eqref{eq:cross-vanish}.

\paragraph{Variance correction -- conclusion.} Combining \eqref{eq:bias-vanish}–\eqref{eq:noise-converge} in \eqref{eq:three-terms} yields
$\widehat\sigma^2_{\mathrm{KRR}}\xrightarrow{p}\sigma^2$, completing the proof.
\end{proof}



\newpage
\clearpage

\section{Algorithms}\label{sec:ALGORITHM}
\subsection*{Algorithm 1: PPI point estimator for general moment equation with a fixed predictor}


\begin{algorithm}[H]
\scriptsize
\setlength{\baselineskip}{9.5pt}
\caption{PPI point estimator for general moment equation with a fixed predictor}
\label{alg:ppi_point_estimation}
\KwIn{Labeled set $S=\{(x_j,y_j)\}_{j=1}^n$, unlabeled covariates $\{x_i\}_{i=1}^N$, predictor $m(\cdot)$, estimating function $U(\theta;x,y)$.}
\KwOut{$\hat\theta_{\mathrm{PPI}}$ such that $\widehat U_{\mathrm{PPI}}(\hat\theta_{\mathrm{PPI}})=0$.}

\For{$j \in S$}{compute $\tilde U_j(\theta)\leftarrow U(\theta;x_j,m(x_j))$\;}
\For{$j \in S$}{compute $U_j(\theta)\leftarrow U(\theta;x_j,y_j)$\;}

\textbf{Rectifier:}\quad
$\displaystyle \Delta_\theta \leftarrow \frac{1}{n}\sum_{j\in S}\!\big[\,U_j(\theta)-\tilde U_j(\theta)\,\big]$\;

\textbf{Model-based fit:}\quad
\For{$i=1$ \KwTo $N$}{compute $\tilde U_i(\theta)\leftarrow U(\theta;x_i,m(x_i))$\;}
$\displaystyle m_\theta \leftarrow \frac{1}{N}\sum_{i=1}^N \tilde U_i(\theta)$\;

\textbf{Bias-corrected equation:}\quad
$\widehat U_{\mathrm{PPI}}(\theta)\leftarrow m_\theta+\Delta_\theta$\;

\textbf{Solve:}\quad
$\hat\theta_{\mathrm{PPI}} \leftarrow \arg\{\theta:\widehat U_{\mathrm{PPI}}(\theta)=0\}$\;  

\Return{$\hat\theta_{\mathrm{PPI}}$}\;
\end{algorithm}

\subsection*{Algorithm 2: 95\% Wald confidence intervals for the PPI estimator for general moment equation with a fixed predictor}

\begin{algorithm}[H]
\scriptsize
\setlength{\baselineskip}{9.5pt}
\caption{95\% Wald confidence intervals for the PPI estimator for general moment equation with a fixed predictor}
\label{alg:ppi_ci}
\KwIn{Labeled set $S=\{(x_j,y_j)\}_{j=1}^n$, unlabeled covariates $\{x_i\}_{i=1}^N$, predictor $m(\cdot)$, estimating function $U(\theta;x,y)$, and a root $\hat\theta_{\mathrm{PPI}}$ solving $\widehat U_{\mathrm{PPI}}(\hat\theta_{\mathrm{PPI}})=0$.}
\KwOut{Componentwise $95\%$ Wald confidence intervals for $\theta_0$.}

\textbf{Jacobian (information) estimate:}\quad
$\displaystyle \widehat I \leftarrow -\,\frac{1}{n}\sum_{j\in S}\partial_\theta U\!\big(\hat\theta_{\mathrm{PPI}};x_j,y_j\big)$\;

\textbf{Score-variance part:}\quad
$\displaystyle \widehat V_1 \leftarrow \widehat I^{-1}\!\Big(\frac{1}{n}\sum_{j\in S} U\!\big(\hat\theta_{\mathrm{PPI}};x_j,y_j\big)^{\otimes 2}\Big)\widehat I^{-1}$\;

\textbf{Rectifier-variance part:}\quad
\For{$j \in S$}{
$\displaystyle \Delta_j(\hat\theta_{\mathrm{PPI}})\leftarrow U\!\big(\hat\theta_{\mathrm{PPI}};x_j,y_j\big)-U\!\big(\hat\theta_{\mathrm{PPI}};x_j,m(x_j)\big)$\;
}
$\displaystyle \widehat V_2 \leftarrow \widehat I^{-1}\!\Big(\frac{1}{n}\sum_{j\in S}\Delta_j(\hat\theta_{\mathrm{PPI}})^{\otimes 2}\Big)\widehat I^{-1}$\;

\textbf{Asymptotic covariance of $\hat\theta_{\mathrm{PPI}}$:}\quad
$\displaystyle \widehat\Sigma_{\hat\theta} \leftarrow N^{-1}\widehat V_1 \;+\; \big(n^{-1}-N^{-1}\big)\widehat V_2$\;

\textbf{95\% CIs (componentwise):}\quad
\For{$j=1$ \KwTo $p$}{
Output $\displaystyle \Big[\ \hat\theta_{\mathrm{PPI},j}\ \pm\ 1.96\,\sqrt{(\widehat\Sigma_{\hat\theta})_{jj}}\ \Big]$\;
}
\Return{$\big\{\,[\hat\theta_{\mathrm{PPI},j}\pm 1.96\sqrt{(\widehat\Sigma_{\hat\theta})_{jj}}]\,\big\}_{j=1}^p$}\;
\end{algorithm}

\subsection*{Algorithm 3: CF-PPI point estimator for general moment equation}

\begin{algorithm}[H]
\scriptsize
\setlength{\baselineskip}{9.5pt}
\caption{CF-PPI point estimator for general moment equation}
\label{alg:cf_ppi_point_estimation}
\KwIn{Labeled set $S=\{(x_j,y_j)\}_{j=1}^n$, unlabeled covariates $\{x_i\}_{i=1}^N$, learning algorithm $\mathcal A$ (maps a labeled subset to a predictor), estimating function $U(\theta;x,y)$, folds $K\ge2$.}
\KwOut{$\hat\theta^{\mathrm{cf}}_{\mathrm{PPI}}$ such that $\widehat U^{\mathrm{cf}}_{\mathrm{PPI}}(\hat\theta^{\mathrm{cf}}_{\mathrm{PPI}})=0$.}

\textbf{Split (cross–fitting):}\quad Partition $S$ into $K$ disjoint folds $S_1,\ldots,S_K$.

\textbf{Train $K$ out–of–fold predictors:}\quad
For each $k\in\{1,\ldots,K\}$, fit $\widehat m^{(k)}\leftarrow \mathcal A(S\setminus S_k)$.

\textbf{Unlabeled model–fit (fold average):}\quad
\[
\widehat m^{\mathrm{cf}}_\theta \;\leftarrow\; \frac{1}{N}\sum_{i=1}^N
U\!\big(\theta; x_i, \widehat m^\star(x_i)\big)
\quad\text{such that }\widehat m^\star(x)=\frac1K\sum_{k=1}^K\widehat m^{(k)}(x)
\]

\textbf{Labeled rectifier (out–of–fold residuals):}\quad
\[
\widehat \Delta^{\mathrm{cf}}_\theta \;\leftarrow\; \frac{1}{n}\sum_{k=1}^K\sum_{i\in S_k}
\Big[\,U(\theta; x_i, y_i)-U\!\big(\theta; x_i, \widehat m^{(k)}(x_i)\big)\,\Big].
\]

\textbf{Cross–fit score:}\quad
\[
\widehat U^{\mathrm{cf}}_{\mathrm{PPI}}(\theta)\;\leftarrow\; \widehat m^{\mathrm{cf}}_\theta+ \widehat \Delta^{\mathrm{cf}}_\theta.
\]

\textbf{Solve:}\quad
$\widehat\theta^{\mathrm{cf}}_{\mathrm{PPI}} \leftarrow
\arg\{\theta:\widehat U^{\mathrm{cf}}_{\mathrm{PPI}}(\theta)=0\}$\; 

\Return{$\widehat\theta^{\mathrm{cf}}_{\mathrm{PPI}}$}\;

\textit{\textbf{$\ast$Note.}} If the estimating equation is linear in its second argument (e.g., mean, GLM scores, etc): \[
\widehat m^{\mathrm{cf}}_\theta \;\leftarrow\; 
\frac{1}{N}\sum_{i=1}^N
U\!\big(\theta; x_i, \widehat m^\star(x_i)\big)=
\frac{1}{KN}\sum_{k=1}^K\sum_{i=1}^N
U\!\big(\theta; x_i, \widehat m^{(k)}(x_i)\big)
\]
\end{algorithm}

\subsection*{Algorithm 4: CF–PPI point estimator for semiparametric mean estimation}

\FloatBarrier
\begin{algorithm}[H]
\scriptsize
\setlength{\baselineskip}{9.5pt}
\caption{CF–PPI point estimator for semiparametric mean estimation}
\label{alg:cf_ppi_mean_point}
\KwIn{Labeled set $S=\{(x_j,y_j)\}_{j=1}^n$, unlabeled covariates $\{x_i\}_{i=1}^N$, learning algorithm $\mathcal A$ (maps a labeled subset to a predictor), folds $K\ge2$.}
\KwOut{$\hat\theta^{\mathrm{cf}}_{\mathrm{PPI}}$.}

\textbf{Split (cross–fitting):}\quad Partition $S$ into $K$ disjoint folds $S_1,\ldots,S_K$.

\textbf{Train $K$ out–of–fold predictors:}\quad
\For{$k\in\{1,\ldots,K\}$}{
\Indp Fit $\widehat m^{(k)} \leftarrow \mathcal A(S\setminus S_k)$; for $j\in S_k$, set $\widehat m^{(-)}(x_j)\leftarrow \widehat m^{(k)}(x_j)$ (out–of–fold).\;
\Indm
}

\textbf{Fold–average predictor (for unlabeled $X$):}\quad
Define $\displaystyle \widehat m^{\star}(x)\leftarrow \frac{1}{K}\sum_{k=1}^K \widehat m^{(k)}(x)$.

\textbf{Unlabeled model–based mean:}\quad
$\displaystyle \widehat m^{\mathrm{cf}} \leftarrow \frac{1}{N}\sum_{i=1}^N \widehat m^{\star}(x_i)$.

\textbf{Rectifier (cross–fit residuals):}\quad
\[
\widehat\Delta^{\mathrm{cf}} \leftarrow \frac{1}{n}\sum_{j\in S}\big\{y_j-\widehat m^{(-)}(x_j)\big\}
\;=\; \frac{1}{n}\sum_{k=1}^K\sum_{j\in S_k}\big\{y_j-\widehat m^{(k)}(x_j)\big\}.
\]

\textbf{Estimator:}\quad
$\displaystyle \widehat\theta^{\mathrm{cf}}_{\mathrm{PPI}} \leftarrow \widehat m^{\mathrm{cf}}+\widehat\Delta^{\mathrm{cf}}$.

\Return{$\widehat\theta^{\mathrm{cf}}_{\mathrm{PPI}}$}\;
\smallskip
\textit{\textbf{$\ast$Note.}} The CF–PPI score is
$\widehat U^{\mathrm{cf}}_{\mathrm{PPI}}(\theta)
= \widehat m^{\mathrm{cf}}_{\theta}+\widehat{\Delta}^{\mathrm{cf}}_{\theta}
= (\widehat m^{\mathrm{cf}}-\theta)+\widehat{\Delta}^{\mathrm{cf}}$,
where $\widehat m^{\mathrm{cf}}:=N^{-1}\sum_{i=1}^N \widehat m^{\star}(x_i)$,
$\widehat m^{\mathrm{cf}}_{\theta}=\widehat m^{\mathrm{cf}}-\theta$, and
$\widehat{\Delta}^{\mathrm{cf}}_{\theta}:=n^{-1}\sum_{j\in S}\{y_j-\widehat m^{(-)}(x_j)\}$. In the mean case the rectifier does not depend on $\theta$ because
$U(\theta;x,y)-U(\theta;x,\widehat m(x))=y-\widehat m(x)$.
\end{algorithm}
\FloatBarrier

\subsection*{Algorithm 5: 95\% Wald confidence interval for the CF-PPI mean estimator for semiparametric mean estimation}

\begin{algorithm}[H]
\scriptsize
\setlength{\baselineskip}{9.5pt}
\caption{95\% Wald confidence interval for $\hat\theta^{\mathrm{cf}}_{\mathrm{PPI}}$ for semiparametric mean estimation}
\label{alg:cf_ppi_mean_ci}
\KwIn{Labeled set $S=\{(x_j,y_j)\}_{j=1}^n$, unlabeled covariates $\{x_i\}_{i=1}^N$, fold--average predictor $\widehat{m}^{\star}(\cdot)$ and out-of-fold (O.O.F.) labeled predictions $\widehat{m}^{(-)}(x_j)$ from Algorithm~\ref{alg:cf_ppi_mean_point}; estimator $\hat\theta^{\mathrm{cf}}_{\mathrm{PPI}}=\widehat{m}^{\mathrm{cf}}+\widehat{\Delta}^{\mathrm{cf}}$ with
$\widehat{m}^{\mathrm{cf}}=N^{-1}\sum_{i=1}^N \widehat{m}^{\star}(x_i)=\big(KN\big)^{-1}\sum_{k=1}^K\sum_{i=1}^N \widehat{m}^{(k)}(x_i)$
and $\widehat{\Delta}^{\mathrm{cf}}=n^{-1}\sum_{j\in S}\{y_j-\widehat{m}^{(-)}(x_j)\}$.}
\KwOut{A 95\% Wald confidence interval for $\theta_0=\mathbb E[Y]$.}

\textbf{Unlabeled variance (model--fit term):}\quad
$$\displaystyle \widehat V_U \;\leftarrow\; \frac{1}{N}\,\widehat{\Var}\!\big(\widehat{m}^{\star}(X)\big)
\;=\; \frac{1}{N}\cdot\frac{1}{N-1}\sum_{i=1}^N \big(\widehat{m}^{\star}(x_i)-\widehat{m}^{\mathrm{cf}}\big)^2.$$

\textbf{Rectifier variance (out-of-fold (O.O.F.) residuals):}\quad
$$\displaystyle \widehat V_S \;\leftarrow\; \frac{1}{n}\,\widehat{\Var}\!\big(Y-\widehat{m}^{(-)}(X)\big)
\;=\; \frac{1}{n}\cdot\frac{1}{n-1}\sum_{j\in S}\Big(\,y_j-\widehat{m}^{(-)}(x_j)-\widehat{\Delta}^{\mathrm{cf}}\,\Big)^2.$$

\textbf{Asymptotic variance:}\quad
$\displaystyle \widehat{\Var}\!\big(\hat\theta^{\mathrm{cf}}_{\mathrm{PPI}}\big)
\;\leftarrow\; \widehat V_U + \widehat V_S.$

\textbf{Standard error:}\quad
$\displaystyle \widehat{\mathrm{se}} \;\leftarrow\; \sqrt{\widehat{\Var}\!\big(\hat\theta^{\mathrm{cf}}_{\mathrm{PPI}}\big)}.$

\textbf{return (95\% Wald CI):}\quad
$\displaystyle \Big[\,\hat\theta^{\mathrm{cf}}_{\mathrm{PPI}} \ \pm\ 1.96\,\widehat{\mathrm{se}}\,\Big]$.

\smallskip
\textit{\textbf{$\ast$Note.}} The variance estimator equals
$\widehat{\Var}(\widehat{m}^{\star}(X))/N + \widehat{\Var}(Y-\widehat{m}^{(-)}(X))/n$.
\end{algorithm}

\subsection*{Algorithm 6: SF--PPI point estimator for semiparametric mean estimation}
\begin{algorithm}[H]
\scriptsize
\setlength{\baselineskip}{9.5pt}
\caption{SF–PPI point estimator for semiparametric mean estimation}
\label{alg:sf_ppi_mean_point}
\KwIn{Labeled set $S=\{(x_j,y_j)\}_{j=1}^n$, unlabeled covariates $\{x_i\}_{i=1}^N$, learning algorithm $\mathcal A$ (maps a labeled set to a predictor).}
\KwOut{$\hat\theta^{\mathrm{sf}}_{\mathrm{PPI}}$.}

\textbf{Fit single predictor:}\quad
$\widehat m \leftarrow \mathcal A(S)$.

\textbf{Unlabeled model--based mean:}\quad
$\displaystyle \widehat m^{\mathrm{sf}} \leftarrow \frac{1}{N}\sum_{i=1}^N \widehat m(x_i)$.

\textbf{Rectifier (in--sample residuals):}\quad
$\displaystyle \widehat\Delta^{\mathrm{sf}} \leftarrow \frac{1}{n}\sum_{j\in S}\!\big\{\,y_j-\widehat m(x_j)\,\big\}$.

\textbf{Estimator (closed form):}\quad
$\displaystyle \hat\theta^{\mathrm{sf}}_{\mathrm{PPI}} \leftarrow \widehat m^{\mathrm{sf}}+\widehat\Delta^{\mathrm{sf}}$.

\Return{$\hat\theta^{\mathrm{sf}}_{\mathrm{PPI}}$}.

\smallskip
\textit{\textbf{$\ast$Note.}} The SF--PPI point estimator is the vanilla PPI plug--in with a single fitted predictor: the model is trained once on the labeled set $S$ and reused both to impute $\widehat m(x_i)$ for the unlabeled covariates and to compute the in–sample labeled residuals $y_j-\widehat m(x_j)$. No cross–fitting is performed; the “VC’’ in SF-PPI-VC only affects variance/CI, not the point estimate.

\smallskip
\textit{\textbf{$\ast$Note.}} For the learning algorithm, we adopt kernel ridge regression (Gaussian kernel) with an unpenalized intercept; any supervised learner may be substituted.
\end{algorithm}

\subsection*{Algorithm 7: 95\% Wald confidence interval for the SF--PPI estimator with variance correction}

\begin{algorithm}[H]
\tiny
\setlength{\baselineskip}{6.0pt}
\caption{95\% Wald confidence interval for $\widehat\theta^{\mathrm{sf}}_{\mathrm{PPI}}$ with variance correction (KRR + unpenalized intercept)}
\label{alg:sf_ppi_vc_ci}
\KwIn{Labeled set $S=\{(x_j,y_j)\}_{j=1}^n$, unlabeled covariates $\{x_i\}_{i=1}^N$, Gaussian kernel $k_\ell(x,x')=\exp\!\big(-(x-x')^2/(2\ell^2)\big)$, ridge penalty $\lambda>0$.}
\KwOut{A 95\% Wald confidence interval for $\theta_0=\mathbb E[Y]$.}

\textbf{KRR with unpenalized intercept.}\\
Compute the Gram matrix $K\in\mathbb R^{n\times n}$, $K_{jj'}=k_\ell(x_j,x_{j'})$; set
$A\leftarrow K+n\lambda I_n$, $\mathbf{1}_n=(1,\ldots,1)^\top$, and
\[
w^\top \;\leftarrow\; \frac{\mathbf{1}_n^\top A^{-1}}{\mathbf{1}_n^\top A^{-1}\mathbf{1}_n}\,.
\]

\textbf{Hat matrix on labeled points ($n\times n$):}\\
\[
H \;\leftarrow\; K A^{-1}\big(I_n-\mathbf{1}_n w^\top\big)+\mathbf{1}_n w^\top \in\mathbb R^{n\times n}.
\]

\textbf{Unlabeled weights ($s(x_i)\in\mathbb R^{n}$):}\\
\[
s(x_i)^\top \;\leftarrow\; k(x_i,X_S)^\top A^{-1}\big(I_n-\mathbf{1}_n w^\top\big)+\mathbf{1}_n w^\top .
\]

\textbf{Weight matrix ($N\times n$):}\\
\[
S_U \;=\; \begin{bmatrix}
s(x_1)^\top\\[-2pt]\vdots\\[-2pt]s(x_N)^\top
\end{bmatrix} \in \mathbb R^{N\times n}.
\]

\textbf{Predictions:}\\
\[
\widehat m_S \leftarrow H Y_S\in\mathbb R^n,\qquad
\widehat m_U \leftarrow S_U Y_S\in\mathbb R^N.
\]

\textbf{SF--PPI point estimate:}\\
\[
\widehat m^{\mathrm{sf}} \leftarrow \frac1N\sum_{i=1}^N \widehat m_U[i],\qquad
\widehat\Delta^{\mathrm{sf}} \leftarrow \frac1n\sum_{j\in S}\!\big(y_j-\widehat m_S[j]\big),\qquad
\widehat\theta^{\mathrm{sf}}_{\mathrm{PPI}} \leftarrow \widehat m^{\mathrm{sf}}+\widehat\Delta^{\mathrm{sf}}.
\]

\textbf{Residual variance (df–corrected):}\\
\[
\widehat\sigma^2_{\mathrm{KRR}} \;\leftarrow\; \frac{\|Y_S-\widehat m_S\|_2^2}{\,n-\mathrm{tr}(H)\,}.
\]

\textbf{Unlabeled signal variance:}\\
\[
s_U^2 \leftarrow \frac1N\sum_{i=1}^N \|s(x_i)\|_2^2
= \frac{1}{N}\|S_U\|_F^2,\qquad
\widehat{\Var}(\widehat m_U) \leftarrow \frac{1}{N-1}\sum_{i=1}^N\big(\widehat m_U[i]-\widehat m^{\mathrm{sf}}\big)^2,
\]
\[
\widehat V_{\mathrm{signal}} \;\leftarrow\; \frac{\widehat{\Var}(\widehat m_U)-\widehat\sigma^2_{\mathrm{KRR}}\, s_U^2}{N}.
\]

\textbf{Labeled noise (variance--correction) term:}\\
\[
a \leftarrow \frac1N S_U^\top \mathbf{1}_N\in\mathbb R^n,\qquad
b \leftarrow \frac1n\big(I_n-H^\top\big)\mathbf{1}_n\in\mathbb R^n,\qquad
c \leftarrow a+b,\qquad
\widehat V_{\mathrm{VC}} \leftarrow \widehat\sigma^2_{\mathrm{KRR}}\,\|c\|_2^2.
\]

\textbf{Asymptotic variance and SE:}\;
\[
\widehat{\Var}\!\big(\widehat\theta^{\mathrm{sf}}_{\mathrm{PPI}}\big) \leftarrow \widehat V_{\mathrm{signal}} + \widehat V_{\mathrm{VC}},\qquad
\widehat{\mathrm{se}} \leftarrow \sqrt{\widehat{\Var}\!\big(\widehat\theta^{\mathrm{sf}}_{\mathrm{PPI}}\big)}.
\]

\textbf{Return (95\% Wald CI):} $\quad \Big[\,\widehat\theta^{\mathrm{sf}}_{\mathrm{PPI}} \ \pm\ 1.96\,\widehat{\mathrm{se}}\,\Big].$

\textit{\textbf{$\ast$Note.}} With an unpenalized intercept, $H\mathbf{1}_n=\mathbf{1}_n$, hence $b=\mathbf 0$ and $c=a$.
\end{algorithm}
\end{appendix}


\begin{funding}
No funding received for this research.
\end{funding}

\begin{supplement}
\stitle{Program Codes}
\sdescription{The R code implementing the simulation experiments and real-data application, as well as the Energy Efficiency dataset, is available in the Supplemental Content.}
\end{supplement}

\bibliographystyle{imsart-number}
\bibliography{ref1}
\end{document}